\def\cal{\mathcal}
\def\Bbb{\mathbb}
\def\frak{\mathfrak}
\newenvironment{NB}{
\color{red}{\bf NB}. \footnotesize 
}{}
\newenvironment{NB2}{
\color{blue}{\bf NB}. \footnotesize
}{}
\newcommand{\wt}{\widetilde}
\newcommand{\pH}{{^p H}}
\newcommand{\baM}{{ M}}
\newcommand{\Pic}{\operatorname{Pic}}
\newcommand{\Quot}{\operatorname{Quot}}
\newcommand{\Supp}{\operatorname{Supp}}
\newcommand{\ch}{\operatorname{ch}}
\newcommand{\Coh}{\operatorname{Coh}}
\newcommand{\Ext}{\operatorname{Ext}}
\newcommand{\Hom}{\operatorname{Hom}}
\newcommand{\im}{\operatorname{im}}
\newcommand{\rk}{\operatorname{rk}}
\newcommand{\chr}{\operatorname{char}}
\newcommand{\NS}{\operatorname{NS}}
\newcommand{\coker}{\operatorname{coker}}
\newcommand{\td}{\operatorname{td}}
\newcommand{\Alb}{\operatorname{Alb}}
\newcommand{\Amp}{\operatorname{Amp}}
\newcommand{\Per}{\operatorname{Per}}
\newcommand{\alg}{\operatorname{alg}}
\newcommand{\Stab}{\operatorname{Stab}}
\newcommand{\Sym}{\operatorname{Sym}}
\font\b=cmr10 scaled \magstep5
\def\bigzerou{\smash{\lower1.7ex\hbox{\b 0}}}
\numberwithin{equation}{section}
\theoremstyle{plain}
 \newtheorem{thm}{Theorem}[section]
 \newtheorem{lem}[thm]{Lemma}
 \newtheorem{prop}[thm]{Proposition}
 \newtheorem{cor}[thm]{Corollary}
\theoremstyle{definition}
 \newtheorem{defn}[thm]{Definition}
\theoremstyle{remark}
 \newtheorem{rem}[thm]{Remark}
\begin{document}

\title{Wall crossing
of the moduli spaces of 
perverse coherent sheaves on a blow-up.}
\author{K\={o}ta Yoshioka}
\address{Department of Mathematics, Faculty of Science,
Kobe University,
Kobe, 657, Japan
}
\email{yoshioka@math.kobe-u.ac.jp}

\thanks{
The author is supported by the Grant-in-aid for 
Scientific Research (No.\ 26287007, \ 24224001), JSPS}

\subjclass[2010]{14D20}

\begin{abstract}
We give a remark on the wall crossing behavior of perverse coherent 
sheaves on a blow-up \cite{perv}, \cite{perv2} 
and stability condition \cite{Toda}
\end{abstract}

\maketitle
%\tableofcontents

\section{Introduction}

Let $X$ be a smooth projective surface over an algebraically 
closed field $k$ of characteristic 0.
For $(\beta, \omega) \in \NS(X)_{\Bbb R} \times \Amp(X)_{\Bbb R}$,
Arcara and Bertram \cite{AB} constructed stability conditions
$\sigma_{(\beta,\omega)}$
such that
the structure sheaves of points $k_x$ $(x \in X)$ are stable of phase 1.
In \cite{Toda}, \cite{Toda2}, Toda constructed new examples of stability 
conditions $\sigma_{(\beta,\omega)}$ which is regarded to an extension
of Arcara and Bertram's examples to non-ample $\omega$.
Moreover Toda showed that new examples are related to stability
conditions on blow-downs of $(-1)$-curves on $X$.
Assume that $\omega$ is close to ample in $\NS(X)_{\Bbb R}$.
Then $\sigma_{(\beta,\omega)}$ is related to stability condition
on a blow-down $\pi:X \to Y$ of a $(-1)$-curve $C$ of $X$.  
In this case, we can set $\omega:=\pi^*(L)+t C$, where 
$L \in \Amp(Y)$ and $|t|$ is sufficiently small. 
If $t<0$, then $\sigma_{(\beta,\omega)}$ is an example of
Arcara and Bertram \cite{AB}.
If $t=0$, then  
instead of using a torsion pair of $\Coh(X)$, 
$\sigma_{(0,\omega)}$ is constructed by
using a similar torsion pair of       
a category of perverse coherent sheaves
${^{-1}\Per}(X/Y)$ \cite{Br:4}. 
Since $k_x$ $(x \in C)$ becomes reducible in $^{-1}\Per(X/Y)$,
$k_x$ is properly $\sigma_{(0,\omega)}$-semi-stable.
If $t>0$, then $k_x$ $(x \in C)$ is not $\sigma_{(0,\omega)}$-semi-stable.
In this case, ${\bf L}\pi^*(k_y)$ is $\sigma_{(0,\omega)}$-stable and 
$\sigma_{(0,\omega)}$-semi-stable objects 
are parameterized by $Y$.
  
In this note, we shall give exmaples of Bridgeland semi-stable objects
on $X$.
In \cite{perv} and \cite{perv2},
Nakajima and the author studied 
the relation of 
moduli spaces of Gieseker semi-stable sheaves on
$Y$  and $X$   
by looking at the wall crossing 
behavior in the category of perverse coherent sheaves.
As expected from Toda's papers and also the relation with the Gieseker
semi-stability in the large volume
limit, we shall show that Gieseker type semi-stability
of perverse coherent sheaves corresponds to
Bridgeland semi-stability in the large volume limit.
Then we shall explain our wall crossing behavior
of the moduli of perverse coherent sheaves 
in terms of Bridgeland stability condition 
(subsection \ref{subsect:wall-crossing:blow-up}). 

%Unfortunately we do not have good moduli spaces of Bridgeland's semi-stable
%objects in general. So our 

For the proof of our results, we also need to study the large volume limit
where $\omega$ is ample.
As is proved by Bridgeland \cite{Br:3} and Lo and Qin \cite{Lo-Qin}, 
Bridgeland's stability $\sigma_{(\beta,\omega)}$ is
related to (twisted) Gieseker stability 
if $(\omega^2)$ is sufficiently large (Proposition \ref{prop:large}). 
On the other hand if an object $E \in {\bf D}(X)$ satisfies
$(c_1(E)-\rk E \beta_0,\omega_0)=0$,
then 
$\sigma_{(\beta_0,\omega_0)}$-stability of $E$ is related to $\mu$-stability
and the moduli space is related to the Uhlenbeck compactification
of the moduli of $\mu$-stable locally free sheaves.
By looking at the wall and chamber structure near 
$\sigma_{(\beta_0,\omega_0)}$,
we shall show that each adjacent chamber of $(\beta_0,\omega_0)$
contains a point at the large volume limit, which
implies that each chamber corresponds to Gieseker semi-stabilty.
Thus we may say that
Bridgeland stability unifies (twisted) Gieseker stabilities and
the $\mu$-stability.
We call these chambers {\it Gieseker chambers}.
%We shall look at the relation of the moduli spaces 
%$M_{(\beta,\omega)}(v)$ of semi-stable objects
%for these stability conditions by introducing
%a cone $C^+(v^\perp)$.
We explain the usual wall crossing of moduli spaces of
Gieseker semi-stable sheaves \cite{EG}, \cite{FQ}, \cite{MW}
and also those of Uhlenbeck compactifications \cite{HL}
in terms of Bridgeland stability conditions.
The blow-up case is a slight generalization of this consideration.

%\begin{NB}
Let us explain the organization of this article.
In section \ref{sect:back}, 
we introduce a bilinear form on the algebraic cohomology
group $H^*(X,{\Bbb Q})_{\alg}$, which is useful to state the Bogomolov
inequality of $\mu$-semi-stable sheaves.
If $X$ is a K3 surface or an abelian surface, 
this bilinear form is nothing but the Mukai's bilinear form.  
We next recall categories ${\frak C}$ of perverse coherent sheaves
associated to a birational map
$\pi:X \to Y$ of surfaces \cite{PerverseI}, 
and then introduce a stability condition
by tilting categories of perverse coherent sheaves.
In section \ref{sect:blowup}, we consider the case of 
a blow-up $\pi:X \to Y$ of a smooth point of $Y$.
We explain local projective generators of ${\frak C}$ and
the moduli of semi-stable perverse coherent sheaves.
We also explain some results in \cite{perv} and \cite{perv2}.

In section \ref{sect:chamber},
we shall study wall and chamber structure of the moduli of
$\sigma$-semi-stable objects 
${\cal M}_\sigma(v)$, where $v$ is the Chern character
of the objects.
We construct a map $\xi$ from the subspace of stability conditions
parameterized by $\NS(X)_{\Bbb R} \times \Amp(X)_{\Bbb R}$
to the positive cone of $v^\perp$
%where $v^\perp$ is the orthogonal complement of $v$ 
with respect to
the bilinear form on $H^*(X,{\Bbb Q})_{\alg}$.
Then we show that the wall and chamber structure
on $\NS(X)_{\Bbb R} \times \Amp(X)_{\Bbb R}$ 
is the pull-back of a wall and chamber structure on $v^\perp$.
In particular, all semi-stabilities on a fiber are the same.
The wall and chamber structure in $v^\perp$ is used 
to compare Gieseker chambers.
In section \ref{sect:stability-blowup},
we shall partially generalize
results in the previous section
to the blow-up case.
We also construct an analytic neighborhood $U$ of the
origin of ${\Bbb C}^2$ which parameterizes stability conditions.
It is used to study the wall crossing for the blow-up case.

In section \ref{sect:wall-crossing}, we first recall a homomorphism
from $v^\perp$ to $\NS(M_H(v))_{\Bbb Q}$, where
$M_H(v)$ is the moduli spaces of Gieseker semi-stable
sheaves. 
It is expected to be surjective modulo $\NS(\Alb(M_H(v)))_{\Bbb Q}$ 
under suitable conditions.
Composing this homomorphism with  
the map constructed in section \ref{sect:chamber},
we have a map $\NS(X)_{\Bbb Q} \times \Amp(X)_{\Bbb Q} 
\to \NS(M_H(v))_{\Bbb Q}$.
By the work of Bayer and Macri \cite{BM},
the image of Gieseker chamber is contained in the nef cone
of the moduli space.
Then we shall explain
Matsuki-Wentworth wall crossing of Gieseker semi-stability
in terms of Bridgeland stability conditions.
Finally we explain the wall crossing of the moduli spaces of perverse
coherent sheaves as a wall crossing of Bridgeland stability conditions.
In Appendix, we give some technical results.
In particular, we explain the moduli of semi-stable objects 
in the large volume limit, which is a generalization of 
\cite{MYY:2011:1}. 
We also give a trivial example of Gieseker chamber 
in section \ref{subsect:ex-Gieseker}.
 
%\end{NB}

During preparation of this note,
Bertram and Martinez informed us they also studied the wall
crossing of twisted Gieseker stability by using Bridgeland stability, and
get the same result of section \ref{subsect:MW} (\cite{BeM}).   

\begin{NB}
$E$ is a
$\sigma_{(\beta,\omega)}$-stable object of phase 0 with
$(\beta,\omega)=(c_1(E),\omega)/\rk E$, then
$E$ is a $\mu$-stable locally free sheaf.
Thus the moduli space can be regarded as the 
Uhlenbeck compactification of the moduli of $\mu$-stable
locally free sheaves
\end{NB}

\section{Background materials}\label{sect:back}

\subsection{
Basic notations}
Let $X$ be a smooth projective surface over an algebraically closed
field $k$ of characteristic 0.
As in Mukai lattice on abelian surfaces, let us introduce a bilinear
form on $H^*(X,{\Bbb Q})_{\alg}:={\Bbb Q} \oplus \NS(X)_{\Bbb Q}
\oplus {\Bbb Q}$. 
For $x=(x_0,x_1,x_2),y=(y_0,y_1,y_2) 
\in H^*(X,{\Bbb Q})_{\alg}$,
we set
\begin{equation}
(x,y):=(x_1,y_1)-x_0 y_2-x_2 y_0 \in {\Bbb Q}.
\end{equation}
Let $\varrho_X:=(0,0,1)$ be the fundamental class of $X$.
We also use the notation
$x=x_0+x_1+x_0 \varrho_X$ to denote 
$x=(x_0,x_1,x_2)$.
For $x=(x_0,x_1,x_2)$, we set
\begin{equation}
\begin{split}
\rk x:= & x_0,\\
c_1(x):=& x_1.
\end{split}
\end{equation}
For $E \in {\bf D}(X)$, we set
$v(E):=\ch(E) \in H^*(X,{\Bbb Q})_{\alg}$.
Since 
\begin{equation}
\begin{split}
\Delta(E):= & c_2(E)-\frac{\rk E-1}{2\rk E}(c_1(E)^2)\\
=&
\frac{c_2(E^{\vee} \otimes E)}{2 \rk E}=\frac{(\ch(E)^2)}{2 \rk E},
\end{split}
\end{equation} 
the Bogomolov inequality is the following.
\begin{lem}
Assume that $X$ is defined over a field of characteristic 0.
Let $E$ be a $\mu$-semi-stable torsion free sheaf. Then
$(v(E)^2)=2 (\rk E) \Delta(E) \geq 0$.
\end{lem}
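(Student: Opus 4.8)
The asserted equality is purely formal: expanding $(v(E)^2)$ with the given bilinear form yields $(v(E)^2) = (c_1(E)^2) - 2\,\rk E\,\ch_2(E)$, and substituting $\ch_2(E) = \tfrac12(c_1(E)^2) - c_2(E)$ recovers $2\,\rk E\,\Delta(E)$ --- exactly the computation displayed just before the statement. So the real content is the inequality $\Delta(E) \geq 0$, the classical Bogomolov inequality, and the plan is to reduce it to a Riemann--Roch growth estimate on tensor powers that is available only in characteristic $0$.

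First I would normalize the sheaf. Replacing $E$ by its double dual $E^{\vee\vee}$ preserves $\mu$-semistability (a reflexive sheaf on a surface is locally free), fixes $\rk$ and $c_1$, and can only decrease $\Delta$, since the quotient $E^{\vee\vee}/E$ is supported in dimension $0$ and $\Delta(E^{\vee\vee}) = \Delta(E) - \mathrm{length}(E^{\vee\vee}/E)$; hence it suffices to treat locally free $E$. Next I pass to $F := E^{\vee}\otimes E$. A direct Chern-character computation gives $c_1(F) = 0$ (so $\mu(F) = 0$) and $(v(F)^2) = 2(\rk E)^2 (v(E)^2)$, equivalently $\Delta(F) = 2\,\rk E\,\Delta(E)$; since $\rk E > 0$ this gives $\Delta(E) \geq 0 \Leftrightarrow \Delta(F) \geq 0$, and because $c_1(F) = 0$ the latter reads simply $\ch_2(F) \leq 0$. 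The whole problem is thereby reduced to showing that a $\mu$-semistable sheaf $F$ with $c_1(F) = 0$ cannot have $\ch_2(F) > 0$.

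Here I invoke the one genuinely characteristic-$0$ ingredient: tensor products of $\mu$-semistable sheaves are again $\mu$-semistable. Then for each $k$ the power $F^{\otimes k}$ is $\mu$-semistable of rank $N^k$, where $N = (\rk E)^2$, with $c_1 = 0$ and $\ch_2(F^{\otimes k}) = kN^{k-1}\ch_2(F)$ (products of two or more copies of $\ch_2(F)$ vanish on a surface). Riemann--Roch gives $\chi(F^{\otimes k}) = N^k\chi(\mathcal{O}_X) + kN^{k-1}\ch_2(F)$. On the other hand, a Le Potier-type estimate bounds the global sections of a $\mu$-semistable sheaf of slope $0$ linearly in its rank, and the same estimate applied to $(F^{\otimes k})^{\vee}\otimes K_X$ bounds $h^2$ through Serre duality; hence $h^0(F^{\otimes k}) + h^2(F^{\otimes k}) \leq C\,N^k$ with $C$ independent of $k$. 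Combining the two and dividing by $N^k$ forces $\chi(\mathcal{O}_X) + (k/N)\ch_2(F) \leq C$ for all $k$, which is impossible if $\ch_2(F) > 0$; therefore $\ch_2(F) \leq 0$ and the proof concludes.

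The main obstacle is the characteristic-$0$ input that $\mu$-semistability is preserved under tensor product (equivalently, that every $F^{\otimes k}$ stays semistable); this is precisely what drives the Riemann--Roch growth argument and is where characteristic $0$ is essential, as the inequality genuinely fails in positive characteristic. I would either cite this theorem directly or, to stay self-contained, fall back on Bogomolov's original route, which instead analyzes the Harder--Narasimhan / maximal destabilizing filtration of $E\otimes E$ and extracts $\Delta(E)\geq 0$ from the Hodge index theorem applied to the discriminants of the filtration quotients; that argument sidesteps the tensor-product theorem but is combinatorially heavier.
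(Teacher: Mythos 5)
Your argument is essentially correct, but there is no proof in the paper to compare it against: the lemma is stated as the classical Bogomolov--Gieseker inequality and taken as known, the only supplied content being the identity displayed immediately before it, $\Delta(E)=c_2(E^{\vee}\otimes E)/(2\rk E)=(\ch(E)^2)/(2\rk E)$, which is exactly your ``purely formal'' equality $(v(E)^2)=2(\rk E)\Delta(E)$. The paper consistently treats the inequality itself as an external input (it is invoked the same way in Lemma~\ref{lem:perverse-Bogomolov} and in Lemma~\ref{lem:weak-Bogomolov2}, where the ``ordinary Bogomolov-Gieseker inequality'' is cited). What you wrote is a faithful reconstruction of the standard characteristic-zero proof in Gieseker's form (as in Huybrechts--Lehn, Thm.\ 3.4.1): reduction to the locally free case via the double dual, with $\Delta(E)=\Delta(E^{\vee\vee})+\mathrm{length}(E^{\vee\vee}/E)$ going in the right direction; passage to $F=E^{\vee}\otimes E$, which has $c_1(F)=0$, $\Delta(F)=2\rk E\,\Delta(E)$, and is $\mu$-semi-stable by the characteristic-zero tensor-product theorem; and the Riemann--Roch growth argument, where your computations $\ch_2(F^{\otimes k})=kN^{k-1}\ch_2(F)$ and $\chi(F^{\otimes k})=N^k\chi({\cal O}_X)+kN^{k-1}\ch_2(F)$ are correct and do force $\ch_2(F)\leq 0$. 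The one step you should flag more explicitly if you want this self-contained is the uniformity of the section bound: you need $h^0(G)\leq C\rk G$ for \emph{every} $\mu$-semi-stable sheaf $G$ of slope zero (and the same after the fixed twist by $K_X$, for $h^2$ via Serre duality), with $C$ depending only on $(X,H)$ and not on $k$; this is exactly where Le Potier's lemma or Flenner's restriction theorem (again a characteristic-zero ingredient) is used, so it deserves a citation rather than a passing mention. With that reference in place, your proof is a legitimate and more informative substitute for the paper's bare statement, and your closing remark correctly identifies both the role of characteristic zero and Bogomolov's original Hodge-index route as the main alternative.
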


Let $L$ be an ample divisor on $X$.
$$
P^+(X)_{\Bbb R}:=\{x \in  \NS(X)_{\Bbb R} \mid (x^2)>0, (x,L)>0 \}
$$ 
denotes the positive cone of $X$ and
$C^+(X)$ denotes $P^+(X)_{\Bbb R}/{\Bbb R}_{>0}$. 

For a stability condition $\sigma$,
$Z_\sigma:{\bf D}(X) \to {\Bbb C}$ is the central charge
and ${\cal A}_\sigma$ is the abelian category
generated by $\sigma$-stable objects $E$
with the phase $\phi_\sigma(E) \in (0,1]$.
Then $\sigma$ consists of the pair 
$(Z_\sigma,{\cal A}_\sigma)$
of a central charge 
$Z_\sigma:{\bf D}(X) \to {\Bbb C}$
and an abelian category
${\cal A}_\sigma$.
Let $\Stab(X)$ be the space of stability conditions.
We have a map
$\Pi:\Stab(X) \to H^*(X,{\Bbb C})_{\alg}$
such that 
$(\Pi(\sigma),*)=Z_\sigma(*)$.
If $\sigma$ satisfies the support property,
then $\Pi$ is locally isomorphic.

\begin{NB}
Let $X$ be a smooth projective surface.
Arcara and Bertram \cite{AB} constructed stability condition
$\sigma_{(\beta,\omega)}=({\cal A}_{(\beta,\omega)},Z_{(\beta,\omega)})$
where $Z_{(\beta,\omega)}(E)=(e^{\beta+\sqrt{-1} \omega},\ch(E))$.
More precisely, they did not prove the support property, which is
important for the deformation of 
$\sigma_{(\beta,\omega)}$.
By showing Bogomolov type inequality for $\sigma_{(\beta,\omega)}$-stable
objects, Toda \cite{Toda} proved the support property
of $\sigma_{(\beta,\omega)}$.
Moreover Toda constructed similar stability condition
if $\omega$ is not ample. 
For example if $\pi:X \to Y$ is the blow-up of 
a smooth surface $Y$ at a point and $\omega$ is the pull-back of an ample
class of $Y$, then instead of using a torsion pair of $\Coh(X)$, 
${\cal A}_{(\beta,\omega)}$ is constructed as a tilting of 
a category of perverse coherent sheaves \cite{Br:4}.
On the other hand, in \cite{perv} and \cite{perv2},
Nakajima and the author studied 
\end{NB}

\subsection{Perverse coherent sheaves}\label{subsect:perverse}

Let $\pi:X \to Y$ be a birational morphism of projective surfaces
such that $X$ is smooth, $Y$ is normal and $R^1 \pi_*({\cal O}_X)=0$.
The notion of perverse coherent sheaves was introduced by Bridgeland
\cite{Br:4}. 
Let us briefly recall a slightly different formulation of
perverse coherent sheaves in \cite{PerverseI}.
Let $G$ be a locally free sheaf on $X$ which is a local projective
generator of a category of perverse coherent sheaves
${\frak C}$ (\cite{PerverseI}).
Thus 
\begin{equation}
\begin{split}
T:= & \{ E \in \Coh(X) \mid R^1 \pi_*(G^{\vee} \otimes E)=0 \}\\
S:= & \{ E \in \Coh(X) \mid \pi_*(G^{\vee} \otimes E)=0 \},
\end{split}
\end{equation}
is a torsion pair $(T,S)$ of $\Coh(X)$, 
$G \in T$ 
and
\begin{equation}
{\frak C}=\{E \in {\bf D}(X) \mid H^i(E)=0, 
i \ne -1,0, H^{-1}(E) \in S, H^0(E) \in T \}.
\end{equation}
${\frak C}$ is the heart of a bounded $t$-structure of ${\bf D}(X)$.
For $E \in {\bf D}(X)$,
$\pH^i(E) \in {\frak C}$ denotes the $i$-th cohomology
of $E$ with respect to the
$t$-structure.
We take a divisor $H$ on $X$ which is the pull-back
of an ample divisor on $Y$.
By using a twisted Hilbert polynomial $\chi(G,E(nH))$ of $E \in {\frak C}$,
we define the dimension and the torsion freeness of $E$, which depend only on 
the category ${\frak C}$.
We also define
a $G$-twisted semi-stability of $E \in {\frak C}$ 
as in the Gieseker semi-stability \cite[Prop. 1.4.3]{PerverseI}.
Then we have the following.
\begin{prop}[{\cite{perv2}, \cite[Prop. 1.4.3]{PerverseI}}]
\label{prop:moduli}
There is a coarse moduli scheme $\baM_H^G(v)$ 
of $S$-equivalence classes of $G$-twisted 
semi-stable objects $E$ with $v(E)=v$.
\end{prop}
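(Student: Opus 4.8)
The plan is to reduce the construction to the known existence theorem for moduli of semi-stable sheaves of modules over a sheaf of (possibly non-commutative) algebras, using the local projective generator $G$ to convert perverse coherent sheaves into such modules. Concretely, I would set $\mathcal{A}:=\pi_*(G^\vee\otimes G)$, a coherent sheaf of $\mathcal{O}_Y$-algebras on $Y$, and consider the functor $\Phi:={\bf R}\pi_*(G^\vee\otimes -)$, which (since $G$ is locally free) computes ${\bf R}\Hom_\pi(G,-)$. The defining property of a local projective generator is exactly that $\Phi$ is an equivalence ${\bf D}(X)\to{\bf D}(\mathcal{A})$ carrying the heart $\frak{C}$ onto the abelian category of coherent $\mathcal{A}$-modules; the torsion-pair conditions $R^1\pi_*(G^\vee\otimes E)=0$ for $E\in T$ and $\pi_*(G^\vee\otimes E)=0$ for $E\in S$ are precisely what make $\Phi$ exact on $\frak{C}$. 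First I would verify this equivalence and its exactness, so that perverse sub-objects and quotient objects of $E$ in $\frak{C}$ correspond bijectively to $\mathcal{A}$-submodules and quotient modules of $\Phi(E)$.

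The second step is to match the numerical data. Writing $H=\pi^*H_Y$ with $H_Y$ ample on $Y$, the projection formula together with the vanishing above gives $\chi(G,E(nH))=\chi(Y,\Phi(E)(nH_Y))$, so the twisted Hilbert polynomial of $E\in\frak{C}$ is literally the Hilbert polynomial of the $\mathcal{A}$-module $\Phi(E)$ taken with respect to $H_Y$. Consequently the dimension, the torsion freeness, and the reduced Hilbert polynomial that enter the definition of $G$-twisted semi-stability of $E$ agree with the corresponding invariants of $\Phi(E)$. Combined with the exactness of $\Phi$, this yields the dictionary: $E$ is $G$-twisted semi-stable if and only if $\Phi(E)$ is a semi-stable $\mathcal{A}$-module, and the two notions of $S$-equivalence coincide under $\Phi$.

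With the dictionary in place, the final step is the GIT construction for $\mathcal{A}$-modules. One first establishes boundedness of the family of $G$-twisted semi-stable $E$ with $v(E)=v$ (equivalently, of the associated $\mathcal{A}$-modules with fixed Hilbert polynomial $P(n)=\chi(G,E(nH))$), using the Bogomolov-type inequality recorded above to control the sheaves $H^0(E)$ and $H^{-1}(E)$. For $n\gg0$ one then embeds the family into a $\Quot$ scheme of quotients of $\mathcal{A}(-nH_Y)^{\oplus N}$ with $N=P(n)$, linearizes the natural $\GL(N)$-action so that its Hilbert--Mumford semi-stability matches the module semi-stability, and forms the good quotient of the open semi-stable locus. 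The resulting quotient is the coarse moduli scheme $\baM_H^G(v)$, with closed orbits corresponding to polystable modules and intersection of orbit closures corresponding to $S$-equivalence.

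I expect the main obstacle to be the first step: proving that $\Phi$ is an exact equivalence of abelian categories that is compatible with the sub/quotient structure governing semi-stability. The delicacy is that $\frak{C}$ is a tilt of $\Coh(X)$ along $(T,S)$, so a sub-object of $E$ in $\frak{C}$ need not be a subsheaf, and one must check that $\Phi$ nevertheless sends the perverse sub-objects exactly to $\mathcal{A}$-submodules, controlling in particular the behaviour of the term $H^{-1}(E)\in S$ under the filtrations used to test stability. Once this categorical comparison is secured, the translation of stability and the appeal to the GIT machinery for $\mathcal{A}$-modules are essentially formal.
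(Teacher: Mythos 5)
Your proposal is correct and is essentially the proof behind the citation \cite[Prop.~1.4.3]{PerverseI}: there, too, the local projective generator $G$ is used to identify ${\frak C}$ with the category of coherent modules over the sheaf of ${\cal O}_Y$-algebras ${\cal A}=\pi_*(G^{\vee}\otimes G)$ via $E\mapsto {\bf R}\pi_*(G^{\vee}\otimes E)$ (exact on ${\frak C}$ by the defining vanishing for the torsion pair $(T,S)$), the twisted Hilbert polynomial $\chi(G,E(nH))$ is matched with the Hilbert polynomial of the corresponding ${\cal A}$-module, and the coarse moduli scheme of $S$-equivalence classes is then obtained from Simpson's GIT construction for semi-stable modules over a sheaf of algebras (a $\GL(N)$-quotient of a $\Quot$-type scheme, which is also why ${\cal M}_H^\gamma(v)$ is a quotient stack). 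The only cosmetic difference is that you re-derive the GIT step (boundedness, embedding, linearization) by hand instead of quoting Simpson's theorem.
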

\begin{defn}
For $\gamma \in \NS(X)_{\Bbb Q}$ such that there is a local
projective generator $G$ of ${\frak C}$
with $\gamma=c_1(G)/\rk G$,
we also define $\gamma$-twisted semi-stability
as $G$-twisted semi-stability. 
\begin{enumerate}
\item[(1)]
We denote the
moduli stack of
$\gamma$-twisted semi-stable objects $E$ with $v(E)=v$
by ${\cal M}_H^\gamma(v)$. 
It is a quotient stack of a scheme by a group action
(see the construction in \cite[Prop. 1.4.3]{PerverseI}).
${\cal M}_H^\gamma(v)^s$ denotes the open substack
of ${\cal M}_H^\gamma(v)$ consisting of $\gamma$-twisted stable objects.
\item[(2)]
$\baM_H^\gamma(v)$ denotes the coarse
moduli scheme of $S$-equivalence classes of
$\gamma$-twisted semi-stable objects $E$ with $v(E)=v$. 
\end{enumerate}
\end{defn}

\begin{rem}
If ${\frak C}=\Coh(X)$, then
every locally free sheaf is a local projective
generator. Hence any $\gamma \in \NS(X)_{\Bbb Q}$ is expressed
as $\frac{c_1(G)}{\rk G}=\gamma$ for a locally free sheaf $G$.
$\gamma$-twisted semi-stability depends on the equivalence
class $\gamma \mod {\Bbb Q}H$ and coincides with the twisted semi-stability
of Matsuki and Wentworth \cite{MW}. 
\end{rem}

\begin{defn}
By using the slope function $(c_1(E),H)/\rk E$,
we also define the $\mu$-semi-stability of a torsion free object
$E$ of ${\frak C}$. 
${\cal M}_H(v)^{\mu\text{-}ss}$ denotes the moduli stack of
$\mu$-semi-stable objects $E$ with $v(E)=v$.
\end{defn}

\begin{rem}\label{rem:indep-of-G}
\begin{enumerate}
\item[(1)]
$\mu$-semi-stability depends only on
the category ${\frak C}$ and $H$.
\item[(2)]
 ${\cal M}_H(v)^{\mu\text{-}ss}$ is bounded.
\end{enumerate}
\end{rem}

\begin{lem}\label{lem:G/beta}
Assume that $G$ satisfies
\begin{equation}\label{eq:ch(G)-1}
\frac{\ch(G)}{\rk G}= e^{\frac{c_1(G)}{\rk G}}
+\left(\frac{1}{8}(K_X^2)-\chi({\cal O}_X)\right)\varrho_X.
\end{equation}
Then
\begin{equation}
\frac{\chi(G,E)}{\rk G}=-(e^\beta,v(E)),
\end{equation}
where 
\begin{equation}
\beta-\frac{1}{2}K_X=\frac{c_1(G)}{\rk G}.
\end{equation}
\end{lem}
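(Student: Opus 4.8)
The plan is to deduce the identity from the Hirzebruch--Riemann--Roch theorem together with the bilinear form $(\ ,\ )$ introduced above. Since $G$ is locally free, I would first write $\chi(G,E)=\chi(G^\vee\otimes E)$ and apply Riemann--Roch to obtain
\begin{equation*}
\frac{\chi(G,E)}{\rk G}=\int_X \frac{\ch(G)^\vee}{\rk G}\,\ch(E)\,\td(X),
\end{equation*}
where $\ch(G)^\vee=\ch(G^\vee)$ is the Chern character with the sign of its $H^2$-part reversed. On the other side, comparing the definition $(x,y)=(x_1,y_1)-x_0y_2-x_2y_0$ with the top-degree part of $x^\vee y$ shows $(x,y)=-\int_X x^\vee y$; since $(e^\beta)^\vee=e^{-\beta}$ and $v(E)=\ch(E)$, this gives $-(e^\beta,v(E))=\int_X e^{-\beta}\ch(E)$. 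Comparing the two expressions, it therefore suffices to establish the cohomological identity
\begin{equation*}
\frac{\ch(G)^\vee}{\rk G}\,\td(X)=e^{-\beta}
\end{equation*}
in $H^*(X,{\Bbb Q})_{\alg}$, for then both sides of the claim reduce to $\int_X e^{-\beta}\ch(E)$.

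To prove this identity I would set $\xi:=c_1(G)/\rk G$, so that $\beta=\xi+\tfrac12 K_X$, and $a:=\tfrac18(K_X^2)-\chi({\cal O}_X)$. Dualizing the hypothesis \eqref{eq:ch(G)-1} merely flips the sign of the $H^2$-component, so that $\ch(G)^\vee/\rk G=e^{-\xi}+a\,\varrho_X$. By Noether's formula one has $\td(X)=1-\tfrac12K_X+\chi({\cal O}_X)\varrho_X$, which I would rewrite as $\td(X)=e^{-K_X/2}-a\,\varrho_X$, the term $a\varrho_X$ being exactly the gap between $\td(X)$ and $e^{-K_X/2}$. Multiplying, and using that $\varrho_X$ is the top class (so $\varrho_X^2=0$ and $\varrho_X$ kills all positive-degree terms), the two cross terms $a\varrho_X\cdot e^{-K_X/2}$ and $e^{-\xi}\cdot(-a\varrho_X)$ become $a\varrho_X$ and $-a\varrho_X$ and cancel, leaving
\begin{equation*}
\frac{\ch(G)^\vee}{\rk G}\,\td(X)=e^{-\xi}e^{-K_X/2}=e^{-(\xi+K_X/2)}=e^{-\beta},
\end{equation*}
as required.

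The argument is in the end a bookkeeping computation, so the only points to watch are the sign conventions: tracking the effect of dualization on each graded piece, and keeping straight the relation $(x,y)=-\int_X x^\vee y$ between the paper's form and the usual Mukai pairing. The conceptual crux---and the step I expect to be most delicate to phrase cleanly---is the cancellation of the $a\varrho_X$ terms: the correction constant $\tfrac18(K_X^2)-\chi({\cal O}_X)$ imposed on $G$ in \eqref{eq:ch(G)-1} is engineered precisely so that the $\varrho_X$-discrepancy of $\ch(G)^\vee/\rk G$ from $e^{-\xi}$ cancels the $\varrho_X$-discrepancy of $\td(X)$ from $e^{-K_X/2}$ coming from Noether's formula. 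Once that is observed the exponentials combine and the identity is immediate.
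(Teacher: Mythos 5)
Your proof is correct and follows essentially the same route as the paper: both are direct Riemann--Roch computations whose crux is that the correction constant $\frac{1}{8}(K_X^2)-\chi({\cal O}_X)$ in \eqref{eq:ch(G)-1} cancels exactly against the discrepancy between $\td_X$ and $e^{-\frac{1}{2}K_X}$ coming from Noether's formula. The only difference is organizational: the paper computes $\chi(e^{\beta-\frac{1}{2}K_X},E)$ by expanding inside the integral and accounts for the $\varrho_X$-correction additively, while you fold the same cancellation into the single multiplicative identity $\frac{\ch(G)^{\vee}}{\rk G}\,\td_X=e^{-\beta}$.
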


\begin{proof}
For $E \in {\bf D}(X)$, we have
\begin{equation}\label{eq:chi}
\begin{split}
\chi(e^{\beta-\frac{1}{2}K_X},E)=&
\int_X \left\{
e^{-\beta}e^{\frac{1}{2}K_X}\ch(E)
\left(1-\frac{1}{2}K_X+\chi({\cal O}_X)\varrho_X \right)
\right\}\\
=& \int_X e^{-\beta}\ch(E)
\left(1+(\chi({\cal O}_X)-\frac{1}{8}(K_X^2))\varrho_X \right)\\
=& \int_X (e^{-\beta}\ch(E))
-\rk E \left(\frac{1}{8}(K_X^2)-\chi({\cal O}_X) \right)\\
=& -( e^\beta,v(E))-
\rk E \left(\frac{1}{8}(K_X^2)-\chi({\cal O}_X) \right).
\end{split}
\end{equation}
Hence the claim holds.
\end{proof}

\subsection{Stability condition associated to ${\frak C}$}
\label{subsect:stab}

For the birational map $\pi:X \to Y$ in \ref{subsect:perverse},  
let $H$ be the pull-back of an ample divisor on $Y$.
For $\beta \in \NS(X)_{\Bbb Q}$ in Lemma \ref{lem:G/beta} and
$\omega \in {\Bbb R}_{>0}H$,
we set
\begin{equation}
Z_{(\beta,\omega)}(E):=(e^{\beta+\omega \sqrt{-1}},v(E)),\; E \in {\bf D}(X).
\end{equation}

For $E \in {\bf D}(X)$, we can write $v(E)$ as 
\begin{equation}
\begin{split}
v(E)= & e^\beta(r(E)+a_\beta(E) \varrho_X+d_\beta(E) H+D_\beta(E)),\; 
D_\beta(E) \in H^\perp\\
= & r(E) e^\beta+a_\beta(E) \varrho_X+d_\beta(E) H+D_\beta(E)+
(d_\beta(E) H+D_\beta(E),\beta) \varrho_X,
\end{split}
\end{equation}
where $r(E)=\rk (E)$ is the rank of $E$ and 
\begin{equation}
d_\beta(E)=\frac{(c_1(E)-r(E) \beta,H)}{(H^2)},\;
a_\beta(E)=-(e^\beta,v(E)).
\end{equation}
Then we have
\begin{equation}
Z_{(\beta,\omega)}(E)=-a_\beta(E)+r(E)\frac{(\omega^2)}{2}
+d_\beta(E)(H,\omega)\sqrt{-1}.
\end{equation}
%>

In appendix, we shall prove the following inequality. 

\begin{lem}[{Lemma \ref{lem:weak-Bogomolov2}}]\label{lem:weak-Bogomolov}
For a $\mu$-semi-stable object $E$ of ${\frak C}$, 
$$
(v(E)^2)-(D_\beta(E)^2)=d_\beta(E)^2 (H^2)-2\rk E a_\beta(E) \geq 0.
$$
\end{lem}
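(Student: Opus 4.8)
The plan is to treat the displayed equality as a formal identity and to concentrate the real work on the inequality.

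\emph{The identity.} I would first observe that multiplication by $e^\beta$ is an isometry of the bilinear form $(\,,\,)$ on $H^*(X,{\Bbb Q})_{\alg}$: writing out the product on the three graded pieces, all $\beta$-dependent cross terms cancel, exactly as for the Mukai pairing. Applying this to $v(E)=e^\beta\bigl(r(E)+d_\beta(E)H+D_\beta(E)+a_\beta(E)\varrho_X\bigr)$ gives
\begin{equation*}
(v(E)^2)=\bigl(d_\beta(E)H+D_\beta(E)\bigr)^2-2r(E)a_\beta(E),
\end{equation*}
and since $D_\beta(E)\in H^\perp$ the cross term $2d_\beta(E)(H,D_\beta(E))$ vanishes, leaving $(v(E)^2)=d_\beta(E)^2(H^2)+(D_\beta(E)^2)-2\rk E\,a_\beta(E)$, which is the asserted equality. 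I would record at this point that, since $(H^2)>0$, the Hodge index theorem makes the intersection form negative definite on $H^\perp$, so that $(D_\beta(E)^2)\le 0$.

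\emph{Reduction to the $\mu$-stable case.} For the inequality I would first dispose of the case $\rk E=0$, where it reads $d_\beta(E)^2(H^2)\ge 0$ and is automatic, and then reduce the case $r:=\rk E>0$ to $\mu$-stable objects. If $E_1,\dots,E_n$ are the $\mu$-Jordan--H\"older factors, each is $\mu$-stable with the same slope as $E$, and since $r>0$ no factor has rank $0$, so $r_i:=\rk E_i>0$. As $d_\beta$ and $a_\beta$ are additive and the slopes agree, $d_\beta(E_i)=\delta r_i$ with $\delta:=d_\beta(E)/r$. Granting the inequality $a_\beta(E_i)\le \tfrac12\delta^2 r_i(H^2)$ for each factor and summing gives $a_\beta(E)\le\tfrac12\delta^2 r(H^2)$, whence $d_\beta(E)^2(H^2)-2r\,a_\beta(E)=\delta^2 r^2(H^2)-2r\,a_\beta(E)\ge 0$. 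Thus it suffices to treat $\mu$-stable $E$ of positive rank.

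\emph{The $\mu$-stable case and the main obstacle.} Here I would use the torsion pair description of ${\frak C}$: there is a triangle $H^{-1}(E)[1]\to E\to H^0(E)$ with $F:=H^{-1}(E)\in S$ and $Q:=H^0(E)\in T$. The sheaf $F$ is supported on the exceptional locus, so $\rk F=0$ and, since $H$ is pulled back from $Y$, $(c_1(F),H)=0$; hence $F$ is invisible to $d_\beta$, and $\rk E=\rk Q$, $d_\beta(E)=d_\beta(Q)$, $a_\beta(E)=a_\beta(Q)-a_\beta(F)$. Applying the identity of the first step to the genuine sheaf $Q$ together with the classical Bogomolov inequality (the Lemma opening Section \ref{sect:back}) and $(D_\beta(Q)^2)\le 0$ gives $d_\beta(Q)^2(H^2)-2\rk Q\,a_\beta(Q)\ge 0$, and the leftover is the correction $2\rk E\,a_\beta(F)$. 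The main obstacle is precisely to control this correction: one must show that $Q$ is $\mu$-semi-stable, so that the classical inequality applies, and that the exceptional-curve object $F$ satisfies $a_\beta(F)\ge 0$, i.e. that objects of $S$ are suitably negative. Equivalently, and perhaps more cleanly, one can restrict to a general $D\in|mH|$ with $m\gg 0$, which is the pull-back of a general ample curve on $Y$ and hence disjoint from the exceptional locus; there ${\frak C}=\Coh$ and $E|_D$ is a genuine $\mu$-semi-stable sheaf, so the combination $d_\beta(E)^2(H^2)-2\rk E\,a_\beta(E)$, which only involves the $H$-degree and the Euler characteristic, is controlled by curve semistability while the $H^\perp$-class $D_\beta(E)$ carrying the exceptional contribution drops out. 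This insensitivity to $D_\beta(E)$ is exactly why the weak inequality, rather than $(v(E)^2)\ge 0$, is the natural statement.
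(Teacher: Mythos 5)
Your first two steps are sound: the identity is a direct computation, and the reduction to $\mu$-stable factors can be made to work (with a word of care, since quotients in ${\frak C}$ can acquire $0$-dimensional pieces, so one must saturate to ensure no Jordan--H\"{o}lder factor has rank $0$). But the third step, which is the entire content of the lemma, is not proved, and both routes you sketch break down. The torsion-pair decomposition is vacuous here: for a torsion-free object $E$ of ${\frak C}$ one has $H^{-1}(E)=0$ automatically, because $H^{-1}(E)\in S$ is supported on the exceptional locus, so $H^{-1}(E)[1]$ is a $0$-dimensional subobject of $E$ in ${\frak C}$, contradicting torsion-freeness; thus $F=0$, $Q=E$, and the ``correction term'' you propose to control does not exist. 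Worse, the sign you say you need is backwards: for $F\in S$ one has $\pi_*(G^{\vee}\otimes F)=0$, hence $a_\beta(F)=\chi(G,F)/\rk G=-\chi(R^1\pi_*(G^{\vee}\otimes F))/\rk G\le 0$ (equivalently, $F[1]$ is a $0$-dimensional object of ${\frak C}$, and such objects have $a_\beta\ge 0$); so the inequality $a_\beta(F)\ge 0$ is false whenever $F\ne 0$. The genuine difficulty --- which you explicitly flag and then leave open --- is to show that $E$, a priori $\mu$-semi-stable only against subobjects \emph{in} ${\frak C}$, is $\mu$-semi-stable and essentially torsion-free as a sheaf on $X$, so that the classical Bogomolov inequality applies. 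Your second route does not close this gap either: $\mu$-semi-stability of $E|_D$ on a curve $D\in|mH|$ constrains degrees of subsheaves of $E|_D$ and carries no information whatsoever about $\chi(E)$, i.e.\ about $a_\beta(E)$, so the quantity $d_\beta(E)^2(H^2)-2\rk E\,a_\beta(E)$ is not ``controlled by curve semistability''; restriction theorems do enter proofs of Bogomolov-type inequalities, but in a far more elaborate way, and invoking them here amounts to re-proving the classical inequality rather than reducing to it.

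For comparison, the paper closes the gap in the opposite direction. By Lemma \ref{lem:reflexive-hull} it embeds $E$ into an object $E'$ whose \emph{quotient} $T=E'/E$ is a $0$-dimensional object of ${\frak C}$ and which satisfies $\Ext^1(A,E')=0$ for all $0$-dimensional $A$; this forces $E'$ to be a locally free, $\mu$-semi-stable sheaf (local freeness from $\Ext^1(k_x,E')=0$ and \cite[Lem.~1.1.31]{PerverseI}), the correction now has the right sign ($a_\beta(T)\ge 0$, so $a_\beta(E)\le a_\beta(E')$ while $d_\beta$ and $\rk$ are unchanged), and the ordinary Bogomolov--Gieseker inequality applied to $E'$, together with the Hodge index theorem, finishes the proof. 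Note the asymmetry that defeats your sketch: enlarging $E$ by a $0$-dimensional quotient ($a_\beta\ge 0$) improves the inequality, whereas splitting off a subobject $F[1]$ with $F\in S$ ($a_\beta(F)\le 0$) would worsen it. A salvageable version of your plan would have to prove directly that a torsion-free object of ${\frak C}$ is a sheaf whose sheaf-torsion lies in $S$ (hence contributes $a_\beta\le 0$ and can be discarded) and whose sheaf subobjects do not destabilize because their $T$-parts are ${\frak C}$-subobjects of the same rank and $H$-degree; none of this is in your proposal.
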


\begin{defn}
\begin{enumerate}
\item[(1)]
${\cal T}_\beta$ is the subcategory of ${\frak C}$
generated by torsion objects and 
$\mu$-stable objects $E$ with $d_\beta(E)>0$.
\item[(2)]
${\cal F}_\beta$ is the subcategory of ${\frak C}$
generated by $\mu$-stable objects $E$ with $d_\beta(E) \leq 0$.
\end{enumerate}
\end{defn}
Then $({\cal T}_\beta,{\cal F}_\beta)$ is a torsion pair
of ${\frak C}$. 
\begin{defn}
Let ${\cal A}_{(\beta,\omega)}$ be the tilting
of the torsion pair $({\cal T}_\beta,{\cal F}_\beta)$
of $\Coh(X)$:
\begin{equation}
{\cal A}_{(\beta,\omega)}=
\{E \in {\bf D}(X) \mid H^i(E)=0, 
i \ne -1,0, H^{-1}(E) \in {\cal F}_\beta, H^0(E) \in 
{\cal T}_\beta \}.
\end{equation}
\end{defn}
By Lemma \ref{lem:weak-Bogomolov},
$\sigma_{(\beta,\omega)}:=({\cal A}_{(\beta,\omega)},Z_{(\beta,\omega)})$
is an example of stability condition.
However we do no know whether 
$\sigma_{(\beta,\omega)}$ satisfies the support property in general. 
If $\pi$ is an isomorphism, a blow-up of a smooth point
or the minimal resolution of rational double points, then
the usual Bogomolov inequality
holds and the support property holds 
(see subsection \ref{subsect:support-property}).

\begin{rem}
By Lemma \ref{lem:G/beta}, a torsion free object 
$E \in {\frak C}$ is $(\beta-\frac{1}{2}K_X)$-twisted semi-stable
with respect to $\omega$
if and only if 
\begin{equation}
\text{(i) } \frac{d_\beta(F)}{\rk F} < \frac{d_\beta(E)}{\rk E} \text{ or }
\text{(ii) }
\frac{d_\beta(F)}{\rk F}=\frac{d_\beta(E)}{\rk E} 
\text{ and }
\frac{a_\beta(F)}{\rk F} \leq \frac{a_\beta(E)}{\rk E}
\end{equation}
for all non-zero subobject $F$ of $E$.
\end{rem}

\begin{defn}\label{defn:Bridgeland-moduli}
For a stability condition $\sigma$,
${\cal M}_\sigma(v)$ denotes the moduli stack of 
$\sigma$-semi-stable objects $E$ with $v(E)=v$. 
We also denote ${\cal M}_{\sigma_{(\beta,\omega)}}(v)$ by
${\cal M}_{(\beta,\omega)}(v)$.
${\cal M}_{(\beta,\omega)}(v)^s$
denotes the substack of ${\cal M}_{(\beta,\omega)}(v)$
consisting of $\sigma_{(\beta,\omega)}$-stable objects.
\end{defn}

\begin{NB}
\begin{rem}
In the proof of Proposition \ref{prop:large} (ii),
we used the $\pi$-nefness of $-K_X$.
Indeed we need the vanishing of
$\Hom(A(-K_X),E)=\Hom(E,A[2])^{\vee}$ for every irreducible object $A$.
If $\pi:X \to Y$ is the blow-up of a smooth point of $Y$,
then $A(-K_X)={\cal O}_C(l)[1],{\cal O}_C(l+1)$.
Hence $A(-K_X) \in {\frak C}[1] \cup {\frak C}$, which implies
that $\Hom(A(-K_X),E)=0$, since $\phi(A(-K_X))=1,2$.

\end{rem}
\end{NB}

\section{Perverse coherent sheaves on a blow-up}\label{sect:blowup}

\subsection{A local projective generator}
Let $\pi:X \to Y$ be the blow-up of a point of $Y$
and $C$ the exceptional divisor on $X$.
For $\beta \in \NS(X)_{\Bbb Q}$,
there is an element $G \in K(X)$ such that 
\begin{equation}\label{eq:ch(G)}
\frac{\ch(G)}{\rk G}= e^{\beta-\frac{1}{2}K_X}
+\left(\frac{1}{8}(K_X^2)-\chi({\cal O}_X)\right)\varrho_X.
\end{equation}
Then
\begin{equation}
\frac{\chi(G,E)}{\rk G}=-(e^\beta,v(E))
\end{equation}
by Lemma \ref{lem:G/beta}.
Assume that $(\beta,C) \not \in \frac{1}{2}+{\Bbb Z}$.
We take an integer $l$ satisfying 
$l-\frac{1}{2}<(\beta,C)<l+\frac{1}{2}$.
Then there is a locally free sheaf $G$ satisfying
\eqref{eq:ch(G)} and
\begin{equation}\label{eq:generator-condition}
\Ext^1(G,{\cal O}_C(l))=\Hom(G,{\cal O}_C(l-1))=0
\end{equation}
by \cite[sect. 2.4]{PerverseI} (see also the next paragraph).
We set
\begin{equation}
\begin{split}
T:= & \{ F \in \Coh(X) \mid R^1 \pi_*(G^{\vee} \otimes F)=0 \}\\
S:= & \{ F \in \Coh(X) \mid \pi_*(G^{\vee} \otimes F)=0 \}.
\end{split}
\end{equation}
Then $(T,S)$ is a torsion pair of $\Coh(X)$.
Let ${\frak C}^\beta$ be a category of
perverse coherent sheaves associated to $(T,S)$.
Then $G$ is a local projective 
generator of ${\frak C}^\beta$.
It is easy to see that
${\frak C}^\beta(lC)$ is the category of perverse coherent sheaves
${^{-1} \Per(X/Y)}$ defined by Bridgeland \cite{Br:4} and studied in
\cite{perv}, \cite{perv2}.
%In particular, ${\frak C}^\beta$ depends only on 
%$l$. So we usually denote ${\frak C}^\beta$ by
%${\frak C}^l$. 
\begin{NB}
Let $E$ be a stable sheaf on $X$ such that
$\Hom(E,E(K_X+C))_0=0$ and $(c_1(E),C)=\rk E(\beta,C)$.
Then $E$ deforms to a locally free sheaf such that
$\Ext^1(E',E')=0$.
For any $\Delta(E) \gg 0$, such a sheaf exists.
Then $G:=E'(kH) \oplus E'(-kH)$ becomes a local projective
generator with a desired invariant $\ch(G)/\rk G$.   
\end{NB}

%Let $G$ be a local projective generator of $^{-1}\Per(X/Y)$
%with $\chi(G,*)/\rk G=-(e^\beta,*)$.
We set $G_1:=G(lC)$.
By \eqref{eq:generator-condition},
$G^{\vee}_{1|C} \cong {\cal O}_C^{\oplus k} \oplus 
{\cal O}_C(-1)^{\oplus (r-k)}$, where $(c_1(G_1),C)=r-k$.
Then we have an exact sequence
\begin{equation}\label{eq:const-G}
0 \to E \to G_1^{\vee} \to {\cal O}_C(-1)^{\oplus (r-k)} \to 0
\end{equation}
such that $E_{|C} \cong {\cal O}_C^{\oplus r}$.
Since $E$ is the pull-back of a locally free sheaf on $Y$,
$\pi_*(G_1^{\vee}) \cong \pi_*(E)$ is a locally free sheaf on $Y$.
By taking the dual of \eqref{eq:const-G},
we have an exact sequence
\begin{equation}
0 \to G_1 \to \pi^*(G_2) \to {\cal O}_C^{\oplus (r-k)} \to 0,
\end{equation}
where $\pi^*(G_2)=E^{\vee}$.
Thus $G_1$ is an elementary transform of the pull-back of a
locally free sheaf $G_2$ on $Y$.
Conversely starting from a locally free sheaf $G_2$ on $Y$,  
we can construct a local projective generator $G_1$.

\subsection{Some properties of perverse coherent sheaves} 

We recall some results on stable perverse coherent sheaves
in \cite{perv2}.

\begin{lem}\label{lem:reflexive-hull2}
Assume that $l-\frac{1}{2}<(\beta,C)<l+\frac{1}{2}$, that is, 
${\frak C}^\beta(lC)={^{-1}\Per(X/Y)}$.
For a torsion free object $E$ of ${\frak C}^\beta$,
there is an exact sequence 
\begin{equation}
0 \to E \to E' \to T \to 0
\end{equation}
in ${\frak C}^\beta$
such that $T$ is 0-dimensional, $E'(lC)$ is the pull-back
of a locally free sheaf on $Y$.
\end{lem}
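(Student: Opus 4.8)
The plan is to produce the extension $E'$ as a suitable "reflexive hull" of $E$ inside the perverse category, mimicking the classical construction for torsion-free sheaves where one passes to the double dual and controls the quotient. Concretely, after the twist by $lC$ we are working in ${^{-1}\Per(X/Y)}$, so I would set $F:=E(lC)$ and seek $F'$ with $F\hookrightarrow F'$, $F'$ the pull-back of a locally free sheaf on $Y$ and cokernel $0$-dimensional; untwisting then gives the statement. The natural candidate for $F'$ is $\mathbf{L}\pi^*(\pi_*(F)^{\vee\vee})$ or, more carefully, the pull-back of the reflexive (hence locally free, since $Y$ is a smooth surface) hull of $\pi_*(F)$. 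Since $F$ is torsion free in the sense defined via the twisted Hilbert polynomial $\chi(G,F(nH))$, its pushforward $\pi_*(F)$ is a torsion-free sheaf on the normal surface $Y$, and on a smooth surface the double dual of a torsion-free sheaf is locally free with $0$-dimensional quotient. So the geometric input I want is: reflexive hulls on $Y$ are locally free, and their defect is supported in dimension $0$.

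The key steps, in order: first, characterize torsion-free objects of ${^{-1}\Per(X/Y)}$ via $\pi_*$, showing that $E$ torsion free forces $\pi_*(E(lC))$ to be torsion free on $Y$ and that $\mathbf{R}\pi_*$ behaves like $\pi_*$ (the higher direct image vanishes because $R^1\pi_*$ kills the relevant objects, using $R^1\pi_*(\mathcal O_X)=0$ and the local projective generator description). Second, take $A:=\pi_*(E(lC))^{\vee\vee}$, a locally free sheaf on $Y$, with the canonical inclusion $\pi_*(E(lC))\hookrightarrow A$ having $0$-dimensional cokernel $Q$. Third, pull back: form $\pi^*(A)$ and the adjunction/comparison map $E(lC)\to \pi^*(A)$, and argue it is injective in ${^{-1}\Per(X/Y)}$ with cokernel the pull-back of $Q$, which is again $0$-dimensional because $Q$ is supported at points whose preimages are handled by the perverse structure. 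Fourth, untwist by $-lC$ to land back in ${\frak C}^\beta$ and identify $E':=(\pi^*A)(-lC)$, recording that $E'(lC)=\pi^*(A)$ is by construction the pull-back of a locally free sheaf on $Y$.

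The main obstacle I expect is verifying that the comparison map is a monomorphism in the perverse $t$-structure with a genuinely $0$-dimensional cokernel, rather than merely a map with $0$-dimensional cohomological support. The subtlety is that short exact sequences in ${^{-1}\Per(X/Y)}$ are not short exact sequences in $\Coh(X)$; I must compute $\pH^{-1}$ and $\pH^0$ of the cone of $E(lC)\to\pi^*(A)$ and show the former vanishes while the latter is $0$-dimensional. This requires the adjunction $\mathbf{L}\pi^*\dashv \mathbf{R}\pi_*$ together with the defining vanishing conditions \eqref{eq:generator-condition} of the generator $G$, which ensure that objects like $\mathcal O_C(l)$ sit correctly in the category and that no spurious $C$-supported contributions appear in $\pH^{-1}$. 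Establishing this compatibility between the reflexive-hull cokernel on $Y$ and the perverse cohomology on $X$ is where the real work lies; the rest is bookkeeping with the torsion-pair definitions and the twist by $lC$.
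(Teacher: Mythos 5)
You take a genuinely different route from the paper's: there, $E'$ is built abstractly by iterated non-trivial extensions of $E$ by irreducible $0$-dimensional objects (Lemma \ref{lem:reflexive-hull}), and only afterwards identified --- $\Ext^1(k_x,E')=0$ gives local freeness, and Serre duality against ${\cal O}_C(l)$ together with $\Hom(E',{\cal O}_C(l-1))=0$ forces $E'_{|C}\cong{\cal O}_C(l)^{\oplus r}$, hence $E'(lC)$ is a pull-back. Your candidate $E'=\pi^*\bigl(\pi_*(E(lC))^{\vee\vee}\bigr)(-lC)$ is in fact the same sheaf (any locally free $A'$ containing $\pi_*(E(lC))$ with $0$-dimensional quotient equals the reflexive hull), so the push-pull strategy is viable. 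But as written it has a genuine gap at its first substantive step: the arrow $E(lC)\to\pi^*(A)$ is \emph{not} an adjunction map. Since $\pi^*$ is the left adjoint of $\pi_*$, adjunction only produces the counit $c:\pi^*\pi_*F\to F$ (write $F=E(lC)$, $M=\pi_*F$, $A=M^{\vee\vee}$), i.e.\ a map \emph{out of} a pull-back; Grothendieck duality applied to $M\hookrightarrow A$ gives a natural map $F\to\pi^!A=\pi^*(A)\otimes{\cal O}_X(C)$, whose target is not a pull-back. No natural transformation points in the direction you need, so the map must be manufactured, and that is where the real content of this approach sits --- not, as you locate it, only in checking that an already given map is a perverse monomorphism.

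The missing ingredient is the relative-generation statement for perverse coherent sheaves (recorded for stable objects in Remark \ref{rem:beta-stable}): for a torsion-free object $F$ of ${^{-1}\Per}(X/Y)$ the counit $c$ is surjective in $\Coh(X)$ with $\ker c\cong{\cal O}_C(-1)^{\oplus N}$. (Sketch: torsion-freeness gives $\Hom(\pi^*k_y,F)=0$ for all $y\in Y$, so $M$ is torsion-free by adjunction; then ${\bf L}\pi^*M=\pi^*M$ and ${\bf R}\pi_*\pi^*M=M$, so ${\bf R}\pi_*$ of the cone of $c$ vanishes, forcing $\ker c$ and $\coker c$ to be direct sums of ${\cal O}_C(-1)$; finally $\coker c=0$ because it is a quotient of $F$ and $\Hom(F,{\cal O}_C(-1))=0$.) Granting this, your plan does close up: $\pi^*(\iota):\pi^*M\to\pi^*A$ kills the torsion sheaf $\ker c$ because $\pi^*A$ is locally free, hence factors through $c$, giving $\bar{g}:F\to\pi^*A$; the sheaf cokernel of $\bar{g}$ is $\pi^*Q$ ($Q=A/M$), which satisfies $R^1\pi_*\pi^*Q=0$ and $\Hom(\pi^*Q,{\cal O}_C(-1))=\Hom(Q,\pi_*{\cal O}_C(-1))=0$, while the sheaf kernel $L_1\pi^*Q/{\cal O}_C(-1)^{\oplus N}$ satisfies $\pi_*(\,\cdot\,)=0$ because $\pi_*L_1\pi^*Q=0$ (Leray for ${\bf R}\pi_*{\bf L}\pi^*Q\cong Q$) and $R^1\pi_*{\cal O}_C(-1)=0$; hence the cone of $\bar g$ lies in ${^{-1}\Per}(X/Y)$, so $\bar g$ is a monomorphism there with $0$-dimensional cokernel. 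Note finally that this cokernel is in general \emph{not} ``the pull-back of $Q$'': for $E=(\pi^*I_y)(-lC)$, $y$ the center of the blow-up (a torsion-free object which is not a torsion-free sheaf), the cokernel is ${\bf L}\pi^*(k_y)$, not ${\cal O}_C=\pi^*(k_y)$, and $\bar g$ is not even injective as a sheaf map. Only the $0$-dimensionality survives, which is all the lemma needs.
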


\begin{proof}
By Lemma \ref{lem:reflexive-hull} in Appendix,
there is an exact sequence 
\begin{equation}
0 \to E \to E' \to T \to 0
\end{equation}
such that $T$ is 0-dimensional, $E'$ is 
torsion free and
$\Ext^1(A,E')=0$ for all 0-dimensional
object $A$ of ${\frak C}^\beta$. 
In particular, $\Ext^1(E',k_x)=\Ext^1(k_x,E')^{\vee}=0$
for all $x \in X$, which implies $E'$ is locally free by 
\cite[Lem. 1.1.31]{PerverseI}.
Let $A$ be an irreducible object of
${\frak C}^\beta$.
Then $A={\cal O}_C(l)$, $A={\cal O}_C(l-1)[1]$ or
$k_x$ ($x \in X \setminus C$) 
(cf. \cite[Lem. 1.2.16, Prop. 1.2.23]{PerverseI}).
Hence $\Ext^1(E',{\cal O}_C(l-1))=\Ext^1({\cal O}_C(l),E')^{\vee}=0$.
Since $E' \in {\frak C}^\beta$, we also have
$\Hom(E',{\cal O}_C(l-1))=0$.
Hence 
$E'_{|C} \cong {\cal O}_C(l)^{\oplus r}$, which implies that
$E'(lC)$ is the pull-back of a locally free sheaf on $Y$.
\begin{NB}
We have an exact sequence 
\begin{equation}
0 \to {\cal O}_C(l-1)^{\oplus 2} \to 
{\cal O}_C(l) \to {\cal O}_C(l-2)[1] \to 0
\end{equation}
 in ${\frak C}^\beta(K_X)$.
Hence $\Ext^1(E',{\cal O}_C(l))=0$.
Since
$$
\Ext^1(E',{\cal O}_C(l-1)[1])=\Hom({\cal O}_C(l-1)(-K_X),E')^{\vee}
=\Hom({\cal O}_C(l),E')^{\vee}
$$
 and
$E'$ is torsion free,
we also have $\Ext^1(E',{\cal O}_C(l-1)[1])=0$.
Therefore $E'$ is a local projective object of ${\frak C}^\beta$ 
(\cite[Rem. 1.1.35]{PerverseI}).
In particular $E'$  is locally free.
Then $\Ext^1(E',{\cal O}_C(l-1))=0$ implies that
$E'_{|C} \cong {\cal O}_C(l)^{\oplus r}$, which implies that
$E'(lC)$ is the pull-back of a locally free sheaf on $Y$.
\end{NB}
\end{proof}

%Let ${\cal M}_H^{\beta-\frac{1}{2}K_X}(v)^{\mu\text{-}ss}$ 
%be the moduli stack of $\mu$-semi-stable
%objects of ${\frak C}^\beta$.

\begin{defn}
Assume that $l-\frac{1}{2}<(\beta,C)<l+\frac{1}{2}$.
Since ${\frak C}^\beta$ depends only on $l$,
we denote this category by ${\frak C}^l$.
We also denote the moduli stack of $\mu$-semi-stable objects
by  
${\cal M}_H^{\beta-\frac{1}{2}K_X}(v)^{\mu\text{-}ss}$
or ${\cal M}_H^l(v)^{\mu\text{-}ss}$ in order to
indicate the category where the stability is defined
(cf. Remark \ref{rem:indep-of-G}). 
\end{defn}

\begin{rem}
Assume that $\gcd(r,(c_1(v),H))=1$.
Then $E \in {\frak C}$ is $\beta$-twisted semi-stable if and only if
$\pi_*(E)^{\vee \vee}$ is stable.
In particular it is independent of the choice of 
$\beta$ satisfying 
$-\frac{1}{2}+l<(\beta,C)<\frac{1}{2}+l$.
\end{rem}

We shall consider the relation with the $\mu$-semi-stability
of torsion free sheaves.
\begin{defn}
A torsion free sheaf $E$ is $\mu$-semi-stable
with respect to $H$, if
\begin{equation}
\frac{(c_1(E_1),H)}{\rk E_1} \leq
\frac{(c_1(E),H)}{\rk E}
\end{equation}
for any subsheaf $E_1 \ne 0$ of $E$.
\end{defn}
The set of $\mu$-semi-stable sheaves of a fixed Mukai 
vector is bounded (use \cite[Lem. 2.2, sect. 2.3]{chamber}).
If a $\mu$-semi-stable perverse coherent sheaf $E$
is torsion free in $\Coh(X)$, then
it is a $\mu$-semi-stable sheaf. 
By the proof of \cite[Prop. 3.37]{perv2}, we have the following
lemma.
\begin{lem}\label{lem:l=infty}
For $m \gg 0$,
${\cal M}_H^{\beta+mC-\frac{1}{2}K_X}(v)^{\mu\text{-}ss}$ 
consists of $\mu$-semi-stable
torsion free sheaves with respect to
$H$.
\end{lem}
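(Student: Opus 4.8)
The plan is to prove the two inclusions separately and to isolate the single place where the hypothesis $m \gg 0$ is genuinely needed. Throughout, let $l = l(m)$ be the integer determined by $l - \frac{1}{2} < (\beta,C) - m < l + \frac{1}{2}$ (recall $(C^2) = -1$), so that ${\cal M}_H^{\beta + mC - \frac{1}{2}K_X}(v)^{\mu\text{-}ss} = {\cal M}_H^l(v)^{\mu\text{-}ss}$, and note that $m \to +\infty$ forces $l \to -\infty$.

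For the inclusion ``$\subseteq$'', which I expect to hold for every $m$, let $E$ be a torsion free object of ${\frak C}^l$ representing a point of ${\cal M}_H^l(v)^{\mu\text{-}ss}$. First I would check that $E$ is a genuine sheaf: Lemma~\ref{lem:reflexive-hull2} gives an exact sequence $0 \to E \to E' \to T \to 0$ in ${\frak C}^l$ with $E'$ locally free and $T$ $0$-dimensional, and the long exact sequence of usual cohomology sheaves yields $H^{-1}(E) \hookrightarrow H^{-1}(E') = 0$, so $E = H^0(E) \in \Coh(X)$ is torsion free. The sentence preceding the lemma (a $\mu$-semistable perverse coherent sheaf that is torsion free in $\Coh(X)$ is a $\mu$-semistable sheaf) then shows $E$ is a $\mu$-semistable torsion free sheaf with respect to $H$.

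For the reverse inclusion I would start from a $\mu$-semistable torsion free sheaf $E$ on $X$ with $v(E) = v$ and show it defines a point of ${\cal M}_H^l(v)^{\mu\text{-}ss}$ once $m$ is large. Two things must be verified. First, that $E$ is $\mu$-semistable in ${\frak C}^l$: given a nonzero subobject $F \subset E$ in ${\frak C}^l$ with $\rk F > 0$ and cokernel $Q$, the cohomology long exact sequence of $0 \to F \to E \to Q \to 0$ gives $H^{-1}(F) = 0$ and an exact $0 \to H^{-1}(Q) \to F \to E$, where $H^{-1}(Q) \in S$ is supported on $C$; since $(C,H) = 0$ this kernel has rank $0$ and contributes nothing to $(c_1, H)$, so the slope of $F$ equals that of the subsheaf $\im(F \to E) \subset E$, and $\mu$-semistability of $E$ as a sheaf gives $\mu(F) \le \mu(E)$ (alternatively one may invoke Remark~\ref{rem:indep-of-G}, that $\mu$-semistability depends only on ${\frak C}$ and $H$). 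Second, and this is the only step forcing $m \gg 0$, that $E$ lies in the heart ${\frak C}^l$, i.e.\ $E \in T$, i.e.\ $R^1 \pi_*(G^{\vee} \otimes E) = 0$.

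The hard part will be this last vanishing, made uniform over all $E$ with $v(E) = v$. Here I would use that the set of $\mu$-semistable torsion free sheaves with fixed Mukai vector is bounded. Writing $G^{\vee} \otimes E = (G_1^{\vee} \otimes E)(lC)$ with $G_1 = G(lC)$, the factor $G_1^{\vee} \otimes E$ ranges in a bounded family as $m$ grows ($G_1$ stays bounded because $(C,H)=0$ makes the $H$-degree of $G$ independent of $m$ and the construction fixes $G_1{}_{|C}$), while the twist $(lC) = (-l)(-C)$ runs over arbitrarily large positive multiples of the $\pi$-ample divisor $-C$ (note ${\cal O}_X(-C)_{|C} = {\cal O}_C(1)$ on the fibre $C \cong \proj^1$) as $l \to -\infty$. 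Relative Serre vanishing over the bounded family then produces a single threshold $m_0$, depending only on $v$, with $R^1 \pi_*(G^{\vee} \otimes E) = 0$ for all $m \ge m_0$ and all $E$ in the family; this is precisely the mechanism behind the cited proof of \cite[Prop.~3.37]{perv2}. The individual vanishing is routine; the real obstacle is to combine boundedness with the asymptotics $l \to -\infty$ to obtain one $m_0$ valid for every semistable sheaf with the given invariants. Once this is in place, the two inclusions give the asserted equality for all $m \ge m_0$.
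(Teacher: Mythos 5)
Your reverse inclusion and the slope comparison are essentially sound, but the forward inclusion --- which is the inclusion the paper actually uses, e.g.\ in Proposition~\ref{prop:big-m} and in the proof of Lemma~\ref{lem:perverse-Bogomolov} --- has a genuine gap, and your claim that it ``holds for every $m$'' is false. The step ``$E\hookrightarrow E'$ with $E'$ locally free, hence $E$ is torsion free'' does not work: $E\to E'$ is a monomorphism in the abelian category ${\frak C}^l$, not in $\Coh(X)$. Taking ordinary cohomology sheaves of the triangle $E\to E'\to T$ gives the exact sequence of sheaves
$0 \to H^{-1}(T) \to E \to E' \to H^0(T) \to 0$,
so the sheaf-theoretic kernel of $E\to E'$ is $H^{-1}(T)$, a sheaf in $S$ (filtered by copies of ${\cal O}_C(l-1)$), which need not vanish; when it is nonzero, $E$ has sheaf torsion supported on $C$. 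This really happens: in ${\frak C}^0={^{-1}\Per}(X/Y)$ the sheaf $\pi^*(I_y)$, where $I_y$ is the ideal sheaf of the blown-up point, is a rank-one torsion free (hence $\mu$-stable) object of ${\frak C}^0$ --- its sheaf torsion ${\cal O}_C(-1)$ lies in $S$, so it is invisible to subobjects taken inside ${\frak C}^0$, and one checks $\Hom({\cal O}_C,\pi^*(I_y))=\Hom(\pi^*(I_y),{\cal O}_C(-1))=0$ --- yet it sits in the exact sequence $0\to{\cal O}_C(-1)\to\pi^*(I_y)\to{\cal O}_X(-C)\to 0$ and so is not torsion free in $\Coh(X)$; its hull sequence is $0\to\pi^*(I_y)\to{\cal O}_X\to T\to 0$ with $H^{-1}(T)={\cal O}_C(-1)$. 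Hence for small $m$ the forward inclusion simply fails, and the hypothesis $m\gg 0$ must enter this half of the argument as well, not only the heart-membership step.

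What is missing is precisely an argument that for $l\ll 0$ (equivalently $m\gg 0$) a $\mu$-semi-stable object of ${\frak C}^l$ with Chern character $v$ cannot carry sheaf torsion in $S$: ruling out subsheaves filtered by ${\cal O}_C(l-1)$ requires a numerical/boundedness estimate, for instance via the exact sequences $0\to{\cal O}_C(-m-1)^{\oplus n}\to E\to E'\to 0$ of \cite[Prop.~3.15]{perv2} and the Bogomolov-type bounds used in the proof of Lemma~\ref{lem:perverse-Bogomolov}, which show that only finitely many values of $l$ admit such torsion once $v$ is fixed. This, and not relative Serre vanishing alone, is the content of the proof of \cite[Prop.~3.37]{perv2} that the paper cites in place of a proof; your proposal reproduces the easier half (every $\mu$-semi-stable torsion free sheaf lies in ${\frak C}^l$ for $m\gg 0$, where your Serre-vanishing mechanism is correct, though it is cleaner to fix one local projective generator $G_0$ of ${^{-1}\Per}(X/Y)$ and take $G=G_0(-lC)$, legitimate since ${\frak C}^{\beta+mC}$ depends only on $l$) while silently assuming the harder half.
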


\begin{prop}\label{prop:big-m}
For $m \gg 0$,
${\cal M}_H^{\beta+mC-\frac{1}{2}K_X}(v)=
{\cal M}_{H-q C}^{\beta-\frac{1}{2}K_X}(v)$,
where $q>0$ is sufficiently small.
\begin{NB}
$E$ is a $(\beta+mC-\frac{1}{2}K_X)$-semi-stable object of 
${\frak C}^{\beta+mC}$ if and only if
$E$ is a $(\beta-\frac{1}{2}K_X)$-semi-stable sheaf with respect to
$H-q C$ ($1 \gg q >0$).
\end{NB}
\end{prop}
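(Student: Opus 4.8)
The plan is to establish the equivalence fiberwise, as the remark above suggests: for $m \gg 0$ and $0 < q \ll 1$, an object $E$ with $v(E)=v$ should be $(\beta+mC-\frac{1}{2}K_X)$-twisted semi-stable in ${\frak C}^{\beta+mC}$ if and only if it is a torsion free sheaf that is $(\beta-\frac{1}{2}K_X)$-twisted semi-stable with respect to $H-qC$. Note first that $((H-qC)^2)=(H^2)-q^2>0$ and $(H-qC,C)=q>0$, so $H-qC$ is ample for $0<q\ll 1$ and the right-hand side is an ordinary twisted Gieseker moduli space. By Lemma \ref{lem:l=infty}, and since twisted semi-stability implies $\mu$-semi-stability, the objects on the left-hand side are for $m\gg 0$ genuine $\mu$-semi-stable torsion free sheaves on $X$; those on the right are torsion free by construction. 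Thus in both cases the underlying object is the same torsion free sheaf and only the semi-stability test differs. Once the numerical equivalence is shown uniformly over the (bounded) family, both conditions cut out the same substack and the equality of moduli stacks follows, semi-stability being an open fiberwise condition.

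First I would carry out the numerical comparison. Write $\mu_H(F)=(c_1(F),H)/\rk F$ and $\mu_C(F)=(c_1(F),C)/\rk F$. Since $(C,H)=0$, the slope $d_\beta(F)$ is unchanged when $\beta$ is replaced by $\beta+mC$, so the leading ($d$-slope) comparison of the perverse twisted semi-stability is independent of $m$. Using $(C^2)=-1$ and the definition $a_\beta(F)=-(e^\beta,v(F))$, a direct computation yields, after dividing by ranks,
\begin{equation}
\frac{a_{\beta+mC}(F)}{\rk F}-\frac{a_{\beta+mC}(E)}{\rk E}
=\left(\frac{a_\beta(F)}{\rk F}-\frac{a_\beta(E)}{\rk E}\right)
-m\bigl(\mu_C(F)-\mu_C(E)\bigr),
\end{equation}
where the terms $m(\beta,C)-m^2/2$, which after division by the rank are independent of the object, cancel in the difference. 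Hence among subobjects of equal $H$-slope the sign of the $a_{\beta+mC}$-comparison is governed for $m\gg 0$ by $-m(\mu_C(F)-\mu_C(E))$, and it reduces to the $a_\beta$-comparison exactly when $\mu_C(F)=\mu_C(E)$. On the other side, recalling via Lemma \ref{lem:G/beta} that the $(\beta-\frac12 K_X)$-twist corresponds to the functions $d_\beta$ and $a_\beta$, one has for the twisted slope with respect to $H-qC$, among subsheaves of equal $H$-slope,
\begin{equation}
\frac{(c_1(F)-\rk F\,\beta,H-qC)}{\rk F}-\frac{(c_1(E)-\rk E\,\beta,H-qC)}{\rk E}
=-q\bigl(\mu_C(F)-\mu_C(E)\bigr).
\end{equation}
Thus on both sides the semi-stability hierarchy is the same: compare first the $H$-slope; then, when $H$-slopes agree, compare $\mu_C$, the destabilizing direction being that of $\mu_C(E)-\mu_C(F)$, produced by the large parameter $m$ on the left and by the small parameter $q$ on the right; and finally, when $\mu_C$ agrees as well, compare the $a_\beta$-slope. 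The three levels match term by term. One also checks that for $q\ll 1$ the $-q$ correction never reverses a nonzero $H$-slope inequality, so the leading level is genuinely unaffected.

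The hard part, and the main obstacle, is to make this comparison rigorous at the level of test objects and uniform over the family. On the right the test objects are honest subsheaves of $E$, whereas on the left they are subobjects in ${\frak C}^{\beta+mC}={\frak C}^l$, which need not be subsheaves: the cohomology sequence of a short exact sequence in ${\frak C}^l$ permits a kernel $H^{-1}$ lying in $S$, a sheaf supported on $C$. Using the classification of irreducible objects of ${\frak C}^l$ (namely ${\cal O}_C(l)$, ${\cal O}_C(l-1)[1]$ and $k_x$ for $x\notin C$) together with the reflexive-hull description of Lemma \ref{lem:reflexive-hull2}, I would show that for $m\gg 0$ (so that $l$ lies in the extreme range) this $S$-part is pushed to large $C$-degree and its only effect on the numerical comparison is precisely through the $\mu_C$-term above; one may therefore reduce the perverse test to a test on honest subsheaves carrying the same numerical data, reproducing the expansion of $a_{\beta+mC}$. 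This step is where the argument parallels the proof of \cite[Prop. 3.37]{perv2}. Finally, boundedness of the relevant families (Remark \ref{rem:indep-of-G}~(2) and boundedness of $\mu$-semi-stable sheaves) ensures that the ranks and first Chern classes of potential destabilizing subobjects take only finitely many values, so the nonzero differences $\mu_C(F)-\mu_C(E)$ and $\mu_H(F)-\mu_H(E)$ are bounded away from $0$. This furnishes a single threshold $m_0$ and a single $q_0$ valid simultaneously for all $E$ with $v(E)=v$ and all their subobjects, completing the identification of the two substacks.
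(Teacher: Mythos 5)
Your skeleton coincides with the paper's own proof: both sides are first reduced to $\mu$-semi-stable torsion free sheaves via Lemma \ref{lem:l=infty}; your expansion of $a_{\beta+mC}$ is exactly the paper's computation of $\chi(E(-\beta_m+\frac{1}{2}K_X))/\rk E-\chi(E_1(-\beta_m+\frac{1}{2}K_X))/\rk E_1$ with $\beta_m=\beta+mC$; and the uniform thresholds $m_0,q_0$ come, as in the paper, from finiteness of the numerical classes of potential destabilizers (the paper gets this finiteness from the Bogomolov inequality applied to pairs $(E_1,E)$ with equal $H$-slope and torsion free quotient). So the numerical core of your proposal is correct and identical in substance to the paper's.

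However, the step you yourself flag as the main obstacle is not carried out, and the mechanism you propose for it is not the one that works. A subobject $E_1$ of $E$ in ${\frak C}^{\beta_m}$ is a sheaf whose map to $E$ has kernel $K$ lying in the torsion-pair part $S$ of ${\frak C}^{\beta_m}$; since $\rk K=0$ and $(C,H)=0$, $K$ never affects the $H$-slope, and its effect on the test is \emph{not} ``precisely through the $\mu_C$-term'' but entirely through the Euler characteristic, $a_{\beta_m}(E_1)=a_{\beta_m}(\im(E_1\to E))+a_{\beta_m}(K)$; note that your per-rank expansion of $a_{\beta+mC}$ does not even apply to the rank-zero object $K$, and for $K\in S$ the two pieces ($a_\beta(K)$ and the $m$-linear term) separately diverge while their sum stays bounded. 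What actually makes the reduction to honest subsheaves legitimate is a sign, not a degree estimate: for $K\in S$ one has $\pi_*(G^{\vee}\otimes K)=0$, hence $\chi(G,K)=-\chi(R^1\pi_*(G^{\vee}\otimes K))\le 0$ for a local projective generator $G$ of ${\frak C}^{\beta_m}$, so discarding $K$ only increases $\chi(G,-)$ at fixed rank. In particular the subsheaf one reduces to does \emph{not} carry ``the same numerical data,'' it carries data modified in a controlled direction, and that one-sidedness is what must be proved. Moreover this conversion only handles one implication; for ``perverse semi-stable $\Rightarrow$ Gieseker semi-stable w.r.t.\ $H-qC$'' you need the opposite conversion, which is where the paper does its real work: given a subsheaf $E_1\subset E$ it chooses $E_1'\subseteq E_1$ with $E_1/E_1'\in{\frak C}^{\beta_m}[-1]$ (i.e.\ in $S$), so that $E_1'$ is a genuine subobject of $E$ in ${\frak C}^{\beta_m}$ of the same rank with $\chi(G,E_1'(nH))\ge\chi(G,E_1(nH))$, and then applies the perverse inequality to $E_1'$. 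Finally, your plan never checks that a sheaf which is Gieseker semi-stable with respect to $H-qC$ is an object of ${\frak C}^{\beta_m}$ at all; the paper arranges $\Hom(E(-(m+l)C),{\cal O}_C(-1))=0$ precisely for this. Without these ingredients the numerical matching alone does not identify the two stacks.
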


\begin{proof}
We set $\beta_m:=\beta+mC$.
Let ${\cal M}_H(v)^{\mu\text{-}ss}$
be the moduli stack of $\mu$-semi-stable torsion free 
sheaves with respect to $H$.
By Lemma \ref{lem:l=infty}, 
we may assume that
${\cal M}_H^{\beta_m-\frac{1}{2}K_X}(v) \subset 
{\cal M}_H(v)^{\mu\text{-}ss}$.
\begin{NB}
we may assume that all $(\beta_m-\frac{1}{2}K_X)$-semi-stable
objects $E$ of ${\frak C}^{\beta_m}$ with $v(E)=v$
are torsion free $\mu$-semi-stable sheaves with respect to $H$. 
\end{NB}
We choose a sufficiently small $q>0$ such that
${\cal M}_{H-q C}^{\beta-\frac{1}{2}K_X}(v) \subset
{\cal M}_H(v)^{\mu\text{-}ss}$.
We consider the set $T$ of pairs
$(E_1,E)$ such that  
\begin{enumerate}
\item $E_1$ is a subsheaf of a $\mu$-semi-stable sheaf $E$ of $v(E)=v$
with respect to $H$,  
\item $\frac{(c_1(E_1),H)}{\rk E_1}=\frac{(c_1(E),H)}{\rk E}$ and
\item $E/E_1$ is torsion free.
\end{enumerate}
\begin{NB}
Since $E_1, E/E_1$ are $\mu$-semi-stable sheaves,
Bogomolov inequality hold.
\end{NB}
Since $(v(E_1)^2) \geq 0$ and $(v(E/E_1)^2) \geq 0$,
we see that $\{v(E_1) \mid (E_1,E) \in T \}$
 is a finite set.
Since
\begin{equation}
\begin{split}
& 
\frac{\chi(E(-\beta_m+\frac{1}{2}K_X))}{\rk E}- 
\frac{\chi(E_1(-\beta_m+\frac{1}{2}K_X))}{\rk E_1}\\
=& m\left(\frac{c_1(E)}{\rk E}-\frac{c_1(E_1)}{\rk E_1},-C\right)+
\frac{\chi(E(-\beta+\frac{1}{2}K_X))}{\rk E}- 
\frac{\chi(E_1(-\beta+\frac{1}{2}K_X))}{\rk E_1},
\end{split}
\end{equation}
we can take a sufficiently large $m$ such that
\begin{equation}
\frac{\chi(E_1(p(H-q C)-(\beta-\frac{1}{2}K_X)))}{\rk E_1} 
\underset{(\geq)}{\leq}
\frac{\chi(E(p(H-q C)-(\beta-\frac{1}{2}K_X)))}{\rk E},\;p \gg 0 
\end{equation}  
if and only if
\begin{equation}
\frac{\chi(E_1(-\beta_m+\frac{1}{2}K_X))}{\rk E_1} \underset{(\geq)}{\leq}
\frac{\chi(E(-\beta_m+\frac{1}{2}K_X))}{\rk E} 
\end{equation}  
for $(E_1,E) \in T$.
If $E$ is a $(\beta_m-\frac{1}{2}K_X)$-twisted semi-stable object
of ${\frak C}^{\beta_m}$, then
for $(E_1,E) \in T$,
we take a subsheaf $E_1'$ of $E_1$ such that
$E_1' \in {\frak C}^{\beta_m}$ and
$E_1/E_1' \in {\frak C}^{\beta_m}[-1]$.
Then $E_1'$ is a subobject
of $E$ such that
$\chi(E_1'(-\beta_m+\frac{1}{2}K_X)) \geq 
\chi(E_1(-\beta_m+\frac{1}{2}K_X))$.
Hence
\begin{equation}
\frac{\chi(E_1(-\beta_m+\frac{1}{2}K_X))}{\rk E_1}
\leq \frac{\chi(E_1'(-\beta_m+\frac{1}{2}K_X))}{\rk E_1'}
\leq
\frac{\chi(E(-\beta_m+\frac{1}{2}K_X))}{\rk E},
\end{equation}
which shows
\begin{equation}\label{eq:H-C}
\frac{\chi(E_1(p(H-q C)-(\beta-\frac{1}{2}K_X)))}{\rk E_1} 
\leq
\frac{\chi(E(p(H-q C)-(\beta-\frac{1}{2}K_X)))}{\rk E},\;p \gg 0. 
%\frac{(c_1(E_1),H-q C)}{\rk E_1} \leq 
%\frac{(c_1(E),H-q C)}{\rk E}.
\end{equation}
Moreover if the equality holds in \eqref{eq:H-C},
then 
%\begin{equation}\label{eq:H-C}
%\frac{(c_1(E_1),H-q C)}{\rk E_1} \geq 
%\frac{(c_1(E),H-q C)}{\rk E}, 
%\end{equation}   
%then
we see that $E_1=E_1'$.
Hence $E$ is $(\beta-\frac{1}{2}K_X)$-semi-stable with respect to
$H-q C$.

Conversely for $E \in {\cal M}_{H-q C}^{\beta-\frac{1}{2}K_X}(v)$,
%a $(\beta-\frac{1}{2}K_X)$-semi-stable sheaf $E$ 
%with respect to $H-q C$, 
we may assume that 
$\Hom(E(-(m+l)C),{\cal O}_C(-1))=0$.
Hence $E$ is an object of ${\frak C}^{\beta_m}$.
Then we see that $E$ is a $(\beta_m-\frac{1}{2}K_X)$-semi-stable
object. 
\end{proof}

Bogomolov inequality for perverse coherent sheaves
is a consequence of \cite{perv2}. There is another proof in
\cite{Toda}.
\begin{lem}[Bogomolov inequality]\label{lem:perverse-Bogomolov}
For a $\mu$-semi-stable object $E$ of ${\frak C}^\beta$,
$(v(E)^2) \geq 0$.
\end{lem}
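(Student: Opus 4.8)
The plan is to reduce the Bogomolov inequality for a $\mu$-semi-stable perverse coherent object $E \in {\frak C}^\beta$ to the classical Bogomolov inequality for ordinary $\mu$-semi-stable torsion free sheaves, which is already available to us as the first Lemma of Section \ref{sect:back}. The key observation is that $(v(E)^2)$ is invariant under twisting by a line bundle and under the pull-back/push-forward operations relating $X$ and $Y$, so I am free to move $E$ into a more convenient category and to replace $E$ by a genuine sheaf without changing the quantity I must bound.

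First I would use Lemma \ref{lem:reflexive-hull2}: for the given torsion free object $E$ of ${\frak C}^\beta$ there is an exact sequence $0 \to E \to E' \to T \to 0$ in ${\frak C}^\beta$ with $T$ zero-dimensional and $E'(lC)$ the pull-back of a locally free sheaf $F$ on $Y$. Since $T$ is zero-dimensional, one has $v(T) = (0,0,\operatorname{length}T)$, and a direct computation gives $(v(E')^2) = (v(E)^2) + 2\,\rk E \cdot \operatorname{length}T \geq (v(E)^2)$; hence it suffices to prove $(v(E')^2) \geq 0$. Because $(v(\,\cdot\,)^2)$ is unchanged by twisting with the line bundle ${\cal O}_X(lC)$, I may replace $E'$ by $E'(lC) = \pi^*(F)$, and then $(v(E'(lC))^2) = (v(\pi^*F)^2)$.

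The final step is to transport the inequality down to $Y$. Here the essential input is that $\mu$-semi-stability of $E$ with respect to $H = \pi^*(\text{ample on }Y)$ forces $F$ to be a $\mu$-semi-stable torsion free sheaf on $Y$: indeed any destabilizing subsheaf of $F$ would pull back, after the elementary transform along $C$, to a destabilizing subobject of $E$ in ${\frak C}^\beta$, contradicting Remark \ref{rem:indep-of-G} and the definition of $\mu$-stability in ${\frak C}^\beta$. Granting this, the classical Bogomolov inequality on $Y$ gives $(v(F)^2) \geq 0$, and since $\pi$ is the blow-up of a smooth point the self-intersection pairing on the algebraic cohomology is preserved by $\pi^*$ on the relevant classes, so $(v(\pi^*F)^2) = (v(F)^2) \geq 0$. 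Combining the three steps yields $(v(E)^2) \geq 0$.

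The main obstacle I expect is the precise bookkeeping in the comparison of stability conditions across the elementary transform relating $E'(lC)$ to $\pi^*(F)$: one must verify carefully that $\mu$-semi-stability in ${\frak C}^\beta$ with respect to $H$ really does descend to $\mu$-semi-stability of $F$ on $Y$, since subobjects in the perverse category do not correspond naively to subsheaves, and the twist by ${\cal O}_C(l)$ and the fiber class $C$ must be tracked through the intersection numbers $(c_1(\,\cdot\,),H)$ with $(C,H)=0$. An alternative, perhaps cleaner route is to cite \cite{perv2} directly, invoking the weak Bogomolov inequality of Lemma \ref{lem:weak-Bogomolov} together with $(D_\beta(E)^2) \leq 0$ (as $D_\beta(E) \in H^\perp$ lies in a negative definite sublattice), which gives $(v(E)^2) = d_\beta(E)^2(H^2) - 2\rk E\, a_\beta(E) + (D_\beta(E)^2)$; but since $(D_\beta(E)^2)$ may be negative this only recovers the inequality after the reduction to honest sheaves, so I would still ultimately rely on the descent argument above to upgrade the weak inequality to the sharp one.
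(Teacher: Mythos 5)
Your route is genuinely different from the paper's, but it breaks down at the very first reduction, and the failure is not cosmetic. A $0$-dimensional object $T$ of ${\frak C}^\beta$ does \emph{not} satisfy $v(T)=(0,0,\operatorname{length}T)$: the irreducible $0$-dimensional objects of ${\frak C}^\beta$ are ${\cal O}_C(l)$, ${\cal O}_C(l-1)[1]$ and $k_x$, and the first two have $c_1=\pm C$. (There is also a sign slip: with the paper's pairing, $((v(E)+\ell\varrho_X)^2)=(v(E)^2)-2\ell\,\rk E$, not $+2\ell\,\rk E$; and from the inequality you actually state, $(v(E')^2)\ge (v(E)^2)$, the conclusion ``it suffices to prove $(v(E')^2)\ge 0$'' would be a non sequitur even in the case $c_1(T)=0$.) Doing the computation correctly: if $n_1,n_2,n_3$ denote the multiplicities of ${\cal O}_C(l)$, ${\cal O}_C(l-1)[1]$, $k_x$ $(x\notin C)$ among the Jordan--H\"{o}lder factors of $T=E'/E$, and one uses $(c_1(E'),C)=l\,\rk E$ (valid because $E'(lC)$ is a pull-back), then
\begin{equation*}
(v(E)^2)=(v(E')^2)+\rk E\,(n_1+n_2+2n_3)-(n_1-n_2)^2 ,
\end{equation*}
so $(v(E')^2)\ge 0$ yields $(v(E)^2)\ge 0$ only if one can bound $(n_1-n_2)^2\le \rk E\,(n_1+n_2)+2\,\rk E\,n_3$. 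No such bound is given, and this is exactly the non-trivial point: the analogous estimate is the heart of the paper's proof, where the number $n$ of copies of ${\cal O}_C(-m-1)$ split off from $E$ is controlled by $n\le\dim\Ext^1(E',{\cal O}_C(-m-1))=-\chi(E',{\cal O}_C(-m-1))=(c_1(E'),C)+m\,\rk E$, which is what makes the quadratic computation come out with the favorable sign $(v^2)\ge ({v'}^2)+n^2+n\,\rk E$.

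The remaining ingredients of your outline are sound --- invariance of the square under twisting by ${\cal O}_X(lC)$ and under $\pi^*$, and the descent of $\mu$-semi-stability to $F=\pi_*(E'(lC))$, which is essentially the argument of the proposition following Lemma \ref{lem:perv2:non-empty} --- and you correctly observe that the fallback via Lemma \ref{lem:weak-Bogomolov} cannot close the gap since $(D_\beta(E)^2)\le 0$. But you locate the main difficulty in the wrong place: it is not the comparison of stabilities across the elementary transform, it is the quantitative control of the $0$-dimensional correction supported on $C$. The paper avoids your one-shot reduction altogether: it strips off ${\cal O}_C(-m-1)^{\oplus n}$ subobjects one category at a time (passing from ${\frak C}^\beta$ to ${\frak C}^{\beta+C}$), proves $(v^2)>({v'}^2)$ at each step using the Ext bound above, and inducts until Lemma \ref{lem:l=infty} places the object in the regime of honest $\mu$-semi-stable torsion-free sheaves, where the classical Bogomolov inequality (the first Lemma of Section \ref{sect:back}) applies. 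Your reduction to $Y$ could conceivably be repaired by supplying the missing bound on $(n_1-n_2)^2$, but as written the proof does not go through.
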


\begin{proof}
Assume that $E \in {\cal M}_H^{\beta-\frac{1}{2}K_X}(v)^{\mu\text{-}ss}$.
Take an integer such that $-m-\frac{1}{2}<(\beta,C)<-m+\frac{1}{2}$.
By the proof of \cite[Prop. 3.15]{perv2},
we have an exact sequence  
\begin{equation}
0 \to {\cal O}_C(-m-1)^{\oplus n} \to E \to E' \to 0
\end{equation}
such that $E' \in {\cal M}_H^{\beta-\frac{1}{2}K_X}(v')^{\mu\text{-}ss} \cap 
{\cal M}_H^{\beta-\frac{1}{2}K_X+C}(v')^{\mu\text{-}ss}$,
$v'+n v({\cal O}_C(-m-1))=v$ and
\begin{equation*}
n \leq \dim \Ext^1(E',{\cal O}_C(-m-1)).
\end{equation*}
Since $\chi(E',{\cal O}_C(-m-1))=-\dim \Ext^1(E',{\cal O}_C(-m-1))$,
we have $(c_1(E'),C)+rm \geq n$.
Hence 
\begin{equation}
\begin{split}
(v^2)=& ({v'}^2)+2n((c_1(v'),C)+(\tfrac{1}{2}+m)r)-n^2\\
\geq & ({v'}^2)+n^2+nr>({v'}^2).
\end{split}
\end{equation}
By the induction on $m$, the claim follows from Lemma \ref{lem:l=infty}.
\end{proof}

\begin{NB}
Assume that 
$\phi_{(\beta,\omega)}({\cal O}_C(-m-1))=1$.
Then $\phi_{(\beta,\omega)}({\cal O}_C(-m-2))=0$
and $\phi_{(\beta,\omega)}({\cal O}_C(-m))=1$.
For a $\sigma_{(\beta,\omega)}$-semi-stable object
$E$ with $v(E)=v$ and $\phi_{(\beta,\omega)}(E) \in (0,1)$,
$\Hom({\cal O}_C(-m-1),E)=0$ implies that
$\Ext^2(E,{\cal O}_C(-m-2))=0$.
Since $\Hom(E,{\cal O}_C(-m-2))=0$,
$\chi(E,{\cal O}_C(-m-2))=$.......

\end{NB}

\begin{NB}
We may assume that $l=0$.
Then we have an exact sequence
\begin{equation}
0 \to {\cal O}_C(-1)^{\oplus k} \to \pi^*(\pi_*(E)) \to E \to 0.
\end{equation}
Hence $(v(E)^2)=(v(\pi_*(E))^2)+kr-k^2$.
\end{NB}

\begin{lem}[{\cite[Lem. 3.2]{perv2}}]\label{lem:perv2:non-empty}
\begin{NB}
Assume that 
$0<(\beta-\tfrac{1}{2}K_X,C)<1$.
Then 
$-(c_1(v),C) \leq 0$ if 
${\cal M}_H^{\beta-\frac{1}{2}K_X}(v)^{\mu\text{-}ss} \ne \emptyset$.
\end{NB}
Let $E$ be a torsion free object of ${\frak C}^0$.
Then $(c_1(E),C) \geq 0$.
\end{lem}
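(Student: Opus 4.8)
The plan is to express $(c_1(E),C)$ as an Euler characteristic on $X$ and to read off its sign from the vanishing of two of the three $\Ext$-groups that enter. First I would reduce to the case that $E$ is an honest coherent sheaf. Writing the standard truncation triangle $H^{-1}(E)[1]\to E\to H^0(E)$, we have $H^{-1}(E)\in S$ (a sheaf supported on $C$) and $H^0(E)\in T$, so all three terms lie in ${\frak C}^0$ and the triangle is a short exact sequence in ${\frak C}^0$ exhibiting $H^{-1}(E)[1]$ as a subobject of $E$. Since $(H,C)=0$, every object supported on $C$ has twisted Hilbert polynomial $\chi(G,F(nH))$ independent of $n$, hence is $0$-dimensional; thus $H^{-1}(E)[1]$ is a $0$-dimensional subobject of the torsion free object $E$ and must vanish. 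Therefore $H^{-1}(E)=0$ and $E=H^0(E)\in T$ is a sheaf.

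Next I would compute, by Riemann--Roch on $X$ using $C^2=-1$ and $(K_X,C)=-1$,
\begin{equation}
\chi({\cal O}_C,E)=-(c_1(E),C).
\end{equation}
Because $E$ and ${\cal O}_C$ are both sheaves on the smooth surface $X$, one has $\Ext^i({\cal O}_C,E)=0$ for $i<0$ and for $i>2$, so
\begin{equation}
(c_1(E),C)=-\chi({\cal O}_C,E)=\dim\Ext^1({\cal O}_C,E)-\dim\Hom({\cal O}_C,E)-\dim\Ext^2({\cal O}_C,E).
\end{equation}
The claim will follow once I show that the two outer terms vanish.

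For $\Hom({\cal O}_C,E)$: by the description of irreducible objects in the proof of Lemma \ref{lem:reflexive-hull2}, ${\cal O}_C$ is an irreducible, $0$-dimensional object of ${\frak C}^0$, so the image in ${\frak C}^0$ of any morphism ${\cal O}_C\to E$ is either $0$ or ${\cal O}_C$; the latter would be a $0$-dimensional subobject of the torsion free $E$, which is excluded, whence $\Hom_{{\frak C}^0}({\cal O}_C,E)=\Hom({\cal O}_C,E)=0$. For $\Ext^2({\cal O}_C,E)$: Serre duality gives $\Ext^2({\cal O}_C,E)\cong\Hom(E,{\cal O}_C\otimes K_X)^\vee=\Hom(E,{\cal O}_C(-1))^\vee$, and since $E\in T$ while ${\cal O}_C(-1)\in S$, the torsion-pair property $\Hom(T,S)=0$ forces $\Ext^2({\cal O}_C,E)=0$. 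Hence $(c_1(E),C)=\dim\Ext^1({\cal O}_C,E)\ge 0$.

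The Riemann--Roch computation is routine. The main obstacle is the categorical bookkeeping: one must verify that $H^{-1}(E)[1]$ and the image of a map ${\cal O}_C\to E$ are genuinely sub\emph{objects} in ${\frak C}^0$ (not merely subsheaves), and that the perverse notion of dimension makes objects supported on the contracted curve $C$ into $0$-dimensional objects precisely because $(H,C)=0$. With these points secured, together with the identification of the irreducible objects of ${\frak C}^0$ as ${\cal O}_C$, ${\cal O}_C(-1)[1]$ and $k_x$ $(x\notin C)$, the argument closes.
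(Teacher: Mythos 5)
Your proof is correct and follows essentially the same route as the paper's: it computes $\chi({\cal O}_C,E)=-(c_1(E),C)$ and kills the outer terms via $\Hom({\cal O}_C,E)=0$ (no $0$-dimensional subobjects of a torsion free object) and $\Ext^2({\cal O}_C,E)=\Hom(E,{\cal O}_C(-1))^{\vee}=0$ (Serre duality plus $\Hom(T,S)=0$), leaving $(c_1(E),C)=\dim\Ext^1({\cal O}_C,E)\geq 0$. Your preliminary reduction showing $H^{-1}(E)=0$ is sound bookkeeping that the paper leaves implicit.
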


\begin{NB}
\begin{proof}
We note that $\Hom({\cal O}_C,E)=0$ and
$\Ext^2({\cal O}_C,E)=\Hom(E,{\cal O}_C(-1))^{\vee}=0$.
Hence $0 \geq \chi({\cal O}_C,E)=-(c_1(E),C)$.
\end{proof}
\end{NB}

\begin{prop}[{cf. \cite[Prop. 3.3]{perv2}}]
Assume that $(c_1(v),C)=0$. Then
${\cal M}_H^{\beta-\frac{1}{2}K_X}(v) \cong 
{\cal M}_{H'}^{\beta'-\frac{1}{2}K_Y}(v')$,
where $\pi^*(v')=v$,
$0<(\beta-\tfrac{1}{2}K_X,C)<1$,
$H' \in \Amp(Y)$ satisfies $\pi^*(H')=H$ and
$\beta'$ is the $\NS(Y)$-component of $\beta$. 
\end{prop}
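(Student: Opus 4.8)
The plan is to descend objects from $X$ to $Y$ along the adjoint pair $({\bf L}\pi^*, R\pi_*)$ and to verify that this descent is compatible with twisted semistability. The categorical backbone, which I would take from the theory of perverse coherent sheaves \cite{Br:4}, \cite{PerverseI} and from \cite[Prop. 3.3]{perv2}, is that $R\pi_*$ is exact on the heart ${\frak C}^0={}^{-1}\Per(X/Y)$ with values in $\Coh(Y)$, that ${\bf L}\pi^*$ sends $\Coh(Y)$ into ${\frak C}^0$, and that $R\pi_*\circ{\bf L}\pi^*\cong\id$ (from $R\pi_*\mathcal O_X=\mathcal O_Y$ and the projection formula). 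The irreducible objects of ${\frak C}^0$ are $\mathcal O_C$, $\mathcal O_C(-1)[1]$ and $k_x$ with $x\notin C$, and of these only $\mathcal O_C(-1)[1]$ is annihilated by $R\pi_*$; a Riemann--Roch computation gives $(c_1(\mathcal O_C(-1)[1]),C)=1$. By Lemma~\ref{lem:reflexive-hull2}, any torsion free $E\in{\frak C}^0$ fits in $0\to E\to\pi^*F\to T\to 0$ with $F$ locally free on $Y$ and $T$ zero-dimensional, so $(c_1(E),C)=(c_1(\pi^*F),C)=0$; the hypothesis $(c_1(v),C)=0$ is therefore exactly the condition that the members of ${\cal M}_H^{\beta-\frac{1}{2}K_X}(v)$ be torsion free, and this is the locus on which $R\pi_*$ and ${\bf L}\pi^*$ are mutually quasi-inverse.

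Next I would match the two stability notions. Taking the generator on $X$ to be $\pi^*G'$ for a local projective generator $G'$ on $Y$, Grothendieck--Riemann--Roch and the projection formula give
\[
\chi(\pi^*G',E(n\pi^*H'))=\chi(G',(R\pi_*E)(nH')),
\]
so the twisted Hilbert polynomial of $E$ on $X$ coincides with that of $R\pi_*E$ on $Y$. Since $K_X=\pi^*K_Y+C$ and $\beta'$ is the $\NS(Y)$-component of $\beta$, the twisting classes $\beta-\tfrac{1}{2}K_X$ and $\beta'-\tfrac{1}{2}K_Y$ correspond under $\pi^*$ up to the boundary term $\tfrac{1}{2} C$, which is accounted for by the elementary transform relating the honest generator $G$ of ${\frak C}^0$ to $\pi^*G_2$ (the sequence $0\to G\to\pi^*G_2\to\mathcal O_C^{\oplus(r-k)}\to 0$ of Section~\ref{sect:blowup}). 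As this modification is supported on $C$, it changes $\chi(G,E(nH))$ only in lower order for objects with $(c_1,C)=0$, so the reduced twisted Hilbert polynomials agree. Because $R\pi_*$ is exact, it identifies the sub-object lattices of $E$ and $R\pi_*E$, whence $E$ is $(\beta-\tfrac{1}{2}K_X)$-twisted (semi)stable on $X$ if and only if $R\pi_*E$ is $(\beta'-\tfrac{1}{2}K_Y)$-twisted (semi)stable on $Y$, and $v(R\pi_*E)=v'$ with $\pi^*v'=v$ by Grothendieck--Riemann--Roch.

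Finally I would promote this to an isomorphism of moduli. For a flat family of objects of ${\frak C}^0$ over a base $S$, the relative version of $R\pi_*$ along $\pi\times\id_S$ produces a flat family on $Y$ --- exactness of $R\pi_*$ on the heart yields commutation with base change and flatness of the image --- while ${\bf L}\pi^*$ gives the inverse construction on the $(c_1(v),C)=0$ locus. These relative functors induce mutually inverse morphisms between ${\cal M}_H^{\beta-\frac{1}{2}K_X}(v)$ and ${\cal M}_{H'}^{\beta'-\frac{1}{2}K_Y}(v')$, and since $R\pi_*$ is exact it carries Jordan--H\"older filtrations to Jordan--H\"older filtrations, so $S$-equivalence is preserved and the isomorphism descends to the coarse moduli schemes $\baM_H^{\beta-\frac{1}{2}K_X}(v)\cong\baM_{H'}^{\beta'-\frac{1}{2}K_Y}(v')$. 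I expect the main obstacle to be the second step: controlling the elementary-transform discrepancy between $\pi^*G'$ and the honest generator $G$ so that the \emph{leading} coefficients of the twisted Hilbert polynomials match exactly rather than only up to lower order, together with checking that $R\pi_*$ and ${\bf L}\pi^*$ commute with base change and preserve flatness, which is what upgrades the bijection on points to an isomorphism of schemes.
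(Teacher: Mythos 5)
There is a genuine gap, and it sits exactly where you flagged ``the main obstacle.'' First, your reduction of the hypothesis $(c_1(v),C)=0$ to torsion freeness is false. From Lemma~\ref{lem:reflexive-hull2} you get $0\to E\to E'\to T\to 0$ with $E'$ a pullback, but $T$ is $0$-dimensional only in the perverse sense and has nonzero first Chern class in general ($c_1({\cal O}_C)=C$, $c_1({\cal O}_C(-1)[1])=-C$), so all you may conclude is $(c_1(E),C)=-(c_1(T),C)$, not $0$. Concretely, ${\cal O}_X(-C)=\ker(\pi^*{\cal O}_Y\twoheadrightarrow{\cal O}_C)$ is a torsion free object of ${\frak C}^0$ with $(c_1,C)=1$. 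The correct statement is Lemma~\ref{lem:perv2:non-empty}: torsion freeness gives only $(c_1(E),C)\geq 0$, so $(c_1(v),C)=0$ is a genuine extra hypothesis, not a description of the torsion free locus. Relatedly, $\pi^*G'$ is not a local projective generator of ${\frak C}^0$: since ${\bf R}\pi_*({\cal O}_C(-1))=0$, one has $\pi_*((\pi^*G')^{\vee}\otimes{\cal O}_C(-1))=R^1\pi_*((\pi^*G')^{\vee}\otimes{\cal O}_C(-1))=0$, so ${\cal O}_C(-1)$ would lie in both $T$ and $S$ and $(T,S)$ is not a torsion pair; equivalently $\Hom(\pi^*G',{\cal O}_C(-1)[1])=H^1(C,{\cal O}_C(-1))^{\oplus\rk G'}=0$, so $\pi^*G'$ fails to generate. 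One must work with the elementary transform $G$, with $0\to\pi^*(G_0^{\vee})\to G^{\vee}\to{\cal O}_C(-1)^{\oplus p}\to 0$.

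Second, the stability transfer does not go through as you state it. ${\bf R}\pi_*$ is exact on the heart but not faithful (it annihilates ${\cal O}_C(-1)[1]$), so it does not ``identify the sub-object lattices''; and for a subobject $E_1\subset E$ that is not itself a pullback, $\chi(G,E_1(nH))$ and $\chi(G_0,(\pi_*E_1)(nH'))$ differ by a term that is constant in $n$ --- precisely the order at which twisted Gieseker semistability is decided, so it cannot be dismissed as ``lower order.'' The missing idea, which is how the paper closes this gap in one stroke, is to apply Lemma~\ref{lem:perv2:non-empty} to the sub- and quotient objects: for any exact sequence $0\to E_1\to E\to E_2\to 0$ of torsion free objects of ${\frak C}^0$, both $(c_1(E_i),C)\geq 0$, and since $(c_1(E),C)=0$ both vanish, hence $E_1,E_2$ (as well as $E$) satisfy $E_i\cong\pi^*(\pi_*(E_i))$. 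For such pullbacks the projection formula gives ${\bf R}\pi_*(G^{\vee}\otimes E_i)=G_0^{\vee}\otimes\pi_*(E_i)$ with $c_1(G_0)/\rk G_0=\beta'-\tfrac{1}{2}K_Y$, so the twisted Hilbert polynomials on $X$ and $Y$ agree \emph{exactly} for every object entering the stability test, and there is no discrepancy left to control. With that step inserted, your descent framework does yield the proposition; without it, the argument breaks at the comparison of semistability.
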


\begin{proof}
Under the assumption,
$\pi^*(\pi_*(E)) \to E$ is an isomorphism by
the proof of \cite[Prop. 3.3]{perv2}.
Let $G$ be a local projective generator of ${\frak C}^0$
with $\frac{c_1(G)}{\rk G}=\beta-\frac{1}{2}K_X$.
Then there is a locally free sheaf $G_0$ on $Y$ and an exact sequence
$$
0 \to \pi^*(G_0^{\vee}) \to G^{\vee} \to {\cal O}_C(-1)^{\oplus p} \to 0,
$$
where $p=(c_1(G),C)$.
Then $\frac{c_1(G_0)}{\rk G_0}=\beta'-\frac{1}{2}K_Y$
and $G_0^{\vee}={\bf R}\pi_*(\pi^*(G_0^{\vee}))={\bf R}\pi_*(G^{\vee})$.
By the projection formula,
$\pi_*(G^{\vee} \otimes E)=G_0^{\vee} \otimes \pi_*(E)$.
For an exact sequence
\begin{equation}
0 \to E_1 \to E \to E_2 \to 0
\end{equation}
of torsion free objects of ${\frak C}^0$,
we have $(c_1(E_i),C) \geq 0$ for $i=1,2$ by Lemma \ref{lem:perv2:non-empty}.
Since $(c_1(E),C)=0$,
we have $(c_1(E_i),C)=0$ $(i=1,2)$ which implies
$E_i \cong \pi^*(\pi_*(E_i))$. 
Hence the stabilty coincides. 
\end{proof}

\begin{NB}
\begin{prop}[{\cite{perv2},\cite[Prop. 1.4.3]{PerverseI}}]
Assume that $(\beta,C) \not \in \frac{1}{2}+{\Bbb Z}$.
We set $\gamma:=\beta-\frac{1}{2}K_X$.
Then 
there is a coarse moduli scheme $\baM_H^{\gamma}(v)$
of $S$-equivalence classes of 
$\gamma$-twisted semi-stable perverse coherent sheaves
$E \in {\frak C}^\beta$ with $v(E)=v$.  
\end{prop}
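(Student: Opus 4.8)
The plan is to reduce $\gamma$-twisted semistability in the perverse heart ${\frak C}^\beta$ to Simpson-type semistability for modules over a sheaf of noncommutative algebras on $Y$, and then to produce $\baM_H^\gamma(v)$ as a GIT quotient of a suitable Quot scheme, following \cite[Prop. 1.4.3]{PerverseI}.

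First I would pass from ${\frak C}^\beta$ to modules on $Y$. Put $A:=\pi_*({\cal H}om_{{\cal O}_X}(G,G))$, a coherent sheaf of ${\cal O}_Y$-algebras, and consider the functor $\Phi:=\pi_*(G^{\vee}\otimes(-))$. Because $G$ is a local projective generator of ${\frak C}^\beta$, $\Phi$ is exact on ${\frak C}^\beta$ and identifies it with the category of coherent $A$-modules; in particular $\Phi(G(-nH))=A(-nH')$, where $H=\pi^*H'$ with $H'\in\Amp(Y)$. Since $H$ is pulled back from $Y$, the twisted Hilbert polynomial satisfies $\chi(G,E(nH))=\chi(Y,\Phi(E)(nH'))$, so the reduced twisted Hilbert polynomial of $E$ equals the reduced Hilbert polynomial of the $A$-module $\Phi(E)$. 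Hence the notions of dimension, torsion freeness, and $\gamma$-twisted (equivalently $G$-twisted) semistability of $E\in{\frak C}^\beta$ translate verbatim into the corresponding Simpson notions for $\Phi(E)$, and $S$-equivalence is preserved. The existence problem is thereby reduced to constructing a coarse moduli scheme of $S$-equivalence classes of semistable $A$-modules with fixed Hilbert polynomial determined by $v$.

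Next I would set up boundedness and the Quot-scheme parametrization. The ordering by reduced twisted Hilbert polynomial refines the slope ordering, so $\gamma$-twisted semistability implies $\mu$-semistability in ${\frak C}^\beta$; by Remark \ref{rem:indep-of-G}(2) the family of such objects with fixed $v$ is bounded. I can therefore fix $n\gg0$ so that every semistable $E$ is $n$-regular: the evaluation $\Hom(G,E(nH))\otimes G(-nH)\to E$ is surjective in ${\frak C}^\beta$, $\Ext^{>0}(G,E(nH))=0$, and $N:=\dim\Hom(G,E(nH))=\chi(G,E(nH))$ is constant on the family. Applying $\Phi$, each $\Phi(E)$ is a quotient $A(-nH')^{\oplus N}\twoheadrightarrow\Phi(E)$ with fixed Hilbert polynomial, and these quotients are parametrized by a projective Quot scheme ${\cal Q}$ of $A$-module quotients, a closed subscheme of the ordinary ${\cal O}_Y$-Quot scheme cut out by $A$-linearity. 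The group $\GL(N)$ acts on ${\cal Q}$ by change of the chosen basis of $\Hom(G,E(nH))$.

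Finally I would take the GIT quotient. With the Grothendieck--Simpson linearization on ${\cal Q}$, the Hilbert--Mumford criterion must be matched to the reduced-Hilbert-polynomial ordering, so that a point is GIT-semistable iff $\Phi(E)$ is Simpson-semistable iff $E$ is $\gamma$-twisted semistable, and two semistable points have intersecting orbit closures iff the associated $A$-modules, hence the objects $E$, are $S$-equivalent. Then $\baM_H^\gamma(v):={\cal Q}^{\text{ss}}/\!/\GL(N)$ is a projective coarse moduli scheme for the stated functor. The main obstacle is precisely this last matching: one must verify that the purity and dimension notions for $E\in{\frak C}^\beta$ defined through $\chi(G,E(nH))$ coincide, under $\Phi$, with Simpson's for $A$-modules, and that the numerical criterion reproduces the twisted semistability inequalities of the Remark following Lemma \ref{lem:weak-Bogomolov}. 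Once the translation to $A$-modules is in force this follows the standard Simpson argument, while the semistable-replacement and $S$-equivalence bookkeeping are supplied by \cite{perv2}.
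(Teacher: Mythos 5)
Your proposal is correct and follows essentially the same route as the proof the paper relies on: the paper gives no argument of its own but defers to \cite[Prop.~1.4.3]{PerverseI}, where the moduli scheme is constructed exactly as you describe, by using the local projective generator $G$ to identify ${\frak C}^\beta$ with coherent modules over $A=\pi_*({\cal H}om_{{\cal O}_X}(G,G))$ on $Y$ (with $\chi(G,E(nH))$ matching the Hilbert polynomial of $\pi_*(G^{\vee}\otimes E)$ since $H=\pi^*(H')$), and then applying the Simpson-type Quot-scheme/GIT construction for $A$-modules, with boundedness supplied by $\mu$-semi-stability. The matching of the Hilbert--Mumford criterion with $\gamma$-twisted semi-stability and the $S$-equivalence bookkeeping that you flag as the main obstacle are precisely the content carried out in \cite{PerverseI} and \cite{perv2}, so your outline is faithful to the cited proof.
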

\end{NB}

\subsection{Moduli spaces of perverse coherent sheaves
on a categorical wall}\label{subsect:moduli-on-wall}

We shall study the case where $\beta$ satisfies
$(\beta,C)=l-\frac{1}{2}$.
In this case,
$(e^\beta,v({\cal O}_C(l-1)))=0$ and we cannot consider 
a category of perverse coherent sheaves.
We shall use the same idea in \cite[sect. 3.7]{perv2} 
to construct a moduli space
of semi-stable perverse coherent sheaves.

We take a locally free sheaf $G$ such that
$\chi(G,E)/\rk G=-(e^\beta,v(E))$ and
$\Hom(G,{\cal O}_C(l-1)[i])=0$ for all $i \in {\Bbb Z}$.
\begin{NB}
$G^{\vee}(-lC)$ is the pull-back of a locally free sheaf on $Y$.
\end{NB} 

\begin{defn}\label{defn:exceptional-case1}
$E \in {\frak C}^{l-1}$ with $\rk E >0$ is 
$(\beta-\frac{1}{2}K_X)$-twisted semi-stable, if
\begin{equation}\label{eq:beta-ss}
\chi(G,E_1(nH)) \leq \rk E_1 \frac{\chi(G,E(nH))}{\rk E}
\end{equation}
for all subobject $E_1$ of $E$ in ${\frak C}^{l-1}$.
If the inequality is strict for any non-trivial subobject
$E_1$ of $E$, then $E$ is $(\beta-\frac{1}{2}K_X)$-twisted stable.

Let ${\cal M}_H^{\beta-\frac{1}{2}K_X}(v)$ 
be the moduli stack of $\beta$-twisted semi-stable
objects $E$ with $v(E)=v$.
\end{defn}

\begin{rem}
If $E_1 \in {\frak C}^{l-1}$ is a subobject of 
$E \in {\cal M}_H^{\beta-\frac{1}{2}K_X}(v)$ with
$\rk E_1=0$, then
$\chi(G,E_1(nH))=0$ for all $n$ by \eqref{eq:beta-ss}.
Then $E_1 \cong {\cal O}_C(l-1)^{\oplus k}$.
In particular we see $E \in \Coh(X)$ by
$\chi(G,(H^{-1}(E)[1])(nH))=0$. 
\end{rem}

 \begin{rem}\label{rem:beta-stable}
If $E$ is $\beta$-twisted stable, then
$\Hom(E,{\cal O}_C(l-1))=\Hom({\cal O}_C(l-1),E)=0$.
In particular $E \in {\frak C}^{l}$.
Moreover $E$ is a torsion free object of ${\frak C}^{l}$ 
and ${\frak C}^{l-1}$.
We also have an exact sequence
\begin{equation}
0 \to {\cal O}_C(-1)^{\oplus N} \to \pi^*(\pi_*(E(lC)))
\to E(lC) \to 0.
\end{equation}
Moreover $\Hom({\cal O}_C(-1),\pi^*(\pi_*(E(lC)))) \cong 
{\Bbb C}^{\oplus N}$.
Hence $E$ is determined by $\pi_*(E(lC))$.
\end{rem}

We take $\beta_- \in \NS(X)_{\Bbb Q}$ which is sufficiently close to
$\beta$ and $l-\frac{3}{2}<(\beta_-,C)<l-\frac{1}{2}$, and
let $G_-$ be a local projective generator of ${\frak C}^{l-1}$
with $\beta_--\frac{1}{2}K_X=\frac{c_1(G_-)}{\rk G_-}$.
For $E \in {\cal M}_H^{\beta-\frac{1}{2}K_X}(v)$,
we have the Harder-Narasimhan filtration with respect to
$G_-$:
\begin{equation}
0 \subset F_0 \subset F_1 \subset \cdots \subset F_s=E.
\end{equation}
Thus $E_i:=F_i/F_{i-1}$ $(i \geq 0)$ satisfy the following.
\begin{enumerate}
\item
$F_0={\cal O}_C(l-1)^{\oplus k}$ and $E_i$ $(i \geq 1)$
are $(\beta_--\frac{1}{2}K_X)$-semi-stable objects,
\item
$$
\frac{\chi(G_-,E_1(nH))}{\rk E_1}>\frac{\chi(G_-,E_2(nH))}{\rk E_2}
>\cdots >\frac{\chi(G_-,E_s(nH))}{\rk E_s},\; n \gg 0
$$
and
\item
$$
\frac{\chi(G,E_1(nH))}{\rk E_1}=\frac{\chi(G,E_2(nH))}{\rk E_2}
=\cdots =\frac{\chi(G,E_s(nH))}{\rk E_s},\;n \gg 0.
$$
\end{enumerate}

If $E$ is $\beta$-twisted stable, then
$F_0=0$ and $s=1$.
Moreover $E$ is $(\beta_--\frac{1}{2}K_X)$-twisted stable.

We take $\beta_+$ which is sufficiently close to
$\beta$ and $\beta_++\beta_-=2\beta$.
Then 
$l-\frac{1}{2}<(\beta_+,C)<l+\frac{1}{2}$.
Let $G_+$ be a local projective generator of ${\frak C}^l$
with $\beta_+-\frac{1}{2}K_X=\frac{c_1(G_+)}{\rk G_+}$.
Then we have a Harder-Narasimhan type filtration
with respect to $G_+$:
\begin{equation}
0 \subset F_1 \subset F_2 \subset \cdots \subset F_t=E.
\end{equation}
Thus $E_i:=F_i/F_{i-1}$ $(i=1,2,...,t)$ satisfy
\begin{enumerate}
\item
$E_i$ $(i=1,2,...,t-1)$
are $(\beta_+-\frac{1}{2}K_X)$-twisted semi-stable objects of 
${\frak C}^l$ and
$E_t={\cal O}_C(l-1)^{\oplus k}$,
\item
$$
\frac{\chi(G_+,E_1(nH))}{\rk E_1}>\frac{\chi(G_+,E_2(nH))}{\rk E_2}
>\cdots >\frac{\chi(G_+,E_{t-1}(nH))}{\rk E_{t-1}},\; n \gg 0
$$
and
\item
$$
\frac{\chi(G,E_1(nH))}{\rk E_1}=\frac{\chi(G,E_2(nH))}{\rk E_2}
=\cdots =\frac{\chi(G,E_{t-1}(nH))}{\rk E_{t-1}},\; n\gg 0.
$$
\end{enumerate}

\begin{rem}
$$
E_t=\im(E \to {\cal O}_C(l-1) \otimes \Hom(E,{\cal O}_C(l-1))^{\vee}).
$$
\end{rem}

If $E$ is $(\beta-\frac{1}{2}K_X)$-twisted stable, then
$F_t/F_{t-1}=0$ and $t=2$.
Moreover $E$ is $(\beta_+-\frac{1}{2}K_X)$-twisted stable.

\begin{rem}
Since ${\cal O}_C(l-1) \in {\frak C}^{\beta_-}$ is
an irreducible object and
${\cal O}_C(l-1)[1]$ is an irreducible object
of ${\frak C}^{\beta_+}$,  
we also set 
${\cal M}_H^{\beta-\frac{1}{2}K_X}(v)=
{\cal M}_H^{\beta_\pm-\frac{1}{2}K_X}(v)=
\{{\cal O}_C(l-1)^{\oplus n}\}$ for
$v=nv({\cal O}_C(l-1))$ $(n>0)$.
\end{rem}

\begin{lem}\label{lem:stable-on-wall}
Assume that $\rk v>0$. Then
${\cal M}_H^{\beta-\frac{1}{2}K_X}(v)^s=
{\cal M}_H^{\beta_--\frac{1}{2}K_X}(v)^s \cap
 {\cal M}_H^{\beta_+-\frac{1}{2}K_X}(v)^s$.
The same claim also holds for $v=nv({\cal O}_C(l-1))$. 
\end{lem}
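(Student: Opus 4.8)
The plan is to prove the asserted equality of loci by establishing the two inclusions separately, and I would dispose of the case $\rk v>0$ first.

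For the inclusion ${\cal M}_H^{\beta-\frac{1}{2}K_X}(v)^s \subset {\cal M}_H^{\beta_--\frac{1}{2}K_X}(v)^s \cap {\cal M}_H^{\beta_+-\frac{1}{2}K_X}(v)^s$ I would note that there is essentially nothing new to check: this is exactly what the two Harder--Narasimhan type filtrations constructed just before the statement deliver. If $E$ is $(\beta-\frac{1}{2}K_X)$-twisted stable, then in the $G_-$-filtration one has $F_0=0$ and $s=1$, so $E=E_1$ is $(\beta_--\frac{1}{2}K_X)$-twisted stable; and in the $G_+$-filtration the top factor $E_t={\cal O}_C(l-1)^{\oplus k}$ must vanish, for otherwise $\Hom(E,{\cal O}_C(l-1))\ne 0$, contradicting Remark \ref{rem:beta-stable}, whence $E$ is $(\beta_+-\frac{1}{2}K_X)$-twisted stable as well.

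For the reverse inclusion I would start from an $E$ that is both $(\beta_--\frac{1}{2}K_X)$- and $(\beta_+-\frac{1}{2}K_X)$-twisted stable and first record the vanishing $\Hom({\cal O}_C(l-1),E)=\Hom(E,{\cal O}_C(l-1))=0$. Indeed, since $H\cdot C=0$ the Euler characteristic $\chi(G_-,{\cal O}_C(l-1)(nH))=-\rk G_-\,(e^{\beta_-},v({\cal O}_C(l-1)))$ is a positive constant because $(\beta_-,C)<l-\frac{1}{2}$; as ${\cal O}_C(l-1)$ has rank $0$, a nonzero map ${\cal O}_C(l-1)\to E$ would give a subobject violating $(\beta_--\frac{1}{2}K_X)$-semistability of the positive-rank object $E$, and symmetrically $(\beta_+,C)>l-\frac{1}{2}$ kills $\Hom(E,{\cal O}_C(l-1))$. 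These vanishings make $E$ a torsion free object of both ${\frak C}^{l-1}$ and ${\frak C}^l$. The comparison of the three stabilities then rests on a single numerical identity: writing $\beta_\pm=\beta\pm\delta$ with $\delta\in\NS(X)_{\Bbb Q}$ and using $e^{\beta_\pm}=e^\beta e^{\pm\delta}$, $e^\beta\varrho_X=\varrho_X$, $(\varrho_X,v(F))=-\rk F$ and $H\cdot C=0$, one obtains for every $F$
\begin{equation}
\frac{\chi(G_+,F(nH))}{\rk G_+}+\frac{\chi(G_-,F(nH))}{\rk G_-}
=2\,\frac{\chi(G,F(nH))}{\rk G}+(\delta^2)\,\rk F .
\end{equation}
Dividing by $\rk F$, the constant $(\delta^2)$ is independent of $F$ and of $n$, so it cancels in every reduced comparison; hence for a subobject $E_1$ of $E$ the difference of the reduced $\chi(G,-)$-polynomials of $E_1$ and $E$ equals $\frac{\rk G}{2}$ times the sum of the corresponding reduced $G_+$- and $G_-$-differences. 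Because $\chi(G,{\cal O}_C(l-1)(nH))\equiv 0$, the functor $\chi(G,-)$ is insensitive to the copies of ${\cal O}_C(l-1)$ by which a subobject of $E$ in ${\frak C}^{l-1}$ and its tilt in ${\frak C}^l$ differ, so a proper subobject that is non-destabilizing for $\beta_-$ in ${\frak C}^{l-1}$ and for $\beta_+$ in ${\frak C}^l$ is strictly non-destabilizing for $\beta$, giving $(\beta-\frac{1}{2}K_X)$-twisted stability of $E$.

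The principal difficulty I anticipate is precisely this last matching of subobjects across the two distinct abelian categories ${\frak C}^{l-1}$ and ${\frak C}^l$: I must verify that the tilt of a subobject changes it only by copies of the $\chi(G,-)$-massless object ${\cal O}_C(l-1)$, after which the averaging identity forces the three stabilities to agree on proper subobjects (the Hom-vanishing above being what removes the remaining rank-$0$ subobjects of the form ${\cal O}_C(l-1)^{\oplus k}$). Finally, for $v=n\,v({\cal O}_C(l-1))$ the moduli is the single point $\{{\cal O}_C(l-1)^{\oplus n}\}$, and I would settle the asserted equality by direct inspection of the (non-)stability of ${\cal O}_C(l-1)^{\oplus n}$ in ${\frak C}^{\beta_-}$, in ${\frak C}^{\beta_+}$ and on the wall, using that ${\cal O}_C(l-1)$ is irreducible in ${\frak C}^{\beta_-}$ whereas only ${\cal O}_C(l-1)[1]$ is an object of ${\frak C}^{\beta_+}$.
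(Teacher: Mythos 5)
Your proposal is correct and takes essentially the same route as the paper's proof: the forward inclusion is read off from the two Harder--Narasimhan type filtrations, and for the converse the paper likewise replaces a destabilizing subobject $E_1\subset E$ in ${\frak C}^{l-1}$ by $E_1'\in {\frak C}^l$ with $E_1/E_1'\cong {\cal O}_C(l-1)^{\oplus q}$ (so $\chi(G,E_1'(nH))=\chi(G,E_1(nH))$, $\rk E_1'=\rk E_1$, and $E_1'\ne 0$ by $\Hom({\cal O}_C(l-1),E)=0$), then applies the strict $G_\pm$-inequalities to $E_1'$ viewed as a subobject in both categories and combines them. Your explicit averaging identity, valid because $\beta_++\beta_-=2\beta$, is precisely the implicit mechanism behind the paper's final step, and your treatment of the case $v=nv({\cal O}_C(l-1))$ by direct inspection matches the paper's convention.
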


\begin{proof}
We already know that
$$
{\cal M}_H^{\beta-\frac{1}{2}K_X}(v)^s \subset 
{\cal M}_H^{\beta_--\frac{1}{2}K_X}(v)^s \cap
 {\cal M}_H^{\beta_+-\frac{1}{2}K_X}(v)^s.
$$
Assume that $E \in {\cal M}_H^{\beta_--\frac{1}{2}K_X}(v)^s \cap
 {\cal M}_H^{\beta_+-\frac{1}{2}K_X}(v)^s$.
Then $E \in {\frak C}^l$ and $\Hom({\cal O}_C(l-1),E)=0$.
If $E$ is not $(\beta-\frac{1}{2}K_X)$-twisted stable, then
there is a subobject $E_1$ of $E$ in ${\frak C}^{l-1}$ with
\begin{equation}\label{eq:E_1}
\chi(G,E_1(nH)) \geq \rk E_1 \frac{\chi(G,E(nH))}{\rk E}.
\end{equation}
We have an exact sequence
$$
0 \to E_1' \to E_1 \to {\cal O}_C(l-1)^{\oplus q} \to 0  
$$
such that $E_1' \in {\frak C}^l$.
Then $E_1'$ is a subobject of $E$ in ${\frak C}^l$ and ${\frak C}^{l-1}$
with $\rk E_1'=\rk E_1$ and
$\chi(G,E_1'(nH))=\chi(G,E_1(nH))$.
By $\Hom({\cal O}_C(l-1),E)=0$,
$E_1' \ne 0$.
Then the $G_\pm$-twisted stability
implies 
\begin{equation}
\chi(G_\pm,E_1'(nH))
< \rk E_1' \frac{\chi(G_\pm,E(nH))}{\rk E},
\end{equation}
which shows 
\begin{equation}
\chi(G,E_1(nH))< \rk E_1 \frac{\chi(G,E(nH))}{\rk E}.
\end{equation}
Therefore $E$ is $(\beta-\frac{1}{2}K_X)$-twisted stable.
\end{proof}

\begin{NB}
If we introduce a suitable stability for 0-dimensional objects,
then we can expect the following remark. 
\begin{rem}\label{rem:stable-on-wall:rk0}
Assume that $v=mv({\cal O}_C(l-1))$.
Then ${\cal M}_H^{\beta-\frac{1}{2}K_X}(v)=\{{\cal O}_C(l-1)^{\oplus m}\}$.
\begin{NB2}
${\cal O}_C(l-1)$ is an irreducible object of ${\frak C}^{l-1}$
and ${\cal O}_C(l-1)[1]$ is an irreducible object of ${\frak C}^{l}$.
\end{NB2}
\end{rem}
\end{NB}

\begin{NB}
$E$ is $(\beta-\frac{1}{2}K_X)$-twisted stable if and only if
$E$ is $(\beta_--\frac{1}{2}K_X)$-twisted stable 
and $(\beta_+-\frac{1}{2}K_X)$-twisted stable.
\end{NB}

We shall explain a construction of
the moduli space.
There is a locally free sheaf $G_0$ on $Y$ such that
$\pi^*(G_0)(-lC) \cong G$.
Then $E \in {\frak C}^{l-1}$ is 
$(\beta-\frac{1}{2}K_X)$-twisted semi-stable if and only if
${\bf R}\pi_*(E(lC))$ is $G_0$-twisted semi-stable.
Moreover if $E$ is $(\beta-\frac{1}{2}K_X)$-twisted stable, then
$\pi_*(E(lC))$ is $(\beta'-\frac{1}{2}K_Y)$-twisted stable,
where $\beta'-\frac{1}{2}K_Y=\frac{c_1(G_0)}{\rk G_0}$.
We have a morphism 
\begin{equation}
\phi:{\cal M}_H^{\beta-\frac{1}{2}K_X}(v) \to
M_{H'}^{\beta'-\frac{1}{2}K_Y}(w),
\end{equation}
where $w=v(\pi_*(E(lC)))$.
\begin{NB}
For $E \in {\cal M}_H^{\beta-\frac{1}{2}K_X}(v)$,
we have an exact sequence
$
0 \to E'(lC) \to E(lC) \to {\cal O}_C(-1)^{\oplus q} \to 0$,
where $E' \in {\frak C}^l$.
Then $\pi_*(E'(lC)) \cong \pi_*( E(lC))$.
For a subsheaf $F_1$ of $\pi_*(E'(lC))$,
we set $E_1:=\im(\pi^*(F_1) \to \pi^*(\pi_*(E'(lC))) \to E'(lC))$
in ${\frak C}^{-1}$.
Then $\pi_*(\pi^*(F_1)) \to \pi_*(E_1) \to \pi_*(E'(lC))$ 
coincides with the inclusion $F_1 \to \pi_*(E'(lC))$. 
Hence $\pi_*(E_1)\cong F_1$.
Then we have an exact sequence
$0 \to {\cal O}_C(-1)^{\oplus p} \to \pi^*(F_1) \to E_1 \to 0$.
Since $\chi(\pi^*(G_0),\pi^*(F_1)(nH))=\chi(\pi^*(G_0),E_1)$,
by the $\beta-\frac{1}{2}K_X$-semi-stability of $E$,
$\pi_*(E(lC))$ is $\beta'-\frac{1}{2}K_Y$-semi-stable   
\end{NB}
If $E$ is $S$-equivalent to
$\oplus_{i=0}^s E_i$ such that
$E_0={\cal O}_C(l-1)^{\oplus p}$ and
$E_i$ $(i>0)$ are $(\beta-\frac{1}{2}K_X)$-twisted stable objects with
$\rk E_i >0$.
Then $\pi_*(E(lC))$ is $S$-equivalent to 
$\oplus_{i=1}^s \pi_*(E_i(lC))$ and
$\pi_*(E_i(lC))$ are $(\beta'-\frac{1}{2}K_Y)$-twisted
stable.
By Remark \ref{rem:beta-stable},
$\oplus_{i=1}^s E_i$ is uniquely determined by $\pi_*(E(lC))$.
Since $pC=c_1(E)-\sum_{i=1}^s c_1(E_i)$,
$E_0$ is also uniquely determined.
 
We set $k:=r((\delta,C)-l)$ and
\begin{equation}
M_{H'}^{\beta'-\frac{1}{2}K_Y}(w,k):=
\{F \in M_{H'}^{\beta'-\frac{1}{2}K_Y}(w) \mid 
\dim \Hom({\cal O}_C(-1),\pi^*(F)) \geq k \}.
\end{equation}
We shall introduce a scheme structure on this Brill-Noether locus
as in \cite[Prop. 3.31]{perv2}.

\begin{lem}\label{lem:onto}
The image of $\phi$ is $M_{H'}^{\beta'-\frac{1}{2}K_Y}(w,k)$.
\end{lem}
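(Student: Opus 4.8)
We need to identify the image of
$$
\phi:{\cal M}_H^{\beta-\frac{1}{2}K_X}(v) \to M_{H'}^{\beta'-\frac{1}{2}K_Y}(w)
$$
as the Brill–Noether locus $M_{H'}^{\beta'-\frac{1}{2}K_Y}(w,k)$ where $k=r((\delta,C)-l)$ and the locus is cut out by $\dim\Hom({\cal O}_C(-1),\pi^*(F))\geq k$.

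Let me understand the geometry first. We have $\pi:X\to Y$ the blow-up with exceptional divisor $C$. Objects $E\in{\frak C}^{l-1}$ map via $E\mapsto {\bf R}\pi_*(E(lC))$, and the claim is that semistability is preserved. The key relation from Remark \ref{rem:beta-stable} is that $E(lC)$ sits in an exact sequence
$$
0 \to {\cal O}_C(-1)^{\oplus N} \to \pi^*(\pi_*(E(lC))) \to E(lC) \to 0,
$$
with $\Hom({\cal O}_C(-1),\pi^*(\pi_*(E(lC)))) \cong {\Bbb C}^{\oplus N}$.

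So $N = \dim\Hom({\cal O}_C(-1),\pi^*(F))$ where $F=\pi_*(E(lC))$. The number $N$ counts how many copies of ${\cal O}_C(-1)$ get killed when passing from $\pi^*(F)$ down to $E(lC)$.

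**My proof plan.**

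*First direction (image $\subseteq$ locus).* Take $E\in{\cal M}_H^{\beta-\frac{1}{2}K_X}(v)$ and set $F=\pi_*(E(lC))$. I need to show $\dim\Hom({\cal O}_C(-1),\pi^*(F))\geq k$ where $k=r((\delta,C)-l)$.

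Here $\delta$ should be related to $c_1$ of the objects. Let me compute $N$ from Chern characters. The exact sequence gives
$$
v(E(lC)) = v(\pi^*(F)) - N\cdot v({\cal O}_C(-1)).
$$
Taking $c_1$ and pairing with $C$: since $\pi^*(F)$ has $(c_1(\pi^*F),C)=0$ (pullback classes are orthogonal to the exceptional fiber) and $(c_1({\cal O}_C(-1)),C) = -(C^2) = 1$ (as $(C^2)=-1$), while $({\cal O}_C(-1))$ has rank $0$ and $({\cal O}_C(-1),C)$... I'd need to compute $(c_1(E(lC)),C)$ directly. If $c_1(E)=\delta$ (roughly), then $(c_1(E(lC)),C) = (c_1(E),C) + lr(C^2) = (\delta,C) - lr$, and matching with $-N(c_1({\cal O}_C(-1)),C)$ gives $N = r((\delta,C)-l)\cdot(\text{sign})$. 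So $N=k$ exactly, not just $\geq$, for a single stable object.

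The inequality $\geq k$ arises because the image point $F$ could be an S-equivalence class, or because at a point $F$ in the target moduli space the fiber $\phi^{-1}(F)$ may carry objects $E$ with larger $\Hom({\cal O}_C(-1),\pi^*F)$. The Brill–Noether condition $\dim\Hom\geq k$ is the right scheme-theoretic containment.

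*Second direction (locus $\subseteq$ image).* This is the harder and more substantial part. Given $F\in M_{H'}^{\beta'-\frac{1}{2}K_Y}(w)$ with $\dim\Hom({\cal O}_C(-1),\pi^*(F))\geq k$, I must \emph{construct} an $E\in{\cal M}_H^{\beta-\frac{1}{2}K_X}(v)$ with $\pi_*(E(lC))\cong F$.

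The plan is: Choose a $k$-dimensional subspace $U\subseteq\Hom({\cal O}_C(-1),\pi^*(F))$ and form the universal extension / elementary modification
$$
0 \to {\cal O}_C(-1)\otimes U \to \pi^*(F) \to E(lC) \to 0
$$
(equivalently, take $E(lC)$ to be the cokernel of the evaluation map ${\cal O}_C(-1)\otimes U\to\pi^*(F)$). Then verify that $E(lC)$ is an object of ${\frak C}^{l-1}$ after the twist, that $\pi_*(E(lC))\cong F$ (using $\pi_*{\cal O}_C(-1)=0$ and $R^1\pi_*{\cal O}_C(-1)=0$), and that $E$ is $(\beta-\frac{1}{2}K_X)$-twisted semistable.

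**The main obstacle.** The crux is the semistability verification in the reverse direction. I expect to argue by contradiction: a destabilizing subobject $E_1\subset E$ in ${\frak C}^{l-1}$ would push down to a destabilizing subsheaf of $F$, contradicting $G_0$-twisted stability of $F=\pi_*(E(lC))$. This uses the correspondence "$E$ is $(\beta-\frac{1}{2}K_X)$-twisted semistable iff ${\bf R}\pi_*(E(lC))$ is $G_0$-twisted semistable" asserted just before Lemma \ref{lem:onto}, together with the projection-formula identity and the adjunction that makes subobjects correspond. The delicate points are: (i) ensuring the chosen subspace $U$ yields a genuine object of ${\frak C}^{l-1}$ rather than something with unwanted torsion (this should follow from $\Hom({\cal O}_C(-1),\pi^*F)$ being exactly the right size and the sequence in Remark \ref{rem:beta-stable}); and (ii) handling the case where $F$ is only semistable (an S-equivalence class), where I must track how the filtration of $F$ lifts. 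For the semistable locus I would reduce to the stable case via the S-equivalence description given before the statement, where $\pi_*(E(lC))$ is $S$-equivalent to $\oplus\pi_*(E_i(lC))$ and the reconstruction of $\oplus E_i$ and $E_0={\cal O}_C(l-1)^{\oplus p}$ is already shown to be unique. Following \cite[Prop. 3.31]{perv2} for the Brill–Noether scheme structure, the surjectivity onto $M_{H'}^{\beta'-\frac{1}{2}K_Y}(w,k)$ should then follow.
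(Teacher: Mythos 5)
Your proposal takes essentially the same route as the paper's own proof: the forward inclusion rests on the elementary-modification sequence of Remark \ref{rem:beta-stable} plus a numerical count of the ${\cal O}_C(-1)$-factors (the paper gets $\dim \Ext^1(E'(lC),{\cal O}_C(-1))=-\chi(E'(lC),{\cal O}_C(-1))=q+k$ after splitting off the ${\cal O}_C(l-1)$-pieces of a properly semistable $E$, while you compute $N=k$ for stable $E$ via $(c_1(\cdot),C)$ and reduce the semistable case to it), and the converse is exactly the paper's construction $E(lC):=\coker({\cal O}_C(-1)\otimes V \to \pi^*(F))$ for a $k$-dimensional $V\subset \Hom({\cal O}_C(-1),\pi^*(F))$, with $\pi_*(E(lC))\cong F$ from ${\bf R}\pi_*({\cal O}_C(-1))=0$ and semistability of $E$ coming from the stated equivalence with $G_0$-twisted semistability of ${\bf R}\pi_*(E(lC))$. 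The paper's proof is no more detailed than yours on the points you flag as delicate, so your outline matches it.
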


\begin{proof}
For $E \in M_H^{\beta-\frac{1}{2}K_X}(v)$,
we have an exact sequence
\begin{equation}
0 \to E'(lC) \to E(lC) \to {\cal O}_C(-1)^{\oplus q} \to 0 
\end{equation}
where $E' \in {\frak C}^l$.
Then $\pi_*(E(lC))=\pi_*(E'(lC))$
and 
\begin{equation}
\begin{split}
\dim \Hom({\cal O}_C(-1),\pi^*(\pi_*(E'(lC))))
\geq& \dim \Ext^1(E'(lC),{\cal O}_C(-1))\\
=&-\chi(E'(lC),{\cal O}_C(-1))=
q+k.
\end{split}
\end{equation}
Hence $\phi(E) \in M_{H'}^{\beta'-\frac{1}{2}K_Y}(w,k)$.
Conversely for $F \in M_{H'}^{\beta'-\frac{1}{2}K_Y}(w,k)$,
we take a $k$-dimensional subspace $V$ of 
$\Hom({\cal O}_C(-1),\pi^*(F))$.
Then $E(lC):=\coker({\cal O}_C(-1) \otimes V \to \pi^*(F)) \in 
{\cal M }_H^{\beta-\frac{1}{2}K_X}(v)$ and
$\phi(E)=F$.
\begin{NB}
We set $E'(lC):=\coker({\cal O}_C(-1)\otimes \Hom({\cal O}_C(-1),\pi^*(F))
\to \pi^*(F))$.
Then $E:=E' \oplus {\cal O}_C(l-1)^{\oplus l}$
also satisfies $f(E)=F$, where
$l=\dim \Hom({\cal O}_C(-1),\pi^*(F))-k$. 
\end{NB}
\begin{NB}
Since $\chi(E(lC)-q{\cal O}_C(-1),E(lC)-q{\cal O}_C(-1))=
\chi(E,E)+q(2(c_1(E(lC)),C)+r)+q^2$,
$q$ is bounded.
\end{NB}
\end{proof}

\begin{defn}\label{defn:moduli-on-wall}
We define the moduli scheme
of the $S$-equivalence classes of
$(\beta-\frac{1}{2}K_X)$-twisted
semi-stable objects 
by $M_H^{\beta-\frac{1}{2}K_X}(v):=M_{H'}^{\beta'-\frac{1}{2}K_Y}(w,k)$.
\end{defn}

Since ${\cal M}_H^{\beta_\pm-\frac{1}{2}K_X}(v) 
\subset {\cal M}_H^{\beta-\frac{1}{2}K_X}(v)$,
we have the following.
\begin{prop}
There are projective morphisms 
$M_H^{\beta_\pm-\frac{1}{2}K_X}(v) \to M_H^{\beta-\frac{1}{2}K_X}(v)$.
\end{prop}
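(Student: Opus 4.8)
The plan is to obtain each morphism as the map on coarse moduli schemes induced by the inclusion of stacks ${\cal M}_H^{\beta_\pm-\frac{1}{2}K_X}(v) \subset {\cal M}_H^{\beta-\frac{1}{2}K_X}(v)$, and then to deduce projectivity formally from the projectivity of the two moduli schemes. First I would record that $M_H^{\beta-\frac{1}{2}K_X}(v)$ is a coarse moduli scheme corepresenting ${\cal M}_H^{\beta-\frac{1}{2}K_X}(v)$: by Definition \ref{defn:moduli-on-wall} it equals $M_{H'}^{\beta'-\frac{1}{2}K_Y}(w,k)$, and by Lemma \ref{lem:onto} the morphism $\phi$ realizes it as the image of the stack, sending $E$ to the $S$-equivalence class of $\pi_*(E(lC))$. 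Thus $\phi$ plays the role of the structure morphism from the wall stack to its coarse space.

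Next I would compose the inclusion ${\cal M}_H^{\beta_\pm-\frac{1}{2}K_X}(v) \hookrightarrow {\cal M}_H^{\beta-\frac{1}{2}K_X}(v)$ with $\phi$. This yields a morphism from the stack ${\cal M}_H^{\beta_\pm-\frac{1}{2}K_X}(v)$ to the scheme $M_H^{\beta-\frac{1}{2}K_X}(v)$; since the target is a scheme, the universal property of the coarse moduli scheme $M_H^{\beta_\pm-\frac{1}{2}K_X}(v)$ forces this morphism to factor uniquely through $M_H^{\beta_\pm-\frac{1}{2}K_X}(v)$, producing the desired morphisms $M_H^{\beta_\pm-\frac{1}{2}K_X}(v) \to M_H^{\beta-\frac{1}{2}K_X}(v)$. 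On closed points these send the class of a $(\beta_\pm-\frac{1}{2}K_X)$-semistable object $E$ (which lies in ${\frak C}^l$, resp.\ ${\frak C}^{l-1}$, is torsion free, and satisfies $\Hom({\cal O}_C(l-1),E)=0$, as in the proof of Lemma \ref{lem:stable-on-wall}) to the class of $\pi_*(E(lC))$; compatibility with $S$-equivalence is automatic, being built into the coarse space. The point that requires genuine verification is that the inclusion is a morphism of stacks, i.e.\ that a flat family of $(\beta_\pm-\frac{1}{2}K_X)$-semistable objects is a flat family of $(\beta-\frac{1}{2}K_X)$-semistable objects; this is precisely the fibrewise implication underlying the Harder--Narasimhan analysis preceding Lemma \ref{lem:stable-on-wall}.

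For projectivity the key inputs are that both $M_H^{\beta_\pm-\frac{1}{2}K_X}(v)$ and $M_H^{\beta-\frac{1}{2}K_X}(v)$ are projective over $k$: the former are moduli of twisted semistable perverse coherent sheaves, hence projective by Proposition \ref{prop:moduli}, while the latter is the Brill--Noether locus $M_{H'}^{\beta'-\frac{1}{2}K_Y}(w,k)$, a closed subscheme of the projective Gieseker moduli scheme over $Y$, hence projective. I would then invoke the formal fact that a morphism $f\colon M \to N$ of $k$-schemes with $M$ projective over $k$ and $N$ separated over $k$ is automatically projective: $f$ factors as the graph $M \to M\times_k N$, a closed immersion since $N$ is separated, followed by the projection $M\times_k N \to N$, which is projective as the base change of $M \to \Spec k$. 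Applying this to the two morphisms gives the claim. I expect the only real obstacle to be the families-level check that $(\beta_\pm-\frac{1}{2}K_X)$-semistability is preserved under the inclusion and that the relevant moduli functors are corepresented as stated; everything else is formal. Alternatively, using the GIT construction of \cite{PerverseI}, both the morphisms and their projectivity follow at once from the inclusion of the semistable loci for the linearizations attached to $\beta_\pm$ and to the wall value $\beta$.
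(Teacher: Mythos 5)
Your proposal is correct and follows essentially the same route as the paper, whose entire "proof" is the remark preceding the proposition that ${\cal M}_H^{\beta_\pm-\frac{1}{2}K_X}(v) \subset {\cal M}_H^{\beta-\frac{1}{2}K_X}(v)$, combined with Definition \ref{defn:moduli-on-wall} of the wall moduli scheme as the Brill--Noether locus $M_{H'}^{\beta'-\frac{1}{2}K_Y}(w,k)$ and the map $\phi$ of Lemma \ref{lem:onto}. Your writeup simply makes explicit what the paper leaves implicit: the factorization through the coarse (GIT) moduli scheme via its universal property, and the formal deduction of projectivity of the morphisms from projectivity of source and separatedness of target.
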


\subsection{Support property}\label{subsect:support-property}

Let us recall Bogomolov type inequality and the support
property in \cite{Toda}, \cite{Toda2}.
In these paper, Toda states results under the assumption $\beta=0$.
Obviously the same results hold for general cases.

\begin{NB}
Let $Q_A(x):=\sum_{i,j}a_{ij}x_i x_j$ be a positive definite quadratic
form associated to a symmetric matrix $A=(a_{ij})$ over ${\Bbb R}$.
Let $\lambda(A)$ be the minimal eigen value of $A$.
Then $Q_A(x) \geq \lambda(A)||x||^2$, where $||x||^2=\sum_i x_i^2$.
Let $B$ be a bounded subset of $\Sym_n({\Bbb R})$.
If $A \in B$, then $\lambda(A)$ is bounded.
For $(\beta,\omega)$, we have a decomposition
$x=e^\beta(r+d \omega+\eta+a \varrho_X)$.
$(r,d,\eta,a)$ are linear forms of $(x_1,...,x_n)$
whose coefficients are 
rational function of $\beta,\omega$.
We set $Q_A(x):=r^2+d^2-(\eta^2)+a^2$. 
Then it is a quadratic form such that each component
of $A$ are rational function of $\beta,\omega$.

\begin{NB2}
$\sqrt{x^2+y^2} \leq |x|+|y| \leq \sqrt{2}\sqrt{x^2+y^2}$.
$\sqrt{\sum_i x_i^2} \leq \sum_i |x_i| \leq \sqrt{n}\sqrt{\sum_i x_i^2}$
by Schwarz inequality.
\end{NB2}

For $x=e^\beta(r+d \omega+\eta+a \varrho_X)$,
\begin{equation}
\frac{||x||_{(\beta,\omega)}}{|Z_{(\beta,\omega)}(x)|}=
\frac{\max \{|r|,|a|,|d(\omega^2)|,\sqrt{-(\eta^2)}\}}
{\sqrt{(-a+r(\omega^2)/2)^2+(d(\omega^2))^2}}
\leq
\max \left\{
\frac{2}{(\omega^2)},\sqrt{2},
\frac{\sqrt{4+(2C_\omega+3)(\omega^2)}}{(\omega^2)},
\frac{1}{a_\beta({\cal O}_C(l))},
\frac{1}{1-a_\beta({\cal O}_C(l))} 
\right\}
\end{equation}

\begin{proof}

\begin{equation}
\begin{split}
(-a+r(\omega^2)/2)^2+(d(\omega^2))^2=&
a^2+(r(\omega^2)/2)^2-ra(\omega^2)+(d(\omega^2))^2 \\
\geq &  
a^2+(r(\omega^2)/2)^2+d^2(\omega^2)^2/2
\end{split}
\end{equation}
 
Hence
\begin{equation}
\begin{split}
\frac{|r|}{|Z_{(\beta,\omega)}(x)|}
\leq & \frac{|r|}{|r|(\omega^2)/2}=\frac{2}{(\omega^2)}\\
\frac{|a|}{|Z_{(\beta,\omega)}(x)|}
\leq & 1\\
\frac{|d(\omega^2)|}{|Z_{(\beta,\omega)}(x)|}
\leq & \sqrt{2}
\end{split}
\end{equation}

Assume that 
\begin{equation}
d^2(\omega^2)+(\eta^2)-2ra+\frac{C_\omega (d(\omega^2))^2}{(\omega^2)}
\geq -1.
\end{equation}
If $\rk x=0$, then
Lemma \ref{lem:Bogomolov(1-dim)} implies
LHS $\geq 0$.
Then
\begin{equation}
\frac{-(\eta^2)}{|Z_{(\beta,\omega)}(x)|^2}
\leq \frac{t(r)+(c_\omega+1)d^2(\omega^2)-2ra}
{(-a+r(\omega^2)/2)^2+d^2(\omega^2)^2}
\end{equation}
where $t(r)=0$ for $r=0$ and $t(r)=1$ for $r \ne 0$.

Assume that $r \ne 0$. Then
\begin{equation}
\begin{split}
\frac{1+(c_\omega+1)d^2(\omega^2)}
{(-a+r(\omega^2)/2)^2+d^2(\omega^2)^2}
\leq & \frac{1+(c_\omega+1)d^2(\omega^2)}
{a^2+(r(\omega^2)/2)^2+d^2(\omega^2)^2/2}\\
\leq & \frac{1}
{a^2+(r(\omega^2)/2)^2+d^2(\omega^2)^2/2}
+\frac{(c_\omega+1)d^2(\omega^2)}
{a^2+(r(\omega^2)/2)^2+d^2(\omega^2)^2/2}\\
\leq & \frac{1}{r^2((\omega^2)/2)^2}+
 \frac{(c_\omega+1)}
{(\omega^2)/2} \leq
 \frac{1}{((\omega^2)/2)^2}+
 \frac{(c_\omega+1)}
{(\omega^2)/2}.
\end{split}
\end{equation}

If $ra<0$, then
\begin{equation}
\begin{split}
\frac{-2ra}
{(-a+r(\omega^2)/2)^2+d^2(\omega^2)^2} \leq &
\frac{-2a/r}
{(-a/r+(\omega^2)/2)^2}\\
\leq & \frac{1}{(\omega^2)}.
\end{split}
\end{equation}

Assume that $r=0$.
If $d \ne 0$, then

Then
\begin{equation}
\begin{split}
\frac{t(r)+(c_\omega+1)d^2(\omega^2)}
{(-a+r(\omega^2)/2)^2+d^2(\omega^2)^2}
\leq & \frac{(c_\omega+1)d^2(\omega^2)}
{a^2+d^2(\omega^2)^2/2}
\leq  \frac{2(c_\omega+1)}{(\omega^2)}.
\end{split}
\end{equation}

If $r=d=0$, then
$x=v({\cal O}_C(l)), v({\cal O}_C(l-1)[1])$.
Then $-(\eta^2)=1$ and
$a_\beta({\cal O}_C(l)), 1-a_\beta({\cal O}_C(l))>0$.
Hence 
\begin{equation}
\frac{\sqrt{-(\eta^2)}}{|Z_{(\beta,\omega)}(x)|}
\leq \max \left\{
\frac{1}{a_\beta({\cal O}_C(l))},
\frac{1}{1-a_\beta({\cal O}_C(l))} 
\right\}.
\end{equation}

\end{proof}
\end{NB}

\begin{lem}[{\cite[Lem. 3.20]{Toda}}]\label{lem:Bogomolov(1-dim)}
There is a constant $C_\omega$ depending only on 
the class $\omega \in C^+(X)$ such that
\begin{equation}
(c_1(E)^2)(\omega^2)+C_\omega (c_1(E),\omega)^2 \geq 0
\end{equation}
for all purely 1-dimensional objects $E \in {\frak C}^\beta$. 
\end{lem}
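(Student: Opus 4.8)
The plan is to reduce the claimed inequality to a boundedness statement for effective curve classes and then settle it with the Hodge index theorem and adjunction. The first observation is that the quantity $(c_1(E)^2)(\omega^2)+C_\omega(c_1(E),\omega)^2$ depends only on $D:=c_1(E)\in\NS(X)_{\Bbb R}$ and is homogeneous of degree $2$ in $D$. Hence it suffices to bound the scale-invariant function $D\mapsto -(D^2)(\omega^2)/(D,\omega)^2$ from above over the classes $D$ occurring as $c_1$ of a purely $1$-dimensional object of ${\frak C}^\beta$, and then take $C_\omega$ to be this supremum. Because $\omega\in{\Bbb R}_{>0}H$ with $H=\pi^*(\text{ample})$ and the dimension of objects of ${\frak C}^\beta$ is measured by the $H$-twisted Hilbert polynomial, a genuinely $1$-dimensional object has $(D,H)>0$, so $(D,\omega)>0$ and the function is well defined. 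Note that the exceptional class $C$, the only effective ray on which $(\,\cdot\,,\omega)$ vanishes, produces $0$-dimensional objects such as ${\cal O}_C(l)$ (since $(H,C)=0$) and is therefore excluded; this is exactly what makes the denominator safely positive.

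Next I would push forward to $Y$. Writing $\omega=\lambda\,\pi^*L$ with $L$ ample on $Y$, and $D=\pi^*D_Y-mC$ with $D_Y:=\pi_*D$ an effective class on $Y$ and $m\in{\Bbb Z}_{\geq 0}$, one computes $(D,\omega)=\lambda(D_Y,L)$ and $(D^2)=(D_Y^2)-m^2$. A purely $1$-dimensional object of ${\frak C}^\beta$ pushes forward to a purely $1$-dimensional sheaf on $Y$, and $m$ is its multiplicity at the center of the blow-up; thus the elementary bound "multiplicity $\leq$ degree" for a very ample $L$ gives $m\leq (D_Y,L)$, so $m^2\leq (D_Y,L)^2$. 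The inequality to be proved then reduces to
\[
-(D_Y^2)\ \leq\ \kappa\,(D_Y,L)^2
\]
for a constant $\kappa$ depending only on $(Y,L)$, after which one may take $C_\omega=(L^2)(1+\kappa)$.

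The lower bound for $(D_Y^2)$ is the heart of the matter, and here I would use adjunction together with the Hodge index theorem. For an irreducible curve $\Gamma$ on $Y$, adjunction gives $(\Gamma^2)=2p_a(\Gamma)-2-(\Gamma,K_Y)\geq -2-(\Gamma,K_Y)$; decomposing $K_Y$ and $\Gamma$ orthogonally along $L$ and estimating the $L^\perp$-part by Cauchy--Schwarz on the negative-definite form $L^\perp$ (Hodge index), $(\Gamma,K_Y)$ is controlled by $(\Gamma,L)$ and $\sqrt{-(\Gamma^2)}$. Solving the resulting quadratic inequality in $\sqrt{-(\Gamma^2)}$ yields $-(\Gamma^2)\leq \kappa_1(\Gamma,L)+\kappa_2$ with $\kappa_1,\kappa_2$ depending only on $(Y,L)$. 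Writing a general effective class as $D_Y=\sum_i a_i\Gamma_i$ with $a_i>0$ and $\Gamma_i$ irreducible, the cross terms $(\Gamma_i,\Gamma_j)$ $(i\neq j)$ are non-negative, so $-(D_Y^2)\leq \sum_i a_i^2\bigl(\kappa_1(\Gamma_i,L)+\kappa_2\bigr)$; since $(\Gamma_i,L)\geq 1$ one has $\sum_i a_i^2(\Gamma_i,L)\leq (D_Y,L)^2$ and $\sum_i a_i^2\leq (D_Y,L)^2$, giving the displayed bound with $\kappa=\kappa_1+\kappa_2$.

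I expect the main obstacle to be precisely this lower bound on self-intersections: the non-ampleness of $\omega$ means curves can have $(\,\cdot\,,\omega)$ arbitrarily small only in the excluded direction $C$, while on $Y$ one must rule out effective curves of very negative self-intersection relative to their degree. Adjunction is what prevents this, but care is needed to pass from irreducible curves to an arbitrary effective class while keeping every estimate homogeneous of the correct degree, and to justify the multiplicity bound on $m$. Alternatively, since this is exactly the content of \cite[Lem.\ 3.20]{Toda} (stated there for $\beta=0$, the general case being identical), one may simply invoke Toda's inequality, the argument above indicating why it holds.
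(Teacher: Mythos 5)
Your final sentence is, in substance, the paper's entire proof: for ample $\omega$ the paper quotes \cite[Lem.~3.20]{Toda} directly, and for $\omega\in\pi^*(\Amp(Y)_{\Bbb R})$ it reduces to that lemma by twisting, noting that ${\frak C}^\beta(lC)={}^{-1}\Per(X/Y)$ and that $c_1(E(lC))=c_1(E)$ because $\rk E=0$. So if you take the citation route, you should at least record this twisting step, since Toda's lemma is stated for ${}^{-1}\Per(X/Y)$ and not for ${\frak C}^\beta$; the remark about $\beta=0$ versus general $\beta$ is the harmless part.

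Your direct argument, however, has a genuine gap exactly where the perverse category has to be used, namely the step $c_1(E)=\pi^*D_Y-mC$ with $m\in{\Bbb Z}_{\geq 0}$ interpreted as the multiplicity of $\Supp(\pi_*E)$ at the blown-up point. Two things are unproved there. (a) That a purely $1$-dimensional object $E\in{\frak C}^\beta$ is a sheaf with effective $c_1$: this is true, but needs the observation that $H^{-1}(E)[1]$, as well as any $0$-dimensional subsheaf, would be a $0$-dimensional perverse subobject, contradicting purity. (b) That $0\leq m$ and $m\leq \mathrm{const}\cdot(D_Y,L)$: perverse purity does \emph{not} by itself exclude components of $E$ supported on $C$, because a subsheaf isomorphic to ${\cal O}_C(l-1)$ lies in $S$ and hence is not a subobject in ${\frak C}^\beta$ at all; for such $E$ the pushforward forgets these components, so $m$ is not a multiplicity of the image curve, and since a component $jC$ contributes $-j$ to $(c_1(E),C)$, effectivity alone does not even give $m\geq 0$. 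The correct argument is categorical: purity gives $\Hom({\cal O}_C(l),E)=0$, while $\Ext^2({\cal O}_C(l),E)\cong\Hom(E,{\cal O}_C(l-1))^{\vee}=0$ holds automatically, because ${\cal O}_C(l-1)[1]\in{\frak C}^\beta$ and negative-degree $\Hom$'s between objects of the heart vanish; hence $0\geq\chi({\cal O}_C(l),E)=-(c_1(E),C)$, i.e.\ $m=(c_1(E),C)\geq 0$. The upper bound then follows by writing the effective class as $c_1(E)=\sum_i a_i\Gamma_i+jC$ with $\Gamma_i\neq C$, $j\geq 0$, and applying multiplicity $\leq$ degree only to the $\Gamma_i$. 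With (a) and (b) repaired, and noting that your reduction to $Y$ only treats $\omega\in\pi^*(\Amp(Y)_{\Bbb R})$ (for ample $\omega$ one runs the same adjunction/Hodge-index estimate on $X$ itself), your computation on $Y$ is sound and the proof closes; as written, though, the input that distinguishes perverse purity from mere effectivity is missing, and that input is precisely the content of the lemma of Toda that the paper chooses to cite.
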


\begin{proof}
If $\omega$ is ample, then it is nothing but
\cite[Lem. 3.20]{Toda}.
So we assume that $\omega \in \pi^*(\Amp(X))$.
If $l-\frac{1}{2}<(\beta,C)<l+\frac{1}{2}$, then
we have 
${\frak C}^\beta(lC)={^{-1}\Per}(X/Y)$ and
$c_1(E)=c_1(E(lC))$.
Hence the claim also follows from 
\cite[Lem. 3.20]{Toda}.
\end{proof}

Then the Bogomolov type inequality
\cite[Cor. 3.24]{Toda} holds.
\begin{lem}\label{lem:strong-Bogomolov}
For any $\sigma_{(\beta,\omega)}$-stable object
$E \in {\cal A}_{(\beta,\omega)}$,
\begin{equation}
(v(E)^2)+C_\omega \frac{(c_1(E(-\beta)),\omega)^2}{(\omega^2)} 
\geq -1.
\end{equation}
\end{lem}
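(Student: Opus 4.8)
The plan is to reduce the statement to Toda's Bogomolov-type inequality \cite[Cor.~3.24]{Toda}, whose proof goes through verbatim once the one-dimensional inequality Lemma~\ref{lem:Bogomolov(1-dim)} is available in the present (possibly non-ample) setting. Concretely, I would set $Q_\omega(E):=(v(E)^2)+C_\omega \frac{(c_1(E(-\beta)),\omega)^2}{(\omega^2)}$ and aim to prove $Q_\omega(E)\geq -1$ for every $\sigma_{(\beta,\omega)}$-stable $E$. The first observations are formal: $Q_\omega$ depends only on $v(E)$, it is invariant under the rescaling $\omega\mapsto s\omega$ (both $C_\omega$ and the correction term are scale invariant, and we may take $C_\omega\geq 0$ since enlarging the constant only strengthens Lemma~\ref{lem:Bogomolov(1-dim)}), and writing $\Pi=\Pi_R+\sqrt{-1}\,\Pi_I:=e^{\beta+\sqrt{-1}\omega}$ one computes $(\Pi_R^2)=(\Pi_I^2)=(\omega^2)$ and $(\Pi_R,\Pi_I)=0$. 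Hence $\langle \Pi_R,\Pi_I\rangle$ is a positive plane for the Mukai form, whose orthogonal complement is exactly $\ker Z_{(\beta,\omega)}=\{v\mid (\Pi_R,v)=(\Pi_I,v)=0\}$; since the correction term equals $(\Pi_I,v(E))^2/(\omega^2)$ and so vanishes there, $Q_\omega$ restricts to the (negative definite) Mukai form on $\ker Z_{(\beta,\omega)}$. This negative-definiteness on $\ker Z$, together with the positivity of $Z$ on the heart, is what will make $Q_\omega$ behave convexly along rays of constant phase.

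Next I would treat the base cases. For a purely $1$-dimensional stable object ($\rk v=0$ with $(c_1,\omega)\neq0$), multiplying by $(\omega^2)$ turns $Q_\omega(E)\geq 0$ into exactly the inequality of Lemma~\ref{lem:Bogomolov(1-dim)}. For $0$-dimensional stable objects one computes directly: skyscrapers give $Q_\omega=0$, while the exceptional objects ${\cal O}_C(l)$ and ${\cal O}_C(l-1)[1]$ have $(v^2)=(C^2)=-1$ and $(c_1,\omega)=(C,\omega)=0$, hence $Q_\omega=-1$. This is precisely where the constant $-1$ on the right-hand side comes from, and it can only be saturated by these rigid objects. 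Finally, running the rescaling $s\omega$ with $s\to\infty$, a stable object of positive rank becomes (perverse) $\mu$-semistable by Proposition~\ref{prop:large} and Lemma~\ref{lem:l=infty}, so $(v(E)^2)\geq 0$ by the perverse Bogomolov inequality Lemma~\ref{lem:perverse-Bogomolov}; since $C_\omega\geq0$, this yields $Q_\omega(E)\geq 0$ at the large-volume end.

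With the base cases in hand I would conclude by descending induction on the volume. A $\sigma_{(\beta,\omega)}$-stable $E$ stays stable as $s$ increases until it meets a wall, where it becomes an extension of stable objects $E_i$ of strictly smaller mass and equal phase, each satisfying $Q_\omega(E_i)\geq -1$ by induction. Because $Q_\omega$ is negative definite on $\ker Z_{(\beta,\omega)}$ and the $Z(E_i)$ lie on a common ray, the classes $v(E_i)$ span a plane on which $Q_\omega$ has Lorentzian signature with the $v(E_i)$ in one cone; the reverse Cauchy--Schwarz inequality for such classes gives $Q_\omega(\sum_i v(E_i))\geq 0$ whenever all $Q_\omega(E_i)\geq0$, and propagates the bound $\geq -1$ in general. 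Tracing back from $s\to\infty$ to the given $\omega$ then yields $Q_\omega(E)\geq -1$.

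The hard part will be the last step when an exceptional factor with $Q_\omega(E_i)=-1$ occurs: such a class is space-like for $Q_\omega$, so the naive time-like reverse Cauchy--Schwarz argument does not apply directly, and one must exploit that ${\cal O}_C(l)$ and ${\cal O}_C(l-1)[1]$ are rigid and can share the phase of $E$ only in the degenerate phase-$1$ locus, which bounds their multiplicity as Jordan--H\"older factors. The second delicate point, already isolated above, is that for non-ample $\omega$ the large-volume limit is governed by perverse coherent (rather than ordinary) Gieseker stability, so the identification of the limit and the verification of the base case must rest on the analysis of ${\frak C}^\beta$ in \cite{perv2} (Lemmas~\ref{lem:l=infty} and~\ref{lem:perverse-Bogomolov}) in place of the classical theory.
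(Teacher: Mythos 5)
Your proposal takes essentially the same route as the paper: the paper's entire proof of this lemma is the reduction to Toda's Bogomolov-type inequality \cite[Cor.~3.24]{Toda}, after noting that Toda's results (stated for $\beta=0$) hold for general $\beta$ and that the key input, the $1$-dimensional inequality of Lemma~\ref{lem:Bogomolov(1-dim)}, extends to the perverse heart ${\frak C}^\beta$ even when $\omega \in \pi^*(\Amp(Y)_{\Bbb R})$ is not ample. Your additional reconstruction of the internals of Toda's argument (large-volume base case, wall-crossing induction, treatment of the exceptional factors ${\cal O}_C(l)$, ${\cal O}_C(l-1)[1]$) goes beyond what the paper records but does not change the fact that the core step is the identical citation-based reduction.
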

As in \cite[sect. 3.7]{Toda},
there is a constant $A$
depending on $C_\omega$, $(\omega^2)$,
$\frac{1}{a_\beta({\cal O}_C(l))}$ and
$\frac{1}{1-a_\beta({\cal O}_C(l))}$ 
such that 
\begin{equation}
\frac{\max \{|r|,|a|,|d(\omega^2)|,\sqrt{-(\eta^2)}\}}
{|Z_{(\beta,\omega)}(E)|} \leq A
\end{equation}
for all $\sigma_{(\beta,\omega)}$-stable objects
$E$, where 
$v(E)=e^\beta(r+d \omega+\eta+a \varrho_X)$,
$\eta \in \omega^\perp$ and
$l-\frac{1}{2}<(\beta,C)<l+\frac{1}{2}$.
More precisely,
if $E \ne {\cal O}_C(l),{\cal O}_C(l-1)[1]$, then
we have a bound $A$ depending only on
$C_\omega$ and $(\omega^2)$ as in \cite[sect. 3.7]{Toda}.
If $E={\cal O}_C(l),{\cal O}_C(l-1)[1]$, then
$\frac{1}{a_\beta({\cal O}_C(l))}$ and
$\frac{1}{1-a_\beta({\cal O}_C(l))}$ appear.

In particular, we have the following support property.
\begin{lem}[{\cite[Prop. 3.13]{Toda}}]\label{lem:support}
Let $B$ be a compact subset of  
$\NS(X)_{\Bbb R} \times \Amp(X)_{\Bbb R}$.
Then there is a constant $C_B$ such that 
for $(\beta,\omega) \in B$ satisfying 
$\beta \in \NS(X)_{\Bbb Q}$, $\omega \in {\Bbb R}_{>0}H$,
$H \in \Amp(X)_{\Bbb Q}$ and
$\sigma_{(\beta,\omega)}$-stable object $E$,
\begin{NB}
Wrong statement:
Let $B$ be a compact subset of  
$\NS(X)_{\Bbb R} \times P^+(X)_{\Bbb R}$
such that $(\beta,\omega) \in B$ satisfies
$(\beta,C) \not \in \frac{1}{2}+{\Bbb Z}$
if $\omega \in \pi^*(\Amp(Y)_{\Bbb Q})$.
Then there is a constant $C_B$ such that 
for $(\beta,\omega) \in B$ satisfying 
$\beta \in \NS(X)_{\Bbb Q}$, $\omega \in {\Bbb R}_{>0}H$,
$H \in \Amp(X)_{\Bbb Q}
\cup \pi^*(\Amp(Y)_{\Bbb Q})$ and
$\sigma_{(\beta,\omega)}$-stable object $E$,
\end{NB}
 \begin{equation}\label{eq:C_B}
\frac{||v(E)||}{|Z_{(\beta,\omega)}(E)|} \leq C_B,
\end{equation}
where $||v(E)||$ is a fixed norm on $H^*(X,{\Bbb R})_{\alg}$.
Assume that $B$ is a compact subset of
$\NS(X)_{\Bbb R} \times \pi^*(\Amp(Y)_{\Bbb R})$
such that 
$(\beta,\omega) \in B$ satisfies
$(\beta,C) \not \in \frac{1}{2}+{\Bbb Z}$.
Then \eqref{eq:C_B} also holds.
 \end{lem}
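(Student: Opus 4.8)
The plan is to reduce the assertion to the estimate displayed in the paragraph immediately preceding the lemma, namely that there is a constant $A$ with
$$\frac{\max\{|r|,|a|,|d(\omega^2)|,\sqrt{-(\eta^2)}\}}{|Z_{(\beta,\omega)}(E)|}\le A$$
for every $\sigma_{(\beta,\omega)}$-stable object $E$, where $v(E)=e^\beta(r+d\omega+\eta+a\varrho_X)$ with $\eta\in\omega^\perp$. This is precisely the coordinate form of the Bogomolov inequality of Lemma \ref{lem:strong-Bogomolov}, as in \cite[sect. 3.7]{Toda}. Two things then remain: (i) to compare the fixed reference norm $\|v(E)\|$ with the $(\beta,\omega)$-dependent quantity $N_{(\beta,\omega)}(v):=\max\{|r|,|a|,|d(\omega^2)|,\sqrt{-(\eta^2)}\}$ uniformly over $B$, and (ii) to bound the constant $A$ uniformly over $B$.

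For (i) I would observe that $v\mapsto(r,d,\eta,a)$ is a linear isomorphism $L_{(\beta,\omega)}$ of $H^*(X,{\Bbb R})_{\alg}$ onto ${\Bbb R}\oplus{\Bbb R}\oplus\omega^\perp\oplus{\Bbb R}$ whose matrix entries are rational functions of $(\beta,\omega)$ with denominators involving only $(\omega^2)$, hence continuous on $B$. Since $(\omega^2)$ stays bounded away from $0$ on the compact set $B$ and, by the Hodge index theorem, $-(\,\cdot\,^2)$ is positive definite on $\omega^\perp$ with bounds controlled on $B$, the function $N_{(\beta,\omega)}$ is a genuine norm depending continuously on $(\beta,\omega)$. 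Compactness of $B$ then yields a constant $c_B$ with $c_B^{-1}\|v\|\le N_{(\beta,\omega)}(v)\le c_B\|v\|$ for all $v$ and all $(\beta,\omega)\in B$; this step is routine.

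For (ii), in the ample case $B\subset\NS(X)_{\Bbb R}\times\Amp(X)_{\Bbb R}$ the constant $A$ depends only on $C_\omega$ and $(\omega^2)$, both continuous in $\omega$ and hence bounded on $B$; here the exceptional objects ${\cal O}_C(l),{\cal O}_C(l-1)[1]$ cause no degeneration, since $(C,\omega)>0$ forces a nonzero imaginary part in their central charges and keeps $|Z_{(\beta,\omega)}|$ bounded below. In the non-ample case $\omega\in\pi^*(\Amp(Y)_{\Bbb R})$ one has $(C,\omega)=0$, so these objects have real central charge and $A$ acquires the extra factors $\tfrac1{a_\beta({\cal O}_C(l))}$ and $\tfrac1{1-a_\beta({\cal O}_C(l))}$. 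I would compute $a_\beta({\cal O}_C(l))=-(e^\beta,v({\cal O}_C(l)))=l+\tfrac12-(\beta,C)$, an affine-linear function of $(\beta,C)$ lying in $(0,1)$ exactly on the strip $l-\tfrac12<(\beta,C)<l+\tfrac12$ and hitting $0$ or $1$ precisely on the walls $(\beta,C)\in\tfrac12+{\Bbb Z}$. Since $B$ is bounded, $(\beta,C)$ ranges over a bounded interval, so only finitely many $l$ occur; and the hypothesis $(\beta,C)\notin\tfrac12+{\Bbb Z}$ together with compactness gives $\operatorname{dist}(B,\{(\beta,C)\in\tfrac12+{\Bbb Z}\})>0$, whence $a_\beta({\cal O}_C(l))$ is bounded away from $0$ and $1$ uniformly on $B$, bounding $A$.

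Combining the two steps gives
$$\frac{\|v(E)\|}{|Z_{(\beta,\omega)}(E)|}\le c_B\,\frac{N_{(\beta,\omega)}(v(E))}{|Z_{(\beta,\omega)}(E)|}\le c_B\,\sup_B A=:C_B,$$
which is the desired inequality. The main obstacle is step (ii) in the non-ample case: one must check that the wall-avoidance hypothesis $(\beta,C)\notin\tfrac12+{\Bbb Z}$ is exactly what prevents the exceptional objects ${\cal O}_C(l)$ and ${\cal O}_C(l-1)[1]$ from driving the ratio to infinity, which explains why this extra assumption is needed in the second half of the statement but not in the first.
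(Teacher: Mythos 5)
Your proposal is correct and follows essentially the same route as the paper: the paper derives the lemma ``in particular'' from the preceding paragraph's bound $\max\{|r|,|a|,|d(\omega^2)|,\sqrt{-(\eta^2)}\}/|Z_{(\beta,\omega)}(E)|\leq A$ (with $A$ depending on $C_\omega$, $(\omega^2)$, $1/a_\beta({\cal O}_C(l))$, $1/(1-a_\beta({\cal O}_C(l)))$), combined with compactness of $B$. Your two supplementary steps—the uniform comparison of the $(\beta,\omega)$-dependent norm with a fixed norm, and the computation $a_\beta({\cal O}_C(l))=l+\tfrac{1}{2}-(\beta,C)$ showing that the hypothesis $(\beta,C)\notin\tfrac{1}{2}+{\Bbb Z}$ on a compact set keeps these factors bounded—are exactly the details the paper leaves implicit, and they are carried out correctly.
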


\begin{NB}
\begin{rem}
Assume that $B$ contains
$(\beta,\omega)$ such that
$\omega \in \pi^*(\Amp(Y)_{\Bbb Q})$ and
$(\beta,C) \in \frac{1}{2}+{\Bbb Z}$.
For a stability condition $\sigma_{(\beta,\omega)}$ with
$(\beta,\omega) \in B$ and a 
$\sigma_{(\beta,\omega)}$-stable object $E$ with
$v(E) \ne v({\cal O}_C(a))$, $a \in {\Bbb Z}$,
the same inequality \eqref{eq:C_B} holds.
\end{rem}
\end{NB}

\begin{NB}
If $E$ is $S$-equivalent to
$\oplus_i E_i$, then
$Z_\sigma(E_i)=\lambda_i Z_\sigma(E)$, $\lambda_i>0$.
Since $\sum_i |Z_\sigma(E_i)|=\sum_i \lambda_i |Z_\sigma(E)|
=|\sum_i \lambda_i Z_\sigma(E)|=|\sum_i Z_\sigma(E_i)|=
|Z_\sigma(E)|$,
$||v(E)|| \leq \sum_i ||v(E_i)|| \leq \sum_i C_B|Z_\sigma(E_i)|
=C_B |Z_\sigma(E)|$.
\end{NB}

Let $(\beta,\omega)$ satisfy the conditions in 
Lemma \ref{lem:support}. 
By the support property, we have a wall/chamber structure
in a neighborhood $U$ of $\sigma_{(\beta,\omega)}$.
Thus 
\begin{equation}
\{ \sigma \in U \mid Z_\sigma(E)/Z_\sigma(E_1) \in {\Bbb R}_{>0},
\text{ $E_1 \subset E$ in ${\cal A}_\sigma$,
 $E, E_1$ are $\sigma$-semi-stable, $v(E)=v$ } 
\} 
\end{equation}
is the set of walls for $v$ and a connected component
of the complement is a chamber in $U$.

\begin{NB}
\begin{defn}
$\sigma$ is not general with respect to $v$
if and only if there are $\sigma_\sigma$-semi-stable
objects $E_1$ and $E_2$ such that $v(E_1) \not \in {\Bbb Q}v$,
$v=v(E_1)+v(E_2)$ and $\phi_\sigma(E_1)
=\phi_\sigma(E_2)$.
\end{defn}
\end{NB}

\begin{rem} 
Assume that $\NS(X)={\Bbb Z}H$.
Then the Bogomolov inequality holds for any
semi-stable object of ${\cal A}_{(\beta,\omega)}$.
As in \cite[Prop. 5.7]{MYY:2011:2}, 
if $v_1$ defines a wall, then
we have $(v_1^2) \geq 0$, $(v_2^2) \geq 0$ and
$(v_1,v_2)>0$, where $v_2=v-v_1$. 
Since we don't know the sufficient condition
for the existence of stable objects $E_i$ with $v(E_i)=v_i$, 
the condition is only a 
necessary condition.
\end{rem}

We set 
\begin{equation}
\begin{split}
{\cal K}(X):= &\{(\beta,\omega) \mid \beta \in \NS(X)_{\Bbb R},
\omega \in \Amp(X)_{\Bbb R} \}\\
\partial {\cal K}(X):= & \{(\beta,\omega) \mid 
\beta \in \NS(X)_{\Bbb R},
\omega \in \pi^*(\Amp(Y)_{\Bbb R}),
(\beta,C) \not \in \tfrac{1}{2}+{\Bbb Z} \}\\
\overline{\cal K}(X):=& {\cal K}(X) \cup \partial {\cal K}(X).
\end{split}
\end{equation}

We have a natural embedding 
$\overline{\cal K}(X) \to \NS(X)_{\Bbb R} \times P^+(X)_{\Bbb R}$.

\begin{prop}[{\cite{Toda2}}]
We have a map ${\frak s}:\overline{\cal K}(X) \to \Stab(X)$
such that $Z_{{\frak s}(\beta,\omega)}=e^{\beta+\sqrt{-1}\omega}$
and ${\frak s}(\beta,\omega)=\sigma_{(\beta,\omega)}$
if $\beta \in \NS(X)_{\Bbb Q}$ and
$\omega \in {\Bbb R}_{>0}H$, $H \in \NS(X)$. 
\end{prop}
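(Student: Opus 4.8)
The plan is to produce ${\frak s}$ as the continuous lift, through the map $\Pi:\Stab(X)\to H^*(X,{\Bbb C})_{\alg}$ (which is a local isomorphism wherever the support property holds), of the central-charge map $Z:(\beta,\omega)\mapsto e^{\beta+\sqrt{-1}\omega}$, using Lemma \ref{lem:support} to guarantee that the lift exists and stays inside $\Stab(X)$ up to the boundary $\partial{\cal K}(X)$. First I would record that $Z$ is continuous and injective: $\beta$ and $\omega$ are the real and imaginary parts of the $c_1$-component of $e^{\beta+\sqrt{-1}\omega}$, which lies in $\NS(X)_{\Bbb C}$, so $Z$ is a topological embedding. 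Since the form on $H^*(X,{\Bbb Q})_{\alg}$ is nondegenerate and $Z_{(\beta,\omega)}(E)=(e^{\beta+\sqrt{-1}\omega},v(E))$ depends on $E$ only through $v(E)$, we have $\Pi(\sigma_{(\beta,\omega)})=e^{\beta+\sqrt{-1}\omega}$, so the required identity $Z_{{\frak s}(\beta,\omega)}=e^{\beta+\sqrt{-1}\omega}$ is exactly $\Pi\circ{\frak s}=Z$. The problem thus becomes the construction of a continuous section of $\Pi$ over $Z(\overline{\cal K}(X))$ agreeing with the explicit family $\sigma_{(\beta,\omega)}$ on the rational locus $R\subset\overline{\cal K}(X)$ where $\beta\in\NS(X)_{\Bbb Q}$ and $\omega\in{\Bbb R}_{>0}H$.

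Second, I would construct the lift locally and check consistency on the interior. For each $(\beta,\omega)\in R$ the explicitly constructed $\sigma_{(\beta,\omega)}$ (subsections \ref{subsect:stab}, \ref{subsect:moduli-on-wall}) is a genuine stability condition satisfying, by Lemma \ref{lem:support}, the support property uniformly over a compact neighbourhood; hence $\Pi$ restricts to a homeomorphism of an open neighbourhood $U$ of $\sigma_{(\beta,\omega)}$ onto an open subset of $H^*(X,{\Bbb C})_{\alg}$, and I would set ${\frak s}:=(\Pi|_U)^{-1}\circ Z$ over the preimage of that subset. To see that this is consistent with the explicit family, and hence that the local pieces glue, I would show that for $(\beta',\omega')\in R$ close to $(\beta,\omega)$ the condition $\sigma_{(\beta',\omega')}$ itself lies in $U$: the tilted hearts ${\cal A}_{(\beta',\omega')}$ and ${\cal A}_{(\beta,\omega)}$ differ only by the $\mu$-stable objects whose value of $d_\beta$ changes sign as $(\beta,H)$ moves, and such objects have $\sigma_{(\beta,\omega)}$-phase tending to $0$ or $1$, so the passage from one heart to the other is an admissible small tilt within $U$. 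Injectivity of $\Pi|_U$ then forces $\sigma_{(\beta',\omega')}=(\Pi|_U)^{-1}(Z(\beta',\omega'))$, so the explicit family is continuous on $R$ and the local inverses agree on overlaps. As $R$ is dense in the interior ${\cal K}(X)$ and $\Pi$ is a local homeomorphism, these patch to a continuous ${\frak s}$ on all of ${\cal K}(X)$ with $\Pi\circ{\frak s}=Z$, filling in the irrational points automatically.

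Finally, the boundary $\partial{\cal K}(X)$ is where I expect the real difficulty. A point $(\beta,\omega)$ with $\omega\in\pi^*(\Amp(Y)_{\Bbb R})$ and $(\beta,C)\notin\frac12+{\Bbb Z}$ is the limit of interior points $(\beta,\pi^*(L)-\epsilon C)$ with $\pi^*(L)-\epsilon C$ ample for small $\epsilon>0$, so ${\frak s}$ is already defined nearby; the task is to show $\lim_{\epsilon\to0}{\frak s}(\beta,\pi^*(L)-\epsilon C)$ exists in $\Stab(X)$ and equals $\sigma_{(\beta,\omega)}$, whose heart is now a tilt of the perverse category ${\frak C}^\beta$ rather than of $\Coh(X)$. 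The decisive input is the second clause of Lemma \ref{lem:support}: the support property is uniform on compact subsets of $\partial{\cal K}(X)$ avoiding $(\beta,C)\in\frac12+{\Bbb Z}$, bounding $\|v(E)\|/|Z_{(\beta,\omega)}(E)|$ uniformly and thereby preventing the family from degenerating — no semistable class acquires vanishing central charge and the heart does not collapse — as the boundary is approached. Granting that the limit lies in $\Stab(X)$, I would identify its heart with ${\cal A}_{(\beta,\omega)}$ by matching the objects whose phase tends to the extremes, namely the irreducible perverse objects ${\cal O}_C(l)$ and ${\cal O}_C(l-1)[1]$ (subsection \ref{subsect:moduli-on-wall}), after which injectivity of $\Pi$ near the boundary point again pins the limit to $\sigma_{(\beta,\omega)}$. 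The main obstacle is precisely this categorical matching across the wall $\omega\in\pi^*(\Amp(Y))$, together with confirming that the excluded locus $(\beta,C)\in\frac12+{\Bbb Z}$, where $(e^\beta,v({\cal O}_C(l-1)))=0$ and no perverse heart can be formed, is exactly where the support property, and hence the lift ${\frak s}$, must break down.
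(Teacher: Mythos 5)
Your overall skeleton---construct the explicit family at rational points, use the support property (Lemma \ref{lem:support}) to make $\Pi$ a local homeomorphism, and lift $(\beta,\omega)\mapsto e^{\beta+\sqrt{-1}\omega}$ through $\Pi$---is indeed the skeleton of Toda's argument that the paper simply invokes; your interior gluing is essentially \cite{Toda} and is not where the content of this proposition lies. But the one piece of mathematics the paper itself supplies, because Toda's proof needs it in this generalized setting, is exactly the step you set aside as ``the main obstacle'': the lemma stated right after the proposition (the analogue of \cite[Prop.~3.14]{Toda}), asserting that for every stability condition $\sigma$ in a small neighborhood $U'$ of $\sigma_0=\sigma_{(\beta_0,\omega_0)}$, $(\beta_0,\omega_0)\in\partial{\cal K}(X)$, lying on the side $(\omega,C)\geq 0$, one has ${\cal M}_{\sigma}(\varrho_X)=\{k_x\mid x\in X\}$. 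The paper proves this by classifying the $\sigma_0$-stable objects of phase $1$ (they are ${\cal O}_C(l)$, ${\cal O}_C(l-1)[1]$ and $k_x$ with $x\in X\setminus C$), deducing that ${\cal M}_{\sigma_0}(\varrho_X)$ consists of the two kinds of extensions between ${\cal O}_C(l)$ and ${\cal O}_C(l-1)[1]$, their direct sum, and $k_x$ off $C$, and then observing that $\phi_\sigma({\cal O}_C(l))<1<\phi_\sigma({\cal O}_C(l-1)[1])$ for $(\omega,C)\geq 0$, so only the non-split extensions $0\to{\cal O}_C(l)\to E\to{\cal O}_C(l-1)[1]\to 0$, i.e.\ the $k_x$ with $x\in C$, remain semi-stable. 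This ``geometricity'' of nearby stability conditions, combined with the uniqueness (due to Toda, following Bridgeland) of a stability condition with prescribed central charge for which all $k_x$ are semi-stable of one phase, is what identifies the deformation of $\sigma_0$ with the Arcara--Bertram family $\sigma_{(\beta,\omega)}$.

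Without that lemma your proposed mechanism does not close. Local injectivity of $\Pi$ identifies two stability conditions with the same central charge only once they are known to lie in a single injectivity neighborhood; a priori the limit of ${\frak s}(\beta,\pi^*(L)-\epsilon C)$, even if it exists, could be a different preimage of $e^{\beta+\sqrt{-1}\pi^*(L)}$ far from $\sigma_{(\beta_0,\omega_0)}$, so ``injectivity of $\Pi$ pins the limit'' is circular. Relatedly, your ``decisive input''---a support bound uniform on compact sets approaching the boundary from the ample side---is not what Lemma \ref{lem:support} provides: its two clauses treat compact subsets of the interior and of $\partial{\cal K}(X)$ separately, and the boundary constant involves $1/a_\beta({\cal O}_C(l))$ and $1/(1-a_\beta({\cal O}_C(l)))$, which is precisely why a single statement straddling both regions is avoided in the paper. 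The efficient argument runs in the opposite direction from yours: one does not take limits inward from the interior, but deforms outward from the explicitly constructed $\sigma_0$---where the second clause of Lemma \ref{lem:support} applies and Bridgeland's deformation theorem needs the support property only at that point---into the region $(\omega,C)\geq 0$, and then uses the skyscraper lemma above to match this deformation with the interior family.
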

For the proof of this result, we also need the following.
Then the same proof of \cite{Toda} works.

\begin{lem}[{\cite[Prop. 3.14]{Toda}}]
Assume that $(\beta_0,\omega_0) \in 
\partial {\cal K}(X)$.  
Let $U$ be an open neighborhood of $\sigma_0:=\sigma_{(\beta_0,\omega_0)}$
in $\Stab(X)$. 
We set
$V:=\{\sigma \mid \Pi(\sigma)=e^{\beta+\sqrt{-1}\omega}, (\omega,C) \geq 0\}$.
Then there is an open neighborhood $U'$ of $\sigma_0$ such that
${\cal M}_{\sigma}(\varrho_X)=\{ k_x \mid x \in X \}$
for all $\sigma \in U' \cap V$.
\end{lem}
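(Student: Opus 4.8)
The plan is to reduce the statement to a classification of the $\sigma$-stable objects of phase $1$ near $\sigma_0$ and then to assemble every $\sigma$-semistable object of class $\varrho_X$ out of them. For every $\sigma \in V$ one computes $Z_\sigma(\varrho_X)=(e^{\beta+\sqrt{-1}\omega},\varrho_X)=-1$, so any $\sigma$-semistable $E$ with $v(E)=\varrho_X$ has $\phi_\sigma(E)=1$, lies in ${\cal A}_\sigma$, and all its Jordan--H\"older factors are $\sigma$-stable of phase $1$. First I would take $U'$ with compact closure contained in the region of Lemma \ref{lem:support}; then for any stable factor $F$ of $E$ one has $|Z_\sigma(F)|\le |Z_\sigma(E)|=1$, and the support property bounds $\|v(F)\|$. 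Since the classes $v(F)$ lie in the image of $\ch\colon K(X)\to H^*(X,{\Bbb Q})_{\alg}$, which is a lattice, only finitely many of them occur.

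Next I would identify this finite list. A phase-$1$ factor $F$ satisfies $\operatorname{Im}Z_\sigma(F)=(\omega,c_1(F)-\rk F\,\beta)=0$ and $\operatorname{Re}Z_\sigma(F)<0$; letting $\sigma\to\sigma_0$ the first condition becomes $c_1(F)-\rk F\,\beta_0\in\omega_0^\perp$, and since $\omega_0\in\pi^*(\Amp(Y)_{\Bbb R})$ we have $C\in\omega_0^\perp$. Combining the boundedness of $\|v(F)\|$ with the Bogomolov inequality of Lemma \ref{lem:strong-Bogomolov}, I expect that after shrinking $U'$ the only classes $w$ of bounded norm with $Z_{\sigma_0}(w)\in{\Bbb R}_{<0}$ carrying a stable object are $\varrho_X=v(k_x)$ and the two classes $v({\cal O}_C(l))$, $v({\cal O}_C(l-1)[1])$, whose central charges become real precisely on the locus $(\omega,C)=0$, that is, on $\partial{\cal K}(X)$. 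Hence, after further shrinking $U'$, the only wall for $\varrho_X$ meeting $U'\cap V$ is $\partial{\cal K}(X)$, and on the ample side $(\omega,C)>0$ the classes $v({\cal O}_C(l))$, $v({\cal O}_C(l-1)[1])$ have non-real central charge, so the classification leaves only the point sheaves: the $\sigma$-semistable objects of class $\varrho_X$ are exactly the $k_x$ $(x\in X)$, as in the Arcara--Bertram case \cite{AB}.

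It then remains to treat the wall $\partial{\cal K}(X)\cap U'$ by $S$-equivalence. There the factors of $E$ are either a single $k_x$ $(x\notin C)$, in which case $E\cong k_x$; or, using $v({\cal O}_C(l))+v({\cal O}_C(l-1)[1])=\varrho_X$, exactly one copy each of ${\cal O}_C(l)$ and ${\cal O}_C(l-1)[1]$. In the latter case the hypothesis $(\omega,C)\ge 0$ is decisive: it gives $\operatorname{Im}Z_\sigma({\cal O}_C(l))=(\omega,C)\ge 0$, so ${\cal O}_C(l)$ can occur only as a subobject of phase $\le 1$, and the semistable extensions of class $\varrho_X$ are precisely the sheaves $k_x$ $(x\in C)$ together with their common polystable degeneration ${\cal O}_C(l)\oplus{\cal O}_C(l-1)[1]$; all of these are $S$-equivalent to $k_x$ $(x\in C)$. (On the opposite side $(\omega,C)<0$ the roles of the two factors are exchanged and one finds ${\bf L}\pi^*(k_y)$ instead, as recalled in the introduction.) Since conversely every $k_x$ is $\sigma$-semistable, this yields ${\cal M}_\sigma(\varrho_X)=\{k_x\mid x\in X\}$ for all $\sigma\in U'\cap V$.

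The main obstacle is the classification in the second paragraph: controlling \emph{all} integral classes of bounded norm whose central charge becomes real and negative as $\sigma\to\sigma_0$, and checking that the associated stable objects are only the three families above. This is exactly where the support property (Lemma \ref{lem:support}) and the Bogomolov inequality (Lemma \ref{lem:strong-Bogomolov}) enter, and where the sign condition $(\omega,C)\ge 0$ must be used to separate the behavior on $V$ from that on the perverse side.
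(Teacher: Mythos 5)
Your opening and closing steps follow the paper's route: reduce to Jordan--H\"older factors of phase $1$, show ${\cal O}_C(l)$ and ${\cal O}_C(l-1)[1]$ stay $\sigma$-stable near $\sigma_0$, use $(\omega,C)\ge 0$ to force ${\cal O}_C(l)$ to be the subobject, and identify the nontrivial extensions $0 \to {\cal O}_C(l) \to E \to {\cal O}_C(l-1)[1] \to 0$ with $k_x$, $x \in C$. But the decisive middle step --- the classification of the possible stable factors --- is exactly the point you leave at the level of ``I expect'', and the numerical argument you propose in its place cannot deliver it. Lemma \ref{lem:support} together with Lemma \ref{lem:strong-Bogomolov} only gives \emph{finiteness} of candidate classes; it does not cut the list down to $\varrho_X$, $v({\cal O}_C(l))$, $v({\cal O}_C(l-1)[1])$. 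Indeed your claimed classification is false in general: for a $\mu$-stable locally free sheaf $F$ with $(c_1(F)-\rk F\,\beta_0,\omega_0)=0$ (e.g.\ $F={\cal O}_X$ when $\beta_0=0$), the shift $F[1]$ lies in ${\cal A}_{\sigma_0}$ and is $\sigma_0$-stable of phase $1$ (this is the Uhlenbeck phenomenon recalled in the introduction and in section \ref{subsect:ample}), with $Z_{\sigma_0}(F[1])=a_{\beta_0}(F)-\rk F\,(\omega_0^2)/2 \in {\Bbb R}_{<0}$; when $(\omega_0^2)\le 2$ this class has $|Z_{\sigma_0}|\le 1$ and bounded norm, so it passes every test in your first two paragraphs yet is not on your list. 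Such classes can only be excluded by an argument you never make: that they cannot occur in a filtration whose total class is $\varrho_X$.

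The paper supplies precisely this missing step, and it is categorical rather than numerical. For a stable factor $A$ of phase $1$, the triangle $\pH^{-1}(A)[1] \to A \to \pH^0(A)$ (cohomology with respect to the perverse heart ${\frak C}^{\beta_0}$) is a short exact sequence in ${\cal A}_{\sigma_0}$ whose two terms both have phase $1$, so stability forces $A \in {\cal T}_{\beta_0}$ or $A \in {\cal F}_{\beta_0}[1]$. A phase-$1$ object of ${\cal T}_{\beta_0}$ has $d_{\beta_0}=0$, hence is a torsion object of ${\frak C}^{\beta_0}$ of rank $0$, while objects of ${\cal F}_{\beta_0}[1]$ have negative rank. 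Since the factors' classes sum to $\varrho_X$ (rank $0$) and no factor has positive rank, no negative-rank factor occurs either; thus every factor is a torsion perverse sheaf with $(c_1,\omega_0)=0$, i.e.\ a $0$-dimensional object of ${\frak C}^{\beta_0}$, and the classification of irreducible $0$-dimensional objects of ${^{-1}\Per}(X/Y)$ yields exactly $k_x$ ($x \notin C$), ${\cal O}_C(l)$, ${\cal O}_C(l-1)[1]$; finally $\varrho_X=n_1 v({\cal O}_C(l))+n_2 v({\cal O}_C(l-1)[1])$ forces $n_1=n_2=1$. Note that in this scheme the support property is used only for what you use it for in your first paragraph (after shrinking $U'$, a $\sigma$-semistable object of class $\varrho_X$ is still $\sigma_0$-semistable, and the two exceptional objects remain stable); it plays no role in the classification itself. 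To repair your write-up, replace the ``expected'' numerical classification by this torsion-pair/rank argument and the quoted classification of $0$-dimensional perverse sheaves.
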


\begin{proof}
For $E \in {\cal M}_{\sigma_0}(\varrho_X)$,
$\phi_{\sigma_0}(E)=1$. 
We first classify 
$\sigma_0$-stable objects $E$ with 
$\phi_{\sigma_0}(E)=1$.
Since $\phi_{\sigma_0}(\pH^{-1}(E)[1]),
\phi_{\sigma_0}(\pH^0(E)) \leq 1$,
$\phi_{\sigma_0}(\pH^{-1}(E)[1])=\phi_{\sigma_0}(\pH^0(E))=1$.
Hence $\pH^0(E)=0$ or $\pH^{-1}(E)=0$.
In the first case,
$0=-\rk E=\rk \pH^{-1}(E)>0$. Hence this case does not occur.
For the second case,
$E=\pH^0(E)$ satisfies $\rk \pH^0(E)=(c_1(E),\omega)=0$.
Hence $E$ is a 0-dimensional object of ${\frak C}^\beta$.
If $\Supp(E) \subset X \setminus C$, then
$E \cong k_x$, $x \in X \setminus C$.
Assume that $\Supp(E) \subset C$.
We may assume ${\frak C}^\beta(lC)={^{-1} \Per}(X/Y)$.
By the classification of 0-dimensional objects,
$E$ is generated by ${\cal O}_C(l)$ and ${\cal O}_C(l-1)[1]$.
Thus $E={\cal O}_C(l)$ or $E={\cal O}_C(l-1)[1]$.
We note that $\varrho_X=n_1 v({\cal O}_C(l))+n_2 v({\cal O}_C(l-1)[1])$
if and only if $n_1=n_2=1$.
It is easy to see that
${\cal M}_{\sigma_0}(\varrho_X)$ consists of 
$E$ such that
\begin{enumerate}
\item
$E$ is a non-trivial extensions
$$
0 \to {\cal O}_C(l) \to E \to {\cal O}_C(l-1)[1] \to 0
$$
or 
\item
$$
0 \to {\cal O}_C(l-1)[1] \to E \to {\cal O}_C(l) \to 0
$$
or 
\item
$E={\cal O}_C(l) \oplus {\cal O}_C(l-1)[1]$
or
\item
$E=k_x$ $(x \in X \setminus C)$.
\end{enumerate}
We may assume that ${\cal O}_C(l)$ and ${\cal O}_C(l-1)[1]$
are $\sigma$-stable for all $\sigma \in U'$.
If $\sigma \in V$, then
$\phi_\sigma({\cal O}_C(l))<1<\phi_\sigma({\cal O}_C(l-1)[1])$.
Hence if $E$ is $\sigma$-semi-stable for $\sigma \in U' \cap V$, then
$E$ is an extension of the first type.
Therefore $E \cong k_x$, $x \in C$.
\end{proof}

\section{Structures of walls and chambers}
\label{sect:chamber}

Let $X$ be a smooth projective surface.
In this section, we shall study the structure of walls and
chambers. 
Let $P^+(v)_{\Bbb R} \subset v^\perp$ be the positive cone in $v^\perp$ and 
set $C^+(v):=P^+(v)_{\Bbb R}/{\Bbb R}_{>0}$.
We shall construct a map
$$
\xi:\NS(X)_{\Bbb R} \times P^+(X)_{\Bbb R} \to C^+(v)
$$ 
and study its property (Proposition \ref{prop:fibration}, 
Corollary \ref{cor:fibration}).

Let $v=r+c_1(v)+a\varrho_X$ 
be an element of $v(K(X)) \subset H^*(X,{\Bbb Q})_{\alg}$
with $r>0$.
From now on, we set
\begin{equation}\label{eq:delta}
\delta:=\frac{c_1(v)}{r}.
\end{equation}
Then we have
$$
v=r e^\delta-\frac{(v^2)}{2r} \varrho_X.
$$
As in \cite{Movable}, 
we set
\begin{equation}\label{eq:def-xi}
\begin{split}
\xi(\beta,\omega)
:=& \xi(\beta,\omega,1)/r\\
=& 
\left(
\frac{(\omega^2)-((\beta-\delta)^2)}{2}+\frac{(v^2)}{2r^2} \right)
(\omega+(\omega,\delta)\varrho_X)\\
&+(\beta-\delta,\omega)(\beta-\delta+(\beta-\delta,\delta)\varrho_X)
+(\beta-\delta,\omega)\left(e^\delta+\frac{(v^2)}{2r^2}\varrho_X \right)
\in C^+(v)
\end{split}
\end{equation}
for $(\beta,\omega) \in \NS(X)_{\Bbb R} \times P^+(X)_{\Bbb R}$. 
We have
$$
\xi(\beta,\omega) =
\mathrm{Im}\frac{e^{\beta+\sqrt{-1}\omega}}
{Z_{(\beta,\omega)}(v)} \in C^+(v).
$$

For $v_1 \in H^*(X,{\Bbb Q})_{\alg}$,
$Z_{(\beta,\omega)}(v_1) \in {\Bbb R}Z_{(\beta,\omega)}(v)$
if and only if $\xi(\beta,\omega) \in v_1^\perp$.

\begin{rem}\label{rem:fiber-property}
For $u \in v^\perp$ with $(u^2)>0$,
$(\beta,\omega) \in \xi^{-1}(u)$ if and only if
$Z_{(\beta,\omega)}(x) \in {\Bbb R}Z_{(\beta,\omega)}(v)$
for all $x \in u^\perp$. 
\end{rem}

The main result of this subsection is the following.
\begin{prop}\label{prop:fibration}
$\xi:\NS(X)_{\Bbb R} \times P^+(X)_{\Bbb R} \to C^+(v)$
is a regular map whose fibers are connected.
In particular, $\xi^{-1}(U)$ is connected if $U$ is connected. 
Moreover the restriction of a fiber to ${\cal K}(X)$
is also connected.
\end{prop}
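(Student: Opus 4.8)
The plan is to make the map $\xi$ completely explicit through the geometry of the positive $2$-plane spanned by the real and imaginary parts of $e^{\beta+\sqrt{-1}\omega}$, and then to identify each fibre with an explicit region of $\NS(X)_{\Bbb R}$ that visibly deformation retracts onto a connected hyperboloid sheet. First I would record that the central charge only sees an orthogonal projection. Writing $P:=\mathrm{Re}(e^{\beta+\sqrt{-1}\omega})=e^\beta-\tfrac{(\omega^2)}{2}\varrho_X$ and $Q:=\mathrm{Im}(e^{\beta+\sqrt{-1}\omega})=e^\beta\omega$, one checks, using that cup product by $e^\beta$ is an isometry of the bilinear form, that $(P,P)=(Q,Q)=(\omega^2)$ and $(P,Q)=0$; thus $\Pi(\beta,\omega):=\langle P,Q\rangle$ is a positive-definite $2$-plane and $Z_{(\beta,\omega)}(x)=(P,x)+\sqrt{-1}(Q,x)$. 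From \eqref{eq:def-xi} one then gets $\xi(\beta,\omega)=[\,(P,v)Q-(Q,v)P\,]$, which is the line $\Pi(\beta,\omega)\cap v^\perp$ with its natural orientation (compare Remark \ref{rem:fiber-property}).

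Next I would deal with regularity and the ``in particular'' clause. The displayed description shows $\xi$ is a polynomial expression divided by the positive quantity $|Z_{(\beta,\omega)}(v)|^2$, hence a regular map, and a direct differentiation shows it is a submersion, so in particular open. Granting connectedness of fibres, the open-map property yields the consequence: if $\xi^{-1}(U)=A\sqcup A'$ with $A,A'$ open and nonempty, then each connected fibre lies entirely in $A$ or in $A'$, so $U$ decomposes as the disjoint union of the nonempty open sets $\xi(A)$ and $\xi(A')$, contradicting connectedness of $U$.

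The heart is connectedness of a single fibre over $u\in P^+(v)_{\Bbb R}$. I would solve $u=sP+tQ$ for scalars $s,t$: applying the isometry $e^{-\beta}$ and comparing ranks gives $s=\rk u$, comparing $\NS$-parts gives the key relation $c_1(u)-(\rk u)\beta=t\omega$, and the norm identity gives $(u^2)=(s^2+t^2)(\omega^2)$. When $\rk u\ne 0$, set $\eta(\beta):=c_1(u)-(\rk u)\beta$, which runs over all of $\NS(X)_{\Bbb R}$ as $\beta$ does; the two relations then force $(\omega^2)=(\eta^2)/t^2$ and $t^2=(\rk u)^2(\eta^2)/((u^2)-(\eta^2))$, while the sign of $t$ and the orientation requirement $\xi=[u]$ are pinned down by $\omega\in P^+(X)_{\Bbb R}$. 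Hence the fibre is homeomorphic to the region $R:=\{\beta\mid 0<(\eta(\beta)^2)<(u^2)\}$ in the selected component, with $(\omega,t)$ uniquely determined; when $\rk u=0$ the same analysis collapses the fibre to an affine hyperplane in $\beta$ times a positive ray in $\omega$, which is manifestly connected. To see $R$ is connected I would deformation retract by scaling $\eta\mapsto\lambda\eta$: this keeps $(\eta^2)$ strictly between $0$ and $(u^2)$ and carries $R$ onto a level sheet $\{(\eta^2)=\epsilon_0\}$ in one nappe of the positive cone of $\NS(X)_{\Bbb R}$, which is connected (a model of hyperbolic space). For the last clause I would repeat this with the extra constraint $\omega\in\Amp(X)_{\Bbb R}$, equivalently $\eta$ in the ample cone up to positive scale: since $\Amp(X)_{\Bbb R}$ is an open convex cone, scaling down followed by a straight-line homotopy (legitimate once the endpoints are small, so the segment stays inside $(\eta^2)<(u^2)$) shows $\{\eta\in\Amp(X)_{\Bbb R}\mid (\eta^2)<(u^2)\}$ is connected.

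The hard part will be the orientation bookkeeping in the fibre computation: a priori the condition $u\in\Pi(\beta,\omega)$ is symmetric under $u\leftrightarrow-u$, and the locus $0<(\eta^2)<(u^2)$ meets both nappes of the positive cone, so one must verify that the positivity constraints (namely $\omega\in P^+(X)_{\Bbb R}$ together with the requirement that the oriented line $\Pi\cap v^\perp$ point toward $+u$) select exactly one connected component, the other corresponding to a different fibre. Everything else is either a formal isometry computation or a standard convexity/retraction argument.
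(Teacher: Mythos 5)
Your reduction of the fibre (for $\rk u\neq 0$) to the region $R=\{\beta \mid 0<(\eta(\beta)^2)<(u^2)\}$ ``in the selected component,'' with $(\omega,t)$ uniquely determined by $\beta$, is where the argument breaks, and the orientation issue you flag as the hard part resolves in the opposite way from what you predict. First, the fibre is strictly larger than $R$: at points with $\eta(\beta)=0$, i.e.\ $c_1(u)=(\rk u)\beta$, the equation $u=sP+tQ$ forces $t=0$, and the remaining degree-four condition only fixes $(\omega^2)=(u^2)/(\rk u)^2$; so there $\omega$ is \emph{not} determined by $\beta$ but ranges over a whole hyperboloid sheet in $P^+(X)_{\Bbb R}$. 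Second, \emph{both} nappes $\pm\eta\in P^+(X)_{\Bbb R}$ lie in the \emph{same} fibre: the sign of $t$ flips together with the nappe and the two sign changes cancel in $\xi(\beta,\omega)=\bigl[(P,v)Q-(Q,v)P\bigr]$, and there is no ``different fibre'' available for the other nappe since $(-u,H+(H,\delta)\varrho_X)<0$ shows $[-u]\notin C^+(v)$. You can check this concretely for $\NS(X)={\Bbb Z}H$, $(H^2)=1$, $v=(1,0,-1)$, $u=(1,2H,1)$: the point $\beta=\tfrac{5}{2}H$, $\omega=\tfrac{\sqrt{7}}{2}H$ lies in the negative nappe ($\eta=-\tfrac{1}{2}H$, $t=-1/\sqrt{7}$), yet a direct computation gives $\xi(\beta,\omega)=\tfrac{\sqrt{7}}{10}\,u$, a positive multiple.

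Consequently the true fibre is the union of the two nappe pieces glued along the locus $\{\eta=0\}\times\{\omega\in P^+(X)_{\Bbb R}\mid (\omega^2)=(u^2)/(\rk u)^2\}$, and it is connected \emph{precisely because} of this gluing; your retraction onto one hyperboloid sheet proves connectedness of only one piece, while the set $\{0<(\eta^2)<(u^2)\}$ that actually arises has two components, so the argument as written would in fact suggest the fibre is disconnected once the orientation claim is corrected. This is exactly what the paper's parameterization handles: fixing the ray ${\Bbb R}_{>0}H$ of $\omega$, the fibre meets $\{\omega\in{\Bbb R}_{>0}H\}$ in the open semicircle \eqref{eq:circle}, whose top point (the centre value of the $s$-coordinate there, with maximal $t$) is an $\eta=0$ point joining the two halves of the semicircle corresponding to your two nappes; connectedness follows because these semicircles vary continuously over the connected space of rays in $P^+(X)_{\Bbb R}$ (and over rays in $\Amp(X)_{\Bbb R}$ for the last clause). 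Your opening computation --- the positive $2$-plane $\Pi=\langle P,Q\rangle$, the identification of $\xi$ with the oriented line $\Pi\cap v^\perp$, and the equations $s=\rk u$, $c_1(u)-(\rk u)\beta=t\omega$, $(u^2)=(s^2+t^2)(\omega^2)$ --- is correct and essentially equivalent to the paper's, and would yield a complete proof once the $\eta=0$ locus is included and the gluing is made explicit; note also that your open-map argument for the ``in particular'' clause silently uses surjectivity of $\xi$, which should be recorded (it follows from non-emptiness of the fibres).
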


Before proving this proposition, 
we shall give consequences of the proposition.
We consider the chamber structure 
in $\NS(X)_{\Bbb R} \times \Amp(X)_{\Bbb R}$
and the map
\begin{equation}\label{eq:map1}
\xi:\NS(X)_{\Bbb R} \times \Amp(X)_{\Bbb R} \to C^+(v).
\end{equation}
 By the identification 
$\overline{\cal K}(X) \cong {\frak s}(\overline{\cal K}(X))$,
we have a map
$$
\tilde{\xi}:{\frak s}(\overline{\cal K}(X)) \to C^+(v).
$$
For $E \in {\bf D}(X)$
with $v(E)=v$,
assume that $E$ is $\sigma$-semi-stable and
is $S$-equivalent to
$\oplus_i E_i$, where $\sigma \in D:=\tilde{\xi}^{-1}(u)$ and
$E_i$ are $\sigma$-stable
with the same phase.
Then there is an open neighborhood $U$ of $\sigma$ in $D$
such that $E_i$ are $\sigma'$-stable for $\sigma' \in U$.
Since $\phi_{\sigma'}(E_i),\phi_{\sigma'}(E_j)$ are continuous over $U$
and $\phi_{\sigma'}(E_i)-\phi_{\sigma'}(E_j) \mod {\Bbb Z}$ is constant
over $D$,
\begin{NB}
$Z_{\sigma'}(E_i) \in {\Bbb R}Z_{\sigma'}(E)$
for $\sigma' \in D$.
\end{NB}
$\phi_{\sigma'}(E_i)=\phi_{\sigma'}(E_j)$ for all $i,j$.
Hence $E$ is a $\sigma'$-semi-stable object
which is $S$-equivalent to $\oplus_i E_i$.
Therefore semi-stability is an open condition.
If $E$ is not $\sigma'$-semi-stable for $\sigma' \in D$,
then in a neighborhood of $\sigma'$,
$E$ is not semi-stable.
Assume that $D$ is connected.
Then $E$ is $\sigma'$-semi-stable
for all $\sigma' \in D$.        
Assume that $E$ is $\sigma$-semi-stable and
is $S$-equivalent to
$\oplus_i E_i$, where $E_i$ are $\sigma$-stable
with the same phase.
Then we also see that $E_i$ are $\sigma'$-stable for all
$\sigma' \in D$. 
Thus the $S$-equivalence class of $E$ is independent of $\sigma \in D$.
In particular, we have the following.
\begin{lem}\label{lem:xi-wall}
Let $\sigma_0 \in {\frak s}(\overline{\cal K}(X))$ and assume that
$\tilde{\xi}^{-1}(\tilde{\xi}(\sigma_0))$ is connected.
If $\sigma_0$ is on a wall, then
every $\sigma \in \tilde{\xi}^{-1}(\tilde{\xi}(\sigma_0))$
is on the same wall.
In particular, if $\sigma_0 \in {\frak s}({\cal K}(X))$,
then the claim holds.
\begin{NB}
Assume that $\sigma_0 \in {\frak s}({\cal K}(X))$.
If $\rk \tilde{\xi}(\sigma_0) \ne 0$,
then by the proof of Proposition \ref{prop:fibration},
the fiber is connected.
If $\rk \tilde{\xi}(\sigma_0)=0$, then
the ampleness of $c_1(\tilde{\xi}(\sigma_0))$ implies
the fiber is contained in ${\frak s}({\cal K}(X))$.
Hence it is also connected by its description.   
\end{NB}
\end{lem}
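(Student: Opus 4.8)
The plan is to separate the hypothesis ``$\sigma_0$ is on a wall'' into its numerical part, which is governed directly by $\xi$, and its existential part, which is supplied by the persistence argument in the paragraph immediately preceding the statement. First I would unwind the hypothesis using the description of walls for $v$ given above: that $\sigma_0$ lies on a wall means there is a properly $\sigma_0$-semi-stable object $E$ with $v(E)=v$, so that $E$ is $S$-equivalent to $\oplus_i E_i$ with the $E_i$ being $\sigma_0$-stable of a common phase and $v(E_i)\notin{\Bbb Q}v$ for at least one index. Writing $v_1:=v(E_i)$ for such a genuinely destabilizing class, the wall on which $\sigma_0$ lies is the locus it cuts out.

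The numerical half is then immediate. Since $\sigma_0$ lies on the wall defined by $v_1$ we have $Z_{\sigma_0}(v_1)\in{\Bbb R}Z_{\sigma_0}(v)$, which by the equivalence $Z_\sigma(v_1)\in{\Bbb R}Z_\sigma(v)\Leftrightarrow\tilde{\xi}(\sigma)\in v_1^\perp$ noted just after \eqref{eq:def-xi} is the same as $\tilde{\xi}(\sigma_0)\in v_1^\perp$. Because $\tilde{\xi}$ is constant equal to $u:=\tilde{\xi}(\sigma_0)$ on the fiber $D:=\tilde{\xi}^{-1}(u)$, the relation $Z_\sigma(v_1)\in{\Bbb R}Z_\sigma(v)$ persists for every $\sigma\in D$, and more generally each class $v(E_i)$ keeps the phase of $v$ throughout $D$ (cf.\ Remark \ref{rem:fiber-property}). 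This gives the numerical wall condition on all of $D$, but not yet the semi-stable objects realizing it.

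For the existence I would invoke the discussion preceding the lemma. Since $D$ is connected by hypothesis and the $E_i$ stay of equal phase along $D$ by the previous paragraph, that discussion shows semi-stability is both open and closed on $D$: $E$ remains $\sigma$-semi-stable for all $\sigma\in D$, each $E_i$ remains $\sigma$-stable, and the $S$-equivalence class $\oplus_i E_i$ is independent of $\sigma\in D$. Hence $E$ is properly $\sigma$-semi-stable for every $\sigma\in D$, with the same destabilizing class $v_1$, so every $\sigma\in D$ lies on the same wall. This establishes the first assertion essentially as a corollary of the preceding persistence statement.

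For the ``in particular'' clause it remains to verify that the fiber is connected when $\sigma_0\in{\frak s}({\cal K}(X))$, so that the connectedness hypothesis is automatic. Here I would split according to $\rk u$: a short computation with \eqref{eq:def-xi} gives $\rk u=(\beta-\delta,\omega)$ for $\sigma_0=\sigma_{(\beta,\omega)}$. If $\rk u\neq 0$, connectedness of the fiber (and of its restriction to ${\cal K}(X)$) is exactly Proposition \ref{prop:fibration}. If $\rk u=0$, then $\tilde{\xi}(\sigma_0)$ reduces to a positive multiple of $\omega+(\omega,\delta)\varrho_X$, so $c_1(u)$ is a positive multiple of the ample class $\omega$; since every $\sigma'=\sigma_{(\beta',\omega')}\in D$ has $\tilde{\xi}(\sigma')\in{\Bbb R}_{>0}u$, it satisfies $(\beta'-\delta,\omega')=0$ and its $\NS$-component forces $\omega'\in{\Bbb R}_{>0}\omega\subset\Amp(X)_{\Bbb R}$, whence the whole fiber stays inside ${\frak s}({\cal K}(X))$ and is connected again by Proposition \ref{prop:fibration}. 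I expect this last case to be the main obstacle: one must use the ampleness of $c_1(u)$ to confine the fiber to the ample cone, for outside it the perverse-sheaf description on $\partial{\cal K}(X)$ intervenes and the openness/closedness argument would have to be re-examined there.
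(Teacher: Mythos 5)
Your proposal is correct and follows essentially the same route as the paper: the main assertion is exactly the corollary of the openness/closedness persistence argument in the paragraph preceding the lemma (constancy of the $S$-equivalence class $\oplus_i E_i$ along the connected fiber $D$), and your treatment of the ``in particular'' clause — splitting on $\rk\tilde{\xi}(\sigma_0)$, citing Proposition \ref{prop:fibration} when $\rk\tilde{\xi}(\sigma_0)\ne 0$, and using ampleness of $c_1(\tilde{\xi}(\sigma_0))$ to confine the fiber to ${\frak s}({\cal K}(X))$ when $\rk\tilde{\xi}(\sigma_0)=0$ — coincides with the author's own (unpublished note) argument. The only cosmetic difference is that for $\rk\tilde{\xi}(\sigma_0)\ne 0$ the paper invokes the \emph{proof} of Proposition \ref{prop:fibration} (the explicit circle description, which also covers the boundary points in $\overline{\cal K}(X)$) rather than its statement, but the content is the same.
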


\begin{NB}
Let $B$ be a compact subset of $\Stab(X)^*$.
Then 
$m_\tau(E)/m_\sigma(E)$ is uniformly bounded for all nonzero 
$E \in {\bf D}(X)$ by
$d(\sigma,\tau)>\left|\log \frac{m_\tau(E)}{m_\sigma(E)} \right|$
and 
$$
N(B):=\max_{\sigma,\tau \in B} d(\sigma,\tau)<\infty,
$$ 
where
$m_\tau(E)=\sum_i |Z_\tau(A_i)|$.
Hence 
$$
e^{-N(B)}m_\sigma(E)<m_\tau(E)<e^{N(B)}m_\sigma(E)
$$
for all $0 \ne E \in {\bf D}(X)$.
For $E \in {\bf D}(X)$, let $T(E)$ 
be the set of semi-stable factor $A$ of an object $E$ in some stability
condition $\tau \in B$.
By the definition of $m_\tau$, 
$m_\tau(E) \geq m_\tau(A)$. 
Since $e^{N(B)} m_\tau(A)>m_\sigma(A)$,
we have 
$$
e^{2N(B)}m_\sigma(E)>e^{N(B)}m_\tau(E)>e^{N(B)}m_\tau(A)>m_\sigma(A).
$$
Thus $\{m_\sigma(A) \mid A \in T(E) \}$ si bounded.
Now assume that the support property holds at $\sigma$, that is
$C|Z_\sigma(F)|>||F||$ for all $\sigma$-stable objects.
Then for $\sigma$-stable factor $F$ of $A$ satisfies
$Ce^{2N(B)}m_\sigma(E)>Cm_\sigma(A) \geq C|Z_\sigma(F)|>||F||$.
Therefore the choice of $v(F)$ is finite.
Then the choice of $v(A)$ is also finite, by
the bound of $m_\sigma(A)$.
In particular, for the set of $\sigma$-semi-stable 
objects $E$ with a fixed Mukai vector $v$,
the set of $v(A)$ is finite.

\begin{lem}
$\sigma$-stability of $E$ is an open condition.
\end{lem}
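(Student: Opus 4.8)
The plan is to combine the finiteness just established---that for a compact neighbourhood $B$ of $\sigma$ the set of Mukai vectors $v(A)$ of the $\tau$-semi-stable Harder--Narasimhan factors of $E$ (over all $\tau \in B$) is finite---with a limiting argument, so that a hypothetical destabilisation of $E$ at stability conditions accumulating at $\sigma$ specialises to a genuine destabilisation of $E$ at $\sigma$ itself. Fix a $\sigma$-stable object $E$ with $v(E)=v$ and a compact neighbourhood $B$ of $\sigma$ on which the support property holds (Lemma \ref{lem:support}); let $\Sigma$ be the resulting finite set of classes.

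Suppose, for contradiction, that $E$ fails to be $\tau$-stable for $\tau$ arbitrarily close to $\sigma$. Then I would produce $\tau_n \to \sigma$ in $B$ together with short exact sequences $0 \to A_n \to E \to B_n \to 0$ in ${\cal A}_{\tau_n}$, where $A_n$ is the maximal destabilising (top Harder--Narasimhan) subobject; thus $A_n$ is $\tau_n$-semi-stable and $A_n,B_n\ne 0$, with the phase $\phi_{\tau_n}(A_n)$ at least the phase of $Z_{\tau_n}(v)$. By the finiteness above, $v(A_n) \in \Sigma$, so after passing to a subsequence I may assume $v(A_n)=v_1$ is constant, with $v_1 \ne 0,v$, and correspondingly $v(B_n)=v-v_1$.

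Next I would invoke the support property on $B$ to conclude that the $\tau_n$-semi-stable objects $A_n$ (and the $B_n$) with these fixed classes form a bounded family; extracting a further subsequence, the $A_n$ and $B_n$ specialise to $\sigma$-semi-stable objects $A$, $B$ with $v(A)=v_1$, $v(B)=v-v_1$ fitting into a short exact sequence $0 \to A \to E \to B \to 0$ in ${\cal A}_\sigma$. By continuity of $Z_{(\beta,\omega)}$ the inequality on phases passes to $\phi_\sigma(A) \ge \phi_\sigma(E)$ in the limit; since $A$ is then a subobject of $E$ in ${\cal A}_\sigma$, this forces $\phi_\sigma(A)=\phi_\sigma(E)=\phi_\sigma(B)$, and as $v_1\ne 0,v$ this is a proper destabilising sequence for $E$ at $\sigma$, contradicting $\sigma$-stability. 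Hence $E$ is $\tau$-stable throughout a neighbourhood of $\sigma$.

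I expect the main obstacle to be the specialisation step in the third paragraph: because the hearts ${\cal A}_{\tau_n}$ vary with $n$, the subobjects $A_n \subset E$ do not a priori live in a single abelian category, so producing the limiting short exact sequence in ${\cal A}_\sigma$ requires the boundedness furnished by the support property together with the closedness of $\tau$-semi-stability under the limit $\tau_n \to \sigma$ (itself a consequence of the same finiteness of factor classes). Once boundedness and this closedness are secured, the phase comparison and the final contradiction are routine consequences of the continuity of the central charge.
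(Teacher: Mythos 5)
Your reduction to a fixed class $v_1$ via the finiteness of the set of Harder--Narasimhan factor classes agrees with the paper, but the specialisation step in your third paragraph is a genuine gap, and the patch you propose does not close it. The support property bounds the ratio $||v(A)||/|Z_\sigma(A)|$, i.e.\ it constrains the \emph{numerical classes} of semi-stable objects; it does not imply that the objects with a fixed class form a bounded family, and no such boundedness statement is proved or used in the paper at this level of generality. Even granting boundedness, a sequence of subobjects $A_n \subset E$ taken in the varying hearts ${\cal A}_{\tau_n}$ has no mechanism by which to ``converge'': the $A_n$ may be pairwise non-isomorphic, they are not presented as points of a family over any base through which one could specialise, and in particular nothing produces the limiting monomorphism $A \to E$ in ${\cal A}_\sigma$ on which your contradiction rests. (Your parenthetical claim that closedness of semi-stability under $\tau_n \to \sigma$ follows from finiteness of factor classes is fine for a \emph{fixed} object, but that is not what is needed here.)

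The paper avoids any limit of objects by using Bridgeland's metric on $\Stab(X)$: if $E \in {\cal P}(\phi)$ for $\sigma=({\cal P},Z)$ and $d(\sigma,\tau)<\eta<1/8$, then every $\tau$-semi-stable Harder--Narasimhan factor of $E$ lies in the single fixed abelian category ${\cal A}={\cal P}\left(\left(\phi-\frac{1}{2},\phi+\frac{1}{2}\right]\right)$, so the maximal destabilising subobject $A$ of $E$ at $\tau$ is already a subobject of $E$ in a category that does not move with $\tau$. One then compares the phases of the \emph{same} pair $A,E$ at both ends: $\sigma$-stability forces $\phi_\sigma(A)<\phi_\sigma(E)$ strictly (the equality case is excluded by a short argument in ${\cal P}(\phi)$), while the destabilisation gives $\phi_\tau(A) \geq \phi_\tau(E)$; by continuity a path from $\sigma$ to $\tau$ crosses the wall $v(A)^\perp$ at an interior point, so $\sigma$ itself lies on no wall defined by any of the admissible classes $v(A)$. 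Since these classes form a finite set, a sufficiently small neighbourhood of $\sigma$ meets none of the corresponding walls, so no destabilising $\tau$ exists there. If you replace your limiting argument by this fixed-heart comparison, the remaining structure of your proof (finiteness of $\Sigma$, phase comparison, contradiction) goes through unchanged.
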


\begin{proof}
Let $\sigma=({\cal P},Z) \in B$ be a stability condition
such that $E \in {\cal P}(\phi)$ with $\phi \in {\Bbb R}$.
For $0<\eta<1/8$,
we set
\begin{equation}
U:=\{\tau \in \Stab(X) \mid d(\sigma,\tau)<\eta \}.
\end{equation}
If $A$ is a semi-stable factor of $E$ in a stability condition 
$\tau \in U$,
then $A$ lies in the abelian subcategory
\begin{equation}
{\cal A}:={\cal P}\left(\left(\phi-\frac{1}{2},\phi+\frac{1}{2}\right]
\right) \subset {\bf D}(X). 
\end{equation}
Hence the Harder-Narasimhan filtration of $E$ with respect to
$\tau$ gives a filtration  
in ${\cal A}$.
In particular there is a subobject $A \in {\cal A}$ of $E$
such that $A$ is $\tau$-stable with $\phi_\tau(A)=\phi_\tau^+(E)$. 
Let $C_t$ be a curve with $\xi(C_t)$ is a segment in $C^+(v)$.
We set $\sigma=C_0$ and $\tau=C_1$.
Assume that $E$ is $\sigma$-stable but
is not $\tau$-stable.
Then $\phi_1(A) \geq \phi_1(E)$. 
Since $E$ is a $\sigma$-stable object of ${\cal A}$,
$\phi_0(A)<\phi_0(E)$.
\begin{NB2}
We first note that 
since $A \in {\cal A}$,
$\phi_\sigma(A)$ is determined by $Z_\sigma(A)$.

Since $E$ is $\sigma$-stable,
$\phi^+_\sigma(A) \leq  \phi_\sigma(E)=\phi$.
Since $A \in {\cal A}$,
$\phi_\sigma^-(A)>\phi-\frac{1}{2}$.
Hence $\phi-\frac{1}{2}<\phi_\sigma(A) \leq \phi_\sigma(E)$.
If the equality holds, then
$A$ is $\sigma$-semi-stable with $\phi_\sigma(A)=\phi_\sigma(E)$.
Then $A \to E$ is surjective in ${\cal P}(\phi)$ and
$(E/A)[-1] \in {\cal P}(\phi)$.
Therefore $(E/A) \in {\cal P}(\phi+1)$, which implies
$E/A=0$.
Since $\phi_\tau(A)>\phi_\tau(E)$, this is impossible. 
Therefore $\phi_\sigma(A)< \phi_\sigma(E)$.
\end{NB2} 
Hence
$\phi_s(A)=\phi_s(E)$ at $s \in (0,1]$.
Then $\xi(C_t)$ intersect the hyperplane
$v(A)^\perp$ transversely at $\xi(C_s)$.
In particular $\sigma$ is not on the wall defined by
$A$.
Since the set of $v(A)$ is finite,
in a small neighborhood of $\sigma$,
this is impossible.
Therefore $E$ is stable in a neighborhood of $\sigma$. 
\end{proof}

\begin{lem}
If $E$ is not $\sigma$-semi-stable, 
then it is not semi-stable
in a neighborhood of $\sigma$.
\end{lem}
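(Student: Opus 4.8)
The plan is to reduce non-semi-stability of $E$ to a single strict inequality between its extremal phases and to show that this inequality survives small perturbations of $\sigma$. For a stability condition $\sigma$ every nonzero object of ${\bf D}(X)$ has extremal phases $\phi^+_\sigma(E) \geq \phi^-_\sigma(E)$ coming from its Harder--Narasimhan filtration, and $E$ is $\sigma$-semi-stable exactly when $\phi^+_\sigma(E)=\phi^-_\sigma(E)$. Hence if $E$ is not $\sigma$-semi-stable, then
\begin{equation}
c:=\phi^+_\sigma(E)-\phi^-_\sigma(E)>0,
\end{equation}
and it suffices to produce a neighborhood of $\sigma$ on which the corresponding gap remains positive.

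First I would use the defining property of the metric $d$ on $\Stab(X)$ already invoked in the preceding proof: for every nonzero $F$ one has $|\phi^\pm_\sigma(F)-\phi^\pm_\tau(F)| \leq d(\sigma,\tau)$. Taking $F=E$ and $U:=\{\tau \mid d(\sigma,\tau)<c/2\}$, I get for all $\tau \in U$
\begin{equation}
\phi^+_\tau(E)-\phi^-_\tau(E) \geq \bigl(\phi^+_\sigma(E)-\phi^-_\sigma(E)\bigr)-2d(\sigma,\tau)=c-2d(\sigma,\tau)>0,
\end{equation}
so $\phi^+_\tau(E)>\phi^-_\tau(E)$ and $E$ fails to be $\tau$-semi-stable. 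This proves the lemma.

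If one wishes to avoid quoting the metric and instead argue exactly as in the preceding openness-of-stability lemma, the alternative is to take the maximal destabilizing subobject $A=E_1$ in the Harder--Narasimhan filtration of $E$, so that $\phi_\sigma(A)=\phi^+_\sigma(E)$ whereas $\arg Z_\sigma(E)/\pi<\phi^+_\sigma(E)$ because $E$ has at least two Harder--Narasimhan factors. Working inside the abelian category ${\cal A}={\cal P}((\phi-\tfrac12,\phi+\tfrac12])$, the subobject $A \subset E$ persists for $\tau$ near $\sigma$, and the strict inequality $\arg Z_\tau(A)>\arg Z_\tau(E)$ is preserved by continuity of $Z_\tau$ in $\tau$; this again destabilizes $E$ nearby.

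The hard part will be not the displayed inequality itself but the input it presupposes, namely that $\phi^\pm_\tau(E)$ are well defined and vary continuously in $\tau$. This rests on the support property and the ensuing local finiteness of the Mukai vectors $v(A)$ of semi-stable factors occurring near $\sigma$ (Lemma \ref{lem:support}), precisely the ingredient used to prove that stability is open. Granting that finiteness, both arguments conclude with no further computation.
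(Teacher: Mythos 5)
Your main argument is correct, but it takes a genuinely different route from the paper's. The paper destabilizes $E$ by an explicit object: it picks a $\sigma$-\emph{stable} subobject $A \subset E$ with $\phi_\sigma(A)=\phi_\sigma^+(E)$ (a stable factor of the top Harder--Narasimhan piece), invokes the preceding openness-of-stability lemma so that $A$ remains $\tau$-stable for $\tau$ near $\sigma$, and concludes from $\Hom(A,E)\ne 0$ together with the persisting strict inequality $\phi_\tau(A)>\phi_\tau(E)$ that $E$ is not $\tau$-semi-stable. You instead estimate the Harder--Narasimhan phase gap $c=\phi^+_\sigma(E)-\phi^-_\sigma(E)>0$ against Bridgeland's generalized metric, using $|\phi^{\pm}_\sigma(E)-\phi^{\pm}_\tau(E)|\le d(\sigma,\tau)$, which holds by the very definition of $d$; since the topology on $\Stab(X)$ is the metric topology, the ball $d(\sigma,\tau)<c/2$ is an honest open neighborhood and your displayed inequality closes the argument. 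Your route is in fact more economical: it needs neither the openness-of-stability lemma nor the support property and the finiteness of destabilizing Mukai vectors --- your last paragraph is overly cautious on this point, since $\phi^{\pm}_\tau$ are well defined for every locally finite stability condition and no continuity beyond the definitional metric bound is used. What the paper's route buys in exchange is a concrete destabilizing object that persists uniformly over the neighborhood, which is the form of the statement that the surrounding wall-and-chamber analysis actually exploits.

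One caveat about your alternative sketch: as written it has a gap. Taking $A=E_1$, which is only $\sigma$-semistable, and showing that $\arg Z_\tau(A)>\arg Z_\tau(E)$ is preserved does not by itself destabilize $E$: a nonzero map $A\to E$ into a $\tau$-semistable $E$ of phase $\psi$ only forces $\phi^-_\tau(A)\le \psi$, whereas the inequality on central charges only gives $\phi^+_\tau(A)>\psi$, and the two are perfectly compatible. To repair it, either pass to the maximal destabilizing $\tau$-subobject $A'\subset A\subset E$, so that $A'$ is $\tau$-semistable with $\phi_\tau(A')=\phi^+_\tau(A)>\psi$ and $\Hom(A',E)\ne 0$ gives the contradiction, or do as the paper does and choose $A$ to be $\sigma$-\emph{stable}, so that openness of stability guarantees $A$ is $\tau$-stable of the requisite phase. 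Your primary metric argument is unaffected by this.
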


\begin{proof}Let $A$ be a $\sigma$-stable subobject of $E$ 
with $\phi_\sigma(A)=\phi_\sigma^+(E)$.
Then there is an open neighborhood $U$ of $\sigma$
such that $A$ is $\tau$-stable and
$\phi_\tau(A)>\phi_\tau(E)$ for all $\tau \in U$.
Then $\Hom(A,E) \ne 0$ implies $E$ is not $\tau$-stable.
\end{proof}

\end{NB}

\begin{lem}\label{lem:locally-finite}
%Let $V$ be a compact subset of $\NS(X)_{\Bbb R} \times \Amp(X)_{\Bbb R}$.
%If $v_1$ defines a wall with respect to $v$ at $(\beta,\omega) \in V$,
%then $v_1$ defines a wall in $\xi^{-1}(\xi(V))$.
Let $u \in P^+(v)_{\Bbb R}$ satisfy
$\rk u \ne 0$ or $u=e^{\delta}\lambda$ 
$(\lambda \in \Amp(X)_{\Bbb R})$.
Then 
the set of walls in $P^+(v)_{\Bbb R}$ is finite in a neighborhood of
$u$. 
\end{lem}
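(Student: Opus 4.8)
The plan is to bound the numerical classes that can define a wall meeting a neighborhood of $u$, using the support property (Lemma \ref{lem:support}) to control the Mukai vectors of the relevant Jordan--Hölder factors. A wall for $v$ in $P^+(v)_{\Bbb R}$ is a locus $v_1^\perp \cap P^+(v)_{\Bbb R}$, where $v_1=v(E_1)$ for a $\sigma$-semi-stable subobject $E_1$ of some $\sigma$-semi-stable $E$ with $v(E)=v$ and $\phi_\sigma(E_1)=\phi_\sigma(E)$, at a stability condition $\sigma$ lying over the wall (via $\xi$). The first step is to reduce to classes of $\sigma$-\emph{stable} objects: passing to the Jordan--Hölder factors of $E$ at $\sigma$, one sees that $E$ is strictly $\sigma$-semi-stable precisely when some $\sigma$-stable factor $A$ satisfies $\phi_\sigma(A)=\phi_\sigma(E)$, and the genuine codimension-one walls through a neighborhood of $u$ are exactly the hyperplanes $v(A)^\perp\cap P^+(v)_{\Bbb R}$ attached to such stable factors (at a point where only $v(E_1)$, but not the individual factors, is aligned with $v$, the object $E_1$ ceases to be semi-stable, so no genuine wall occurs there). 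It therefore suffices to show that only finitely many classes $v(A)$ arise.

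Next I would fix a representative of $u$ in the stability manifold. The hypothesis on $u$, together with Proposition \ref{prop:fibration} (connectedness of the fibers of $\xi$ and of their restriction to ${\cal K}(X)$), guarantees that $\xi^{-1}(u)$ meets ${\cal K}(X)$, i.e. corresponds to stability conditions with $\omega\in\Amp(X)$: this is automatic when $\rk u\ne 0$, while if $u=e^\delta\lambda$ with $\lambda$ ample the associated $\omega$ is a positive multiple of $\lambda$, hence ample. Choosing a small compact neighborhood $B\subset \NS(X)_{\Bbb R}\times\Amp(X)_{\Bbb R}$ of such a representative $\sigma_0$, and using that the walls are $\xi$-pullbacks so that each wall through a neighborhood of $u$ has a point in $B$, I may invoke Lemma \ref{lem:support}: there is a constant $C_B$ with $\|v(A)\|\le C_B\,|Z_\sigma(A)|$ for every $\sigma$-stable object $A$ with $\sigma\in B$.

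The key estimate is then immediate. If $A$ is a $\sigma$-stable Jordan--Hölder factor of such an $E$ at a point $\sigma\in B$ on the wall, then $Z_\sigma(A)=t\,Z_\sigma(v)$ with $0<t<1$, since all factors share the phase of $E$ and their central charges sum to $Z_\sigma(v)$ (there are at least two factors as $E$ is strictly semi-stable). Hence $|Z_\sigma(A)|\le |Z_\sigma(v)|\le M$, where $M:=\max_{\sigma\in B}|Z_\sigma(v)|<\infty$ by continuity, and so $\|v(A)\|\le C_B M$. As the Mukai vectors $v(A)$ lie in the discrete subgroup $v(K(X))\subset H^*(X,{\Bbb Q})_{\alg}$, only finitely many classes satisfy this bound; thus only finitely many hyperplanes $v(A)^\perp$ occur, and the set of walls is finite near $u$. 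Note that the support property subsumes the Bogomolov inequality here, so no separate discriminant estimate is needed.

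The main obstacle is precisely the reduction in the first step from an arbitrary destabilizing subobject to its \emph{stable} factors. Without it one would have to bound $\|v(E_1)\|=\|\sum_j v(A_j)\|$, which requires controlling the \emph{number} of Jordan--Hölder factors; at the rank-zero point $u=e^\delta\lambda$ one has $\mathrm{Im}\,Z_\sigma(v)=0$, so the factors cannot be counted by comparing imaginary parts and this number is not bounded by $|Z_\sigma(v)|$. Reducing to stable factors sidesteps this entirely, applying the support-property bound to each class individually; the sole role of the hypothesis on $u$ is then to ensure that Lemma \ref{lem:support} is available, i.e. that a neighborhood of $u$ pulls back under $\xi$ to stability conditions with ample $\omega$.
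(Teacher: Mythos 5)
Your proof is correct and takes essentially the same route as the paper: both use the fibration structure of $\xi$ (Proposition \ref{prop:fibration} together with Lemma \ref{lem:xi-wall}) to transfer the question to local finiteness of walls over a compact subset of $\NS(X)_{\Bbb R}\times\Amp(X)_{\Bbb R}$, and the hypothesis on $u$ enters in both arguments only to guarantee that $\xi^{-1}(u)$ meets ${\cal K}(X)$. The sole difference is one of detail: the paper cites the finiteness of walls meeting such a compact set as a standard consequence of the support property, whereas you prove it explicitly via Lemma \ref{lem:support}, the reduction to stable Jordan--H\"{o}lder factors, and the discreteness of $v(K(X))$.
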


\begin{proof}
Let $V$ be a compact small neighborhood of $x$ of $C^+(v)$.
Then there is a compact subset $\widetilde{V}$ of 
$\NS(X)_{\Bbb R} \times \Amp(X)_{\Bbb R}$ such that
$\xi(\widetilde{V})=V$ by Proposition \ref{prop:fibration}.
Then there are finitely many walls intersecting $\widetilde{V}$.
By Lemma \ref{lem:xi-wall},
the set of walls in $P^+(v)_{\Bbb R}$ is locally finite. 
\end{proof}

\begin{cor}\label{cor:fibration}
Let $W$ be the set of vectors defining walls with respect to
$v$.
Let $U$ be a connected component of 
$C^+(v) \setminus \cup_{u \in W} u^\perp$.
Then $\xi^{-1}(U) \cap \NS(X)_{\Bbb R} \times \Amp(X)_{\Bbb R}$ 
is a chamber for $v$.
\end{cor}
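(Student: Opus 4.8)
The plan is to realise $\Omega:=\xi^{-1}(U)\cap{\cal K}(X)$, where ${\cal K}(X)=\NS(X)_{\Bbb R}\times\Amp(X)_{\Bbb R}$, as a maximal connected subset of ${\cal K}(X)$ meeting no wall, so that it is a chamber by definition. The argument is a synthesis of the preceding results: the relation between $\xi$ and the central charge, Lemma~\ref{lem:xi-wall}, Lemma~\ref{lem:locally-finite}, and Proposition~\ref{prop:fibration}.

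First I would identify the wall locus inside ${\cal K}(X)$. Recall that $Z_{(\beta,\omega)}(v_1)\in{\Bbb R}Z_{(\beta,\omega)}(v)$ holds precisely when $\xi(\beta,\omega)\in v_1^\perp$. Hence a subobject $E_1\subset E$ with $v(E_1)=u$ can have its phase aligned with that of $E$ only along $\xi^{-1}(u^\perp)$, so every wall is contained in $\xi^{-1}\bigl(\bigcup_{u\in W}u^\perp\bigr)$. By Lemma~\ref{lem:xi-wall} the property of lying on a wall is constant along the fibres of $\xi$ over ${\cal K}(X)$; therefore the wall locus is exactly the $\xi$-saturated set $\xi^{-1}\bigl(\bigcup_{u\in W}u^\perp\bigr)\cap{\cal K}(X)$, cut out by the numerical walls $u^\perp$. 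Lemma~\ref{lem:locally-finite} ensures that $\bigcup_{u\in W}u^\perp$ is locally finite, so the complement is open and its connected components are precisely the chambers.

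Next I would check that $\Omega$ is open, wall-free, and connected. Openness is continuity of the regular map $\xi$ (Proposition~\ref{prop:fibration}), and wall-freeness is immediate from the previous paragraph because $U$ is disjoint from every $u^\perp$. For connectedness I would re-run the fibration argument of Proposition~\ref{prop:fibration} inside ${\cal K}(X)$, using the Proposition's explicit assertion that the fibres of $\xi$ remain connected after restriction to ${\cal K}(X)$: since $\xi|_{{\cal K}(X)}$ is an open map with connected fibres and $U$ is connected, its preimage $\Omega$ is connected, by the standard fact that an open continuous surjection with connected fibres over a connected base has connected total space.

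Finally I would prove maximality, which promotes $\Omega$ to a full chamber. Let $C\supseteq\Omega$ be any connected subset of ${\cal K}(X)$ meeting no wall (we assume $\Omega\neq\emptyset$, which is what makes the statement non-vacuous, i.e.\ $U$ meets the image of $\xi|_{{\cal K}(X)}$). For each $\sigma\in C$ the first step forces $\xi(\sigma)\notin\bigcup_{u\in W}u^\perp$, so $\xi(C)$ is a connected subset of $C^+(v)\setminus\bigcup_{u\in W}u^\perp$ meeting $U$; since $U$ is a connected component of that complement, $\xi(C)\subseteq U$ and hence $C\subseteq\Omega$. Thus $\Omega$ is a connected component of the wall-complement, that is, a chamber for $v$. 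I expect the connectedness step to be the main obstacle: Proposition~\ref{prop:fibration} gives connected preimages in the ambient space $\NS(X)_{\Bbb R}\times P^+(X)_{\Bbb R}$, but passing to the open subset ${\cal K}(X)$ could a priori disconnect a preimage, and the whole descent rests on the Proposition's guarantee that the fibres stay connected over ${\cal K}(X)$ together with the openness of $\xi$.
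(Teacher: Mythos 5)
Your proof is correct and takes essentially the same route as the paper: the paper's own two-line proof likewise identifies the walls in $\NS(X)_{\Bbb R}\times\Amp(X)_{\Bbb R}$ with the $\xi$-preimages of the numerical walls $u^\perp$ ($u\in W$) and then cites Proposition \ref{prop:fibration} for the connectedness of $\xi^{-1}(U)\cap(\NS(X)_{\Bbb R}\times\Amp(X)_{\Bbb R})$, concluding that this set is a chamber. The points you spell out — openness of $\xi$, the maximality argument, and the caveat that connectedness must survive the restriction from $P^+(X)_{\Bbb R}$ to the ample cone (which is exactly why Proposition \ref{prop:fibration} includes the statement about fibers restricted to ${\cal K}(X)$) — are precisely the details the paper leaves implicit.
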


\begin{proof}
A chamber is a connected component of
$\NS(X)_{\Bbb R} \times \Amp(X)_{\Bbb R} \setminus \cup_{u \in W} 
W_u$ and $(\beta,\omega) \in W_u$ if and only if 
$\xi(\beta,\omega) \in u^\perp$.
By Proposition \ref{prop:fibration},
$\xi^{-1}(U)$ is connected.
Hence it is a chamber. 
\end{proof}

Corollary \ref{cor:fibration}
implies that we can study the wall crossing behavior 
by looking at linear walls in $C^+(v)$.

\begin{NB}
A similar result does not hold for $\omega \in \pi^*(\Amp(Y)_{\Bbb Q})$
by Lemma \ref{lem:fiber2} (1).
\end{NB}

{\it Proof of Proposition \ref{prop:fibration}.}

We set 
\begin{equation}\label{eq:u}
u:=\zeta+(\zeta,\delta)\varrho_X+
y \left(e^\delta+\frac{(v^2)}{2r^2}\varrho_X \right).
\end{equation}
Then $u \in C^+(v)$ if and only if
$(u^2)>0$ and $(u,H+(H,\delta)\varrho_X)=(\zeta,H)>0$. 
Assume that $y \ne 0$.
For $H \in P^+(X)_{\Bbb R}$, we have a decomposition
\begin{equation}\label{eq:zeta}
\zeta=\frac{(\zeta,H)}{(H^2)}H+D,\; 
D=\zeta-\frac{(\zeta,H)}{(H^2)}H \in H^\perp.
\end{equation} 
Then 
\begin{equation}\label{eq:xi-explicit}
\frac{\xi(\delta+sH+D',tH)}{(sH+D',tH)}=
\frac{\frac{(H^2)}{2}(t^2+s^2)+\frac{(v^2)}{2r^2}-\frac{({D'}^2)}{2}}{s(H^2)}
(H+(H,\delta)\varrho_X)
+D'+(D',\delta)\varrho_X+\left(e^\delta+\frac{(v^2)}{2r^2}\varrho_X \right),
\end{equation}
where $D' \in H^\perp$.
Hence 
$\frac{\xi(\delta+sH+D',tH)}{(sH+D',tH)}=\frac{u}{y}$
if and only if
\begin{equation}
D'=\frac{D}{y},\; 
\frac{\frac{(H^2)}{2}(t^2+s^2)+\frac{(v^2)}{2r^2}-\frac{(D^2)}{2y^2}}{s(H^2)}
=\frac{(\zeta,H)}{y(H^2)}.
\end{equation}
Since 
\begin{equation}
\begin{split}
& \frac{(H^2)}{2}(t^2+s^2)+\frac{(v^2)}{2r^2}-\frac{(D^2)}{2y^2}
-s \frac{(\zeta,H)}{y}\\
=& \frac{(H^2)}{2}(t^2+s^2)+\frac{(v^2)}{2r^2}-
\frac{1}{2y^2}\left((\zeta^2)-\frac{(\zeta,H)^2}{(H^2)} \right)
-s \frac{(\zeta,H)}{y}\\
=& \frac{(H^2)}{2}t^2+
\frac{(H^2)}{2}\left(s-\frac{(\zeta,H)}{y(H^2)} \right)^2
+\frac{(v^2)}{2r^2}-
\frac{(\zeta^2)}{2y^2}\\
=& \frac{(H^2)}{2}t^2+
\frac{(H^2)}{2}\left(s-\frac{(\zeta,H)}{y(H^2)} \right)^2
-\frac{(u^2)}{2y^2},
\end{split}
\end{equation}
$\xi^{-1}(u)$ is parameterized by 
$H$ and the circle 
\begin{equation}\label{eq:circle}
\frac{(H^2)}{2}t^2+
\frac{(H^2)}{2}\left(s-\frac{(\zeta,H)}{y(H^2)} \right)^2=
\frac{(u^2)}{2y^2}.
\end{equation}

\begin{NB}
For $(H,s_0,t_0)$, let $e^\delta(\zeta_0+y_0(1+\frac{(v^2)}{2r^2}\varrho_X))$
be the image of $\xi$.
Then for a small neighborhood $U$ and a point 
$e^\delta(\zeta+y(1+\frac{(v^2)}{2r^2}\varrho_X))$ of $U$,
there is $(H,s_0,t)$ with 
$\xi(\delta+s_0 H+D',tH)= e^\delta(\zeta+y(1+\frac{(v^2)}{2r^2}\varrho_X))$.
\end{NB}

\begin{NB}
$(\zeta,H)^2 \geq (H^2)(\zeta^2)$ means that
$s \geq 0$ for $y>0$ and $s \leq 0$ for $y<0$. 

The number of parameters is
$(\rho(X)-1)+1=\rho(X)$.
\end{NB}

\begin{NB}
We set $\omega=tH$. Then
$\beta=\delta+\zeta+\left(x-\frac{(\zeta,\omega)}{(\omega^2)} \right)\omega$
with 
$$
(\omega^2)\left(1+\left(x-\frac{(\zeta,\omega)}{(\omega^2)} \right)^2 \right)
=\frac{(u^2)}{2y}.
$$
\end{NB}
For the general case, we apply Proposition \ref{prop:Phi-xi}
to isometries
$(r,\xi,a) \mapsto e^\eta(r,\xi,a)$ and 
$(r,\xi,a) \mapsto (a,-\xi,r)$ in order to reduce to the case
$y \ne 0$.
Thus $\xi$ is a regular map as a $C^\infty$-map.  
Since we are restricted to $\Amp(X)_{\Bbb R}$,
for the connectedness of the fiber of $\xi$,
we need to describe it directly.
For $u=H+(H,\delta)\varrho_X$,
$$
\xi^{-1}(u)=\{(\beta,\omega) \mid \beta=\delta+D, \omega \in {\Bbb R}_{>0}H,
D \in H^\perp \}.
$$
So it is connected.
\qed

\begin{rem}\label{rem:fiber-property2}
In order to clarify the dependence of $\xi(\beta,\omega)$ on 
$v$, we set 
$$
\xi_v(\beta,\omega):=\mathrm{Im}\frac{e^{\beta+\sqrt{-1}\omega}}
{Z_{(\beta,\omega)}(v)}.
$$
Then
$$
\xi_v^{-1}(u)=\xi_w^{-1}(u)
$$
for $u \in v^\perp \cap w^\perp$.

Indeed for $u \in v^\perp$ with $\rk u \ne 0$,
by using the expression of $u$ in \eqref{eq:u},
$c_1(u)=\zeta+y\delta$ and
\eqref{eq:circle}, we see that
$\xi_v(\beta,\omega)=u$ in $C^+(v)$ if and only if
$$
\frac{(\omega^2)}{2}+\left(\frac{(y\beta-c_1(u),\omega)}{y(\omega^2)}
\right)^2 \frac{(\omega^2)}{2}=\frac{(u^2)}{2y^2}.
$$
If $\rk u=0$, then 
$(c_1(v)-r\beta,\omega)=0$ and $\omega \in {\Bbb R}_{>0}c_1(u)$.
Hence it does not depend on the choice of $v$.
The same claim also holds if $r=0$.
\begin{NB}
For $v=(r,\xi,a)$,
\begin{equation*}
\begin{split}
\xi(\beta,\omega):=&
\left(r\frac{(\omega^2)}{2}+\langle e^\beta,v \rangle \right)
(\omega+(\beta,\omega)\varrho_X)
-(\xi-r \beta,\omega)\left(e^\beta-\frac{(\omega^2)}{2}\varrho_X \right).
\end{split}
\end{equation*} 
If $u=\xi(\beta,\omega)$ with $\rk u \ne 0$,
then 
\begin{equation}
\begin{split}
\frac{c_1(u)}{\rk u}=&\beta-A\omega,\\
\frac{(c_1(u)^2)-(u^2)}{2 \rk u^2}=& \frac{(\beta^2)-(\omega^2)}{2}
-A(\beta,\omega),\\
A=& \frac{r\frac{(\omega^2)}{2}+\langle e^\beta,v \rangle }
{(\xi-r \beta,\omega)}.
\end{split}
\end{equation}
By the first equation,
$A(\omega^2)=(\beta-c_1(u),\omega)$.
Hence  
$$
\frac{(\omega^2)}{2}+
\left(\frac{(\beta \rk u -c_1(u),\omega)}{\rk u(\omega^2)}
\right)^2 \frac{(\omega^2)}{2}=\frac{(u^2)}{2 (\rk u)^2}.
$$
\end{NB}

\end{rem}

\begin{NB}

Direct proof of the regularity at
$(\beta_0,\omega_0)$ with
$\xi(\beta_0,\omega_0) \in \varrho_X^\perp$.
We set $(\beta,\omega)=(\beta_0+\eta,\omega_0+\lambda)$,
$\lambda \in \omega_0^\perp$ and $\eta=x \omega_0$.
We also set 
$\beta_0-\delta=pe$, $(e^2)=-1$,
$e \in \omega_0^\perp$.
Then $\beta-\delta=pe+\eta=pe+x \omega_0$.
\begin{equation}
\begin{split}
\xi(\beta,\omega)-\xi(\beta_0,\omega_0)
\sim & 
\left(\frac{(\omega_0^2)-((\beta_0-\delta)^2)}{2}+
\frac{(v^2)}{2r^2} \right)\lambda
+(pe+\eta,\omega_0+\lambda)(pe+\eta)+
(pe+\eta,\omega_0+\lambda)\left(1+\frac{(v^2)}{2r^2}\varrho_X \right)\\
\sim & 
\left(\frac{(\omega_0^2)-((\beta_0-\delta)^2)}{2}+
\frac{(v^2)}{2r^2} \right)\lambda
+
((\eta,\omega_0)+p(\lambda,e))pe+
((\eta,\omega_0)+p(\lambda,e))\left(1+\frac{(v^2)}{2r^2}\varrho_X \right)\\
=& \left(\frac{(\omega_0^2)-((\beta_0-\delta)^2)}{2}+
\frac{(v^2)}{2r^2} \right)(\lambda,e)e
+
((\eta,\omega_0)+p(\lambda,e))pe+
((\eta,\omega_0)+p(\lambda,e))\left(1+\frac{(v^2)}{2r^2}\varrho_X \right)\\
\mod e^\perp \cap \left(1+\frac{(v^2)}{2r^2}\varrho_X \right)^\perp.
\end{split}
\end{equation}
Since 
\begin{equation}
\begin{pmatrix}
\left(\frac{(\omega_0^2)-((\beta_0-\delta)^2)}{2}+
\frac{(v^2)}{2r^2} \right) & p\\
0 & 1
\end{pmatrix}
\begin{pmatrix}
(\lambda,e)\\
(\eta,\omega_0)+p(\lambda,e)
\end{pmatrix}
\ne
0
\end{equation}
for any $((\lambda,e),(\eta,\omega_0)) \ne (0,0)$,
$\xi$ is a regular map.
By this proof, $(x,\lambda)$ gives a local parameter
in a neighborhood of $\xi(\beta_0,\omega_0)$.
\end{NB}

\begin{rem}
By \cite{Ma}, the walls form nested circles, 
if we fixed ${\Bbb R}_{>0}\omega$.
By our proof of Proposition \ref{prop:fibration},
the circles are the fibers of $\xi$.
\end{rem}

We set
\begin{equation}
\begin{split}
x_0:= & e^\delta H=H+(H,\delta)\varrho_X,\\
x_1:= & -e^\delta 
\left(D+\left(1+\frac{(v^2)}{2r^2}\varrho_X \right)\right)=
-\left(D+(D,\delta)\varrho_X+
\left(e^\delta+\frac{(v^2)}{2r^2}\varrho_X \right)\right).
\end{split}
\end{equation}
Let $L$ be the line in ${\Bbb P}(v^\perp)$ passing through
$x_0$ and $x_1$.
Then $\xi(\delta+sH+D,tH) \in L$ for all $(s,t)$.
The image of the unbounded chamber in $(s,t)$ with $s<0$ and $t>0$
is the interior of a segment connecting $x_0$
and $x_0+\epsilon x_1$ ($0<\epsilon \ll 1$).
\begin{NB}
${\cal M}_H^{\delta+D}(v)=
{\cal M}_{(\delta+sH+D,tH)}(v)$
for $(s,t)$ in the unbounded chamber.
\end{NB}

\begin{lem}\label{lem:isom}
Let $H$ be an ample divisor on $X$.
\begin{enumerate}
\item[(1)]
${\cal M}_{(\delta+sH+D,tH)}(v)={\cal M}_H^{\delta+D-\frac{1}{2}K_X}(v)$
if $-1 \ll s <0$.
\item[(2)]
If $(\beta,\omega)$ is general, then
there is $\beta' \in \NS(X)_{\Bbb Q}$ with 
${\cal M}_{(\beta,\omega)}(v) \cong {\cal M}_{(\beta',tH)}(v)$.
In particular, ${\cal M}_{\omega}^{\beta-\frac{1}{2}K_X}(v) \cong
{\cal M}_{(\beta',tH)}(v)$.
\end{enumerate}
\end{lem}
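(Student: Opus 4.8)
The plan for (1) is to place the stability conditions $\sigma_{(\delta+sH+D,tH)}$ with $-1\ll s<0$ inside the unbounded chamber and then pass to the large volume limit. Fixing the ample $H$ and $D\in H^\perp$, I would first describe the walls in the $(s,t)$-upper half plane: by \eqref{eq:xi-explicit} the point $\xi(\delta+sH+D,tH)$ runs along the line $L$ through $x_0$ and $x_1$, and the locus where it meets a fixed wall is, by \eqref{eq:circle}, a circle centered at some $(c,0)$ on the $s$-axis of radius $\rho$ with $\rho^2=c^2-\frac{(v^2)/r^2-(D^2)}{(H^2)}$. Because $(v^2)\ge 0$ (Lemma \ref{lem:perverse-Bogomolov}) and $(D^2)\le 0$ (Hodge index theorem), the subtracted term is positive outside the degenerate case $(v^2)=0,\ D=0$, so $\rho<|c|$. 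Hence a wall-circle with $c>0$ lies entirely in $\{s>0\}$ (its leftmost point $c-\rho>0$), while one with $c<0$ has rightmost point $c+\rho<0$; using local finiteness (Lemma \ref{lem:locally-finite}) there is $s_0<0$ close to $0$ so that the line $\{s=s_0\}$ meets no wall, and the whole ray $\{(\delta+sH+D,tH)\mid t>0\}$ lies in the unbounded chamber, whose $\xi$-image is the segment $(x_0,x_0+\epsilon x_1)$ of the Remark preceding the lemma.

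Having done this, I would invoke Corollary \ref{cor:fibration}: the moduli stack is constant on the unbounded chamber, so letting $t\to\infty$ I may apply Proposition \ref{prop:large}, which identifies the large volume moduli with the $(\beta-\frac12 K_X)$-twisted Gieseker moduli $\mathcal{M}_H^{(\delta+sH+D)-\frac12 K_X}(v)$. Since $sH\in\mathbb{Q}H$ and twisted semistability depends only on the twist modulo $\mathbb{Q}H$ (as noted after Proposition \ref{prop:moduli}), this equals $\mathcal{M}_H^{\delta+D-\frac12 K_X}(v)$, proving (1).

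For (2), set $u:=\xi(\beta,\omega)$. As $(\beta,\omega)$ is general, $u$ lies on no wall of $C^+(v)$, so by Lemma \ref{lem:xi-wall} the fiber $\xi^{-1}(u)$ meets no wall, and being connected (Proposition \ref{prop:fibration}) it lies in a single chamber; thus $\mathcal{M}_{(\beta,\omega)}(v)\cong\mathcal{M}_{\sigma'}(v)$ for every $\sigma'\in\xi^{-1}(u)$, and it suffices to produce one $\sigma'$ of the form $(\beta',tH)$. Assuming $\rk u\ne 0$ (the case $\rk u=0$ being the $\mu$-stability locus governed by Remark \ref{rem:fiber-property2}), I would use the explicit fiber from the proof of Proposition \ref{prop:fibration}: writing $u=\zeta+(\zeta,\delta)\varrho_X+y(e^\delta+\frac{(v^2)}{2r^2}\varrho_X)$ and $D:=\zeta-\frac{(\zeta,H)}{(H^2)}H\in H^\perp$ for the fixed ample $H$, the fiber over $u$ consists of the points $(\delta+sH+\frac{D}{y},tH)$ with $(s,t)$ on the circle \eqref{eq:circle}. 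Every such point already has $\omega$-component in $\mathbb{R}_{>0}H$; choosing a rational one (possible since the chamber is open) gives $\beta'=\delta+sH+\frac{D}{y}\in\NS(X)_{\mathbb{Q}}$ with $\mathcal{M}_{(\beta,\omega)}(v)\cong\mathcal{M}_{(\beta',tH)}(v)$. The final assertion is obtained by applying (1) with the ample class $\omega$ in place of $H$, which realizes $\mathcal{M}_\omega^{\beta-\frac12 K_X}(v)$ as $\mathcal{M}_{(\beta'',t\omega)}(v)$ for a general stability condition with $\beta''\equiv\beta\pmod{\mathbb{Q}\omega}$, and then applying (2) to it.

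The main obstacle is the bookkeeping in (1): one must guarantee that for $s$ just below $0$ the \emph{entire} ray $t>0$ remains in one chamber, not merely the range $t\gg0$, and that this chamber is exactly the one reached by the large volume limit so that Proposition \ref{prop:large} applies. This is precisely what the sign of the radical in the wall-circles secures, via the Bogomolov inequality and the Hodge index theorem, and it is also what forces the hypothesis $s<0$ rather than $s>0$. By comparison the remaining points — the rationality of $\beta'$ and the exclusion of the rank-zero fiber in (2) — are routine once the fiber description of Proposition \ref{prop:fibration} is in hand.
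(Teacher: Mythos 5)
Your proof of part (1) is correct and is in substance the paper's own: the paper disposes of (1) by citing Proposition \ref{prop:large} (together with the preceding discussion of the unbounded chamber and the invariance of twisted semistability under changing the twist by ${\Bbb Q}H$), and your wall-circle computation is a legitimate filling-in of the uniformity in $t$ that the terse official proof leaves implicit. One caution on the logic: the observation that each circle with $c<0$ has rightmost point $c+\rho<0$ does not by itself prevent walls from accumulating toward $s=0$ — circles with $c\to-\infty$ have rightmost point tending to $0^-$ and cross \emph{every} vertical line $s=s_0<0$ at large $t$ — so your appeal to Lemma \ref{lem:locally-finite} at $u=e^\delta H$ (note that both $t\to\infty$ and $(s,t)\to(0,0)$ are sent to this point by $\xi$) is what actually rules this out; it is essential, not cosmetic, and with it the argument for (1) is sound.

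In part (2), however, there is a genuine (though easily reparable) gap. You fix $u:=\xi(\beta,\omega)$ and look for $\sigma'=(\beta',tH)$ inside the single fiber $\xi^{-1}(u)$. But on that fiber the class $D/y$ is completely determined by $u$: every point of the fiber of the stated form is $(\delta+sH+D/y,tH)$ with the \emph{same} $D/y$, and only $(s,t)$ varies along the circle \eqref{eq:circle}. Hence ``choosing a rational one (possible since the chamber is open)'' is a non sequitur: openness of the chamber lets you move $u$, not move within a fixed fiber. If $\omega$ is an irrational multiple of an ample class not proportional to $H$ (which is allowed for $\sigma_{(\beta,\omega)}$), then $\xi(\beta,\omega)$, and with it $D/y$, need not lie in $H^*(X,{\Bbb Q})_{\alg}$, and then \emph{no} point of $\xi^{-1}(u)$ yields $\beta'\in\NS(X)_{\Bbb Q}$. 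The paper avoids this precisely by not taking $u=\xi(\beta,\omega)$: it chooses a vector $u$, written as in \eqref{eq:u} with \eqref{eq:zeta}, anywhere in the chamber of $C^+(v)$ containing $\xi(\beta,\omega)$ — so $u$ may be taken rational and off $\varrho_X^\perp$ — and then uses constancy of the moduli over the whole chamber (Corollary \ref{cor:fibration}, Lemma \ref{lem:xi-wall}) to get ${\cal M}_{(\beta,\omega)}(v)\cong{\cal M}_{(\beta',tH)}(v)$ with $\beta'=\delta+sH+D/y$ rational. Your own setup already contains this fix, since you established that the chamber is a single constancy region; but as written, the rationality of $\beta'$ — which is part of the statement — is not proved.
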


\begin{proof}
(1) is a consequence of Proposition \ref{prop:large}.
(2)
For a chamber in $C^+(v)$ containing $\xi(\beta,\omega)$,
we take a vector $u$ and write it
as in \eqref{eq:u} with \eqref{eq:zeta}.
Then there is $(s,t)$ such that
$\xi(\delta+sH+D/y,tH) =u$.
For $\beta':=\delta+sH+D/y$,
${\cal M}_{(\beta,\omega)}(v) \cong {\cal M}_{(\beta',tH)}(v)$.
\end{proof}

%\subsection{}

We shall remark the behavior of $\xi(\beta,tH)$ under an isometry 
of $H^*(X,{\Bbb Q})_{\alg}$.
Let 
$$
\Phi:H^*(X,{\Bbb Q})_{\alg} \to
H^*(X,{\Bbb Q})_{\alg}$$ 
be an isomery.
Then $\Phi$ induces an isomorphism
$\Phi^+:H^*(X,{\Bbb Q})_{\alg}^+ \to H^*(X,{\Bbb Q})_{\alg}^+$
of positive 2-plane 
$H^*(X,{\Bbb Q})_{\alg}^+={\Bbb Q}(1-\varrho_X)+{\Bbb Q}H$.
Assume that $\Phi$ is an isometry such that
$\Phi(r_1 e^\gamma)=\varrho_{X_1}$,
$\Phi(\varrho_X)=r_1 e^{\gamma'}$ and $\Phi^+$ preserves
the orientation of $H^*(X,{\Bbb Q})_{\alg}^+$.
Then we can describe the action as
\begin{equation*}
\Phi(r e^\gamma+a \varrho_X+\xi+(\xi,\gamma)\varrho_X)
=\frac{r}{r_1} \varrho_{X}+r_1 a e^{\gamma'}-
\frac{r_1}{|r_1|}
( \widehat{\xi}+(\widehat{\xi},\gamma')\varrho_{X}),
\end{equation*}
where $\xi \in \NS(X)_{\Bbb Q}$ and $
\widehat{\xi}:=
\frac{r_1}{|r_1|} 
c_1(\Phi(\xi+(\xi,\gamma)\varrho_X)) \in \NS(X)_{\Bbb Q}$.
We note that $\xi$ belongs to the positive cone if and only
if $\widehat{\xi}$ belongs to the positive cone.
For $(\beta,\omega) \in \NS(X)_{\Bbb R} \times P^+(X)_{\Bbb R}$,
we set 
\begin{equation}\label{eq:tilde(beta)}
\begin{split}
\widetilde{\omega}:= & -\frac{1}{|r_1|}
\frac{\frac{((\beta-\gamma)^2)-(\omega^2)}{2}\widehat{\omega}-
(\beta-\gamma,\omega)(\widehat{\beta}-\widehat{\gamma})}
{\left(\frac{((\beta-\gamma)^2)-(\omega^2)}{2} \right)^2
+(\beta-\gamma,\omega)^2},\\
\widetilde{\beta}:= & \gamma'-\frac{1}{|r_1|}
\frac{\frac{((\beta-\gamma)^2)-(\omega^2)}{2}(\widehat{\beta}-\widehat{\gamma})
-(\beta-\gamma,\omega) \widehat{\omega}}
{\left(\frac{((\beta-\gamma)^2)-(\omega^2)}{2} \right)^2
+(\beta-\gamma,\omega)^2}.
\end{split}
\end{equation}
Then $(\widetilde{\beta},\widetilde{\omega}) 
\in \NS(X)_{\Bbb R} \times P^+(X)_{\Bbb R}$.

\begin{proof}
By our assumption, $\widehat{\omega} \in P^+(X)_{\Bbb R}$.
It is sufficient to prove $(\widetilde{\omega},\widetilde{\omega})>0$
and $(\widetilde{\omega},\widehat{\omega})>0$,
which follows from the following equations:
\begin{equation}\label{eq:tilde(beta)2}
\begin{split}
(\widetilde{\omega}^2)= & \frac{1}{|r_1|^2}
\frac{(\omega^2)}
{\left(\frac{((\beta-\gamma)^2)-(\omega^2)}{2} \right)^2
+(\beta-\gamma,\omega)^2},\\
(\widetilde{\omega},\widehat{\omega})=&
 \frac{1}{|r_1|}
\frac{(\omega^2)^2+(\beta-\gamma,\omega)^2-(D^2)(\omega^2)}
{2\left(\left(\frac{((\beta-\gamma)^2)-(\omega^2)}{2} \right)^2
+(\beta-\gamma,\omega)^2 \right)},
\end{split}
\end{equation}
where $\beta-\gamma=\lambda \omega+D$ ($\lambda \in {\Bbb R}$,
$D \in \omega^\perp$).
\end{proof}

By \cite[sect. A.1]{MYY:2011:2},
we get the following commutative diagram:
\begin{equation}
\xymatrix{
   H^*(X,{\Bbb Q})_{\alg} \ar[r] \ar[d]_{Z_{(\beta,\omega)}}
 & H^*(X,{\Bbb Q})_{\alg} \ar[d]^{Z_{(\wt{\beta},\wt{\omega})}} \\
   {\Bbb C}  \ar[r]_{\zeta^{-1}} 
 & {\Bbb C}
}
%\begin{CD}
%{\bf D}(X) @>>> {\bf D}^{\alpha_1}(X_1)\\
%@V{Z_{(\beta,\omega)}}VV @VV{Z_{(\widetilde{\beta},\widetilde{\omega})}}V\\
%{\Bbb C} @>{\zeta^{-1}}>> {\Bbb C}
%\end{CD}
\end{equation}
where 
$$
\zeta=r_1 \left(
\frac{((\gamma-\beta)^2)-(\omega^2)}{2}
+\sqrt{-1}(\beta-\gamma,\omega) \right).
$$

\begin{NB}
\begin{prop}
Assume that $Z_{(\beta,tH)}(r_1 e^\gamma) \in {\Bbb R}_{>0}
Z_{(\beta,tH)}(v)$.
Then $\Phi(\xi(\beta,H,t)) \in {\Bbb R}_{>0}
(\widetilde{tH}+(\widetilde{tH},\widetilde{\beta})\varrho_{X_1})$.
\end{prop}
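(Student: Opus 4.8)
The plan is to transport the map $\xi$ for $v$ through the isometry $\Phi$, recognize the result as the $\xi$-map for $\Phi(v)$ on $X_1$, and then read off the target ray from the shape of $\mathrm{Im}\,e^{\wt\beta+\sqrt{-1}\wt\omega}$; throughout write $\omega=tH$. First I would reinterpret the commutative diagram preceding the statement as an identity of vectors. Since $Z_{(\beta,\omega)}(x)=(e^{\beta+\sqrt{-1}\omega},x)$ and $\Phi$ is an isometry of the nondegenerate pairing, the relation $Z_{(\wt\beta,\wt\omega)}(\Phi(x))=\zeta^{-1}Z_{(\beta,\omega)}(x)$, valid for all $x$, is equivalent to
\[
\Phi\bigl(e^{\beta+\sqrt{-1}\omega}\bigr)=\zeta\, e^{\wt\beta+\sqrt{-1}\wt\omega},
\]
and taking $x=v$ gives $Z_{(\beta,\omega)}(v)=\zeta\,Z_{(\wt\beta,\wt\omega)}(\Phi(v))$. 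Extending $\Phi$ $\Bbb C$-linearly to $H^*(X,{\Bbb C})_{\alg}$, it commutes with $\mathrm{Im}$ and with division by a complex scalar, so, recalling that $\xi(\beta,H,t)$ is a positive multiple of $\mathrm{Im}\bigl(e^{\beta+\sqrt{-1}\omega}/Z_{(\beta,\omega)}(v)\bigr)$,
\[
\Phi\bigl(\xi(\beta,H,t)\bigr)\in{\Bbb R}_{>0}\,
\mathrm{Im}\frac{\Phi(e^{\beta+\sqrt{-1}\omega})}{Z_{(\beta,\omega)}(v)}
={\Bbb R}_{>0}\,\mathrm{Im}\frac{e^{\wt\beta+\sqrt{-1}\wt\omega}}{Z_{(\wt\beta,\wt\omega)}(\Phi(v))},
\]
the factor $\zeta$ cancelling. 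Thus $\Phi$ intertwines the $\xi$-map for $v$ with that for $\Phi(v)$.

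Next I would use the hypothesis to force the denominator $Z_{(\wt\beta,\wt\omega)}(\Phi(v))$ to be real. Since $e^{\wt\beta+\sqrt{-1}\wt\omega}$ has rank $1$ and $(y,\varrho_{X_1})=-\rk y$, one has $Z_{(\wt\beta,\wt\omega)}(\varrho_{X_1})=-1$; combining this with $\Phi(r_1 e^\gamma)=\varrho_{X_1}$ and the diagram gives
\[
\frac{Z_{(\wt\beta,\wt\omega)}(\varrho_{X_1})}{Z_{(\wt\beta,\wt\omega)}(\Phi(v))}
=\frac{Z_{(\beta,\omega)}(r_1 e^\gamma)}{Z_{(\beta,\omega)}(v)}\in{\Bbb R}_{>0},
\]
the last membership being the assumption, so $Z_{(\wt\beta,\wt\omega)}(\Phi(v))\in{\Bbb R}^\times$. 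For a real denominator $\mathrm{Im}$ passes to the numerator, and a direct expansion gives $\mathrm{Im}\,e^{\wt\beta+\sqrt{-1}\wt\omega}=e^{\wt\beta}\wt\omega=\wt\omega+(\wt\omega,\wt\beta)\varrho_{X_1}=\widetilde{tH}+(\widetilde{tH},\wt\beta)\varrho_{X_1}$, using the explicit formulas \eqref{eq:tilde(beta)} for $\wt\beta$ and $\wt\omega=\widetilde{tH}$. Hence $\Phi(\xi(\beta,H,t))$ is a nonzero real multiple of $\widetilde{tH}+(\widetilde{tH},\wt\beta)\varrho_{X_1}$.

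The main obstacle is pinning down the sign of that scalar, i.e. that the multiple is positive. This is exactly where the orientation-preserving hypothesis on $\Phi^+$ enters: since $\xi(\beta,H,t)\in C^+(v)$ lies in the chosen component of the positive cone and $\Phi$ preserves the orientation of $H^*(X,{\Bbb Q})_{\alg}^+$, the image lies in $C^+(\Phi(v))$, which selects the positive representative of the ray. I expect the delicate part to be the bookkeeping that reconciles the sign of $Z_{(\wt\beta,\wt\omega)}(\Phi(v))$ coming through $\zeta=-Z_{(\beta,\omega)}(r_1e^\gamma)$ with the positivity $(\wt\omega^2)>0$ and the orientation of $\Phi^+$; the remaining ingredients are the routine Mukai-lattice identities $(e^a,e^b)=-\tfrac12((a-b)^2)$ and $e^{\wt\beta}\wt\omega=\wt\omega+(\wt\omega,\wt\beta)\varrho_{X_1}$ already implicit in \eqref{eq:tilde(beta)2}.
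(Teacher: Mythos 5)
Your reduction is correct as far as it goes, and its first half coincides with the paper's: both arguments rest on reading the commutative diagram as $\Phi(e^{\beta+\sqrt{-1}tH})=\zeta e^{\widetilde{\beta}+\sqrt{-1}\widetilde{tH}}$ and on the fact that the real-linear map $\Phi$ commutes with $\mathrm{Im}$ and with complex scalars. The genuine gap is the last step: the positivity of the scalar, which is the whole content of the proposition (everything up to ``a nonzero real multiple'' is immediate from your earlier steps), is never proved. You delegate it to the orientation hypothesis on $\Phi^+$, but that implication is nowhere carried out: it would require relating the orientation of the positive $2$-plane ${\Bbb Q}(1-\varrho_X)+{\Bbb Q}H$ to the choice of components of the positive cones inside $v^\perp$ and $\Phi(v)^\perp$, a nontrivial piece of bookkeeping that the paper does not supply either. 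Worse, your own (correct) remark that $\zeta=-Z_{(\beta,\omega)}(r_1e^\gamma)$ shows the hand-wave cannot succeed as stated: your step giving $Z_{(\widetilde\beta,\widetilde\omega)}(\Phi(v))\in{\Bbb R}^\times$ in fact determines its sign, namely $Z_{(\widetilde\beta,\widetilde\omega)}(\Phi(v))=Z_{(\widetilde\beta,\widetilde\omega)}(\varrho_{X_1})\cdot\bigl(Z_{(\beta,\omega)}(v)/Z_{(\beta,\omega)}(r_1e^\gamma)\bigr)=-1\cdot(\text{positive})<0$, and with a negative real denominator your chain outputs a multiple in ${\Bbb R}_{<0}$, the opposite of the assertion. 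So the sign is not a cosmetic afterthought to be settled by orientation; it is exactly the convention conflict (your $\zeta=-Z(r_1e^\gamma)$ versus the paper's assertion $Z_{(\beta,tH)}(r_1e^\gamma)=\zeta$) that must be resolved before the proof is complete.

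The paper handles this point by using the hypothesis on the source side, before applying $\Phi$: since (in its convention) $Z_{(\beta,tH)}(r_1e^\gamma)=\zeta\in{\Bbb R}_{>0}Z_{(\beta,tH)}(v)$, one may replace $Z_{(\beta,tH)}(v)^{-1}$ by $\zeta^{-1}$ in $\xi(\beta,H,t)=\mathrm{Im}\bigl(Z_{(\beta,tH)}(v)^{-1}e^{\beta+\sqrt{-1}tH}\bigr)$ at the cost of a positive real factor only; then $\Phi$ turns $\zeta^{-1}e^{\beta+\sqrt{-1}tH}$ into $\zeta^{-1}\zeta e^{\widetilde\beta+\sqrt{-1}\widetilde{tH}}=e^{\widetilde\beta+\sqrt{-1}\widetilde{tH}}$, whose imaginary part is exactly $\widetilde{tH}+(\widetilde{tH},\widetilde\beta)\varrho_{X_1}$, with no residual scalar left and hence no sign to chase. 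Translated into your setup, the fix is one line: record not merely that $Z_{(\widetilde\beta,\widetilde\omega)}(\Phi(v))$ is real but its actual value $\zeta^{-1}Z_{(\beta,\omega)}(v)$, whose sign the hypothesis pins down once the $\zeta$-convention is fixed; no appeal to orientations or positive cones is needed.
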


\begin{proof}
\begin{equation}
\Phi( e^{\beta+\sqrt{-1}tH})=
\zeta e^{\widetilde{\beta}+\sqrt{-1}\widetilde{tH}}.
\end{equation}

Since 
\begin{equation}
\begin{split}
Z_{(\beta,tH)}(v)=& \frac{r}{2}t^2 (H^2)-a+(c_1-r \beta,tH)\sqrt{-1}\\
=& r \frac{t^2 (H^2) -((c_1/r-\beta)^2)}{2}
+\frac{\langle v^2 \rangle}{2r} +(c_1-r \beta,tH)\sqrt{-1}
\end{split}
\end{equation}
and $Z_{(\beta,tH)}(r_1 e^\gamma)=\zeta$,
we have
\begin{equation}
{\Bbb R}_{>0}\xi(\beta,H,t)=
{\Bbb R}_{>0}\mathrm{Im}(Z_{(\beta,tH)}(v)^{-1} e^{\beta+\sqrt{-1}tH})=
{\Bbb R}_{>0}\mathrm{Im}(\zeta^{-1} e^{\beta+\sqrt{-1}tH}).
\end{equation}
\end{proof}
\end{NB}

\begin{prop}\label{prop:Phi-xi}
For $(\beta,\omega) \in \NS(X)_{\Bbb R} \times P^+(X)_{\Bbb R}$, we have
$\Phi(\xi(\beta,\omega))=
\xi(\widetilde{\beta},\widetilde{\omega})$.
\end{prop}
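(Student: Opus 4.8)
The plan is to reduce the statement to two facts that the surrounding text has already set up: that the isometry $\Phi$ commutes with taking imaginary parts, and that $\Phi$ sends the central-charge vector $e^{\beta+\sqrt{-1}\omega}$ to a scalar multiple of $e^{\widetilde{\beta}+\sqrt{-1}\widetilde{\omega}}$. The first is structural: $\Phi$ is defined over ${\Bbb Q}$ and extended ${\Bbb C}$-linearly to $H^*(X,{\Bbb C})_{\alg}$, so it has real coefficients and commutes with complex conjugation; writing $\mathrm{Im}(w)=(w-\overline{w})/(2\sqrt{-1})$ this yields $\Phi(\mathrm{Im}(w))=\mathrm{Im}(\Phi(w))$ for every $w\in H^*(X,{\Bbb C})_{\alg}$. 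This is the only input about $\Phi$ I need beyond the two displayed relations.

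The key transformation identity is $\Phi(e^{\beta+\sqrt{-1}\omega})=\zeta\,e^{\widetilde{\beta}+\sqrt{-1}\widetilde{\omega}}$, and I would extract it straight from the commutative diagram preceding the statement. That diagram reads $Z_{(\widetilde{\beta},\widetilde{\omega})}(\Phi(x))=\zeta^{-1}Z_{(\beta,\omega)}(x)$ for all $x$; using $Z_{(\beta,\omega)}(x)=(e^{\beta+\sqrt{-1}\omega},x)$, $Z_{(\widetilde{\beta},\widetilde{\omega})}(y)=(e^{\widetilde{\beta}+\sqrt{-1}\widetilde{\omega}},y)$ together with the fact that $\Phi$ is an isometry, this becomes $(\Phi^{-1}(e^{\widetilde{\beta}+\sqrt{-1}\widetilde{\omega}}),x)=\zeta^{-1}(e^{\beta+\sqrt{-1}\omega},x)$ for all $x$. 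Nondegeneracy of the bilinear form then forces $\Phi^{-1}(e^{\widetilde{\beta}+\sqrt{-1}\widetilde{\omega}})=\zeta^{-1}e^{\beta+\sqrt{-1}\omega}$, which is the desired identity. (Equivalently, one checks it head-on by feeding $e^{\beta+\sqrt{-1}\omega}$ into the explicit formula for the action of $\Phi$ displayed above and reading off the real and imaginary $\NS$-parts; that is exactly the calculation that produces the defining formulas \eqref{eq:tilde(beta)}, and it also identifies $\zeta$ with $r_1$ times the $\varrho_X$-coefficient of $e^{\beta+\sqrt{-1}\omega}$ in its $e^\gamma$-decomposition.)

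With both facts in hand the proof is a short chain. Beginning from $\xi(\beta,\omega)=\mathrm{Im}\bigl(e^{\beta+\sqrt{-1}\omega}/Z_{(\beta,\omega)}(v)\bigr)$, I would apply $\Phi$, pull it through $\mathrm{Im}$ and through the complex scalar $Z_{(\beta,\omega)}(v)^{-1}$, insert the transformation identity, and finally substitute $Z_{(\beta,\omega)}(v)=\zeta\,Z_{(\widetilde{\beta},\widetilde{\omega})}(\Phi(v))$ (the diagram with $x=v$):
\[
\Phi(\xi(\beta,\omega))=\mathrm{Im}\frac{\Phi(e^{\beta+\sqrt{-1}\omega})}{Z_{(\beta,\omega)}(v)}
=\mathrm{Im}\frac{\zeta\,e^{\widetilde{\beta}+\sqrt{-1}\widetilde{\omega}}}{\zeta\,Z_{(\widetilde{\beta},\widetilde{\omega})}(\Phi(v))}
=\xi(\widetilde{\beta},\widetilde{\omega}),
\]
the two factors of $\zeta$ cancelling exactly. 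Thus equality holds on the nose for the chosen representatives in $v^\perp$, hence in $C^+(v)$, with the $\xi$ on the right understood as the map attached to $\Phi(v)$.

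The only genuinely computational part — and the sole place where the intricate formulas \eqref{eq:tilde(beta)} and \eqref{eq:tilde(beta)2} are needed — is not the chain above but the verification that $(\widetilde{\beta},\widetilde{\omega})$ defined by \eqref{eq:tilde(beta)} really lands in $\NS(X)_{\Bbb R}\times P^+(X)_{\Bbb R}$, i.e. that $\Phi(e^{\beta+\sqrt{-1}\omega})$ is $\zeta$ times an honest $e^{(\cdot)+\sqrt{-1}(\cdot)}$ with imaginary part in the positive cone. This is precisely what \eqref{eq:tilde(beta)2} records: the formulas for $(\widetilde{\omega}^2)$ and $(\widetilde{\omega},\widehat{\omega})$ exhibit both as positive, using $(\omega^2)>0$ and the Hodge-index sign $(D^2)(\omega^2)\le 0$ for $D\in\omega^\perp$, so that $\widetilde{\omega}\in P^+(X)_{\Bbb R}$; the orientation-preserving hypothesis on $\Phi^+$ guarantees $\widehat{\omega}$, and hence $\widetilde{\omega}$, lie on the correct side. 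I expect this positivity bookkeeping to be the main (though routine) obstacle; once it is settled, the proposition follows immediately from the two-line computation.
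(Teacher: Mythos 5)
Your proposal is correct and follows essentially the same route as the paper: the paper's proof defers to \cite[Prop.~3.7]{Movable}, whose argument (reproduced in the author's own notes) is exactly your chain — pull $\Phi$ through $\mathrm{Im}$ and the scalar $Z_{(\beta,\omega)}(v)^{-1}$, insert $\Phi(e^{\beta+\sqrt{-1}\omega})=\zeta\,e^{\widetilde{\beta}+\sqrt{-1}\widetilde{\omega}}$, and cancel $\zeta$ against $Z_{(\beta,\omega)}(v)=\zeta\,Z_{(\widetilde{\beta},\widetilde{\omega})}(\Phi(v))$ from the commutative diagram. Your added care about nondegeneracy, the positivity check via \eqref{eq:tilde(beta)2}, and the remark that the right-hand $\xi$ is the one attached to $\Phi(v)$ are all consistent with (indeed slightly more explicit than) the paper's treatment.
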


\begin{proof}
The proof is completely the same as of \cite[Prop. 3.7]{Movable}.
\end{proof}
\begin{NB}
\begin{proof}
\begin{equation}
\begin{split}
{\Bbb R}_{>0}\Phi(\xi(\beta,\omega))=&
{\Bbb R}_{>0}\Phi(\mathrm{Im}
(Z_{(\beta,\omega)}(v)^{-1} e^{\beta+\sqrt{-1}\omega}))\\
=& {\Bbb R}_{>0} \mathrm{Im}(Z_{(\beta,\omega)}(v)^{-1} 
\Phi(e^{\beta+\sqrt{-1}\omega}))\\
=& {\Bbb R}_{>0} 
\mathrm{Im}(Z_{(\widetilde{\beta},\widetilde{\omega})}(\Phi(v))^{-1} \zeta^{-1}
\zeta e^{\widetilde{\beta}+\sqrt{-1}\widetilde{\omega}})\\
=& {\Bbb R}_{>0} \xi(\widetilde{\beta},\widehat{\omega}).
\end{split}
\end{equation}
\end{proof}
\end{NB}

\section{Stability conditions on a blow-up}\label{sect:stability-blowup}

\subsection{Stability conditions for $(\beta,tH)$}
Let $\pi:X \to Y$ be the blow-up of a point as 
in section \ref{sect:blowup}.
In this subsection, we shall study the map
$\xi$ in a neighborhood of 
$H+(H,\delta)\varrho_X$, where
$H \in \pi^*(\Amp(Y)_{\Bbb Q})$.
We start with the following easy fact.
\begin{lem}\label{lem:lambda}
Assume that $\omega \in \pi^*(\Amp(Y)_{\Bbb R})$.
\begin{enumerate}
\item[(1)]
$Z_{(\beta,\omega)}({\cal O}_C(-a))=0$ if and only if
$(\beta,C)=-a+\frac{1}{2}$.
\item[(2)]
If $(\beta-\delta,\omega) \ne 0$,
then $Z_{(\beta,\omega)}({\cal O}_C(-a))=0$ if and only if
$\xi(\beta,\omega) \in v({\cal O}_C(-a))^\perp$.
\end{enumerate}
\end{lem}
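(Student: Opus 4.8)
The plan is to prove Lemma \ref{lem:lambda} by direct computation with the central charge, exploiting the special structure of the blow-up: the exceptional curve $C$ satisfies $(C^2)=-1$, $(H,C)=0$ since $H$ is pulled back from $Y$, and $K_X=\pi^*(K_Y)+C$. I would begin by recording the Chern character $v(\mathcal{O}_C(-a))$. Using $\ch(\mathcal{O}_C(-a)) = \ch(\mathcal{O}_C) - a(C,\cdot)$-type data, one finds $v(\mathcal{O}_C(-a)) = (0, C, -a-\tfrac{1}{2})$ or the analogous expression in the coordinates $(x_0,x_1,x_2)$ of $H^*(X,\mathbb{Q})_{\mathrm{alg}}$; the precise constant term follows from Riemann--Roch on the rational curve $C$. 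I would compute this once and keep it fixed for both parts.

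For part (1), I would substitute $v(\mathcal{O}_C(-a))$ into the formula
\[
Z_{(\beta,\omega)}(E)=-a_\beta(E)+r(E)\tfrac{(\omega^2)}{2}+d_\beta(E)(H,\omega)\sqrt{-1}
\]
established in subsection \ref{subsect:stab}. Since $r(\mathcal{O}_C(-a))=0$, the real part reduces to $-a_\beta = (e^\beta, v(\mathcal{O}_C(-a)))$, and the imaginary part is $d_\beta(H,\omega)$ where $d_\beta = (c_1,H)/(H^2) = (C,H)/(H^2) = 0$ because $\omega \in \pi^*(\Amp(Y))$ forces $(C,\omega)=0$. Thus the imaginary part vanishes automatically, and $Z_{(\beta,\omega)}(\mathcal{O}_C(-a))=0$ reduces to the single real equation $(e^\beta, v(\mathcal{O}_C(-a)))=0$. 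Expanding the Mukai pairing $(e^\beta, (0,C,c)) = (\beta,C) - c$ (with the sign conventions of the bilinear form defined at the start of section \ref{sect:back}) and plugging in $c=-a-\tfrac{1}{2}$ should yield exactly the condition $(\beta,C)=-a+\tfrac{1}{2}$, which is the claim.

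For part (2), the key observation is the characterization stated just after the definition of $\xi$: for any $v_1$, one has $Z_{(\beta,\omega)}(v_1)\in\mathbb{R}\,Z_{(\beta,\omega)}(v)$ if and only if $\xi(\beta,\omega)\in v_1^\perp$. I would apply this with $v_1 = v(\mathcal{O}_C(-a))$. The hypothesis $(\beta-\delta,\omega)\neq 0$ guarantees that $Z_{(\beta,\omega)}(v)\neq 0$ (indeed its imaginary part is $r(v)\cdot d_\beta(v)(H,\omega)$, which up to scaling is $(c_1(v)-r\beta,\omega)=-r(\beta-\delta,\omega)\neq 0$), so that $\mathbb{R}\,Z_{(\beta,\omega)}(v)$ is a genuine real line and the equivalence is meaningful. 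By part (1), $Z_{(\beta,\omega)}(\mathcal{O}_C(-a))=0$ exactly when $(\beta,C)=-a+\tfrac12$; and $0$ lies in any real line, so $Z_{(\beta,\omega)}(\mathcal{O}_C(-a))=0$ implies the collinearity condition, hence $\xi(\beta,\omega)\in v(\mathcal{O}_C(-a))^\perp$. For the converse direction, I would note that under $\omega\in\pi^*(\Amp(Y))$ the imaginary part of $Z_{(\beta,\omega)}(\mathcal{O}_C(-a))$ already vanishes identically, so collinearity with the genuinely complex number $Z_{(\beta,\omega)}(v)$ forces the real part to vanish as well, giving $Z_{(\beta,\omega)}(\mathcal{O}_C(-a))=0$.

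The computation is essentially routine, so there is no deep obstacle; the main point requiring care is bookkeeping of the sign conventions in the bilinear form and in $v(\mathcal{O}_C(-a))$, together with verifying that the hypothesis $(\beta-\delta,\omega)\neq 0$ is precisely what makes $Z_{(\beta,\omega)}(v)$ non-real so that the abstract $\xi$-perpendicularity criterion can be invoked cleanly in both directions. The only subtlety worth flagging is that $(C,\omega)=0$ is doing the real work: it collapses the imaginary part of $Z_{(\beta,\omega)}(\mathcal{O}_C(-a))$, which is what decouples part (2) into a statement purely about the real part and thereby makes the equivalence with part (1) immediate.
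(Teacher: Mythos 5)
Your proof is correct and follows essentially the same route as the paper: part (1) is the direct computation the paper dismisses as obvious, and part (2) invokes the criterion $Z_{(\beta,\omega)}(v_1)\in{\Bbb R}\,Z_{(\beta,\omega)}(v)\Leftrightarrow\xi(\beta,\omega)\in v_1^\perp$ together with the two facts that $(C,\omega)=0$ makes $Z_{(\beta,\omega)}({\cal O}_C(-a))$ real while $(\beta-\delta,\omega)\neq 0$ makes $Z_{(\beta,\omega)}(v)$ non-real, which is exactly the paper's argument. One bookkeeping correction: $\ch({\cal O}_C(-a))=(1-e^{-C})e^{aC}=C+(\tfrac{1}{2}-a)\varrho_X$, so the constant term is $\tfrac{1}{2}-a$ rather than $-a-\tfrac{1}{2}$; with your stated value the equation $(\beta,C)=c$ from part (1) would produce the threshold $(\beta,C)=-a-\tfrac{1}{2}$ instead of the claimed $-a+\tfrac{1}{2}$ (a quick check: $\chi({\cal O}_C(-a))=1-a$ forces the constant $\tfrac{1}{2}-a$).
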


\begin{proof}
  (1) is obvious.
Since $Z_{(\beta,\omega)}(v) \not \in {\Bbb R}$ for
$(\beta-\delta,\omega) \ne 0$,
$\xi(\beta,\omega) \in v({\cal O}_C(-a))^\perp$
if and only if $Z_{(\beta,\omega)}({\cal O}_C(-a))=0$.
Thus (2) holds. 
\end{proof}

\begin{NB}
\begin{rem}
If $\xi(\beta,tH) \not \in \cup_a v({\cal O}_C(-a))^\perp$, 
that is, $(\beta,C) \not \in \frac{1}{2}+{\Bbb Z}$, then
${\frak C}^\beta$ is determined by $\xi(\beta,\omega)$.
\end{rem}
\end{NB}

\begin{lem}\label{lem:fiber2}
\begin{enumerate}
\item[(1)]
For $u=H+(H,\delta)\varrho_X$ with $H \in \pi^*(\Amp(Y)_{\Bbb Q})$,
\begin{equation}
\xi^{-1}(u) \cap \overline{\cal K}(X) 
=\{(\beta,\omega) \mid 
\beta-\delta \in H^\perp, (\beta,C) \not \in \tfrac{1}{2}+{\Bbb Z},
\omega \in {\Bbb R}_{>0}H
\}.
\end{equation}
In particular, $\xi^{-1}(u) \cap \overline{\cal K}(X)$
is not connected.
\begin{NB}
By the disconnectivity,
the category ${\frak C}^\beta$ depends on the choice of $\beta$.
\end{NB}
\item[(2)]
Assume that $\rk u = -1$.
\begin{enumerate}
\item
If $u \not \in \cup_{a \in {\Bbb Z}} v({\cal O}_C(-a))^\perp$, then
\begin{equation}
\xi^{-1}(u) \cap \overline{\cal K}(X)=
\{(\beta,\omega) \in \xi^{-1}(u) \mid \omega \in \Amp(X)_{\Bbb R} \cup
\pi^*(\Amp(Y)_{\Bbb R}) \}.
\end{equation}
\item
If $u \in \cup_{a \in {\Bbb Z}} v({\cal O}_C(-a))^\perp$, then
\begin{equation}
\xi^{-1}(u) \cap \overline{\cal K}(X)=
\{(\beta,\omega) \in \xi^{-1}(u) \mid \omega \in \Amp(X)_{\Bbb R} \}.
\end{equation}
\end{enumerate}
In particular, $\xi^{-1}(u) \cap \overline{\cal K}(X)$
is connected.
\end{enumerate}
\end{lem}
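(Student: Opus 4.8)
The plan is to read off both fibers from the explicit description of $\xi^{-1}(u)$ obtained in the course of proving Proposition \ref{prop:fibration}, and then to cut out the part lying in $\overline{\cal K}(X)$ by using the rank of $\xi(\beta,\omega)$ together with Lemma \ref{lem:lambda}.

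For (1), note that $u=H+(H,\delta)\varrho_X$ has $\rk u=0$, so this is exactly the case treated at the end of the proof of Proposition \ref{prop:fibration}, where it is shown that
$$\xi^{-1}(u)=\{(\beta,\omega)\mid \beta=\delta+D,\ D\in H^\perp,\ \omega\in{\Bbb R}_{>0}H\}.$$
Since $H\in\pi^*(\Amp(Y)_{\Bbb Q})$, every such $\omega$ lies in $\pi^*(\Amp(Y)_{\Bbb R})$ and is never ample; hence a point of $\xi^{-1}(u)$ lies in $\overline{\cal K}(X)$ precisely when it lies in $\partial{\cal K}(X)$, i.e. when $(\beta,C)\notin\frac{1}{2}+{\Bbb Z}$, which gives the stated equality. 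For the disconnectedness, the projection formula gives $(H,C)=0$, so $C\in H^\perp$ with $(C,C)=-1$; as $D$ ranges over $H^\perp$ the number $(\beta,C)=(\delta,C)+(D,C)$ sweeps out all of ${\Bbb R}$, and deleting the discrete family of hyperplanes $(\beta,C)\in\frac{1}{2}+{\Bbb Z}$ breaks the affine fiber into infinitely many pieces.

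For (2) the key computation is the rank of $\xi(\beta,\omega)$. Writing $v=re^\delta-\frac{(v^2)}{2r}\varrho_X$ and taking the degree-zero part of $\mathrm{Im}\,e^{\beta+\sqrt{-1}\omega}/Z_{(\beta,\omega)}(v)$, one finds, using $\mathrm{Im}\,Z_{(\beta,\omega)}(v)=-r(\beta-\delta,\omega)$, that
$$\rk\xi(\beta,\omega)=\frac{r\,(\beta-\delta,\omega)}{|Z_{(\beta,\omega)}(v)|^2}.$$
Thus $\rk u=-1\neq0$ forces $(\beta-\delta,\omega)\neq0$ for every $(\beta,\omega)\in\xi^{-1}(u)$, so Lemma \ref{lem:lambda}(2) applies at each boundary point where $\omega\in\pi^*(\Amp(Y)_{\Bbb R})$: there $\xi(\beta,\omega)=u\in v({\cal O}_C(-a))^\perp$ is equivalent to $Z_{(\beta,\omega)}({\cal O}_C(-a))=0$, hence by Lemma \ref{lem:lambda}(1) to $(\beta,C)=-a+\frac{1}{2}$. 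In case (a), $u\notin\bigcup_a v({\cal O}_C(-a))^\perp$ means $(\beta,C)\neq-a+\frac{1}{2}$ for every $a$, so $(\beta,C)\notin\frac{1}{2}+{\Bbb Z}$ automatically and every boundary point of the fiber lies in $\partial{\cal K}(X)$; together with the ample part this yields the description with $\omega\in\Amp(X)_{\Bbb R}\cup\pi^*(\Amp(Y)_{\Bbb R})$. In case (b), $u\in v({\cal O}_C(-a_0))^\perp$ forces $(\beta,C)=-a_0+\frac{1}{2}\in\frac{1}{2}+{\Bbb Z}$ at every boundary point, so all such points are excluded and only the ample part $\{\omega\in\Amp(X)_{\Bbb R}\}$ survives.

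Finally, for connectedness: in case (b) the set equals $\xi^{-1}(u)\cap{\cal K}(X)$, which is connected by the last assertion of Proposition \ref{prop:fibration}. In case (a) the same proposition gives that the ample part $\xi^{-1}(u)\cap{\cal K}(X)$ is connected, and it remains to see that the added boundary points (those with $\omega\in\pi^*(\Amp(Y)_{\Bbb R})$) lie in its closure inside the fiber. Here I would use that $\xi$ is a submersion (regularity, Proposition \ref{prop:fibration}), so each fiber is a smooth manifold, and that every $\omega_0\in\pi^*(\Amp(Y)_{\Bbb R})$ is a limit of the ample classes $\omega_0-\epsilon C$ as $\epsilon\to0^+$; the implicit function theorem then produces a path inside the fiber from a boundary point into the ample region, so the boundary points are limits of interior ones and the union stays connected. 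I expect this transversality-at-the-boundary step to be the only real obstacle, the remainder being the rank computation and a direct application of Lemma \ref{lem:lambda}. It is precisely the presence of the connected ample interior in (2), absent in (1) where the whole fiber sits over $\pi^*(\Amp(Y)_{\Bbb R})$, that accounts for the contrast between the two parts.
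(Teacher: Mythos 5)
Your set-theoretic descriptions in (1), (2)(a) and (2)(b) are correct, and they follow exactly the route the paper intends (the paper states this lemma without a written proof; the ingredients are the explicit fiber description from the proof of Proposition \ref{prop:fibration} together with Lemma \ref{lem:lambda}). In particular, your rank computation $\rk\xi(\beta,\omega)=r(\beta-\delta,\omega)/|Z_{(\beta,\omega)}(v)|^2$ is right, so $\rk u\ne 0$ does force $(\beta-\delta,\omega)\ne 0$ on the whole fiber, Lemma \ref{lem:lambda} then converts the boundary condition into $(\beta,C)\in\tfrac12+{\Bbb Z}$ exactly as you say, and the disconnectedness in (1) (using $(H,C)=0$, $(C^2)=-1$, so the hyperplanes $(\beta,C)\in\tfrac12+{\Bbb Z}$ cut the affine fiber into infinitely many pieces) is sound, as is the case (2)(b) connectedness via the last assertion of Proposition \ref{prop:fibration}.

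The gap is the connectedness step in case (2)(a), and the justification you sketch does not work as stated: submersivity of $\xi$ only makes the fiber a smooth submanifold, and a fiber of a submersion can be tangent to, or even contained in, the wall $\{(\omega,C)=0\}$ bounding the ample cone. For the model submersion $(x,y)\mapsto y$ on ${\Bbb R}^2$, the fiber through the origin never meets $\{y>0\}$ even though the origin is a limit of points of $\{y>0\}$; so the fact that $\omega_0=\pi^*(\omega')$ is a limit of ample classes $\omega_0-\epsilon C$ in $\NS(X)_{\Bbb R}$ does not by itself produce fiber points with ample $\omega$ near $(\beta_0,\omega_0)$ --- you must exclude this tangency, and the implicit function theorem alone does not do it. The clean fix is the same parametrization of the fiber you already quote from Proposition \ref{prop:fibration}: for $\rk u\ne 0$, the points of $\xi^{-1}(u)$ with $\omega\in{\Bbb R}_{>0}H'$ form, for \emph{every} ray ${\Bbb R}_{>0}H'\subset P^+(X)_{\Bbb R}$, the nonempty open arc \eqref{eq:circle} (nonempty because $(u^2)>0$), with center, radius and the component $D/y$ of $\beta$ depending continuously on $H'$. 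Hence $(\beta,\omega)\mapsto{\Bbb R}_{>0}\omega$ identifies $\xi^{-1}(u)$ with the product of the ray space of $P^+(X)_{\Bbb R}$ with an open interval, and in case (a) the set $\xi^{-1}(u)\cap\overline{\cal K}(X)$ corresponds to $\bigl(\text{rays in }\Amp(X)_{\Bbb R}\cup\pi^*(\Amp(Y)_{\Bbb R})\bigr)\times(\text{interval})$. Since $\pi^*(\Amp(Y)_{\Bbb R})$ consists of nef classes, it lies in the closure of the connected open cone $\Amp(X)_{\Bbb R}$, so that union of rays is connected, and a product of connected sets is connected. This replaces, and is the correct form of, your transversality-at-the-boundary step.
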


We fix $H \in \pi^*(\Amp(Y)_{\Bbb Q})$.
%Assume that $(\beta-\delta,\omega) \ne 0$ and $\omega=tH$.
\begin{NB}
For $u=H+(H,\delta)\varrho_X$,  
$$
\xi^{-1}(u) \cap \Stab(X)=\{(\beta,\omega) \mid 
\beta-\delta \in H^\perp, (\beta,C) \not \in \tfrac{1}{2}+{\Bbb Z}
\},
$$
since $Z_{(\beta,\omega)}({\cal O}_C(-a))=(C,\beta)-\tfrac{1}{2}+a
\ne 0$ for all $a \in {\Bbb Z}$.

For $u=e^\delta(xH+pC+(1+\frac{(v^2)}{2r^2}\varrho_X))$,
$u \in \ch {\cal O}_C(-a)^\perp$ if and only if 
$p=-\frac{1}{2}+\lambda)$.
These are lines on the $x,p$-plane.
If $\omega=tH$, then $(\beta,\omega)$ define a stability
condition on the complement of 
$$
\bigcup_a \xi^{-1}(v({\cal O}_X(-a))^\perp)=
\bigcup_a \xi^{-1}(\{u \mid p=-\tfrac{1}{2}+a \}).
$$

In $\NS(X)_{\Bbb R} \times {\Bbb R}_{>0}H$,
$\xi^{-1}(\{u \mid p=-\tfrac{1}{2}+a\})$ is a family
of semi-circles over a line.

If $\omega$ is ample, then
$$
\bigcup_a \xi^{-1}(\{u \mid p=-\tfrac{1}{2}+a\})
$$
is a candidate of a wall.
\end{NB}
We have an inclusion
$$
\iota:H^\perp \times {\Bbb R} \to \NS(X)_{\Bbb R} \times P^+(X)_{\Bbb R}
$$
such that $\iota(D,s)=(\delta+sH+D,H)$.
By \eqref{eq:xi-explicit},
\begin{equation}
\xi(\delta+sH+D,H)=
e^\delta \left\{\frac{r^2((H^2)(s^2+1)-(D^2))+(v^2)}{2r^2}H+
s(H^2)\left(D+\left(1+\frac{(v^2)}{2r^2}\varrho_X \right)\right)\right\}.
\end{equation}
\begin{NB}
Not correct:
Hence
$\xi_{|\im \iota}$ is surjective in a neighborhood of
$H+(H,\delta)\varrho_X$ in $C^+(v) \setminus 
(\varrho_X^\perp \setminus {\Bbb R}_{>0}(H+(H,\delta)\varrho_X))$.
\end{NB}
$\xi(\im \iota)$ consists of
$e^\delta(H+X+y(1+\tfrac{(v^2)}{2r^2}\varrho_X))$
satisfying
\begin{equation}
(H^2) \geq -(X^2)+y^2((H^2)+\frac{(v^2)}{r^2}), y \ne 0
\end{equation}
or
$(X,y)=(0,0)$.
Moreover if $s<0$, then $y<0$.
\begin{NB}
We set 
\begin{equation}
\xi(\delta+sH+D,H)=e^\delta
\left(\frac{(H^2)(s^2+1)-(D^2)}{2}+\frac{(v^2)}{2r^2} \right)
\left(H+X+y\left(1+\frac{(v^2)}{2r^2}\varrho_X \right)\right).
\end{equation}
Then $X=yD$ and 
$y\left(\frac{(H^2)(s^2+1)-(D^2)}{2}+\frac{(v^2)}{2r^2} \right)=s(H^2)$.
Hence we see that
\begin{equation}
\left(y-\frac{s}{(s^2+1)+\frac{(v^2)}{r^2 (H^2)}} \right)^2+
\frac{-(X^2)}{(H^2)((s^2+1)+\frac{(v^2)}{r^2 (H^2)})}=
\frac{s^2}{(s^2+1+\frac{(v^2)}{r^2 (H^2)})^2}.
\end{equation}
It is a quadric of elliptic type and having tangent hyperplane
$y=0$ at $(X,y)=(0,0)$.
Since it is a quadratic equation for $s$ for $y \ne 0$,
the condition for the existence of $s$ is
\begin{equation}
y^2(H^2)^2-y^2(H^2)(y^2((H^2)+\frac{(v^2)}{r^2})-X^2) \geq 0
\end{equation}
if $y \ne 0$.
Hence the condition is $(X,y)=(0,0)$ or 
$y \ne 0$ and
\begin{equation}
(H^2) \geq -(X^2)+y^2((H^2)+\frac{(v^2)}{r^2}).
\end{equation}
\begin{rem}
For $s^2 \gg 0$ and $D$ is sufficiently large, 
$\xi(\delta+sH+D,H)$ is close to $y=0$. 
\end{rem}

We next consider a restricted parameter space
$\xi(\delta+sH+D_0+pC,H)$.
In this case, we set $X=yD_0+x C$ and $X=yD$, where
$D=D_0+(x/y) C=D_0+pC$.
Then 
\begin{equation}
\xi(\delta+sH+D_0+pC,H)=e^\delta
\left(\frac{(H^2)(s^2+1)-(D^2)}{2}+\frac{(v^2)}{2r^2} \right)
\left(H+xC+y D_0+y\left(1+\frac{(v^2)}{2r^2}\varrho_X \right)\right).
\end{equation}
Hence
\begin{equation}
\left(y-\frac{s}{A} 
\right)^2+
\frac{(x-y(D_0,C))^2}{(H^2)A}=
\frac{s^2}{A^2}
\end{equation}
where
$$
A:=(s^2+1)+\frac{(v^2)}{r^2 (H^2)}+(-(D_0^2)-(D_0,C)^2)
\geq (s^2+1)+\frac{(v^2)}{r^2 (H^2)}.
$$
Thus
\begin{equation}
\xi(\delta+sH+D_0+pC,H)=e^\delta
\left(H+(x-y(D_0,C))C+y(D_0+(D_0,C)C)+
y\left(1+\frac{(v^2)}{2r^2}\varrho_X \right)\right) \in C^+(v).
\end{equation}
\end{NB}
For a fixed $D$, $\xi \circ \iota(D,s)=\xi(\delta+sH+D,H)$ is a line
passing $H+(H,\delta) \varrho_X$ and 
\begin{equation}
\iota^{-1} (\xi^{-1}(H+(H,\delta)\varrho_X))=
\{ (D,0) \mid D \in H^\perp \}.
\end{equation}
\begin{NB}
If $D \ne D'$, then
$\xi(\delta+sH+D,H)=\xi(\delta+s' H+D',H)$
if and only if $s=s'=0$.
\end{NB}
We also have
$$
\im \iota \cap \overline{\cal K}(X)=
\{(\delta+sH+D,H) \mid (\delta+D,C) \not \in \tfrac{1}{2}+{\Bbb Z} \}.
$$
\begin{NB}
Let
$\iota':{\Bbb R}C \times {\Bbb R} \to \NS(X)_{\Bbb R} \times P^+(X)_{\Bbb R}$
be the restriction of $\iota$ to ${\Bbb R}C \times {\Bbb R}$.
Then 
$$
\im \iota' \cap 
\overline{\cal K}(X)= 
\{ \delta+sH+pC \mid p \not \in \frac{1}{2}+{\Bbb Z} \}.
$$
For a Mukai vector $v_1$ with $v_1 \perp H+(H,\delta)\varrho_X$,
we write $v_1=\frac{r_1}{r}v+e^\delta(qC+D+b \varrho_X)$,
where $D \in H^\perp \cap C^\perp$.
Then $(\xi \circ \iota')^{-1}(v_1^\perp)=\{(p,s) \mid s(pq+b)=0 \}$. 
\end{NB}

\begin{lem}\label{lem:isom2}
Assume that
$(\beta,\omega),(\beta',\omega') \in \overline{\cal K}(X)$
satisfy
$\xi(\beta,\omega)=\xi(\beta',\omega') \not \in 
\cup_{a \in {\Bbb Z}} v({\cal O}_C(-a))^{\perp}$.
Then
${\cal M}_{(\beta,\omega)}(v)={\cal M}_{(\beta',\omega')}(v)$.
\end{lem}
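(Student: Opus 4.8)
Set $u := \xi(\beta,\omega) = \xi(\beta',\omega')$. Since $(\beta,\omega) \in \overline{\cal K}(X)$ we have $u \in C^+(v)$, so $(u^2) > 0$ and Remark \ref{rem:fiber-property} applies. The plan is to obtain the equality of moduli stacks from two ingredients: the constancy of $\sigma$-semi-stability and of $S$-equivalence classes along connected fibers of $\xi$, established in the discussion preceding Lemma \ref{lem:xi-wall}, and the connectedness of the fiber of $\xi|_{\overline{\cal K}(X)}$ through $(\beta,\omega)$ and $(\beta',\omega')$, which will come from Lemma \ref{lem:fiber2}.

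First I recall why $\mathcal{M}_\sigma(v)$ is constant on a connected $D \subset \xi^{-1}(u) \cap \overline{\cal K}(X)$. If $E$ with $v(E) = v$ is $\sigma$-semi-stable and $S$-equivalent to $\bigoplus_i E_i$ with the $E_i$ all $\sigma$-stable of the same phase, then the equal-phase condition at $\sigma$ gives $Z_\sigma(E_i) \in {\Bbb R}_{>0} Z_\sigma(v)$, hence $u = \xi(\sigma) \in v(E_i)^\perp$ by the characterization of $v_1^\perp$ in terms of $Z_{(\beta,\omega)}$. As $u$ is the same point for every $\sigma' \in D$, we still have $u \in v(E_i)^\perp$, so $Z_{\sigma'}(E_i) \in {\Bbb R} Z_{\sigma'}(v)$ and $\phi_{\sigma'}(E_i) - \phi_{\sigma'}(E_j) \in {\Bbb Z}$ throughout $D$; continuity of the $\phi_{\sigma'}(E_i)$ together with connectedness of $D$ then forces the phases to remain equal, so $E$ stays semi-stable with the same $S$-equivalence class, and conversely a destabilizing subobject persists in a neighborhood. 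Thus $\mathcal{M}_\sigma(v)$ does not change as $\sigma$ ranges over $D$. The support property (Lemma \ref{lem:support}) and the continuity of ${\frak s}$ supply the local wall-and-chamber structure that validates these openness arguments, including at the boundary points where $\omega \in \pi^*(\Amp(Y)_{\Bbb R})$.

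It then remains to place $(\beta,\omega)$ and $(\beta',\omega')$ in one connected component of $\xi^{-1}(u) \cap \overline{\cal K}(X)$, and here the hypothesis $u \not\in \bigcup_{a \in {\Bbb Z}} v({\cal O}_C(-a))^\perp$ is exactly what is needed. The only fiber of $\xi$ meeting $\overline{\cal K}(X)$ that is disconnected is the one in Lemma \ref{lem:fiber2}(1), namely $u = H + (H,\delta)\varrho_X$ with $H \in \pi^*(\Amp(Y)_{\Bbb Q})$; but for such $u$ one computes $(u, v({\cal O}_C(-a))) = (H, C) = 0$ for all $a$, since $C$ is contracted by $\pi$, so this $u$ lies in every $v({\cal O}_C(-a))^\perp$ and is excluded by hypothesis. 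In all remaining cases the fiber over $\overline{\cal K}(X)$ is connected: when $\rk u \ne 0$ this is Lemma \ref{lem:fiber2}(2)(a) (after normalizing the representative of $u$ in $C^+(v)$, using Proposition \ref{prop:Phi-xi} if necessary), and when $\rk u = 0$ the same computation shows that $c_1(u)$ must be ample, so $\xi^{-1}(u) \cap \overline{\cal K}(X)$ lies in ${\cal K}(X)$ and is connected by Proposition \ref{prop:fibration}. Taking $D$ to be this connected fiber and applying the constancy above yields $\mathcal{M}_{(\beta,\omega)}(v) = \mathcal{M}_{(\beta',\omega')}(v)$.

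The main obstacle is controlling the fiber across the boundary, that is, checking that the constancy argument survives as $\sigma$ moves along the fiber from the region $\omega \in \Amp(X)_{\Bbb R}$ into the region $\omega \in \pi^*(\Amp(Y)_{\Bbb R})$, where the heart ${\cal A}_{(\beta,\omega)}$ changes from a tilt of $\Coh(X)$ to a tilt of a perverse-coherent category. This is precisely where the exclusion of $\bigcup_{a} v({\cal O}_C(-a))^\perp$ keeps $(\beta,C)$ away from $\tfrac{1}{2} + {\Bbb Z}$, so that the perverse heart stays consistent along the fiber, and where Lemma \ref{lem:support} furnishes the uniform support property needed for a well-defined wall-and-chamber structure on all of $\xi^{-1}(u) \cap \overline{\cal K}(X)$.
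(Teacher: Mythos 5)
Your proof is correct and takes essentially the same route as the paper, whose entire proof is "By Lemma \ref{lem:fiber2} (2) and Lemma \ref{lem:xi-wall}, the claim follows": connectedness of the fiber $\xi^{-1}(u) \cap \overline{\cal K}(X)$ (with the hypothesis $u \notin \cup_a v({\cal O}_C(-a))^\perp$ ruling out the disconnected fiber of Lemma \ref{lem:fiber2}(1)) plus constancy of semi-stability and $S$-equivalence classes along connected fibers. Your extra verifications — that the disconnected fiber $H+(H,\delta)\varrho_X$, $H \in \pi^*(\Amp(Y)_{\Bbb Q})$, is orthogonal to every $v({\cal O}_C(-a))$ since $(H,C)=0$, and the separate treatment of the $\rk u = 0$ case via Proposition \ref{prop:fibration} — are details the paper leaves implicit.
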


\begin{proof}
By Lemma \ref{lem:fiber2} (2) and Lemma \ref{lem:xi-wall}, 
the claim follows.
\end{proof}

\begin{rem}\label{rem:beta'}
Assume that $H \in \pi^*(\Amp(Y))$.
Then the category ${\frak C}^{\beta'}$ is determined by
the integer $l$ satisfying
$l-\frac{1}{2}<(\beta',C)<l+\frac{1}{2}$, where
\begin{equation}
(\beta',C)=
\frac{r^2((\omega^2)-((\beta-\delta)^2))+(v^2)}
{2r^2 (\beta-\delta,\omega)}
(\omega,C)
+(\beta,C).
\end{equation}
Indeed
by \eqref{eq:def-xi} and the proof of Lemma \ref{lem:isom},
$$
\beta' \equiv \beta+\frac{r^2((\omega^2)-((\beta-\delta)^2))+(v^2)}
{2r^2 (\beta-\delta,\omega)}\omega \mod {\Bbb R}H.
$$
\end{rem}

\begin{rem}
For
the fiber $\xi^{-1}(H+(H,\delta)\varrho_X)$,
Lemma \ref{lem:isom2} does not
hold by Lemma \ref{lem:lambda} (1). 
Moreover if $(\omega,C)<0$, then Lemma \ref{lem:isom2} does not
hold. 
Thus the structure of $\Stab(X)$ seems to be complicated if $(\omega,C)<0$.
\end{rem}

\begin{NB}
Assume that $\xi(\beta,\omega)=H+(H,\delta)\varrho_X$. Then
the $S$-equivalence class of $E \in {\cal M}_{(\beta,\omega)}(v)$ 
is represented by $\oplus_i E_i$
such that each $E_i$ is a $\mu$-stable  
objects of ${\frak C}^\beta$ with 
$(c_1(E_i(-\beta)),H)=0$ and $E_i(lC)$ is the pull-back of
a locally free sheaf on $Y$
or irreducible objects
${\cal O}_C(l)[-1],{\cal O}_C(l-1), k_x[-1]$ ($x \in X \setminus C$)
of ${\frak C}^\beta[-1]$
by Lemma \ref{lem:reflexive-hull2}.
By Lemma \ref{lem:perverse-Bogomolov},
$(v(E_i)^2) \geq 0$ if $\rk E_i>0$.
\end{NB}

By Proposition \ref{prop:large}, we get the following claim.
\begin{prop}\label{prop:unbdd}
Assume that $(\beta,tH)$ belongs to an adjacent chamber 
of $\varrho_X^\perp$ and $(\beta-\delta,H)<0$.
Then
${\cal M}_{(\beta,tH)}(v)={\cal M}_H^{\beta-\frac{1}{2}K_X}(v)$.
\end{prop}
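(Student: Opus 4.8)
The plan is to transport the given stability condition $\sigma_{(\beta,tH)}$ to the large-volume limit $t'\to\infty$ along the fixed ray $\omega=t'H$, to invoke Proposition~\ref{prop:large} (the large-volume identification with twisted Gieseker semistability) at $t'\gg 0$, and then to descend back to $(\beta,tH)$ because no wall is crossed inside a chamber adjacent to $\varrho_X^\perp$.

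First I would unpack the hypotheses. Writing $\beta-\delta=sH+D$ with $D\in H^\perp$, the condition $(\beta-\delta,H)<0$ is just $s<0$; since $c_1(v)=r\delta$ this is equivalent to $d_\beta(v)=-r(\beta-\delta,H)/(H^2)>0$, the sign that places $v$ in the torsion part ${\cal T}_\beta$ of the tilt, so that in the large-volume limit the $v$-semistable objects are genuine ($(\beta-\tfrac12 K_X)$-twisted Gieseker) sheaves rather than shifts. I would then note that the heart ${\cal A}_{(\beta,t'H)}$ is \emph{independent} of $t'$: it is the tilt of ${\frak C}^\beta$ along $({\cal T}_\beta,{\cal F}_\beta)$, and that torsion pair is built only from $\mu$-stability with respect to $H$ and the sign of $d_\beta$, neither of which sees $t'$. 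Hence increasing $t'$ moves $\sigma_{(\beta,t'H)}$ along a ray in $\Stab(X)$ with fixed heart and only the central charge rescaling through $(\omega^2)=t'^2(H^2)$ and $(H,\omega)=t'(H^2)$.

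Next I would run the propagation. By \eqref{eq:xi-explicit} the image $\xi(\beta,t'H)$ lies on the line through $x_0=H+(H,\delta)\varrho_X\in\varrho_X^\perp$ and $x_1$, with its $x_0$-coordinate $c=A/\!\big(s(H^2)\big)$, where $A\sim\tfrac{(H^2)}{2}\,t'^2$; as $s<0$ this tends monotonically to $-\infty$, so $\xi(\beta,t'H)\to x_0$. The assumption that $\xi(\beta,tH)$ lies in a chamber adjacent to $\varrho_X^\perp$ means exactly that this arc reaches its endpoint $x_0$ without crossing a wall; since $\beta$ stays fixed we keep $(\beta,C)\notin\tfrac12+{\Bbb Z}$ and $(\beta-\delta,\omega)=t's(H^2)\neq 0$, so by Lemma~\ref{lem:lambda}~(2) the arc avoids every $v({\cal O}_C(-a))^\perp$, and the constancy of the moduli along a connected fibre/chamber of $\xi$ over $\overline{\cal K}(X)$ (Lemma~\ref{lem:isom2}, with Lemma~\ref{lem:fiber2}~(2)) gives ${\cal M}_{(\beta,tH)}(v)={\cal M}_{(\beta,t'H)}(v)$ for all $t'\ge t$. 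Finally, for $t'\gg 0$ Proposition~\ref{prop:large} identifies ${\cal M}_{(\beta,t'H)}(v)$ with ${\cal M}_H^{\beta-\frac12 K_X}(v)$, and chaining the equalities proves the claim.

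The step I expect to be the main obstacle is reconciling Proposition~\ref{prop:large} with the present non-ample setting $\omega\in\pi^*(\Amp(Y))$: the large-volume statement is cleanest for ample polarizations, so to apply it here one likely has to pass to a nearby ample class---e.g.\ replace $H$ by $H-qC$ with $0<q\ll 1$ and use Proposition~\ref{prop:big-m} to match ${\cal M}_{H-qC}^{\beta-\frac12 K_X}(v)$ with the perverse-coherent moduli ${\cal M}_H^{\beta-\frac12 K_X}(v)$---while checking that this perturbation crosses no wall for $v$. Verifying that the adjacent chamber really extends to the corner $x_0$ without an intervening wall, using the local finiteness of walls from the support property (Lemma~\ref{lem:support}), is the other point requiring care.
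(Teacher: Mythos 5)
Your proposal follows essentially the same route as the paper's proof: let $t'\to\infty$ along the fixed ray, observe that $\xi(\beta,t'H)$ stays in the adjacent chamber ${\cal C}$ (so the moduli spaces are unchanged between $t$ and $t'$), and invoke Proposition \ref{prop:large} at large volume. Your flagged ``main obstacle'' is in fact a non-issue: Proposition \ref{prop:large} is stated precisely for the stability conditions $\sigma_{(\beta,\omega)}$ of subsection \ref{subsect:stab}, i.e.\ with $\omega\in{\Bbb R}_{>0}H$ and $H$ the pull-back of an ample divisor on $Y$, and its conclusion ${\cal M}_\omega^{\beta-\frac{1}{2}K_X}(v)$ is exactly the twisted moduli of perverse coherent sheaves (which depends only on the ray of $\omega$), so no perturbation to $H-qC$ and no appeal to Proposition \ref{prop:big-m} is needed.
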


\begin{proof}
We can take a sufficiently large $t'$ such that 
$\xi(\beta,t' H)$ belongs to the adjacent chamber ${\cal C}$.
Since $(\beta,tH)$ also belongs to ${\cal C}$, 
we have ${\cal M}_{(\beta,tH)}(v) \cong {\cal M}_{(\beta,t'H)}(v)$.
By Proposition \ref{prop:large},
${\cal M}_{(\beta,t'H)}(v) \cong {\cal M}_H^{\beta-\frac{1}{2}K_X}(v)^{ss}$.
Thus the claim holds.
\end{proof}

The following claim gives an explicit example
of $(\beta,tH)$ in Proposition \ref{prop:unbdd}
\begin{prop}\label{prop:min}
We set $\mu:=\min \{(D,H)>0 \mid D \in \NS(X) \}$.
Assume that $v$ satisfies 
$\gcd(r,(r\delta,H)/\mu)=1$.
We take $r_0 \in {\Bbb Z}$ and $\xi_0 \in \pi^*(\NS(Y))$ such
that $(r\xi_0-r_0 (r\delta),H)=-\mu$.% and set $\beta_0:=\xi_0/r_0$. 
Then 
$$
{\cal M}_{(\delta+sH+D,tH)}(v)={\cal M}_H^{\delta+D-\frac{1}{2}K_X}(v)
$$
if %$\beta_0-\delta \equiv D \mod {\Bbb Q}H$ and 
\begin{equation}\label{eq:min-s}
-\frac{\mu}{rr_0 (H^2)} \leq s<0.
\end{equation}
\begin{NB}
\begin{equation}
\left(s(H^2)-(\beta_0-\delta,H) \right)
\left(s(\beta_0-\delta,H)-\left(\frac{(v^2)}{r^2}-(D^2)\right) \right)
\geq 0.
\end{equation}
\end{NB}
\end{prop}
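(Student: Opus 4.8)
The plan is to deduce the statement from Proposition \ref{prop:unbdd}. For $\beta=\delta+sH+D$ with $D\in H^\perp$ and $s<0$ one has $(\beta-\delta,H)=s(H^2)<0$ automatically, so it suffices to prove that $\sigma_{(\delta+sH+D,tH)}$ lies in the chamber adjacent to $\varrho_X^\perp$ containing the large volume limit, for every $t>0$ and every $s\in[-\mu/(rr_0(H^2)),0)$. Granting this, Proposition \ref{prop:unbdd} (through Proposition \ref{prop:large}) identifies ${\cal M}_{(\delta+sH+D,tH)}(v)$ with ${\cal M}_H^{\delta+D-\frac12K_X}(v)$.

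To locate the chamber I would pass through the map $\xi$. By Corollary \ref{cor:fibration} and Lemma \ref{lem:isom}, the chamber of $\sigma_{(\delta+sH+D,tH)}$ is read off from the position of $\xi(\delta+sH+D,tH)$ on the line $L\subset C^+(v)$ through $x_0=H+(H,\delta)\varrho_X$ and $x_1$; the large volume limit is the boundary point ${\Bbb R}_{>0}x_0$, reached as $t\to\infty$, and adjacency means that no wall $v_1^\perp$ separates $\xi(\delta+sH+D,tH)$ from $x_0$ along $L$. A wall is cut out by a sub/quotient class $v_1=(r_1,c_1(v_1),\cdot)$ with $(v_1^2)\geq0$, $(v_2^2)\geq0$, $(v_1,v_2)>0$ ($v_2=v-v_1$), and $\xi(\beta,tH)\in v_1^\perp$ precisely when $Z_{(\beta,tH)}(v_1)\in{\Bbb R}Z_{(\beta,tH)}(v)$. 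Carrying out this alignment with $\beta=\delta+sH+D$ reduces the wall-avoidance to a quadratic inequality in $s$ of the form
\[
\left(s(H^2)-(\beta_0-\delta,H)\right)\left(s(\beta_0-\delta,H)-\Big(\tfrac{(v^2)}{r^2}-(D^2)\Big)\right)\geq0,
\]
where $(\beta_0-\delta,H)=\frac{(c_1(v_1),H)}{r_1}-\frac{(c_1(v),H)}{r}$ is the $H$-slope gap of the destabilizer and $\tfrac{(v^2)}{r^2}-(D^2)\geq0$ by the Bogomolov inequality (Lemma \ref{lem:perverse-Bogomolov}) together with the negative definiteness of $H^\perp$.

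The arithmetic input then comes from the definition of $\mu$ and the coprimality hypothesis. Since $H\in\pi^*(\Amp(Y))$ kills $C$, every slope gap satisfies $r r_1(\beta_0-\delta,H)=(r\,c_1(v_1)-r_1 c_1(v),H)\in\mu{\Bbb Z}$; the hypothesis $\gcd(r,(r\delta,H)/\mu)=1$ forces this integer to be nonzero for every proper $v_1$ (so $E$ is $\mu$-stable and $\varrho_X^\perp$ is not itself a wall), and it singles out the class $(r_0,\xi_0)$ with $(r\xi_0-r_0 c_1(v),H)=-\mu$ as the candidate destabilizer of smallest $H$-slope gap $|(\beta_0-\delta,H)|=\mu/(rr_0)$. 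Substituting this minimal gap into the inequality, and taking the worst case over $t>0$ (the point of $L$ farthest from $x_0$, attained as $t\to0^+$), yields exactly the threshold $s\geq-\mu/(rr_0(H^2))$.

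The main obstacle is the uniform control over all wall-defining classes. The position of $v_1^\perp$ on $L$ also depends on the $\varrho_X$-coordinate of $v_1$, and a priori a destabilizer of larger rank, larger slope-gap multiplicity, or extremal height could produce a wall nearer to $x_0$. The crux is therefore to use the Bogomolov inequalities $(v_1^2),(v_2^2)\geq0$ and positivity $(v_1,v_2)>0$ to bound the $\varrho_X$-component of $v_1$ and to confirm that the rank-$r_0$ minimal-gap class is the binding one, so that the first wall encountered as $s$ decreases from $0$ occurs precisely at $s=-\mu/(rr_0(H^2))$. Once this is settled, the translation into the stated range of $s$ is routine.
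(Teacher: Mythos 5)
There is a genuine gap, and you have flagged it yourself: the step you call ``the crux'' --- showing that the rank-$r_0$ minimal-slope-gap class is the binding one among all possible wall-defining classes --- is precisely the mathematical content of the proposition, and your proposal does not carry it out. In your framing the difficulty is real, not routine: a destabilizing Jordan--H\"{o}lder factor $E_1$ of an object of ${\cal A}_{(\beta,\omega)}$ need not have $0<\rk E_1<r$ (factors of the form $F[1]$ have negative rank, and ranks are a priori unbounded), the slope gaps $\mu/(rr_1)$ have no positive minimum as $r_1$ grows, and the position of the wall $v_1^\perp$ on the line $L$ depends also on the $\varrho_X$-component $a(v_1)$. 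So ``the first wall as $s$ decreases from $0$'' cannot be identified by minimizing the slope gap alone; converting the Bogomolov-type inequalities into the exact threshold $-\mu/(rr_0(H^2))$ along your route would essentially require re-proving a quantitative version of Lemma \ref{lem:mini-wall} together with uniform bounds on $(r_1,a_1)$, none of which is sketched.

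The paper's proof avoids this problem entirely, which is why it is four lines long. One sets $\gamma:=\delta+D-\frac{\mu}{rr_0(H^2)}H$, i.e.\ the value of $\beta$ at the extreme point $s=-\frac{\mu}{rr_0(H^2)}$ of the range \eqref{eq:min-s}. Since $\xi_0\in\pi^*(\NS(Y))$ and $(c_1(E),H)\in\mu{\Bbb Z}$ for every $E\in{\bf D}(X)$, the choice of $(r_0,\xi_0)$ gives the \emph{uniform} quantization
\begin{equation*}
(c_1(E),H)-\rk E\,(\gamma,H)\in\tfrac{\mu}{r_0}{\Bbb Z}
\quad\text{for all }E\in{\bf D}(X),
\end{equation*}
while $d_\gamma(v)(H^2)=\frac{\mu}{r_0}$ is exactly the minimal positive value of this quantized degree. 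Hence on the ray $\{(\gamma,tH)\mid t>0\}$ no object with vector $v$ and phase in $(0,1)$ can be properly semi-stable: each of the two or more factors of a destabilizing filtration would have positive $d_\gamma$, hence $d_\gamma(H^2)\geq\mu/r_0$, and their sum would exceed $d_\gamma(v)(H^2)$. This rules out \emph{every} candidate destabilizer at once, with no bound on its rank or $\varrho_X$-component needed --- the degree budget is simply too small to split. Combined with the structure of walls on the slice (they are the fibres of $\xi$, nested semicircles, so a wall meeting the region \eqref{eq:min-s} must meet this extreme ray; this is what the commented-out quadratic inequality in the statement encodes), the whole region lies in the Gieseker chamber, and Proposition \ref{prop:large} (equivalently Proposition \ref{prop:unbdd}), together with the fact that twisted semi-stability depends on the twist only modulo ${\Bbb Q}H$, gives ${\cal M}_{(\delta+sH+D,tH)}(v)={\cal M}_H^{\gamma-\frac{1}{2}K_X}(v)={\cal M}_H^{\delta+D-\frac{1}{2}K_X}(v)$. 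Your reduction to Proposition \ref{prop:unbdd} and your use of the arithmetic of $\mu$ and the gcd hypothesis are the right ingredients, but the argument needs to be reorganized around the quantization at the single twist $\gamma$ rather than around a search for the nearest wall.
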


\begin{proof}
Assume that $s$ satisfies \eqref{eq:min-s}.
We set 
\begin{equation}
\begin{split}
\gamma:=& \delta+D+\frac{(\xi_0-r_0 \delta,H)}{r_0 (H^2)}H\\
=& \delta+D-\frac{\mu}{r r_0 (H^2)}H.
\end{split}
\end{equation}
Then
$r_0 (\gamma,H)=(\xi_0,H)$ and
$(c_1(E),H)-\rk E (\gamma,H) \in \frac{\mu}{r_0}{\Bbb Z}$
for all $E \in {\bf D}(X)$.
Since $d_\gamma(v)(H^2)=\frac{\mu}{r_0}$,
there is no wall intersecting 
$\{(sH+D,tH) \mid t>0 \}$.
Hence the claim holds.
\begin{NB}
For $s=(\beta_0,H)/(H^2)$,
$d_{\beta_0}(v)(H^2)=\frac{\mu}{r_0}$.
Hence there is no wall intersecting 
$\{(sH+D,tH) \mid t>0 \}$.
Under the assumption,
we have
\begin{equation}
\frac{(H^2)s^2+\frac{(v^2)}{r^2}-(D^2)}
{2s(H^2)}
\geq 
\frac{\frac{(\beta_0-\delta,H)^2}{(H^2)}+\frac{(v^2)}{r^2}-(D^2)}
{2(\beta_0-\delta,H)}.
\end{equation}
Then there is $t'>0$ such that
$\xi(\delta+sH+D,tH)=\xi(\beta_0,t' H)$.
Hence the claim holds.
\end{NB}
\end{proof}

\begin{NB}
It seems that the following is not needed (Oct.28):
Hence $E \in {\cal M}_{(\beta,tH)}(v)$ is $S$-equivalent to
$\oplus_i E_i$ such that $E_i$ are $\mu$-stable torsion free objects
of ${\frak C}^\beta$.
In particular,
$(v(E_i)^2) \geq 0$ for all $i$.
Then we see that $(v(E_i)^2) \leq r_i (v^2)/r^2$, where
$r_i=\rk E_i$.
Therefore the choice of $v(E_i)$ is finite.
\end{NB}

\begin{NB}
For $\omega=H+pC$ with $\sqrt{(H^2)}>p>0$,
\end{NB}

Toda \cite{Toda}, \cite{Toda2} constructed a stability condition
$\sigma_{(0,\pi^*(\omega')-qC)}=
(Z_{(0,\pi^*(\omega')-qC)},{\cal A}_{\omega'} (X/Y))$, where
$q<0$,
$Z_{\pi^*(\omega')-qC}(E)=(e^{\pi^*(\omega')-qC},v(E))$,
${\cal A}_{\omega'}(X/Y)=
\langle {\bf L}\pi^* {\cal A}_{(0,\omega')} ,
{\cal C}_{X/Y}^0 \rangle$
and ${\cal C}_{X/Y}^0$ is spanned by
${\cal O}_C[-1]$.
By the same proof in \cite{Toda2}, 
we shall generalize ${\cal A}_{\omega'}(X/Y)$ 
as ${\cal A}_{(\beta',\omega')}(X/Y):=
\langle {\bf L}\pi^* {\cal A}_{(\beta',\omega')}, {\cal O}_C[-1] \rangle$,
where $(\beta',\omega') \in \NS(Y)_{\Bbb Q} \times 
\NS(Y)_{\Bbb Q}$.
Then 
$$
\sigma_{(\pi^*(\beta'),\pi^*(\omega')-qC)}
:=(Z_{(\pi^*(\beta'),\pi^*(\omega')-qC)},{\cal A}_{(\beta',\omega')}(X/Y))
$$
is a stability condition.
Assume that $v=\pi^*(v')$ with $v' \in H^*(Y,{\Bbb Q})$.
\begin{NB}
We note that $-\frac{1}{2}<(\beta,C)<\frac{1}{2}$.
If $\beta$ is sufficiently
close to $\delta$, then
${\cal M}_{(\beta',H')}(v') \cong {\cal M}_H^{\beta'}(v')$.
\end{NB}
Then
${\cal M}_{(\pi^*(\beta'),H-qC)}(v) \cong {\cal M}_{(\beta',H')}(v')$,
where $H=\pi^*(H')$.

\subsection{A classification of walls}\label{subsect:classification-wall}

\begin{NB}
\begin{lem}\label{lem:O_C}
Let $L$ be an ample divisor on $X$.
Let $E$ be a $\beta$-twisted stable sheaf such that $\Supp(E)=C$.
Then $E \cong {\cal O}_C(a)$. 
\end{lem}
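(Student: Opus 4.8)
The plan is to reduce everything to the geometry of the exceptional curve $C\cong{\Bbb P}^1$ with $(C^2)=-1$ and to exploit the positivity of its conormal bundle. First I would record that a $\beta$-twisted stable sheaf is pure, so $E$ is a pure $1$-dimensional sheaf with $\Supp(E)=C$. The easy half is the case where $E$ is scheme-theoretically supported on the reduced curve, i.e.\ $I_CE=0$: then $E$ is a torsion-free ${\cal O}_C$-module (any ${\cal O}_C$-torsion would be a $0$-dimensional subsheaf of $E$, contradicting purity), hence locally free on the smooth rational curve $C$, so $E\cong\bigoplus_i{\cal O}_C(a_i)$. Since a stable sheaf is simple, hence indecomposable, there is a single summand and $E\cong{\cal O}_C(a)$.

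The substance is to rule out thickened support, i.e.\ to prove $I_CE=0$. Suppose $I_CE\neq 0$ and let $k\ge 1$ be maximal with $I_C^kE\neq 0$. I would use the $I_C$-adic filtration $E\supseteq I_CE\supseteq\cdots\supseteq I_C^kE\supseteq 0$, whose graded pieces $\gr^j=I_C^jE/I_C^{j+1}E$ are ${\cal O}_C$-modules. The multiplication maps give surjections $(E/I_CE)\otimes(I_C/I_C^2)^{\otimes j}\twoheadrightarrow\gr^j$, and since $C$ is a $(-1)$-curve the conormal line bundle is positive: $I_C/I_C^2={\cal O}_X(-C)|_C={\cal O}_C(1)$. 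Taking $j=k$ yields a surjection $(E/I_CE)(k)\twoheadrightarrow I_C^kE$. Now $I_C^kE$ is a subsheaf of the pure sheaf $E$, hence pure and torsion-free over $C$, so $I_C^kE\cong\bigoplus_i{\cal O}_C(\alpha_i)$ with some maximal degree $\alpha_{\max}$; and $E/I_CE$ has full $1$-dimensional support (by Nakayama), so its torsion-free part is a nonzero $\bigoplus_j{\cal O}_C(\beta_j)$ with some minimal degree $\beta_{\min}$. After discarding torsion (which dies in the torsion-free target) the surjection reads $\bigoplus_j{\cal O}_C(\beta_j+k)\twoheadrightarrow\bigoplus_i{\cal O}_C(\alpha_i)$, and projecting onto a maximal-degree summand ${\cal O}_C(\alpha_{\max})$ forces $\alpha_{\max}\ge\beta_{\min}+k\ge\beta_{\min}+1$.

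To finish I would compare slopes. The reduced $\beta$-twisted Hilbert slope $\nu({\cal O}_C(a))$, the normalization of $\chi(G,{\cal O}_C(a)(nL))$ by its leading coefficient, is a strictly increasing affine function of $a$: the denominator $(c_1({\cal O}_C(a)),L)=(C,L)$ is independent of $a$, while the numerator grows linearly in $a$ with positive slope. Here ${\cal O}_C(\alpha_{\max})\hookrightarrow I_C^kE\hookrightarrow E$ is a genuine sub-line bundle, while $E\twoheadrightarrow E/I_CE\twoheadrightarrow{\cal O}_C(\beta_{\min})$ is a genuine quotient line bundle, so $\beta$-twisted stability demands $\nu({\cal O}_C(\alpha_{\max}))<\nu(E)<\nu({\cal O}_C(\beta_{\min}))$, whence $\alpha_{\max}<\beta_{\min}$. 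This contradicts $\alpha_{\max}\ge\beta_{\min}+1$, so $I_CE=0$ and we are back in the easy case.

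The main obstacle is this middle step: controlling a possibly thickened, higher-multiplicity support. The idea that makes it work is that $(C^2)=-1$ makes the conormal bundle $I_C/I_C^2$ positive, so passing to the deepest layer $I_C^kE$ of the filtration shifts degrees strictly upward by $k$; since $I_C^kE$ sits inside $E$ as a subsheaf while the top layer $E/I_CE$ is a quotient, this upward shift is exactly what stability forbids. I would take some care to check that the extremal summands ${\cal O}_C(\alpha_{\max})$ and ${\cal O}_C(\beta_{\min})$ really define a sub- and a quotient line bundle of $E$, and that torsion in $E/I_CE$ is harmlessly annihilated by the torsion-free target of the surjection.
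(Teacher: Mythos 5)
Your proof is correct and follows essentially the same route as the paper's: both reduce the lemma to showing the scheme-theoretic support is reduced, by playing the deepest layer $I_C^kE$ of the $I_C$-adic filtration (a subsheaf whose degrees are shifted up by $k$ via the positive conormal bundle $I_C/I_C^2\cong{\cal O}_C(1)$) against the restriction to $C$ (a quotient), letting twisted stability produce the contradiction, and then invoking the classification of bundles on $C\cong{\Bbb P}^1$ together with simplicity. The only difference is technical and in your favor: where the paper directly asserts an injection $F(-(n-1)C)\to E$ of the full torsion-free quotient $F$ of $E_{|C}$ (the multiplication map is a priori only a surjection onto $I_C^{n-1}E$), you instead extract extremal line-bundle summands ${\cal O}_C(\alpha_{\max})$ and ${\cal O}_C(\beta_{\min})$ and use degree bounds for nonzero maps of line bundles on ${\Bbb P}^1$, which is a more careful implementation of the same idea.
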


\begin{proof}
We may assume that $E$ is an ${\cal O}_{nC}$-module
and the multiplication of $E$ by $(n-1)C$ is not zero.
Let $F$ be the torsion free quotient of $E_{|C}$
as an ${\cal O}_C$-module.
Then 
%$\chi(F) \geq \chi(E_{|C})$ and
we have an injective morphism
$F(-(n-1)C) \to E$.
By the stability of $E$,
$$
\frac{\chi(F(n-1-\beta))}{(c_1(F),L)} \leq 
\frac{\chi(E(-\beta))}{(c_1(E),L)} \leq
\frac{\chi(F(-\beta))}{(c_1(F),L)}.
$$
Therefore $n=1$.
Then the classification of locally free sheaves
on $C$, we get the claim.
\end{proof}

Let $(\beta',\omega')$, $\omega' \in \Amp(X)_{\Bbb Q}$ be sufficiently close to
$(\beta,\omega)$ with $\omega \in \pi^*(\Amp(Y)_{\Bbb Q})$.

\begin{lem}\label{lem:perverse-wall}
Let $E$ be a $\sigma_{(\beta',\omega')}$-stable object
such that $E[1]$ is a 0-dimensional object of ${\frak C}^\beta$.
Then $E \cong {\cal O}_C(a)$, $a \leq l-1$.
\end{lem}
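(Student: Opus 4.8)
The plan is to classify the Jordan--Hölder factors of $E[1]$ in ${\frak C}^\beta$ and then use the Bogomolov inequality together with the position of $\omega'$ near the boundary wall to pin down $E$. Recall that the irreducible $0$-dimensional objects of ${\frak C}^\beta$ are ${\cal O}_C(l)$, ${\cal O}_C(l-1)[1]$ and $k_x$ with $x \in X \setminus C$ (cf. the proof of Lemma \ref{lem:reflexive-hull2} and \cite[Lem. 1.2.16, Prop. 1.2.23]{PerverseI}). For $x \notin C$ the object $k_x$ is disjoint in support from $C$, so all $\Hom$ and $\Ext^1$ between it and objects supported on $C$ vanish; hence a $0$-dimensional object splits as a direct sum of a part supported on $C$ and a part supported on $X \setminus C$. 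Since $E$ is $\sigma_{(\beta',\omega')}$-stable, hence indecomposable, either $E \cong k_x[-1]$ for some $x \notin C$ (the degenerate skyscraper case, set aside as in the preceding discussion of sheaves with support $C$), or $E[1]$ is an iterated extension of the two simples $S_0 := {\cal O}_C(l)$ and $S_1 := {\cal O}_C(l-1)[1]$. Writing $\alpha,\gamma \ge 0$ for the multiplicities of $S_0,S_1$ in $E[1]$ and using $v(S_0)+v(S_1)=\varrho_X$, one gets $c_1(E)=(\gamma-\alpha)[C]=:m[C]$ and $(v(E)^2)=m^2(C^2)=-m^2$.

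I normalize so that $E \in {\cal A}_{(\beta',\omega')}$, which is where the statement has content since the target objects ${\cal O}_C(a)$ ($a\le l-1$) lie in the heart; first I would show $m=1$. As $E \in {\cal A}_{(\beta',\omega')}$, its central charge satisfies $\mathrm{Im}\,Z_{(\beta',\omega')}(E)=(c_1(E),\omega')=m(C,\omega')\ge 0$, and $(C,\omega')>0$ because $\omega'$ is ample; hence $m \ge 0$. If $m=0$ then $v(E)=-\alpha\varrho_X$ and $\mathrm{Re}\,Z_{(\beta',\omega')}(E)=(e^{\beta'},-\alpha\varrho_X)=\alpha>0$, so $E$ would have phase $0 \notin (0,1]$, which is impossible; thus $m \ge 1$. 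For the upper bound I apply the Bogomolov inequality of Lemma \ref{lem:strong-Bogomolov}, which reads $-m^2 + C_{\omega'}\,m^2(C,\omega')^2/(\omega'^2) \ge -1$. As $(\beta',\omega')\to(\beta,\omega)$ with $\omega=tH$, $H\in\pi^*(\Amp(Y))$, we have $(C,\omega)=t(C,H)=0$ while $C_{\omega'}$ and $(\omega'^2)$ stay bounded (Lemma \ref{lem:Bogomolov(1-dim)}), so the correction term tends to $0$ and $-m^2 \ge -1$; as $m$ is a positive integer this forces $m=1$.

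Finally, with $m=1$ we have $c_1(E)=[C]$ and $(v(E)^2)=-1$. The remaining point is to upgrade $E$ to an honest line bundle on $C$: I would argue from $\sigma_{(\beta',\omega')}$-stability that $E$ is a pure one-dimensional sheaf supported on $C$, ruling out the second cohomology sheaf $H^1(E)=H^0(E[1])$ and any $0$-dimensional torsion, so that $E$ is a rank-one torsion-free, hence locally free, sheaf on $C\cong\proj^1$, i.e. $E\cong{\cal O}_C(a)$. The bound $a \le l-1$ is then forced by $E[1]\in{\frak C}^\beta$: since ${\cal O}_C(a)(lC)\cong{\cal O}_C(a-l)$ and ${\frak C}^\beta(lC)={}^{-1}\Per(X/Y)$, the object ${\cal O}_C(a)[1]$ is perverse precisely when $a-l \le -1$. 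The main obstacle is exactly this last step, namely proving that a $\sigma_{(\beta',\omega')}$-stable object in the class $v(E)$ with $c_1(E)=[C]$ is genuinely a sheaf and not a nontrivial two-term complex; I expect to handle it by a direct analysis of the cohomology sheaves $H^0(E)$ and $H^1(E)$ relative to the torsion pair defining ${\cal A}_{(\beta',\omega')}$, using stability to kill $H^1(E)$ and the purity of $H^0(E)$, after which the multiplicity-one condition $c_1(E)=[C]$ leaves ${\cal O}_C(a)$ as the only possibility.
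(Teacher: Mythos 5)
The step in your second paragraph, where the Bogomolov-type inequality is supposed to force $m=1$, is the one that fails, and it fails for a structural reason. The constant $C_{\omega'}$ of Lemma \ref{lem:Bogomolov(1-dim)} cannot stay bounded as the ample class $\omega'$ approaches $\omega \in \pi^*(\Amp(Y)_{\Bbb R})$: applying that lemma to the purely $1$-dimensional sheaf ${\cal O}_C$ itself gives $(C^2)(\omega'^2)+C_{\omega'}(C,\omega')^2 \geq 0$, i.e.
\begin{equation*}
C_{\omega'} \;\geq\; \frac{(\omega'^2)}{(C,\omega')^2},
\end{equation*}
so $C_{\omega'}$ blows up at exactly the rate your limit argument needs it not to. Feeding this into Lemma \ref{lem:strong-Bogomolov} for your class $v(E)=(0,mC,*)$, the correction term satisfies
\begin{equation*}
C_{\omega'}\,\frac{m^2 (C,\omega')^2}{(\omega'^2)} \;\geq\; m^2,
\end{equation*}
so the inequality $-m^2+C_{\omega'}m^2(C,\omega')^2/(\omega'^2) \geq -1$ holds vacuously for every $m$ and yields no bound at all. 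This is not a fixable choice-of-constants issue: the correction term exists precisely to accommodate effective classes such as $mC$ with $(c_1,\omega')\to 0$ and $(c_1^2)<0$, and the class $m\,v({\cal O}_C(l-1))$ is actually realized by the semistable object ${\cal O}_C(l-1)^{\oplus m}$, so no numerical inequality valid for semistable objects can exclude $m\geq 2$; stability has to be used at the level of sheaves. (A smaller point: $E=k_x[-1]$, $x\notin C$, is stable and satisfies the hypothesis if ``stable'' is read with arbitrary phase, so this case must be excluded by the normalization $E\in{\cal A}_{(\beta',\omega')}$, not merely ``set aside''.)

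By contrast, the step you single out as the main obstacle is immediate, and it is where the paper's proof starts: $E\in{\cal A}_{(\beta',\omega')}$ gives $H^i(E)=0$ for $i\neq -1,0$, while $E[1]\in{\frak C}^\beta$ gives $H^i(E[1])=0$ for $i\neq -1,0$, i.e. $H^i(E)=0$ for $i\neq 0,1$; intersecting the two t-structure constraints, $E$ is a sheaf, automatically torsion and supported on $C$ (it lies in $S$), and pure by stability. No Bogomolov inequality and no analysis of the torsion pair $({\cal T}_{\beta'},{\cal F}_{\beta'})$ are needed there. The genuine content, which must replace your $m=1$ step, is the classification of stable torsion sheaves with support $C$ (the companion lemma proved just above in the same note: any stable sheaf with $\Supp(E)=C$ is ${\cal O}_C(a)$): if $E$ is an ${\cal O}_{nC}$-module with $n\geq 2$ minimal, the torsion-free quotient $F$ of $E_{|C}$ gives a subsheaf $F(-(n-1)C)\subset E$ of strictly larger slope/phase, contradicting stability; this is what forces multiplicity one. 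With that in hand, your remaining steps (purity, rank one on $C\cong{\Bbb P}^1$, and the bound $a\leq l-1$ from $E[1]\in{\frak C}^\beta$, equivalently from $\Hom(E,{\cal O}_C(l-1))\neq 0$) do go through and agree with the paper's conclusion.
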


\begin{proof}
We note that $E[1]$ is generated by 
${\cal O}_C(l), {\cal O}_C(l-1)[1]$.
Since $H^i(E[1])=0$ for $i \ne -1,-2$,
(i) $\Hom(E[1],{\cal O}_C(l))=0$ and
(ii) $\Hom(E[1],{\cal O}_C(l-1)[1]) \ne 0$.
Since $E[1] \in {\frak C}^\beta$,
$H^{-2}(E[1])=0$.
Thus $E$ is a torsion sheaf on $X$.
Then $\sigma_{(\beta',\omega')}$-stability
of $E$ implies that $E$ is a stable 1-dimensional sheaf. 
By Lemma \ref{lem:O_C} and
(ii), we get the claim.      
\end{proof}

\begin{NB2}
The ampleness of $\omega'$ is important. For the classification of walls,
it is not sufficient to study this case.
\end{NB2}

\begin{rem}
$\beta$ satisfies
$\xi(\beta',\omega')=\xi(\beta,tH)$.
$\sigma_{(\beta',\omega')}$-stability of $E$ is the same as
$\sigma_{(\beta,tH)}$-stability of $E$ by Lemma \ref{lem:xi-wall}.
By our assumption,
along a curve connecting 
$\sigma_0:=\sigma_{(\beta,tH)}$ and 
$\sigma_1:=\sigma_{(\beta-\frac{(\beta-\delta,H)}{(H^2)}H,t'H)}$,
semi-stability does not change. 
Hence $E$ is a $\sigma_1$-semi-stable.
If $\rk E=(c_1(E),H)=0$, then 
$E[1]$ is a 0-dimensional object of 
${\frak C}^\beta$ and $\phi_{\sigma_1}(E)=\phi_{\sigma_1}(v)=0$.
\end{rem}
\end{NB}

\begin{NB}
If $(\beta',\omega')$ is close to $(0,tH)$, 
then 
${\cal O}_C$, $a \geq 0$ does not define a wall.
 
\end{NB}

\begin{NB}
Let $U$ be an open neighborhood of $(\beta,\omega)$ such that
$\overline{U} \subset \NS(X)_{\Bbb R} \times P^+(X)_{\Bbb R}$ 
is compact.
We consider the set $S$ of Mukai vectors 
$v(E_1)$ such that 
$E_1$ is $\sigma_{(\beta',\omega')}$-stable subobject
of $E \in {\cal M}_{(\beta',\omega')}(v)$,
$\phi_{(\beta',\omega')}(E_1)=\phi_{(\beta',\omega')}(E)$
and $(\beta',\omega') \in U$.
Then $S$ is a finite set.
Replacing $U$ by a small neighborhood,
we may assume that $E_1$ is $\sigma_{(\beta',\omega')}$-stable
in a chamber for $v(E_1)$ whose closure 
contains $\xi^{-1}(H+(H,\delta)\varrho_X)$.
Then by Lemma \ref{lem:perverse-wall},
$E_1={\cal O}_C(a)$ with $a \leq l-1$.
\end{NB}

\begin{NB}

\begin{lem}
We take a sufficiently small neighborhood $U$ of
$H+(H,\delta)\varrho_X$, where
$H \in \pi^*(\Amp(Y))$.
If $(\beta,\omega) \in \xi^{-1}(U)$ and
$\xi(\beta,\omega) \not \in \cup_a v({\cal O}_C(a))^\perp$,
then $(\beta,\omega)$ is general with respect to $v$.
\end{lem}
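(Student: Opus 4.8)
The plan is to argue by contradiction: assuming that $\sigma:=\sigma_{(\beta,\omega)}$ with $\xi(\beta,\omega)\in U$ is \emph{not} general, I want to produce some $a$ with $\xi(\beta,\omega)\in v({\cal O}_C(a))^\perp$, contradicting the hypothesis. So suppose $\sigma$ is not general. Then there is a $\sigma$-semistable $E$ with $v(E)=v$ and a proper sum $E_1$ of Jordan--H\"older factors with $v_1:=v(E_1)\notin{\Bbb Q}v$ and $\phi_\sigma(E_1)=\phi_\sigma(E)$. Equality of phases gives $Z_\sigma(E_1)\in{\Bbb R}_{>0}Z_\sigma(v)$, i.e.\ $\xi(\beta,\omega)\in v_1^\perp$ by the characterization of $v_1^\perp$ stated after the definition of $\xi$. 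Since $\xi(\beta,\omega)\in v^\perp$ always, it suffices to show $v_1\in{\Bbb Q}v+{\Bbb Q}v({\cal O}_C(a))$ for some $a$: then $(\,\cdot\,,v_1)$ and $(\,\cdot\,,v({\cal O}_C(a)))$ cut out the same hyperplane of $v^\perp$, so $\xi(\beta,\omega)\in v_1^\perp$ forces $\xi(\beta,\omega)\in v({\cal O}_C(a))^\perp$, the desired contradiction. Thus the whole problem reduces to classifying the destabilizing classes $v_1$ whose wall $v_1^\perp$ can meet an arbitrarily small $U$.

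First I would fix a base point $\sigma_0=\sigma_{(\beta_0,\omega_0)}$ with $\xi(\beta_0,\omega_0)=u_0:=H+(H,\delta)\varrho_X$, so $\omega_0\in\pi^*(\Amp(Y))$, $(\beta_0,C)\notin\tfrac12+{\Bbb Z}$ and $Z_{\sigma_0}(v)\in{\Bbb R}$ (Lemma \ref{lem:fiber2}(1)); note that by Lemma \ref{lem:lambda} the hypothesis $\xi(\beta,\omega)\notin\cup_a v({\cal O}_C(a))^\perp$ is exactly $(\beta,C)\notin\tfrac12+{\Bbb Z}$, so $(\beta,\omega)\in\overline{\cal K}(X)$ and the boundary support property applies. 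Shrinking $U$ so that $\overline U$ is compact, I would invoke Lemma \ref{lem:support} together with the refined estimate after Lemma \ref{lem:strong-Bogomolov}: there is a constant $C$ such that every $\sigma$-stable $F$ (for $\sigma$ with $\xi$-image in $\overline U$) with $v(F)\notin\cup_a{\Bbb Q}v({\cal O}_C(a))$ satisfies $\|v(F)\|\le C|Z_\sigma(F)|$. Because $E_1$ and $E/E_1$ lie on the same positive ray as $v$, we have $|Z_\sigma(E_1)|\le|Z_\sigma(v)|$, and $|Z_\sigma(v)|$ is bounded on $\overline U$. Hence either $v_1\in{\Bbb Q}v({\cal O}_C(a))$ for some $a$ (and we are done as above), or $\|v_1\|$ is uniformly bounded and $v_1$ ranges over a finite set $S\subset v(K(X))$ of classes with $v_1\notin{\Bbb Q}v$.

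The finite set $S$ gives finitely many hyperplanes $v_1^\perp\subset C^+(v)$. For those $v_1\in S$ with $u_0\notin v_1^\perp$, the condition $u_0\notin v_1^\perp$ is open and $S$ is finite, so I may shrink $U$ so that none of these walls meets $U$; here local finiteness of walls off $\varrho_X^\perp$ (Lemma \ref{lem:locally-finite}, applicable at nearby points of rank $\neq 0$) confirms that apart from the ${\cal O}_C(a)$-walls only these listed hyperplanes can approach $u_0$. It then remains to treat $v_1\in S$ with $u_0\in v_1^\perp$. For such a class I would specialize the destabilizing sequence to $\sigma_0$: continuity of the phase gives factors with $Z_{\sigma_0}\in{\Bbb R}$ and phase $\phi_{\sigma_0}(v)$, and since $\omega_0=\pi^*(\text{ample})$ has $(\omega_0,C)=0$, I would apply the classification of such boundary destabilizers (Lemma \ref{lem:perverse-wall} and the description of the $0$-dimensional objects of ${\frak C}^\beta$ as built from ${\cal O}_C(l)$ and ${\cal O}_C(l-1)[1]$) to conclude that the exceptional part of $E_1$ is a sum of copies of ${\cal O}_C(a)$ and the rest is pulled back from $Y$, forcing $v_1\in{\Bbb Q}v+{\Bbb Q}v({\cal O}_C(a))$.

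The hard part will be precisely this last step. The support property only confines the non-exceptional destabilizers to the finite set $S$; it does not by itself rule out a wall through $u_0$ arising from a genuine destabilization that descends to $Y$. Concretely, if $v=\pi^*v'$ and $v'$ lies on a wall of $Y$, then $\pi^*v_1'$ gives a hyperplane through $u_0$ which is \emph{not} an ${\cal O}_C(a)$-wall, and a point $\xi(\beta,\omega)$ near $u_0$ off all ${\cal O}_C(a)$-walls may well lie on it; in that case $(\beta,\omega)$ fails to be general, so the statement is literally false without a further restriction. The honest resolution is therefore to add the hypothesis that the push-forward datum on $Y$ is generic (no $Y$-wall through the image of $u_0$), whereupon Lemma \ref{lem:perverse-wall} guarantees that every remaining boundary destabilizer is supported on $C$, hence an ${\cal O}_C(a)$, and the argument above closes: $(\beta,\omega)$ is general whenever $\xi(\beta,\omega)\notin\cup_a v({\cal O}_C(a))^\perp$.
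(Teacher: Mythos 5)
Your first two paragraphs follow the same skeleton as the paper's own proof of this lemma: pass to a point $(\beta',tH)$ in the same $\xi$-fibre, use the support property to cut the possible destabilizing classes down to a finite set, shrink $U$ so that every relevant wall passes through $u_0:=H+(H,\delta)\varrho_X$, and then classify the destabilizers whose walls contain $u_0$. The paper disposes of the rank-zero destabilizers exactly as you anticipate (such an $E_1$ is a $0$-dimensional object of ${\frak C}^{\beta'}$, so $Z_{(\beta',tH)}(E_1)\in{\Bbb R}_{<0}$, which is incompatible with $\phi_{(\beta',tH)}(E_1)=\phi_{(\beta',tH)}(E)$ provided $Z_{(\beta',tH)}(E)\notin{\Bbb R}$) and then simply stops: destabilizers with $\rk E_1\neq 0$ and $(v(E_1),u_0)=0$, i.e.\ of the same $H$-slope as $v$, are never treated. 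Your verdict that this case cannot be handled and that the lemma as stated is false is correct. Direct sums of twisted-stable perverse coherent sheaves of equal slope and equal reduced twisted Hilbert polynomial are semistable at points of the corresponding wall arbitrarily close to $u_0$, and such classes exist as soon as the coprimality assumption used elsewhere in the paper fails; your pull-back classes $\pi^*v_1'$ are one instance. This is precisely why the main text works either under a coprimality hypothesis (Proposition \ref{prop:min}, subsection \ref{subsect:wall-crossing:blow-up}) or, in the discussion of ${\cal M}_{(\delta+sH+pC,H-qC)}(v)$, explicitly records the walls coming from twisted stable objects $E_1$ with $(c_1(E_1)-\rk E_1\,\delta,H)=0$.

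Two corrections to your final paragraph, however. First, your proposed repair (genericity of the push-forward data on $Y$) is insufficient: same-slope classes need not come from $Y$; classes whose $c_1$ contains a $\pm C$-component (which is $H$-orthogonal) give further walls through $u_0$ that are neither ${\cal O}_C(a)$-walls nor pull-backs, so the natural hypothesis is coprimality on $X$, not genericity on $Y$. Second, and more seriously, you overlooked the wall $\varrho_X^\perp$. It passes through $u_0$, it is not contained in $\cup_a v({\cal O}_C(a))^\perp$, and it is an actual wall for every $v$ with $(v^2)\gg 0$ no matter how generic the data: for $(\beta,\omega)=(\delta,t(H+xC))$ with $0<-x\ll 1$ one has $\xi(\beta,\omega)\in\varrho_X^\perp$ while $(e^\delta(H+xC),v({\cal O}_C(a)))=-x\neq 0$ for all $a$, and the objects $F\oplus k_p[-1]$, with $F$ a $\mu$-stable locally free sheaf, $v(F)=v+\varrho_X$ and $p\in X\setminus C$, are strictly $\sigma_{(\beta,\omega)}$-semistable by the Uhlenbeck description of subsection \ref{subsect:ample} applied to the ample class $H+xC$. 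This failure also shows exactly where the paper's rank-zero argument itself breaks: the claim $Z_{(\beta',tH)}(E)\notin{\Bbb R}$ is false precisely on $\varrho_X^\perp$, which the hypothesis does not exclude. A corrected statement must therefore both assume coprimality and remove $\varrho_X^\perp$ (equivalently the locus $(\beta-\delta,\omega)=0$) from the allowed set; your instinct that an extra hypothesis is unavoidable was right, but the one you chose does not close the gap. (A minor point: your identification of the hypothesis with $(\beta,C)\notin\frac{1}{2}+{\Bbb Z}$ via Lemma \ref{lem:lambda} is only valid for $\omega\in\pi^*(\Amp(Y)_{\Bbb R})$; for ample $\omega$ the condition concerns the class $\beta'$ of Remark \ref{rem:beta'}, not $\beta$ itself.)
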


\begin{proof}
We take $(\beta',tH)$ with $\xi(\beta,\omega)=\xi(\beta',tH)$.
If there is a $\sigma_{(\beta',tH)}$-stable subobject
$E_1$ of $E \in {\cal M}_{(\beta',tH)}(v)$ with
$\phi_{(\beta',tH)}(E_1)=\phi_{(\beta',tH)}(E)$,
then $(v(E_1),H+(H,\delta)\varrho_X)=0$
by the choice of $U$.
If $\rk E_1=0$, then
$(c_1(E_1),H)=0$ and $E_1$ is a 0-dimensional object 
of ${\frak C}^{\beta'}$.
Then we have
$Z_{(\beta',tH)}(E_1) \in {\Bbb R}_{<0}$ and
$Z_{(\beta',tH)}(E) \not \in {\Bbb R}$, which is a contradiction.
\end{proof}

\end{NB}

\begin{NB}
We shall study walls in a neighborhood of
$(\delta-\epsilon H,tH)$.

We may assume that $(\omega,C)<0$.
For a $\sigma_{\omega+tC}$-stable object $E$,
we have an exact sequence
\begin{equation}
0 \to F \to E \to {\bf L}\pi^* M \to 0
\end{equation}
where $F \in \langle {\cal O}_C[-1] \rangle$.
Assume that $E[1]$ is a 0-dimensional object of
${^{-1}\Per}(X/Y)$.
We note that
$\Hom({\cal O}_C[-1],{\cal O}_C(-1))=0$.
Since $\Hom({\bf L}\pi^* M,{\cal O}_C(-1))=0$,
$\Hom(E,{\cal O}_C(-1))=0$.
By the stability of $E$,
$E=F$ or $E={\bf L}\pi^* M$.
Since $\Ext^1({\cal O}_C,{\cal O}_C)=0$,
$E=F$ means $E={\cal O}_C[-1]$.
Since
$\Hom({\bf L}\pi^* M,{\cal O}_C[-1])=0$, the latter case does not
occur. 

Since $\Hom({\cal O}_C[-1],{\bf L}\pi^* M)=0$,
${\cal O}_C[-1]$ does not define a wall for $v$.
Thus there is a neighborhood $U$ of $(\delta,H)$ such that
if $Z_\sigma(v) \in {\Bbb H}$, then
${\cal M}_\sigma(v)$ is constant 
in a connected component of $U \setminus \cup_{a>0} W_a$,
where $W_a=\{\sigma \mid 
\phi_\sigma({\cal O}_C(-a))=\phi_\sigma(E) \}$.  
\begin{lem}
In a neighborhood of $(\delta-\epsilon H,tH)$,
the candidates of the Mukai vectors defining
walls are ${\cal O}_C(a)$, $a<0$.
\end{lem}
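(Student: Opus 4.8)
The plan is to work inside the Toda heart ${\cal A}_{(\beta',\omega')}(X/Y)=\langle {\bf L}\pi^*{\cal A}_{(\beta',\omega')},{\cal O}_C[-1]\rangle$, which governs the stability conditions $\sigma_{(\pi^*(\beta'),\pi^*(\omega')-qC)}$ with $q<0$ parameterized near $(\delta-\epsilon H,tH)$, so that $(\omega,C)<0$. The structural input I would exploit is the gluing description: every object $E$ of this heart sits in a canonical exact sequence
\begin{equation}
0 \to F \to E \to {\bf L}\pi^*M \to 0,
\end{equation}
with $F\in\langle {\cal O}_C[-1]\rangle$ and $M\in{\cal A}_{(\beta',\omega')}$. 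First I would invoke the support property (Lemma \ref{lem:support}) together with the local finiteness of walls (Lemma \ref{lem:locally-finite}) to reduce the statement to a finite problem: every candidate wall in a compact neighborhood is defined by a class $v_1=v(E_1)$, where $E_1$ is a $\sigma$-stable subobject of a $\sigma$-semistable $E$ with $v(E)=v$ and $\phi_\sigma(E_1)=\phi_\sigma(E)$ on the wall.

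Since $v=\pi^*(v')$ and $H=\pi^*(H')$, the equal-phase condition forces $(c_1(E_1)-\rk(E_1)\delta,H)=0$, which together with the Bogomolov-type bound of Lemma \ref{lem:strong-Bogomolov} confines $v_1$ to a short list. I would then classify $E_1$ through its gluing sequence $0\to F\to E_1\to{\bf L}\pi^*M\to 0$ and its stability. Using adjunction $\Hom({\bf L}\pi^*(-),-)=\Hom(-,{\bf R}\pi_*(-))$ and the vanishings $\Hom({\bf L}\pi^*M,{\cal O}_C(-1))=0$ (from ${\bf R}\pi_*{\cal O}_C(-1)=0$) and $\Ext^1({\cal O}_C,{\cal O}_C)=0$, stability of $E_1$ forces it to be either a pull-back ${\bf L}\pi^*M$ or an object of $\langle {\cal O}_C[-1]\rangle$; in the latter case stability upgrades $E_1$ to an irreducible object, so $E_1\cong{\cal O}_C(a)$ up to shift and $v_1=v({\cal O}_C(a))$.

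It remains to discard the degenerate sources. A pull-back $E_1={\bf L}\pi^*M$ cannot create a new wall for the pulled-back $v$: its $\sigma$-stability is controlled by the stability of $M$ on $Y$, where no wall lies in this neighborhood, and the vanishing of $\Hom$ between pure pull-backs of the relevant phases shows that such $E_1$ split off rather than destabilize. For $a\ge 0$ I would invoke the analogous vanishing $\Hom({\cal O}_C(a)[-1],{\bf L}\pi^*M)=0=\Hom({\bf L}\pi^*M,{\cal O}_C(a)[-1])$, a consequence of ${\bf R}\pi_*{\cal O}_C(a)$ carrying no higher direct image for $a\ge 0$, to conclude that the corresponding summand always splits and hence defines no wall; this is exactly the argument already sketched for $a=0$ (the case of ${\cal O}_C[-1]$). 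What survives is precisely the family $v({\cal O}_C(a))$ with $a<0$, giving the claim. The main obstacle will be the second step---rigorously excluding pulled-back destabilizers for $v=\pi^*(v')$---since it is here that the recollement description of the heart, the adjunction, and the vanishing of $\Ext$-groups between the exceptional classes and pull-backs must be combined; the bookkeeping is delicate in the non-ample regime $(\omega,C)<0$, where one must use the modified Bogomolov inequality of Lemma \ref{lem:strong-Bogomolov} in place of the naive one.
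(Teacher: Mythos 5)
Your setup for the non-ample region --- Toda's glued heart, the canonical sequence $0 \to F \to E \to {\bf L}\pi^*M \to 0$ with $F \in \langle{\cal O}_C[-1]\rangle$, and the vanishings coming from ${\bf R}\pi_*{\cal O}_C(-1)=0$ --- coincides with the paper's, but the classification step at the core of your argument is wrong, and the error is fatal to the conclusion. Since $\Ext^1({\cal O}_C,{\cal O}_C)=0$, the extension closure $\langle{\cal O}_C[-1]\rangle$ consists of the direct sums ${\cal O}_C[-1]^{\oplus n}$ and nothing else; hence a stable object lying on that side of the recollement is isomorphic to ${\cal O}_C[-1]$, i.e.\ only $a=0$ occurs. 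Your assertion that ``stability upgrades $E_1$ to an irreducible object, so $E_1\cong{\cal O}_C(a)$ up to shift'' for general $a$ has no basis: the classes $v({\cal O}_C(a))=(0,C,a+\tfrac{1}{2})$ with $a\neq 0$ are not multiples of $v({\cal O}_C)$ and simply do not arise inside $\langle{\cal O}_C[-1]\rangle$. Carried out correctly, your argument shows that in the region $(\omega,C)<0$ there are \emph{no} wall-defining objects at all: pull-backs ${\bf L}\pi^*M$ inherit their wall-free behavior from $Y$, and ${\cal O}_C[-1]$ never destabilizes an object with quotient ${\bf L}\pi^*M$ because $\Hom({\cal O}_C[-1],{\bf L}\pi^*M)=\Ext^1({\cal O}_C[-1],{\bf L}\pi^*M)=0$ (Serre duality together with ${\bf R}\pi_*{\cal O}_C(-1)=0$). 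That is indeed the paper's conclusion for this region --- but it produces an empty list, not the list ${\cal O}_C(a)$, $a<0$.

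The family ${\cal O}_C(a)$, $a<0$, asserted by the lemma lives on the other side of the neighborhood, which your proposal never treats. A neighborhood of $(\delta-\epsilon H,tH)$ contains classes $\omega$ with $(\omega,C)>0$ (for instance $tH-qC$ with $0<q\ll 1$, which is ample), and it is precisely there that the candidate walls occur; your restriction to $q<0$ at the outset discards this part. On the ample side the relevant heart is not Toda's glued category but the tilt of the perverse heart ${\frak C}^0$, and the paper's argument there is different in kind: a stable object $E$ whose numerical class forces $E[1]$ to be a $0$-dimensional object of ${\frak C}^0$ must be a purely $1$-dimensional twisted-stable sheaf supported on $C$, the classification of such sheaves gives $E\cong{\cal O}_C(a)$, and the constraints $\Hom(E[1],{\cal O}_C(l))=0$ and $\Hom(E[1],{\cal O}_C(l-1)[1])\neq 0$ force $a\leq l-1$, i.e.\ $a<0$ in the normalization $l=0$. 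The lemma therefore needs both halves: the perverse-heart classification on the ample side, which actually produces the list, and the recollement vanishing on the non-ample side, which shows nothing is added there. Your proposal contains only the second half and misattributes the list to it.
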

\end{NB}

\begin{NB}
We set $\beta' \in \NS(X)_{\Bbb R}$ and $\beta=\pi^*(\beta')$.
${\cal A}_{(\beta,\omega)}(X/Y)=\langle {\cal O}_C[-1],
{\bf L}\pi^* {\cal A}_{(\beta',\omega')} \rangle$, 
where $\omega=\pi^*(\omega')+tC$, $t>0$.
Then $\sigma_{(\beta,\omega)}=(Z_{(\beta,\omega)},{\cal A}_{(\beta,\omega)})$
is a stability condition.
\end{NB}

\begin{NB}
For $(\beta,\omega)=(\delta-\epsilon H,H+t_0 C)$,
$(\beta',tH) \in \xi^{-1}(\xi(\beta,\omega))$ satisfies
\begin{equation}
(\beta',C)=
t_0 \frac{r^2((1-\epsilon^2)(H^2)-t_0^2)+(v^2)}{2\epsilon (H^2)}.
\end{equation}
Hence if $t_0>0$ and $0<\epsilon \ll 1$, then
$l$ is sufficiently large.
On the other hand, $\sigma_{\delta-\epsilon H,H+t_0 C}$
is constant.
Then ${\cal M}_{(\delta-\epsilon H,H+t_0 C)}(v)=
{\cal M}_H^{-\frac{1}{2}K_X}(v)$ and
${\cal M}_{(\beta',tH)}(v)=\emptyset$.
This means that semi-stability is not invariant
on $\xi^{-1}(\xi(\beta,\omega))$ if
$\omega$ is not ample. 

\begin{NB2}
The following is not correct:
For $(\beta,\omega)=(\delta-\epsilon H,H+t_0 C)$,
$\xi(\beta,\omega)=\xi(\beta_t,\omega_t)$,
where 
\begin{equation}
\beta_t:=\delta+x(t)(H+tC)-
\left((t_0-t)\frac{(1-\epsilon^2)(H^2)-t_0^2}{2\epsilon (H^2)}-
\epsilon\right)C,\;
\omega_t=y(t)(H+tC)
\end{equation}
and $(x(t),y(t))$ is determined by $H+tC$ and $\xi(\beta,\omega)$.
We require $(\beta_{t_0},\omega_{t_0})=(\beta,\omega)$.
Then we have $(x(t_0),y(t_0))=(-\epsilon,1)$.
Since $(\beta_t,C)$ is increasing,
${\cal A}_{\sigma_{(\beta_t,\omega_t)}}$ will be non-constant.
Thus ${\cal O}_C(a)$ will gives a wall.
Thus stability will change in $\xi^{-1}(\xi(\beta,\omega))$. 
\end{NB2}
\end{NB}

\begin{NB}
Moduli spaces for
$\xi^{-1} \cap \{(\beta,\omega) \mid (\omega,C)<0 \}$
are quite different from those for
$\xi^{-1} \cap \{(\beta,\omega) \mid (\omega,C)>0 \}$.
\end{NB}

\begin{NB}

For $\beta$ with $(\beta,C)=-1/2$, we consider an exact
sequence in ${\cal A}_{(\beta-\epsilon C,tH)}$.
\begin{equation}
0 \to E_0 \to E \to E_1 \to 0
\end{equation}
$E_0={\cal O}_C(-1)^{\oplus n}$
and $\Hom({\cal O}_C(-1),E_1)=0$.
We set $v(E)=r+dH+D+a \varrho_X$, $r>0$, $D \in H^\perp$.
Since $v({\cal O}_C(-1))=C-\frac{1}{2}\varrho_X$,
we have
\begin{equation}
0 \leq (v(E_1)^2)-((D-nC)^2)=d^2(H^2)-r(2a+n).
\end{equation}
Hence the choice of $n$ is finite.
We define the 
$\sigma_{(\beta,tH)}$-semi-stability
if $\phi_{(\beta,tH)}(E_1) \leq \phi_{(\beta,tH)}(E)$ for all subobject
$E_1 \subset E$ in ${\cal A}_{(\beta-\epsilon C,tH)}$,
where $\phi_{(\beta,tH)}({\cal O}_C(-1)):=\phi_{(\beta,tH)}(v)$.

We generalize $\sigma_{(\beta,\omega)}$-semi-stability
for $(\beta,C)=-m-\frac{1}{2}$.

Assume that 
$\gcd((c_1(v),H)/d_0,r)=1$.
We can take $\beta_0$ such that $d_{\beta_0}(v)=d_{\min,\beta_0}$.
We note that $\beta_0+nC$ also satisfies the same property. 
We write
$\beta_0 \equiv \delta-d_{\min,\beta_0}H+D_0 \mod {\Bbb R}C$,
where $D_0 \in H^\perp$.
We may assume that $D_0 \in C^\perp$.
Then $\xi(\beta_0+sH+pC,tH)$ is contained 
in the plane
$e^\delta(H+{\Bbb R}C+{\Bbb R}(1+D_0+\frac{(v^2)}{2r^2}\varrho_X))$.
Assume that $u \in C^+(v)$ is contained in this
plane and $-\rk u=(u,\varrho_X)>0$.
From now on, we assume that
$s=-d_{\min,\beta_0}$.
Then the image ${\cal D}$ of $\xi$ with $e^\delta H$ is
is a quadric and its interior.
There is no wall except $v({\cal O}_C(n))^\perp$ and
$M_{(\beta,\omega)}(v)$ consists of 
stable perverse coherent sheaves if $\xi(\beta,\omega) \in {\cal D}$
and $\xi(\beta,\omega) \not \in \cup_n v({\cal O}_C(n))^\perp$.
 
\begin{lem}\label{lem:deg-H}
For $(\beta,\omega) \in \xi^{-1}(u)$, $u \in v^\perp$,
let $E$ be a $\sigma_{(\beta,\omega)}$-semi-stable object with
$(v(E),u)=0$.
Then 
$(c_1(E_1(-\beta_0)),H) \geq 0$ for all quotient object 
$E_1$ of $H^0(E)$ and
$(c_1(E_1(-\beta_0)),H) \leq 0$ for all subobject 
$E_1$ of $H^{-1}(E)$, where
$(\beta_0,tH) \in \xi^{-1}(u)$.
\end{lem}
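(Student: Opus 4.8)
The plan is to reduce the statement to the distinguished point $(\beta_0,tH)$ of the fibre and then read off the two inequalities directly from the defining torsion pair of ${\cal A}_{(\beta_0,tH)}$. First I would use the hypothesis $(v(E),u)=0$, which places $v(E)$ in $u^\perp$: by Remark \ref{rem:fiber-property} the central charges $Z_{(\beta',\omega')}(E)$ and $Z_{(\beta',\omega')}(v)$ are then proportional for every $(\beta',\omega')\in\xi^{-1}(u)$, so $E$ retains the phase of $v$ along the whole fibre. Since $\rk u=-1$, Lemma \ref{lem:fiber2} (2) guarantees that $\xi^{-1}(u)\cap\overline{\cal K}(X)$ is connected, whence by Lemma \ref{lem:xi-wall} semi-stability is constant on it. As both $(\beta,\omega)$ and $(\beta_0,tH)$ lie in this fibre, $E$ is $\sigma_{(\beta_0,tH)}$-semi-stable of a phase in $(0,1]$, so $E\in{\cal A}_{(\beta_0,tH)}$. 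By the definition of this heart this means $\pH^{-1}(E)\in{\cal F}_{\beta_0}$ and $\pH^0(E)\in{\cal T}_{\beta_0}$, where $\pH^i$ denotes cohomology with respect to the $t$-structure with heart ${\frak C}^\beta$ (these are the $H^i(E)$ of the statement).

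Next I would record the elementary but crucial fact that $d_{\beta_0}(\,\cdot\,)=(c_1(\,\cdot\,)-\rk(\,\cdot\,)\beta_0,H)/(H^2)$ is additive on short exact sequences and is nonnegative on every generator of ${\cal T}_{\beta_0}$ and nonpositive on every generator of ${\cal F}_{\beta_0}$. Indeed a torsion object has rank $0$ and effective $c_1$, hence $d_{\beta_0}\ge 0$ on it, while a $\mu$-stable generator of ${\cal T}_{\beta_0}$ has $d_{\beta_0}>0$ by definition; dually the $\mu$-stable generators of ${\cal F}_{\beta_0}$ have $d_{\beta_0}\le 0$. Because ${\cal T}_{\beta_0}$ (resp. ${\cal F}_{\beta_0}$) is generated under extensions by these objects, additivity yields $d_{\beta_0}\ge 0$ on all of ${\cal T}_{\beta_0}$ and $d_{\beta_0}\le 0$ on all of ${\cal F}_{\beta_0}$. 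Note that $(C,H)=0$ since $H\in\pi^*(\Amp(Y))$, so $d_{\beta_0}$ depends only on $\beta_0$ modulo ${\Bbb R}C$, which is exactly the ambiguity with which $\beta_0$ was specified.

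Finally I would combine the two ingredients. Since $({\cal T}_{\beta_0},{\cal F}_{\beta_0})$ is a torsion pair of ${\frak C}^\beta$, the class ${\cal T}_{\beta_0}$ is closed under quotients and ${\cal F}_{\beta_0}$ under subobjects in ${\frak C}^\beta$. Hence any quotient $E_1$ of $\pH^0(E)$ lies in ${\cal T}_{\beta_0}$, giving $(c_1(E_1(-\beta_0)),H)=d_{\beta_0}(E_1)(H^2)\ge 0$, and any subobject $E_1$ of $\pH^{-1}(E)$ lies in ${\cal F}_{\beta_0}$, giving $(c_1(E_1(-\beta_0)),H)=d_{\beta_0}(E_1)(H^2)\le 0$. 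As $(H^2)>0$, this is precisely the asserted pair of inequalities.

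The main obstacle is the first step: transporting $E$ from ${\cal A}_{(\beta,\omega)}$ to ${\cal A}_{(\beta_0,tH)}$. These two hearts are tilts of the same ${\frak C}^\beta$ by the different torsion pairs $({\cal T}_\beta,{\cal F}_\beta)$ and $({\cal T}_{\beta_0},{\cal F}_{\beta_0})$, so membership in the heart is not preserved for an arbitrary object; it is the $\sigma_{(\beta,\omega)}$-semi-stability together with the phase matching forced by $(v(E),u)=0$ that pushes $E$ into ${\cal A}_{(\beta_0,tH)}$. The care therefore goes into verifying that the relevant component of $\xi^{-1}(u)$ is connected (so that Lemma \ref{lem:xi-wall} is applicable) and that the common phase of $E$ and $v$ indeed lands in $(0,1]$ at $(\beta_0,tH)$; once $E\in{\cal A}_{(\beta_0,tH)}$ is secured, the remaining two steps are formal consequences of the torsion-pair structure.
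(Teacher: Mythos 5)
The last two steps of your argument (the signs of $d_{\beta}$ on the two halves of the torsion pair, and closure of ${\cal T}$ under quotients, ${\cal F}$ under subobjects) are exactly the mechanism the paper relies on. The gap is in your first step --- transporting $E$ to the boundary point $(\beta_0,tH)$ --- and it is fatal in precisely the case this lemma exists for. Here $H\in\pi^*(\Amp(Y)_{\Bbb Q})$, so $(\beta_0,tH)$ lies on the boundary of ${\cal K}(X)$, and the heart ${\cal A}_{(\beta_0,tH)}$ (a tilt of ${\frak C}^{\beta_0}$ by $({\cal T}_{\beta_0},{\cal F}_{\beta_0})$) exists only when $(\beta_0,C)\not\in\tfrac{1}{2}+{\Bbb Z}$, which by Lemma \ref{lem:lambda} is equivalent to $u\not\in\cup_a v({\cal O}_C(-a))^\perp$. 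The hypotheses of the lemma allow $u\in v({\cal O}_C(l-1))^\perp$, and this is exactly the situation in which the lemma is applied afterwards (the classification of semi-stable objects with $Z_{(\beta_0,tH)}(E)=0$ as ${\cal O}_C(l-1)^{\oplus n}$, and the description of the wall for $u\in v({\cal O}_C(l-1))^\perp$). In that case $(\beta_0,C)=l-\tfrac{1}{2}$, the category ${\frak C}^{\beta_0}$ is not defined (cf. subsection \ref{subsect:moduli-on-wall}), $(\beta_0,tH)\not\in\overline{\cal K}(X)$, and Lemma \ref{lem:fiber2} (2) says $\xi^{-1}(u)\cap\overline{\cal K}(X)$ consists only of points with ample $\omega$; so Lemma \ref{lem:xi-wall} transports semi-stability only among ample points and never reaches $(\beta_0,tH)$. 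The phrases ``$E$ is $\sigma_{(\beta_0,tH)}$-semi-stable'', ``$E\in{\cal A}_{(\beta_0,tH)}$'', ``$H^0(E)\in{\cal T}_{\beta_0}$'' are then meaningless. A second, related failure: even where the boundary stability condition does exist, an object with $(v(E),u)=0$ may have $Z_{(\beta_0,tH)}(E)=0$ (such objects are the very point of the lemma), and a nonzero object with vanishing central charge has no phase and cannot lie in any heart, so your phase-matching step cannot be ``secured''.

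The paper's proof never leaves ${\cal K}(X)$. For $(\beta',\omega')\in\xi^{-1}(u)\cap{\cal K}(X)$ (this intersection is connected by Proposition \ref{prop:fibration}) one has $\phi_{(\beta',\omega')}(E)\in(0,1)$ since $\rk u\ne 0$, hence $E\in{\cal A}_{(\beta',\omega')}$, i.e. $H^0(E)\in{\cal T}_{\beta'}$ and $H^{-1}(E)\in{\cal F}_{\beta'}$. If one had $(c_1(E_1(-\beta_0)),H)<0$ for some quotient $E_1$ of $H^0(E)$, then continuity of $(\beta',\omega')\mapsto(c_1(E_1(-\beta')),\omega')$ along the fiber would give $(c_1(E_1(-\beta')),\omega')<0$ at an ample point of the fiber near $(\beta_0,tH)$, contradicting $H^0(E)\in{\cal T}_{\beta'}$; the argument for subobjects of $H^{-1}(E)$ is dual. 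Note that this perturbation argument only yields the weak inequalities, which is exactly what the statement asserts. So the repair of your proof is local: keep your torsion-pair computation, but run it at nearby ample points $(\beta',\omega')$ of the fiber and obtain the inequality at $(\beta_0,H)$ by the limiting/contradiction step, instead of evaluating the (possibly nonexistent) heart at $(\beta_0,tH)$ itself.
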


\begin{proof}
Since $E$ is $\sigma_{(\beta,\omega)}$-semi-stable
for any $(\beta,\omega) \in \xi^{-1}(u) \cap {\cal K}(X)$
and $(u,\varrho_X) \ne 0$,
we may assume that $\phi_{(\beta,\omega)}(E) \in (0,1)$
for all $(\beta,\omega)  \in \xi^{-1}(u) \cap {\cal K}(X)$.
In particular $E \in {\cal A}_{(\beta,\omega)}$.
If $(c_1(E_1(-\beta_0)),H) < 0$ for a quotient object 
$E_1$ of $H^0(E)$, then
$(c_1(E_1(-\beta')),\omega') < 0$ for   
a pair $(\beta',\omega')$
in a neighborhood of $(\beta,\omega)$ in $\xi^{-1}(u)$.
Since $E$ is a $\sigma_{(\beta',\omega')}$-semi-stable
object, $H^0(E) \in {\cal T}_{\beta'}$, which is a contradiction.
\end{proof}

\begin{NB2}  
\begin{lem}
Assume that $v(E)=v({\cal O}_C(a))$.
Then $E={\cal O}_C(a)$.
\end{lem}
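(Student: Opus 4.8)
The plan is to show that $E$ is forced to be a sheaf supported on the exceptional curve and then to invoke the classification of such sheaves. First I record the numerical input: $v(E)=v({\cal O}_C(a))$ has rank $0$ and $c_1=C$, so $(v(E)^2)=(C^2)=-1$, the smallest value permitted by the Bogomolov-type inequality of Lemma \ref{lem:strong-Bogomolov}; this rigidity is what makes the moduli a single point. The substance of the proof is to check that any $\sigma_{(\beta,\omega)}$-semi-stable $E$ with this Mukai vector is concentrated in cohomological degree $0$ as an honest sheaf, after which the identification $E\cong{\cal O}_C(a)$ is forced.

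To see that $E$ is a sheaf I would use the torsion pair $({\cal T}_\beta,{\cal F}_\beta)$ underlying ${\cal A}_{(\beta,\omega)}$. From the triangle $H^{-1}(E)[1]\to E\to H^0(E)$ we get a short exact sequence $0\to H^{-1}(E)[1]\to E\to H^0(E)\to 0$ in ${\cal A}_{(\beta,\omega)}$ with $H^{-1}(E)\in{\cal F}_\beta$ and $H^0(E)\in{\cal T}_\beta$, and since $\rk E=0$ the two pieces have equal rank. The point is the sign of the slope $d_\beta$, which is positive on the $\mu$-stable generators of ${\cal T}_\beta$ and $\le 0$ on those of ${\cal F}_\beta$: a nonzero $H^{-1}(E)$ of positive rank would make $H^{-1}(E)[1]$ a subobject of $E$ whose phase is strictly larger than $\phi_{(\beta,\omega)}(E)$, contradicting semi-stability. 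Hence $H^{-1}(E)=0$, so $E=H^0(E)$ is a sheaf lying in ${\cal T}_\beta$; being semi-stable of rank $0$ it is a torsion sheaf, and its $1$-dimensional part carries the class $c_1(E)=[C]$.

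It remains to classify pure one-dimensional semi-stable sheaves $E$ with $c_1(E)=[C]$. The support is then the reduced curve $C\cong{\Bbb P}^1$ taken with multiplicity one, so restricting to $C$ exhibits $E$ as a line bundle on $C$, i.e. $E\cong{\cal O}_C(a')$ for some $a'$; comparing the last component of the Mukai vector, $\ch_2(E)=\ch_2({\cal O}_C(a))$, forces $a'=a$. This yields $E\cong{\cal O}_C(a)$.

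I expect the real obstacle to be the second step in the degenerate regime $(\omega,C)=0$, where $\omega$ is only semi-ample. There $d_\beta({\cal O}_C(a))=0$, the central charge $Z_{(\beta,\omega)}(E)$ becomes real, and ${\cal O}_C(a)$ sits on a categorical wall rather than inside a chamber, so the clean slope inequality of the second paragraph degenerates. In that case I would instead run the wall analysis of subsection \ref{subsect:moduli-on-wall}, distinguishing phase-$0$ from phase-$1$ objects and using the explicit list of irreducible objects ${\cal O}_C(l)$, ${\cal O}_C(l-1)[1]$, $k_x$ of ${\frak C}^\beta$ to exclude any nontrivial extension sharing the Mukai vector $v({\cal O}_C(a))$. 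Once this case is settled the remaining steps are routine.
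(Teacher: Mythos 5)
Your proof has the same skeleton as the paper's (first force $H^{-1}(E)=0$, then exclude $0$-dimensional torsion, then identify a pure $1$-dimensional sheaf of class $C$ with a line bundle on $C$), but the decisive step in your second paragraph is not valid. You assert that whenever $H^{-1}(E)\ne 0$, the subobject $H^{-1}(E)[1]\subset E$ has phase strictly larger than $\phi_{(\beta,\omega)}(E)$, and that "the point is the sign of the slope $d_\beta$". Sign information alone cannot produce a phase comparison: for $F\in {\cal F}_\beta$ one only knows $\mathrm{Im}\,Z_{(\beta,\omega)}(F[1])\ge 0$, while the real part can be large and positive, so objects of ${\cal F}_\beta[1]$ realize \emph{every} phase in $(0,1]$ — tilting is a statement about the torsion pair, not about phases. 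Concretely, let $F$ be a line bundle in ${\frak C}^\beta$ of very negative degree (e.g.\ a pull-back of a sufficiently negative line bundle on $Y$, twisted into ${\frak C}^\beta$); it is $\mu$-stable with $d_\beta(F)\ll 0$, hence lies in ${\cal F}_\beta$, yet
\begin{equation*}
Z_{(\beta,\omega)}(F[1])=\frac{((c_1(F)-\beta)^2)-(\omega^2)}{2}
+\sqrt{-1}\,(\beta-c_1(F),\omega)
\end{equation*}
has real part growing quadratically and imaginary part growing only linearly in the degree, so its phase is arbitrarily close to $0$, far \emph{below} $\phi_{(\beta,\omega)}(E)$. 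Thus semi-stability of $E$ at the single point $(\beta,\omega)$ yields no contradiction with $H^{-1}(E)\ne 0$; ruling out such an $H^{-1}(E)$ is exactly what has to be proved, and nothing in your argument does it.

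What makes the paper's proof work is the hypothesis, implicit in the surrounding discussion, that $E$ is semi-stable for \emph{all} $(\beta,\omega)$ in the fiber $\xi^{-1}(u)\cap{\cal K}(X)$: by Lemma \ref{lem:xi-wall} semi-stability is constant along such fibers, and this fiber limits onto the point $(\beta_0,tH)$ with $H\in\pi^*(\Amp(Y))$, where $(C,H)=0$ and all relevant central charges become real. There the paper first shows that $H^{-1}(E)$ and the torsion-free quotient of $H^0(E)$ are $\mu$-semi-stable of $\beta_0$-degree zero, so the Bogomolov inequality (Lemma \ref{lem:weak-Bogomolov}) pins down signs: $Z_{(\beta_0,tH)}(H^{-1}(E)[1])\in{\Bbb R}_{<0}$ and $Z_{(\beta_0,tH)}(H^0(E))\in{\Bbb R}_{>0}$. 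Hence $\phi(H^{-1}(E)[1])>\phi(H^0(E))$ on a whole neighborhood of $(\beta_0,tH)$, and since $E$ stays semi-stable throughout that neighborhood, one of the two pieces must vanish; $\rk E=0$ excludes $H^0(E)=0$, so $H^{-1}(E)=0$. In other words, the degenerate point $(\omega,C)=0$ is not a corner case to be deferred to subsection \ref{subsect:moduli-on-wall}, as your last paragraph proposes — it is the engine of the whole proof, and without it (or a large-volume hypothesis as in Proposition \ref{prop:large}) your first step cannot be carried out. A second, smaller gap: you pass from "torsion sheaf" to "pure one-dimensional sheaf" silently, but purity is not forced by the Mukai vector (${\cal O}_C(a-1)\oplus k_x$ has the same Mukai vector as ${\cal O}_C(a)$); the paper excludes a $0$-dimensional subsheaf $T$ by noting that $Z(T)\in{\Bbb R}_{<0}$ has phase $1$, strictly larger than $\phi_{(\beta,\omega)}(E)<1$ when $\omega$ is ample, contradicting semi-stability.
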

\begin{proof}
We consider the exact sequence
\begin{equation}
0 \to H^{-1}(E)[1] \to E \to H^0(E) \to 0
\end{equation}
in ${\cal A}_{(\beta,\omega)}$.
Then $H^{-1}(E)$ is torsion free.
Since $(c_1(E(-\beta_0)),H)=0$,
$H^{-1}(E)$ and the torsion free quotient
of $H^0(E)$ are $\mu$-semi-stable sheaves
such that
\begin{equation} 
(c_1(H^{-1}(E)(-\beta_0)),H)=(c_1(H^0(E)(-\beta_0)),H)=0.
\end{equation}
If $H^{-1}(E) \ne 0$, then the Bogomolov inequality implies
$Z_{(\beta_0,tH)}(H^{-1}(E)[1]) \in {\Bbb R}_{<0}$,
which also implies
$Z_{(\beta_0,tH)}(H^0(E)) \in {\Bbb R}_{>0}$.
Since $E$ is $\sigma_{(\beta',\omega')}$-semi-stable for all
$(\beta',\omega')$ and
$\phi_{(\beta',\omega')}(H^{-1}(E)[1])>\phi_{(\beta',\omega')}(H^0(E))$
in a neighborhood of $(\beta_0,tH)$,
$H^{-1}(E)[1]=0$ or $H^0(E)=0$.
By our assumption, $H^0(E)=0$. 
Then $\rk E=\rk H^{-1}(E)<0$, which is a contradiction.
Therefore $H^{-1}(E)=0$.
If $H^0(E)$ contains a 0-dimensional subsheaf $T$, then
$T$ is a subobject of $E$ in ${\cal A}_{(\beta,\omega)}$.
Since $\mathrm{Im}(Z_{(\beta',\omega')}({\cal O}_C(a)))>0$,
$E$ is not $\sigma_{(\beta',\omega')}$-semi-stable.
Therefore $E$ is a purely 1-dimensional sheaf.
Then we have $E={\cal O}_C(a)$.
\end{proof}

\end{NB2}

\begin{lem}\label{lem:d_min}
We set $v:=(r,\xi,a)$, $r>0$.
Assume that $\omega \in \pi^*(\Amp(Y))$,
$d_\beta(v)=d_{\min}$ and $E$ is a
$\sigma_{(\beta',\omega')}$-semi-stable object
with $v(E)=v$
for $(\beta',\omega') \in \xi^{-1}(u) \cap {\cal K}(X)$.
Then $E$ is a coherent sheaf such that the torsion free quotient
of $E$ is $\mu$-semi-stable
with respect to $\omega$ and the torsion part is
supported on $C$. 
\end{lem}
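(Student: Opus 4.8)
The plan is to reduce everything to the single boundary stability condition $\sigma_{(\beta,\omega)}$ with $\omega\in\pi^*(\Amp(Y))$ and then run a rank--Bogomolov--phase argument driven by the minimality of $d_\beta$. First I would observe that, by Lemma~\ref{lem:fiber2}~(2), the fibre $\xi^{-1}(u)\cap\overline{\cal K}(X)$ is connected and contains both the ample locus $\xi^{-1}(u)\cap{\cal K}(X)$ and the boundary point $(\beta,\omega)$; combining the openness/constancy discussion preceding Lemma~\ref{lem:xi-wall} with the support property (Lemma~\ref{lem:support}), semi-stability is constant along this connected fibre, so $E$ is $\sigma_{(\beta,\omega)}$-semi-stable. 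Since $(\omega,C)=0$ we have $\mathrm{Im}\,Z_{(\beta,\omega)}(G)=d_\beta(G)\,(H,\omega)$ for every $G$, so each $G\in{\cal A}_{(\beta,\omega)}$ satisfies $d_\beta(G)\ge 0$, and $\phi_{(\beta,\omega)}(E)<1$ because $d_\beta(E)=d_{\min}>0$.

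The key point is the dichotomy coming from minimality: as $d_{\min}$ is the least positive value of $d_\beta$ on the lattice, any short exact sequence $0\to E_1\to E\to E_2\to 0$ in ${\cal A}_{(\beta,\omega)}$ has $d_\beta(E_1),d_\beta(E_2)\ge 0$ with sum $d_{\min}$, forcing $\{d_\beta(E_1),d_\beta(E_2)\}=\{0,d_{\min}\}$. I would apply this to the triangle $\pH^{-1}(E)[1]\to E\to\pH^0(E)$. If $d_\beta(\pH^{-1}(E)[1])=d_{\min}$ then $d_\beta(\pH^0(E))=0$; but an object of ${\cal T}_\beta$ with $d_\beta=0$ is torsion, so $\rk\pH^0(E)=0$, contradicting $\rk\pH^0(E)\ge\rk E=r>0$. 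Hence $d_\beta(\pH^{-1}(E)[1])=0$, so $\pH^{-1}(E)\in{\cal F}_\beta$ is $\mu$-semi-stable with $d_\beta=0$; Bogomolov (Lemma~\ref{lem:perverse-Bogomolov}) then forces $a_\beta(\pH^{-1}(E))\le 0$, whence $Z_{(\beta,\omega)}(\pH^{-1}(E)[1])\in{\Bbb R}_{<0}$ has phase $1$, contradicting $\phi_{(\beta,\omega)}(\pH^{-1}(E)[1])\le\phi_{(\beta,\omega)}(E)<1$ unless $\pH^{-1}(E)=0$. Thus $E\in{\frak C}^\beta$.

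Next I would rule out torsion subobjects: the irreducible torsion objects ${\cal O}_C(l)$, ${\cal O}_C(l-1)[1]$ and $k_x$ ($x\notin C$) all have $d_\beta=0$ and, since $a_\beta({\cal O}_C(l)),a_\beta({\cal O}_C(l-1)[1]),a_\beta(k_x)>0$, satisfy $\mathrm{Re}\,Z_{(\beta,\omega)}<0$, hence phase $1$, so none can inject into $E$. Therefore $E$ is a torsion-free object of ${\frak C}^\beta$, and by Lemma~\ref{lem:reflexive-hull2} it fits in $0\to E\to E'\to T\to 0$ with $T$ $0$-dimensional and $E'(lC)$ the pull-back of a locally free sheaf; in particular $E$ is a coherent sheaf. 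A $\Coh$-torsion component off $C$ would give a subsheaf with $(c_1,H)>0$, i.e.\ $d_\beta>0$, which is excluded, so the torsion of $E$ is supported on $C$. Finally, for the torsion-free quotient $F$ one has $F\in{\cal T}_\beta$ and $d_\beta(F)=d_{\min}$ (the torsion on $C$ contributing $d_\beta=0$); the minimal-slope quotient $Q$ in the Harder--Narasimhan filtration of $F$ again lies in ${\cal T}_\beta$, so $d_\beta(Q)>0$ and hence $d_\beta(Q)\ge d_{\min}$, giving $\frac{(c_1(Q),H)}{\rk Q}=(\beta,H)+\frac{d_\beta(Q)(H^2)}{\rk Q}\ge(\beta,H)+\frac{d_{\min}(H^2)}{r}=\frac{(c_1(F),H)}{\rk F}$ because $\rk Q\le r$; thus $F$ is $\mu$-semi-stable with respect to $H$, equivalently $\omega$.

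The hardest part will be the perverse-versus-coherent bookkeeping in the third paragraph --- passing cleanly from ``torsion-free in ${\frak C}^\beta$'' to ``a genuine coherent sheaf whose $\Coh$-torsion lies on $C$'' --- together with justifying that the hypothesis $d_\beta(v)=d_{\min}$ really records the global minimal positive value of $d_\beta$, which is precisely what licenses the $\{0,d_{\min}\}$ dichotomy for every subobject and powers the rest of the argument.
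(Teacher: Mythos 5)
Your paragraphs 2--3 (the $\{0,d_{\min}\}$ dichotomy, the phase-$1$ argument, the exclusion of torsion off $C$, the Harder--Narasimhan computation for $\mu$-semi-stability) are sound in themselves, but everything rests on your opening move: transporting semi-stability of $E$ to the boundary point and then computing inside the perverse heart ${\frak C}^\beta$ and its tilt ${\cal A}_{(\beta,\omega)}$. That reduction fails in exactly the case this lemma exists to handle, namely when $u=\xi(\beta,\omega)$ lies on some $v({\cal O}_C(-a))^\perp$, equivalently $(\beta,C)\in\frac{1}{2}+{\Bbb Z}$: this is the situation of the subsequent results on the wall $v({\cal O}_C(l-1))^\perp$ (the statements on $Z_{(\beta_0,tH)}(E)=0$ and on properly semi-stable objects), whose proofs invoke the present lemma, and whose surrounding text explicitly notes that the stability of ${\cal O}_C(l)$, ${\cal O}_C(l-1)$ is unknown there. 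In that degenerate case $Z_{(\beta,\omega)}$ annihilates $v({\cal O}_C(l-1))$ (Lemma \ref{lem:lambda}), so $\sigma_{(\beta,\omega)}$ is not a stability condition, there is no heart ${\frak C}^\beta$, no classification of irreducible $0$-dimensional objects, and Lemma \ref{lem:reflexive-hull2} is unavailable; moreover Lemma \ref{lem:fiber2} (2)(b) says that then $\xi^{-1}(u)\cap\overline{\cal K}(X)$ consists only of points with $\omega$ ample, so your connectedness argument cannot even reach $(\beta,\omega)$ --- your assertion that the fibre ``contains the boundary point'' is false there, and your appeal to Lemma \ref{lem:fiber2} (2) is only valid in its subcase (a). Nothing in the hypotheses of the lemma restricts to that subcase.

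The paper's own proof is engineered to avoid precisely this: it never evaluates stability at $(\beta,\omega)$. It decomposes $E$ by the \emph{standard} t-structure, $H^{-1}(E)[1]\to E\to H^0(E)$ (legitimate because for ample $\omega'$ the heart ${\cal A}_{(\beta',\omega')}$ is a tilt of $\Coh(X)$), applies the same minimality dichotomy to $d_\beta$, and derives the contradictions from phase inequalities at nearby ample points $(\beta',\omega')\in\xi^{-1}(u)\cap{\cal K}(X)$ (``in a neighborhood of $(\beta,\omega)$''), with the preceding degree lemma supplying the sign constraints in the limit; the support statement for the torsion is likewise a limiting slope argument, not an appeal to perverse $0$-dimensional objects. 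To repair your proof you would either have to add the hypothesis $(\beta,C)\notin\frac{1}{2}+{\Bbb Z}$ --- which guts the lemma's intended applications --- or redo your second and third paragraphs with the standard cohomology sheaves and limiting phase comparisons at ample points of the fibre, which is exactly the paper's argument.
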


\begin{proof}
We note that
 $d_\beta(H^{-1}(E)[1]) \geq 0$,
$d_\beta(H^0(E)) \geq 0$ implies that
$d_\beta(H^{-1}(E)[1])=0$ or
$d_\beta(H^0(E))=0$.
For the first case,
$\phi_{(\beta',\omega')}(H^{-1}(E)[1])>\phi_{(\beta',\omega')}(E)$
in a neighborhood of $(\beta,\omega)$.
Hence $H^{-1}(E)=0$.
For the second case,
$\rk H^0(E)=0$, which implies that $\rk E<0$.
Therefore $E$ is a sheaf.
Let $T$ be the torsion subsheaf of $E$.
If $(c_1(T),\omega)>0$, then
$(c_1((E/T)(-\beta)),\omega) \leq 0$.
Then $(c_1((E/T)(-\beta')),\omega') <(c_1(E(-\beta')),\omega')$
in a neighborhood of $(\beta,\omega)$,
which is a contradiction.
Therefore $T$ is supported on $C$.
Moreover we see that the torsion free quotient
of $E$ is $\mu$-semi-stable
with respect to $\omega$.
\end{proof}

\begin{lem}
Let $E$ be an object of ${\bf D}(X)$ in Lemma \ref{lem:d_min}.
\begin{enumerate}
\item[(1)]
Assume that $l-\frac{1}{2}<(\beta,C)<l+\frac{1}{2}$.
Then $\Hom({\cal O}_C(l),E)=\Hom(E,{\cal O}_C(l-1))=0$.
In particular $E$ is a $\mu$-stable object of ${\frak C}^\beta$.
\item[(2)]
$\Hom({\cal O}_C(l),E)=0$ and
$\Hom(E,{\cal O}_C(l-2))=0$, if
$\xi(\beta,\omega) \in v({\cal O}_C(l-1))^\perp$.
\end{enumerate}
\end{lem}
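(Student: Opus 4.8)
The plan is to deduce both statements from a comparison of Bridgeland phases, carried out at a point $(\beta',\omega')\in\xi^{-1}(u)\cap{\cal K}(X)$ with $\omega'$ ample but close to the boundary $\pi^*(\Amp(Y)_{\Bbb R})$, where $u=\xi(\beta,\omega)$. Since the groups $\Hom({\cal O}_C(l),E)$, $\Hom(E,{\cal O}_C(l-1))$ and $\Hom(E,{\cal O}_C(l-2))$ are computed in ${\bf D}(X)$ and do not depend on the stability condition, it suffices to show they vanish using the stability data at such a $(\beta',\omega')$. By Lemma \ref{lem:d_min}, $E$ is a coherent sheaf with torsion supported on $C$ and $\mu$-semi-stable torsion free quotient, and by hypothesis $E$ is $\sigma_{(\beta',\omega')}$-semi-stable; each ${\cal O}_C(a)$ is a stable pure $1$-dimensional sheaf, hence $\sigma_{(\beta',\omega')}$-stable and lies in the heart ${\cal A}_{(\beta',\omega')}$, as does $E$.

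First I would record the central charge $Z_{(\beta',\omega')}({\cal O}_C(a))=\bigl((\beta',C)-a-\tfrac12\bigr)+\sqrt{-1}\,(\omega',C)$, which follows from $v({\cal O}_C(a))=C+(a+\tfrac12)\varrho_X$ and the definition of $Z$. For $\omega'$ close to the boundary we have $(\omega',C)>0$ small and $(\beta',C)$ close to $(\beta,C)\in(l-\tfrac12,l+\tfrac12)$; reading off the arguments gives $\phi_{(\beta',\omega')}({\cal O}_C(l))\to 1$, $\phi_{(\beta',\omega')}({\cal O}_C(l-1))\to 0^+$ and $\phi_{(\beta',\omega')}({\cal O}_C(l-2))\to 0^+$ as $\omega'\to\pi^*(\Amp(Y))$, with the strict ordering $\phi({\cal O}_C(l-2))<\phi({\cal O}_C(l-1))<\phi({\cal O}_C(l))$. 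For $E$ one has $\mathrm{Im}\,Z_{(\beta',\omega')}(E)=d_\beta(v)(H,\omega)\to d_{\min}(H,\omega)>0$, so $\phi_{(\beta',\omega')}(E)$ tends to a value in $(0,1)$.

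Then I would pin down the position of $\phi(E)$ relative to the $\phi({\cal O}_C(a))$ by a constancy argument: by Proposition \ref{prop:fibration} the fiber $\xi^{-1}(u)\cap{\cal K}(X)$ is connected, and $\phi_{(\beta',\omega')}(E)=\phi_{(\beta',\omega')}({\cal O}_C(a))$ precisely when $u\in v({\cal O}_C(a))^\perp$ (Lemma \ref{lem:lambda}), so on the complement the sign of $\phi(E)-\phi({\cal O}_C(a))$ is constant over the connected fiber. In case (1) the hypothesis $l-\tfrac12<(\beta,C)<l+\tfrac12$ together with Lemma \ref{lem:lambda} gives $u\notin v({\cal O}_C(l))^\perp\cup v({\cal O}_C(l-1))^\perp$, and the boundary limits above force $\phi({\cal O}_C(l-1))<\phi(E)<\phi({\cal O}_C(l))$ on the whole fiber. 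The standard inequality for semi-stable objects, namely $\Hom(A,B)\ne0\Rightarrow\phi(A)\le\phi(B)$, then yields $\Hom({\cal O}_C(l),E)=0$ and $\Hom(E,{\cal O}_C(l-1))=0$. The vanishing of the second group means $E$ has no quotient ${\cal O}_C(l-1)$, hence $E$ lies in the torsion part of the torsion pair defining ${\frak C}^\beta$, so $E\in{\frak C}^\beta$; the $\mu$-semi-stability of its torsion free quotient (with torsion on $C$, which has $H$-degree $0$) together with the minimality $d_\beta(v)=d_{\min}$ gives $\mu$-stability. In case (2), $u\in v({\cal O}_C(l-1))^\perp$ forces $\phi(E)=\phi({\cal O}_C(l-1))$, and the ordering above gives $\phi({\cal O}_C(l-2))<\phi(E)<\phi({\cal O}_C(l))$, whence $\Hom({\cal O}_C(l),E)=0$ and $\Hom(E,{\cal O}_C(l-2))=0$ by the same inequality.

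The main obstacle I anticipate is the careful control of the limiting phases and the constancy-of-sign step: one must ensure that along $\xi^{-1}(u)\cap{\cal K}(X)$ the phase $\phi(E)$ stays in $(0,1)$ (i.e. $\mathrm{Im}\,Z(v)$ does not vanish) and that $(\beta',C)$ remains in $(l-\tfrac12,l+\tfrac12)$ for $\omega'$ near the boundary, so that the sheaves ${\cal O}_C(a)$ keep the asserted phases and the conditions $u\notin v({\cal O}_C(a))^\perp$ do not degenerate; the sub-case $(\beta-\delta,\omega)=0$, where $Z_{(\beta,\omega)}(v)\in{\Bbb R}$, must be excluded or treated separately via Lemma \ref{lem:lambda} (1). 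Verifying that the relevant ${\cal O}_C(a)$ are genuinely $\sigma_{(\beta',\omega')}$-semi-stable, so that the $\Hom$–phase inequality applies, is routine but should be stated explicitly.
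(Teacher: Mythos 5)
Your proof of part (1) follows the same route as the paper (compare phases at the boundary point $(\beta,\omega)$, where $\phi_{(\beta,\omega)}({\cal O}_C(l))=1$ and $\phi_{(\beta,\omega)}({\cal O}_C(l-1))=0$, then apply the Hom--phase inequality), but your justification of the one essential input --- that the sheaves ${\cal O}_C(a)$ are Bridgeland (semi)stable at the relevant points --- is wrong: a pure $1$-dimensional sheaf that is stable as a sheaf need not be $\sigma_{(\beta',\omega')}$-stable. Bridgeland stability of ${\cal O}_C(a)$ depends on the region of the stability manifold; for instance the exact sequence $0\to{\cal O}_C^{\oplus 2}\to{\cal O}_C(1)\to{\cal O}_C(-1)[1]\to 0$ destabilizes ${\cal O}_C(1)$ in suitable tilted hearts. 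In case (1) this gap is repairable: for $l-\tfrac{1}{2}<(\beta,C)<l+\tfrac{1}{2}$, the objects ${\cal O}_C(l)$ and ${\cal O}_C(l-1)[1]$ are irreducible objects of the heart at $(\beta,\omega)$ (Lemma \ref{lem:irreducible}), hence stable with nonzero central charge, and stability is open; that is what the paper actually uses.

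In part (2) the gap is fatal, and it is precisely the difficulty the paper flags (``we do not know the stability of ${\cal O}_C(l)$ and ${\cal O}_C(l-1)$''). There $(\beta,C)=l-\tfrac{1}{2}$, so $Z_{(\beta,\omega)}({\cal O}_C(l-1))=0$: the category ${\frak C}^\beta$ does not exist at the boundary point, ${\cal O}_C(l-1)$ has no phase there, the irreducibility argument from case (1) is unavailable, and whether ${\cal O}_C(l)$, ${\cal O}_C(l-1)$, ${\cal O}_C(l-2)$ are semistable at the nearby ample points of $\xi^{-1}(u)$ is exactly what is unknown. Hence your chain $\phi({\cal O}_C(l-2))<\phi(E)=\phi({\cal O}_C(l-1))<\phi({\cal O}_C(l))$ has no meaning as stated (and even granting semistability, $u\in v({\cal O}_C(l-1))^\perp$ only gives $Z({\cal O}_C(l-1))\in{\Bbb R}Z(E)$, i.e.\ equality of phases modulo ${\Bbb Z}$). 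The paper's proof of (2) is designed to avoid this input altogether: for a nonzero morphism $E\to{\cal O}_C(l-2)$ (resp.\ ${\cal O}_C(l)\to E$) it takes the image $F$ inside the heart ${\cal A}_{(\beta_s,\omega_s)}$ along a path $(\beta_t,\omega_t)$ in $\xi^{-1}(u)$ with $\omega_t\in{\Bbb R}_{>0}(H-\lambda_t C)$ and $\lambda_0=0$, uses the torsion-pair conditions ($E\in{\cal T}_{\beta_t}$ for all $t$, and $H^{-1}(\coker\phi)\in{\cal F}_{\beta_s}$) to bound $H$-degrees of quotients of $E$, concludes that for small $s$ the image is forced to be some ${\cal O}_C(k)$, and then compares $\phi_{(\beta_s,\omega_s)}(F)$ with $\phi_{(\beta_s,\omega_s)}(E)$ --- a comparison that uses only the semistability of $E$ against its own subobjects and quotients, never semistability of the ${\cal O}_C(a)$. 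To salvage your approach you would have to first prove stability of ${\cal O}_C(l)$ and ${\cal O}_C(l-2)$ near the categorical wall (nontrivial; the later Lemma \ref{lem:C-stable} achieves this only in certain regions), or else switch to a direct argument of the paper's type.
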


\begin{proof}
(1)
Assume that $l-\frac{1}{2}<(\beta,C)<l+\frac{1}{2}$.
Then ${\cal O}_C(l)$ and ${\cal O}_C(l-1)[1]$ are 
$\sigma_{(\beta',\omega')}$-stable in a neighborhood
of $(\beta,\omega)$.
Since $\phi_{(\beta,\omega)}({\cal O}_C(l))=1$ and
$\phi_{(\beta,\omega)}({\cal O}_C(l-1))=0$, we get
$\Hom({\cal O}_C(l),E)=\Hom(E,{\cal O}_C(l-1))=0$. 
Moreover we see that
$\Hom({\cal O}_C(l),E)=0$ and
$\Hom(E,{\cal O}_C(l-2))=0$, if
$\xi(\beta,\omega) \in v({\cal O}_C(l-1))^\perp$.

\begin{NB2}
If $E$ is $\sigma_{(\beta',\omega')}$-stable,
then $\Hom(E,{\cal O}_C(l-1))=\Hom({\cal O}_C(l-1),E)=0$.
Hence $E$ is $\mu$-stable in ${\frak C}^\beta$.
In particular, $(v(E)^2) \geq 0$.
\end{NB2}

(2)
Since we do not know the stability of ${\cal O}_C(l)$ and
${\cal O}_C(l-1)$,
the proof is complicated.
For a quotient sheaf $F$ of $E$,
$E \in T_{\beta,\omega}$ implies 
$(c_1(F(-r \beta)),\omega) \geq 0$.

Let $(\beta_t,\omega_t)$ $(t \in [0,1])$ 
be a continuous family in $\xi^{-1}(u)$
such that
$\beta_t:=a_t H-b_t C+D_t$, $D_t \in H^\perp \cap C^\perp$,
$\omega_t \in {\Bbb R}_t(H-\lambda_t C)$,
$\lambda_0=0$.
We may assume that $|a_t|,|b_t|<B$.
We set
$c_1(F):=xH-yC+D$, $D \in H^\perp \cap C^\perp$.
Then
\begin{equation}
(c_1(F(-\beta_t)),H-\lambda_t C)=
(x-ra_t)(H^2)-(y-rb_t)\lambda_t \geq 0
\end{equation}
for all $t$.
Thus
\begin{equation}\label{eq:y-bound}
y \leq rb_t+\frac{(x-r a_t)}{\lambda_t}(H^2).
\end{equation}  

For a non-zero morphism
$\phi:E \to {\cal O}_C(l-2)$,
let $F$ be the image of $\phi$ in ${\cal A}_{(\beta_s,\omega_s)}$.
Then $F$ is a quotient sheaf of $E$ and
we have an exact sequence
\begin{equation}
0 \to H^{-1}(\coker \phi) \to F \to {\cal O}_C(k) \to 0.
\end{equation}
Since $H^{-1}(\coker \phi) \in {\cal F}_{\beta_s}$,
$(c_1(F(-\beta_s))-C,\omega_s) \leq 0$.
Thus
\begin{equation}
(x-ra_s)(H^2) \leq \lambda_s(1+y-r b_s) 
\leq \lambda_s(1+y+rB).
\end{equation}
Since $F$ is a quotient sheaf of $E$,
applying \eqref{eq:y-bound}, we see that
\begin{equation}
\lambda_s \geq \frac{(x-ra_s)(H^2)}
{1+rb_s+rB+\frac{x-r a_t}{\lambda_t}(H^2)}
\geq
\frac{(x-ra_s)(H^2)}
{1+2rB+\frac{x-r a_s+2rB}{\lambda_t}(H^2)}.
\end{equation}
For a sufficiently small $s$,
$|r(a_s-a_0)|<d_{\min}/2$.
Hence $x-ra_s \geq x-ra_0-d_{\min}/2$.
If $x-ra_0> 0$, then
$x-ra_0 \geq d_{\min}$,
which gives a lower bound of $\lambda_s$:
\begin{equation}
\lambda_s \geq \frac{(H^2)d_{\min}/2}
{1+2rB+\frac{d_{\min}/2+2rB}{\lambda_t}(H^2)}.
\end{equation}
On the other hand, $\lambda_0=0$ implies
$F$ must satisfy $x-ra_0=0$ 
for a sufficiently small $s$.
Hence $\rk F=0$,
$F={\cal O}_C(k)$ and $H^{-1}(\coker \phi)=0$
for a small $s$.
Then $\phi_{(\beta_s,\omega_s)}(F) \leq 
\phi_{(\beta_s,\omega_s)}({\cal O}_C(l-2))$
for any $s$ and 
$\phi_{(\beta_s,\omega_s)}({\cal O}_C(l-2))<
\phi_{(\beta_s,\omega_s)}(E)$ for a small $s$.   
Hence $E$ is not semi-stable.
Therefore $\Hom(E,{\cal O}_C(l-2))=0$.

For a morphism
$\phi:{\cal O}_C(l) \to E$,
let $F$ be the image of $\phi$ in ${\cal A}_{(\beta_s,\omega_s)}$.
Then $F \in \Coh(X)$ and ${\cal O}_C(l) \to F$ is surjective in
$\Coh(X)$.
Since the torsion part of $E$ is purely 1-dimensional,
$F={\cal O}_C(l)$.
Then $\phi_{(\beta_s,\omega_s)}(F)>
\phi_{(\beta_s,\omega_s)}(E)$ for a small $s$.
Therefore $\Hom({\cal O}_C(l),E)=0$. 
\end{proof}

\begin{lem}\label{lem:Z=0}
Let $E$ be a $\sigma_{(\beta,\omega)}$-semi-stable object
with $(\beta,\omega) \in \xi^{-1}(u) \cap {\cal K}(X)$
such that $Z_{(\beta_0,tH)}(E)=0$ and
$(u,v(E))=0$.
Then $E \cong {\cal O}_C(l-1)^{\oplus n}$.
Moreover ${\cal O}_C(l-1)$ is $\sigma_{(\beta,\omega)}$-stable. 
\end{lem}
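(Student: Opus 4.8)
The plan is to reduce everything to a limiting phase argument along the fibre $\xi^{-1}(u)$. By Lemma \ref{lem:xi-wall} together with the connectedness of $\xi^{-1}(u)\cap{\cal K}(X)$ (Lemma \ref{lem:fiber2}(2) and Proposition \ref{prop:fibration}), the object $E$ is $\sigma_{(\beta',\omega')}$-semi-stable for \emph{every} $(\beta',\omega')\in\xi^{-1}(u)\cap{\cal K}(X)$, and since $(u,v(E))=0$ the fibre property (Remark \ref{rem:fiber-property}) forces $\phi_{(\beta',\omega')}(E)=\phi_{(\beta',\omega')}(v)$ on the whole fibre. I would therefore let $(\beta',\omega')$ run in the ample part of the fibre towards the distinguished boundary point $(\beta_0,tH)$, at which $Z_{(\beta_0,tH)}(E)=0$, i.e. $d_{\beta_0}(E)=0$ and $a_{\beta_0}(E)=\rk E\,t^2(H^2)/2$. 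Because $d_{\beta_0}(v)=d_{\min}>0$ in the ambient situation of Lemma \ref{lem:d_min}, the imaginary part of $Z_{(\beta_0,tH)}(v)$ is nonzero, so $\phi(v)$ stays in a fixed compact subinterval of $(0,1)$ near the boundary; this is exactly what makes the phase comparison effective even though the support property degenerates at $(\beta_0,tH)$.

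First I would show $\pH^{-1}(E)=0$, so that $E\in{\frak C}^\beta$. Applying Lemma \ref{lem:deg-H} to the canonical triangle $0\to\pH^{-1}(E)[1]\to E\to\pH^0(E)\to 0$ in ${\cal A}_{(\beta,\omega)}$ gives $d_{\beta_0}(\pH^0(E))\ge 0$ and $d_{\beta_0}(\pH^{-1}(E))\le 0$; combined with $d_{\beta_0}(E)=0$ this yields $d_{\beta_0}(\pH^{-1}(E))=d_{\beta_0}(\pH^0(E))=0$. Lemma \ref{lem:deg-H} further shows every subobject of the torsion-free object $\pH^{-1}(E)$ has $d_{\beta_0}\le 0$, hence $\pH^{-1}(E)$ is $\mu$-semi-stable; by the Bogomolov inequality (Lemma \ref{lem:perverse-Bogomolov}) and the negative-definiteness of $H^\perp$ (Hodge index, as $(H^2)>0$) one gets $a_{\beta_0}(\pH^{-1}(E))\le 0$, so $Z_{(\beta_0,tH)}(\pH^{-1}(E))>0$ whenever $\pH^{-1}(E)\ne 0$. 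Then $Z_{(\beta_0,tH)}(\pH^{-1}(E)[1])$ is a strictly negative real, so $\phi_{(\beta',\omega')}(\pH^{-1}(E)[1])\to 1$; as $\pH^{-1}(E)[1]$ is a subobject of $E$ and $\phi(E)=\phi(v)$ stays bounded away from $1$, semi-stability near the boundary forces $\pH^{-1}(E)=0$.

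Next I would exclude $\rk E>0$ by the dual computation. Writing $F$ for the torsion-free quotient of $E$, Lemma \ref{lem:deg-H} gives $d_{\beta_0}(F)\ge 0$; since a $1$-dimensional torsion component off $C$ would force $d_{\beta_0}(F)<0$, the torsion of $E$ is supported on $C$ and $d_{\beta_0}(F)=0$, so $F$ is $\mu$-semi-stable and Bogomolov gives $Z_{(\beta_0,tH)}(F)>0$ once $\rk F>0$, whence $\phi(F)\to 0$. But $F$ is a quotient of the semi-stable $E$ with $\phi(E)=\phi(v)$ bounded away from $0$, a contradiction; hence $\rk E=0$. For $\rk E=0$ the object $E$ is a torsion object of ${\frak C}^\beta$ supported on $C$ with $Z_{(\beta_0,tH)}(E)=0$, so its Jordan--H\"older factors are $\sigma$-stable massless objects; among the irreducible objects supported on $C$ (classified as in Lemma \ref{lem:reflexive-hull2}) only ${\cal O}_C(l-1)$ satisfies $Z_{(\beta_0,tH)}=0$, the classes ${\cal O}_C(l)$ and $k_x$ having phase $1$. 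Thus $E$ is an iterated self-extension of ${\cal O}_C(l-1)$, and since $C\cong{\Bbb P}^1$ with $(C^2)=-1$ we have $\Ext^1({\cal O}_C(l-1),{\cal O}_C(l-1))\cong H^0({\Bbb P}^1,{\cal O}(-1))=0$, so the extensions split and $E\cong{\cal O}_C(l-1)^{\oplus n}$; finally ${\cal O}_C(l-1)$, being a simple object of the relevant category, is $\sigma_{(\beta,\omega)}$-stable.

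I expect the main obstacle to be the limiting argument at $(\beta_0,tH)$: this is precisely a point where the support property fails (Lemma \ref{lem:support} requires $(\beta,C)\notin\tfrac12+{\Bbb Z}$), so semi-stability cannot be tested there directly and everything must be run through nearby ample points of the fibre, keeping careful track of the perverse shifts of ${\cal O}_C(l-1)$ and of which category ${\frak C}^l$ or ${\frak C}^{l-1}$ hosts the massless factor. Verifying that the $\mu$-semi-stability needed for Bogomolov really follows from Lemma \ref{lem:deg-H}, and that the only massless irreducible object is ${\cal O}_C(l-1)$, is the technical heart of the proof.
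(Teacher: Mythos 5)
Your first half reproduces the paper's own argument: using Lemma \ref{lem:deg-H}, the vanishing $Z_{(\beta_0,tH)}(E)=0$, the Bogomolov inequality, and the limiting phase comparison near the degenerate point $(\beta_0,tH)$, you correctly kill the $H^{-1}$-part and the torsion-free quotient, so $E$ is a torsion sheaf supported on $C$. (One notational slip: for $(\beta,\omega)\in{\cal K}(X)$ the heart ${\cal A}_{(\beta,\omega)}$ is a tilt of $\Coh(X)$, so the relevant cohomologies are sheaf cohomologies, not $\pH^i$; this does not affect the structure.) The genuine gap is in your identification step. The Jordan--H\"{o}lder factors of $E$ are $\sigma_{(\beta,\omega)}$-stable objects of the tilted heart ${\cal A}_{(\beta,\omega)}$, and nothing you have proved places them among the \emph{irreducible objects of a perverse heart}, which is what the classification you invoke (via Lemma \ref{lem:reflexive-hull2}) describes; a $\sigma$-stable massless torsion sheaf could, for all you have shown, be a stable sheaf living on a thickening $nC$ of the exceptional curve. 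The paper closes exactly this hole by sheaf theory: $\sigma$-semi-stability along the fiber implies the torsion sheaf $E$ is $(\beta_0-\tfrac{1}{2}K_X)$-twisted Gieseker semi-stable; a separate lemma (lem:O\_C in the source) shows that a twisted stable sheaf with support $C$ must be ${\cal O}_C(a)$ (by restricting the ${\cal O}_{nC}$-module structure to $C$ and using stability to force $n=1$); then $\Ext^1({\cal O}_C,{\cal O}_C)=0$ splits the extensions and $\ch_2(E)=(\beta_0,c_1(E))=n(l-\tfrac{1}{2})$ forces $a=l-1$. There is also a shift inconsistency in your list: among the irreducible objects of ${\frak C}^l$ the massless one is ${\cal O}_C(l-1)[1]$, not the unshifted sheaf ${\cal O}_C(l-1)$ that the lemma asserts $E$ is built from, so even granting your reduction the bookkeeping does not close.

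The final claim is also unjustified: you assert that ${\cal O}_C(l-1)$ is $\sigma_{(\beta,\omega)}$-stable because it is ``a simple object of the relevant category.'' Simplicity in ${\frak C}^{l-1}$ does not imply Bridgeland stability in ${\cal A}_{(\beta,\omega)}$: subobjects in the tilted heart need not be subsheaves (they can have positive rank, with torsion-free kernel in ${\cal F}_\beta$), so ${\cal O}_C(l-1)$ has many subobjects in ${\cal A}_{(\beta,\omega)}$ besides $0$ and itself. The paper proves stability by running the fiber argument once more: if $F_1\subset{\cal O}_C(l-1)$ is a nonzero subobject of equal phase, then $Z_{(\beta,\omega)}(F_1)/Z_{(\beta,\omega)}({\cal O}_C(l-1))\in(0,1)$ for every $(\beta,\omega)\in\xi^{-1}(u)\cap{\cal K}(X)$, hence $Z_{(\beta_0,tH)}(F_1)=0$ in the limit; applying the already-proved first part of the lemma to $F_1$ gives $F_1\cong{\cal O}_C(l-1)^{\oplus m}$, which is incompatible with the ratio being strictly less than $1$. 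You need this (or an equivalent) argument in place of the one-line appeal to simplicity.
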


\begin{proof}
By Lemma \ref{lem:deg-H} and $Z_{(\beta_0,tH)}(E)=0$,
we have $(c_1(H^{-1}(E)(-\beta_0)),H)=
(c_1(H^0(E)(-\beta_0)),H)=0$ and
they are $\mu$-semi-stable with respect to $H$.
If $H^{-1}(E) \ne 0$, then
$Z_{(\beta_0,tH)}(H^{-1}(E)[1]) \in {\Bbb R}_{<0}$,
which implies $Z_{(\beta_0,tH)}(H^0(E)) \in {\Bbb R}_{>0}$.
Then $E$ is not $\sigma_{(\beta,\omega)}$-semi-stable
in a neighborhood of $(\beta_0,tH)$.
Hence $H^{-1}(E)=0$.
For $H^0(E)$, let $T$ be the torsion subsheaf of
$H^0(E)$ in $\Coh(X)$ and $F=H^0(E)/T$.
If $F \ne 0$, then
$Z_{(\beta_0,tH)}(F) \in {\Bbb R}_{>0}$, which implies
$Z_{(\beta_0,tH)}(T) \in {\Bbb R}_{<0}$.
Since $T$ is a subobject of $E$ in ${\cal A}_{(\beta,\omega)}$
for all $(\beta,\omega) \in {\cal K}(X)$,
$E$ is not $\sigma_{(\beta,\omega)}$-semi-stable
in a neighborhood of $(\beta_0,tH)$.
Hence $F=0$.
$\sigma_{(\beta,\omega)}$-semi-stability
of $E$ implies $E$ is a $(\beta_0-\frac{1}{2}K_X)$-twisted 
semi-stable sheaf with respect to $\omega$.
Since $Z_{(\beta_0,tH)}(E)=0$,
$c_1(E)=nC$, $n \in {\Bbb Z}_{>0}$ and
$\ch_2(E)=(\beta_0,c_1(E))$.
By Lemma \ref{lem:O_C},
$E$ is a successive extension of ${\cal O}_C(a)$.
Since $\Ext^1({\cal O}_C,{\cal O}_C)=0$,
$E \cong {\cal O}_C(a)^{\oplus n}$.
Since $\ch_2(E)=(\beta_0,c_1(E))=n(l-\frac{1}{2})$,
we have $E={\cal O}_C(l-1)^{\oplus n}$.  

If ${\cal O}_C(l-1)$ is not $\sigma_{(\beta,\omega)}$-stable,
then we have a non-trivial subobject $F_1$ of ${\cal O}_C(l-1)$
with $\phi_{(\beta,\omega)}(F_1)=\phi_{(\beta,\omega)}({\cal O}_C(l-1))$. 
%We set $F_2:={\cal O}_C(l-1)/F_1$.
Since $1>Z_{(\beta,\omega)}(F_1)/Z_{(\beta,\omega)}({\cal O}_{C}(l-1))>0$
for all $(\beta,\omega) \in \xi^{-1}(u) \cap {\cal K}(X)$,
we have $Z_{(\beta_0,tH)}(F_1)=0$.
%$Z_{(\beta_0,tH)}(F_1)=Z_{(\beta_0,tH)}(F_2)=0$.
Applying the first part of our lemma,
$F_1={\cal O}_C(l-1)^{\oplus m}$.
Therefore ${\cal O}_C(l-1)$ is 
$\sigma_{(\beta,\omega)}$-stable.
\end{proof}

\begin{prop}\label{prop:wall-H}
Assume that $u \in v({\cal O}_C(l-1))^\perp$.
If $E$ is a properly $\sigma_{(\beta,\omega)}$-semi-stable
object with $v(E)=v$ for $(\beta,\omega) \in \xi^{-1}(u)$, then
${\cal O}_C(l-1)$ is $\sigma_{(\beta,\omega)}$-stable
and $E$ is $S$-equivalent to
${\cal O}_C(l-1) \oplus E'$. 
\end{prop}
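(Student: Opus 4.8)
The plan is to read off everything at the reference point $(\beta_0,tH)\in\xi^{-1}(u)$ with $tH\in\pi^*(\Amp(Y))$ and $d_{\beta_0}(v)=d_{\min}$ as in Proposition~\ref{prop:min}, and to transport stability statements along the fibre. Since $u\in v(\mathcal{O}_C(l-1))^\perp$, Lemma~\ref{lem:fiber2}~(2) tells us that $\xi^{-1}(u)\cap\overline{\mathcal K}(X)$ consists of ample classes and is connected, so by Lemma~\ref{lem:xi-wall} the notion of $\sigma_{(\beta,\omega)}$-(semi)stability is constant on it; thus the given $(\beta,\omega)$ is ample, $E$ is semistable there, and $(\beta_0,tH)$ serves as the limiting reference of Lemmas~\ref{lem:deg-H}, \ref{lem:d_min} and \ref{lem:Z=0}. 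The equality $\xi(\beta,\omega)=u\in v(\mathcal{O}_C(l-1))^\perp$ says that $\mathcal{O}_C(l-1)$ has the same phase as $v$; since $(\beta_0,C)=l-\tfrac12$ we get $Z_{(\beta_0,tH)}(\mathcal{O}_C(l-1))=0$ from Lemma~\ref{lem:lambda}~(1), while $\mathrm{Im}\,Z_{(\beta_0,tH)}(v)=d_{\min}t(H^2)\neq0$. The stability of $\mathcal{O}_C(l-1)$ is then precisely the last assertion of Lemma~\ref{lem:Z=0}.

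For the $S$-equivalence statement I would first invoke Lemma~\ref{lem:d_min}, so that $E$ is a coherent sheaf with $\mu$-semistable torsion-free quotient and torsion supported on $C$, and then take a Jordan--H\"older filtration $0=F_0\subset\cdots\subset F_k=E$ with $\sigma_{(\beta,\omega)}$-stable quotients $A_i$ of the phase of $v$; properness gives $k\ge2$, and each $A_i$ satisfies $(u,v(A_i))=0$. The key is a degree computation. Applying Lemma~\ref{lem:deg-H} to each semistable $A_i$ (the quotient bound to $H^0(A_i)$ and the subobject bound to $H^{-1}(A_i)$) gives $d_{\beta_0}(H^0(A_i))\ge0$ and $d_{\beta_0}(H^{-1}(A_i))\le0$, whence $d_{\beta_0}(A_i)=d_{\beta_0}(H^0(A_i))-d_{\beta_0}(H^{-1}(A_i))\ge0$. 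By the choice of $\beta_0$ in Proposition~\ref{prop:min} one has $(c_1(A_i),H)-\rk A_i\,(\beta_0,H)\in\tfrac{\mu}{r_0}\mathbb{Z}$, i.e. $d_{\beta_0}(A_i)\in d_{\min}\mathbb{Z}$ for every $i$, and $\sum_i d_{\beta_0}(A_i)=d_{\beta_0}(v)=d_{\min}$. Hence exactly one factor has $d_{\beta_0}=d_{\min}$ and the remaining $k-1\ge1$ factors have $d_{\beta_0}=0$.

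Finally I would identify the degree-zero factors. For such an $A_i$, the relation $(u,v(A_i))=0$ is equivalent to $Z_{(\beta_0,tH)}(A_i)\in\mathbb{R}\,Z_{(\beta_0,tH)}(v)$ (the criterion stated just before Remark~\ref{rem:fiber-property}), while $d_{\beta_0}(A_i)=0$ forces $\mathrm{Im}\,Z_{(\beta_0,tH)}(A_i)=0$; since $\mathrm{Im}\,Z_{(\beta_0,tH)}(v)\neq0$ this is possible only if $Z_{(\beta_0,tH)}(A_i)=0$, and then $A_i\cong\mathcal{O}_C(l-1)$ by Lemma~\ref{lem:Z=0}. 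Collecting factors, $E$ is $S$-equivalent to $\mathcal{O}_C(l-1)^{\oplus(k-1)}\oplus A$ with $A$ the unique factor of degree $d_{\min}$, and peeling off one copy of $\mathcal{O}_C(l-1)$ gives the claim. I expect the only real friction to be the degree bookkeeping of the middle paragraph: checking that the two-sided estimate of Lemma~\ref{lem:deg-H} genuinely yields $d_{\beta_0}(A_i)\ge0$ even for the possibly two-term complexes $A_i$, and that the hypothesis $\gcd(r,(r\delta,H)/\mu)=1$ makes $d_{\min}=d_{\beta_0}(v)$ the positive generator of the group of values of $d_{\beta_0}$, so that the integrality step legitimately pins all but one factor to degree $0$.
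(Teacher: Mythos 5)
Your proposal is correct and is essentially the paper's own argument: the paper likewise works at the reference point $(\beta_0,tH)$, shows the $d_{\beta_0}$-degrees of the pieces are non-negative and sum to the minimal positive value $d_{\min}=d_{\beta_0}(v)$ (so all but one vanish, using exactly the discreteness you flagged, which holds because $\beta_0$ is rational and hence $d_{\beta_0}$ takes values in the cyclic group $d_{\min}{\Bbb Z}$), and then combines $(u,v(\cdot))=0$ with the vanishing of the imaginary part to get $Z_{(\beta_0,tH)}=0$, applying Lemma \ref{lem:Z=0} both for the identification with ${\cal O}_C(l-1)$ and for its stability. The only difference is bookkeeping: the paper argues with a single destabilizing sequence $0\to E_1\to E\to E_2\to 0$ of semi-stable pieces, with the degree bound coming directly from membership in the tilted hearts along the fiber, whereas you use the full Jordan--H\"older filtration and Lemma \ref{lem:deg-H}.
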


\begin{proof}
Assume that there is an exact sequence
\begin{equation}
0 \to E_1 \to E \to E_2 \to 0
\end{equation}
such that
$E_1$ and $E_2$ are $\sigma_{(\beta,\omega)}$-semi-stable
with $u \in v(E_1)^\perp$
and $v(E)=v$.

Then the same property hold for any
$(\beta,\omega) \in \xi^{-1}(u) \cap {\cal K}(X)$.
Then $(c_1(E_i(-\beta)),\omega) \geq 0$ for $i=1,2$ and
any $(\beta,\omega) \in \xi^{-1}(u)$.
In particular,
$(c_1(E_i(-\beta_0)),H) \geq 0$ for $i=1,2$.
Since $(c_1(E(-\beta_0)),H)=d_{\min} (H^2)$,
$(c_1(E_1(-\beta_0)),H)=0$ or
$(c_1(E_2(-\beta_0)),H)=0$.
By $(v(E_i),u)=0$,
we have
$Z_{(\beta_0,tH)}(E_1)=0$ or  
$Z_{(\beta_0,tH)}(E_2)=0$.
By Lemma \ref{lem:Z=0},
$E_1={\cal O}_C(l-1)^{\oplus n}$ or
$E_2={\cal O}_C(l-1)^{\oplus n}$.
\end{proof}

\begin{cor}
If $E$ is $\sigma_{(\beta,\omega)}$-semi-stable
for $(\beta,\omega) \in \xi^{-1}(u)$
and $\Hom(E,{\cal O}_C(l-1))=\Hom({\cal O}_C(l-1),E)=0$,
then $E$ is $\sigma_{(\beta,\omega)}$-stable.
\end{cor}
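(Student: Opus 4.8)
The plan is to argue by contraposition, drawing on Proposition~\ref{prop:wall-H}. Suppose $E$ is $\sigma_{(\beta,\omega)}$-semi-stable but not stable; I aim to produce a nonzero morphism either into or out of ${\cal O}_C(l-1)$. Setting $\phi:=\phi_{(\beta,\omega)}(E)$, I work inside the finite-length abelian category of $\sigma_{(\beta,\omega)}$-semi-stable objects of phase $\phi$. Since $E$ is not simple there, I may choose a proper short exact sequence
\[
0 \to E_1 \to E \to E_2 \to 0
\]
with $E_1$ stable and $E_2$ semi-stable, both of phase $\phi$: take $E_1$ to be a simple subobject, and note that $E_2$, being an extension of objects of phase $\phi$, is again semi-stable of that phase.

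The decisive input is the dichotomy contained in the proof of Proposition~\ref{prop:wall-H}. Because $\xi(\beta,\omega)=u\in v({\cal O}_C(l-1))^\perp$, applying Lemma~\ref{lem:Z=0} to the displayed sequence forces either $E_1\cong{\cal O}_C(l-1)^{\oplus n}$ or $E_2\cong{\cal O}_C(l-1)^{\oplus n}$ for some $n\ge 1$. In the first case $E_1$ is stable, so $n=1$ and the inclusion ${\cal O}_C(l-1)\cong E_1\hookrightarrow E$ shows $\Hom({\cal O}_C(l-1),E)\ne 0$. In the second case the composite $E\twoheadrightarrow E_2\cong{\cal O}_C(l-1)^{\oplus n}\twoheadrightarrow{\cal O}_C(l-1)$ shows $\Hom(E,{\cal O}_C(l-1))\ne 0$. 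Either alternative contradicts the hypothesis, so $E$ must be stable.

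I expect the main obstacle to be the temptation to read the conclusion off the $S$-equivalence assertion of Proposition~\ref{prop:wall-H} alone. Knowing only that $E$ is $S$-equivalent to ${\cal O}_C(l-1)\oplus E'$ records ${\cal O}_C(l-1)$ merely as a Jordan--H\"older factor, and a composition factor of a finite-length object need be neither a genuine subobject nor a quotient; so it need not yield a nonzero $\Hom$. The remedy is to use the sharper statement implicit in the proof---that for the chosen two-step filtration one of the two terms is \emph{literally} a direct sum of copies of ${\cal O}_C(l-1)$---together with the elementary observation that a stable object isomorphic to ${\cal O}_C(l-1)^{\oplus n}$ must have $n=1$. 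Both points make the required morphism visible as an honest inclusion or projection.
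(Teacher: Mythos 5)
Your proof is correct and follows exactly the paper's intended route: the corollary is stated there with no separate argument, as an immediate consequence of Proposition \ref{prop:wall-H}, whose proof supplies precisely the dichotomy you invoke (for any destabilizing sequence $0 \to E_1 \to E \to E_2 \to 0$ of equal phase, Lemma \ref{lem:Z=0} forces $E_1$ or $E_2$ to be literally ${\cal O}_C(l-1)^{\oplus n}$, yielding a nonzero morphism ${\cal O}_C(l-1) \to E$ or $E \to {\cal O}_C(l-1)$). Your observation that the $S$-equivalence assertion alone would not suffice—a Jordan--H\"older factor need be neither a subobject nor a quotient—is well taken, and is exactly why the sharper statement inside the proof of Proposition \ref{prop:wall-H}, rather than its conclusion, is the correct input.
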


Proposition \ref{prop:wall-H} says that walls in ${\cal K}(X)$ gives
a wall for semi-stability of perverse coherent sheaves.
Conversely if there is a wall for semi-stability of perverse 
coherent sheaves, then it gives a wall in ${\cal K}(X)$. 
We set
$\beta_\pm :=\beta_0 \pm \epsilon C$.
If
there is a $(\beta_--\frac{1}{2}K_X)$-twisted semi-stable
object $E'$ with $v(E')=v-v({\cal O}_C(l-1))$ and
$\chi(E',{\cal O}_C(l-1))<0$,
there is a non-trivial extension
\begin{equation}
0 \to {\cal O}_C(l-1) \to E \to E' \to 0
\end{equation}
which gives a $(\beta_--\frac{1}{2}K_X)$-twisted semi-stable
object with $v(E)=v$.
Obviously $E$ is not $(\beta_+-\frac{1}{2}K_X)$-semi-stable.
Then $E$ is properly $\sigma_{(\beta,\omega)}$-semi-stable.

\end{NB}

For $(s,q) \in {\Bbb R}^2$ with $|s|,|q| \ll 1$, we consider
\begin{equation}\label{eq:sq-plane}
\begin{split}
\xi(\delta+sH+pC,H-qC)=&
e^\delta \left(
\frac{(1+s^2)(H^2)+p^2-q^2+2psq+\frac{(v^2)}{r^2}}{2}H \right.\\
& \left.
 +\frac{q^3+q(s^2-1)(H^2)+p^2 q+2ps (H^2)-\frac{(v^2)}{r^2}q}{2}C
+(s(H^2)+pq)(1+\frac{(v^2)}{2r^2}\varrho_X)
\right).
\end{split}
\end{equation}
If$(s,q)=(0,0)$, then
$\xi(\delta+sH+pC,H-qC)=e^\delta H \in C^+(v)$.
In this subsection,
we shall classify walls in a neighborhood of $(s,q)=(0,0)$.
We set 
\begin{equation}
L:=\{(\delta+sH+pC,H-qC) \mid (s,q) \in {\Bbb R}^2 \}.
\end{equation}
\begin{NB}
By Lemma \ref{lem:perv2:non-empty},
we may assume that
$\beta=\delta+sH+pC$ satisfies
$0<(\beta-\frac{1}{2}K_X,C)<1$ and
$-(\delta,C) \leq 0$, if $q=0$.
Then $p-\frac{1}{2}<(\delta,C)<p+\frac{1}{2}$
and $(\delta,C) \geq 0$. 
We may normalize $0 \leq (\delta,C) < 1$.
If $0<(\delta,C) < 1$, then
for $p=\frac{1}{2}$, 
$p-\frac{1}{2}<(\delta,C)<p+\frac{1}{2}$ holds.
If $(\delta,C)=0$, then
we may take $p=0$.
\end{NB}
\begin{NB}
Since $p^2+2psq=(p+sq)^2-(sq)^2$,
if $q^2(1+s^2)<(H^2)+\frac{(v^2)}{r^2}$, then
the coefficient of $H$ is positive.
\end{NB}
\begin{NB}
We need to assume
$(\delta+sH+pC,C)=(\delta,C)-p \not \in \frac{1}{2}+{\Bbb Z}$, if $q=0$.
\end{NB}
We set $\epsilon:=s(H^2)+pq$ and assume that
$\epsilon \leq 0$.
We have
\begin{equation}
\xi(\delta+sH+pC,H-qC)
=
e^\delta(H+xC+y(1+\frac{(v^2)}{2r^2}\varrho_X)),
\end{equation}
where 
\begin{equation}\label{eq:def-xy}
\begin{split}
x:=& \frac{q^3+q(s^2-1)(H^2)+p^2 q+2ps (H^2)-\frac{(v^2)}{r^2}q}
{(1+s^2)(H^2)+p^2-q^2+2psq+\frac{(v^2)}{r^2}},\\
y:=& 
\frac{2\epsilon}
{(1+s^2)(H^2)+p^2-q^2+2psq+\frac{(v^2)}{r^2}}.
\end{split}
\end{equation}
Then we have expansions
\begin{equation}\label{eq:expansion1}
\begin{split}
x=& \frac{-((H^2)+p^2+\frac{(v^2)}{r^2})q+2p\epsilon}
{(H^2)+p^2+\frac{(v^2)}{r^2}}+O_2(\epsilon,q),\\
y=& 
\epsilon \left(
\frac{2}
{(H^2)+p^2+\frac{(v^2)}{r^2}}+O_1(\epsilon,q) \right),
\end{split}
\end{equation}
where $O_n(\epsilon,q)$ is a power series of 
$\epsilon$ and $q$ contained in the ideal $(\epsilon, q)^n$.  
If $q=0$, then
\begin{equation}
\begin{split}
x=& \frac{2ps (H^2)}
{(1+s^2)(H^2)+p^2+\frac{(v^2)}{r^2}}=\frac{2s(H^2)}{p}+O_2(p^{-1}),\\
y=& 
\frac{2\epsilon}
{(1+s^2)(H^2)+p^2+\frac{(v^2)}{r^2}}=\frac{2\epsilon}{p^2}+O_3(p^{-1}),\\
x=&py.
\end{split}
\end{equation}
If $p \gg 0$, then $(x,y)$ is close to $y=0$.

We set
$\beta_0:=\delta+p_0 C$, and assume that
$-\frac{1}{2}+l<(\beta_0,C) <\frac{1}{2}+l$ $(l \in {\Bbb Z})$.
\begin{NB}
If $l=0$ and $0 \leq (\delta,C)<1$, then 
\begin{equation}
-(\beta_0-\delta,C)=
\begin{cases}
0, & 0 \leq (\delta,C)<\frac{1}{2}\\
\frac{1}{2}, & \frac{1}{2} \leq (\delta,C)<1
\end{cases}
\end{equation}
\end{NB}
By \eqref{eq:expansion1},
we can take a neighborhood $U$ of $(\beta_0,H)$ in $L$ such that
$\overline{U}$ is compact and $\xi:U \to \xi(U)$ is isomorphic,
where $p:=p_0$. 
In particular $\xi(U)$ is a neighborhood of $\xi(\beta_0,H)=e^\delta H$.
\begin{NB}
If $\beta_0=\delta+p_0 C$,
then $x=-q+px+(\epsilon,q)^2$ implies
$x-p_0 y \leq 0$ for $q \geq 0$.
If $p \to \infty$, then
stability is close to Gieseker stability with respect to $H-qC$.
So it is better to consider small $p_0$.
If $p_0$ is sufficiently small, then
${\cal M}_H^{\beta_0-\frac{1}{2}K_X}(v)=\emptyset$.
\end{NB}
By shrinking $U$, we may assume that 
there are finitely many Mukai vectors defining walls
 in $U$ and all walls passes
$(\beta_0,H)$.
For each Mukai vector $v_1$ 
defining a wall,
we may also assume that all walls in $U$ with respect to
$v_1$ passes $(\beta_0,H)$.
\begin{NB}
All walls are of the form
$\{(\beta,\omega) \in U \mid \phi_{\beta,\omega}(E)=
\phi_{\beta,\omega}(E_1) \}$.
\end{NB}
We set
\begin{equation}
\begin{split}
U^{\leq 0}:=& U \cap \{(\delta+sH+pC,H-qC) \mid  \epsilon \leq 0 \},\\
U^{< 0}:=& U \cap \{(\delta+sH+pC,H-qC) \mid  \epsilon < 0 \}.\\
\end{split}
\end{equation}
Let $E_1$ be a $\sigma_{(\beta,\omega)}$-stable object defining a wall
in $U^{\leq 0}$. Since there is no wall with respect to $v(E_1)$ 
between $(\beta,\omega)$ and $(\beta_0,H)$,
$E_1$ is $\sigma_{(\beta_0,H)}$-semi-stable.
Since $(\beta_0-\delta,H)=0$,
$E_1$ is generated by $\mu$-semi-stable objects
and objects ${\cal O}_C(l)[-1],
{\cal O}_C(l-1),k_x \in {\frak C}^l[-1]$,
$(x \in X \setminus C)$.
\begin{NB}
If $\rk E_1>0$, then
$E_1$ is twisted stable.
\end{NB}

Since $\varrho_X^\perp \cap e^\delta({\Bbb R}H+{\Bbb R}C
+{\Bbb R}(1+\frac{(v^2)}{2r^2}\varrho_X))$ is $y=0$, 
if $|\epsilon| \ll q$, then $\sigma_{(\beta,\omega)}$-twisted
stability coincides with Gieseker semi-stability with respect to
$H-qC$, where 
$(\beta,\omega)=(\delta+sH+p_0 C,H-qC)$.

\begin{lem}\label{lem:mu-ss:wall}
If $\rk E_1 \ne 0$, then
$E_1$ is a $\mu$-semi-stable object of ${\frak C}^l$. 
\end{lem}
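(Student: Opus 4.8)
The plan is to decompose $E_1$ with respect to the $t$-structure ${\frak C}^l$ and exploit the classification of $\sigma_{(\beta_0,H)}$-semistable objects recalled just above the statement. Since $E_1$ defines a wall in $U^{\leq 0}$ and $\xi$ is constant along the fibre through $(\beta,\omega)$, there is no wall for $v(E_1)$ between $(\beta,\omega)$ and $(\beta_0,H)$, so $E_1$ is $\sigma_{(\beta_0,H)}$-semistable; moreover $Z_{(\beta_0,H)}(v)$ is real (indeed positive, by the Bogomolov inequality), and the wall condition makes $Z_{(\beta_0,H)}(E_1)$ a positive real multiple of it. Hence $d_{\beta_0}(E_1)=0$ and $E_1\in{\cal P}(0)$, i.e.\ $E_1[1]\in{\cal A}_{(\beta_0,H)}$ is a phase-$1$ object. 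By the classification, the Jordan--Hölder factors of $E_1[1]$ are of the form $A[1]$ with $A$ a $\mu$-stable torsion free object of ${\frak C}^l$ with $d_{\beta_0}(A)=0$, together with the rank-$0$ irreducibles ${\cal O}_C(l)$, ${\cal O}_C(l-1)[1]$, and $k_x$ ($x\in X\setminus C$). Passing to perverse cohomology, $\pH^0(E_1)\in{\cal F}_{\beta_0}$ is built from the objects $A$, while $\pH^1(E_1)\in{\cal T}_{\beta_0}$ is built from the rank-$0$ irreducibles. Since the latter contribute nothing to the rank, $\rk E_1=\rk\pH^0(E_1)$, so $\rk E_1\neq0$ forces $\rk E_1>0$ and $\pH^0(E_1)\neq0$. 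Thus the entire statement reduces to the vanishing $\pH^1(E_1)=0$.

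Granting $\pH^1(E_1)=0$, the conclusion is immediate: $E_1=\pH^0(E_1)\in{\cal F}_{\beta_0}\subset{\frak C}^l$, which is in particular torsion free, so $\mu$-semistability is meaningful. If $E_1$ were not $\mu$-semistable there would be a subobject $F\subset E_1$ in ${\frak C}^l$ with $0<\rk F$ and $d_{\beta_0}(F)/\rk F>0=d_{\beta_0}(E_1)/\rk E_1$. Decomposing $F$ by the torsion pair $({\cal T}_{\beta_0},{\cal F}_{\beta_0})$ as $0\to F_T\to F\to F_{\cal F}\to 0$, the composite $F_T\to F\hookrightarrow E_1$ is a morphism from ${\cal T}_{\beta_0}$ to ${\cal F}_{\beta_0}$, hence zero; being a monomorphism it forces $F_T=0$, so $F=F_{\cal F}\in{\cal F}_{\beta_0}$ and therefore $d_{\beta_0}(F)\leq0$, a contradiction. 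Hence $E_1$ is $\mu$-semistable in ${\frak C}^l$.

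It remains to prove $\pH^1(E_1)=0$, i.e.\ that none of ${\cal O}_C(l)$, ${\cal O}_C(l-1)[1]$, $k_x$ occurs as a Jordan--Hölder factor of $E_1[1]$. The key input is that $E_1$ is genuinely $\sigma_{(\beta,\omega)}$-\emph{stable} for $(\beta,\omega)\in U^{<0}$, where $\epsilon<0$. If one of these rank-$0$ irreducibles $B$ were such a factor, the corresponding nonzero morphism in ${\bf D}(X)$ (either $B\to E_1[1]$ or $E_1[1]\to B$) persists to every nearby stability condition, forcing a phase inequality between $B$ and $E_1[1]$ at each $(\beta,\omega)\in U^{<0}$ at which both are stable. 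I would then compute these phases explicitly along $L$ by means of \eqref{eq:sq-plane}, together with $\mathrm{Im}\,Z_{(\beta,\omega)}({\cal O}_C(l))=(\omega,C)=q$, the opposite sign $-q$ for ${\cal O}_C(l-1)[1]$, and $\mathrm{Im}\,Z_{(\beta,\omega)}(k_x)=0$, and show that the sign condition $\epsilon<0$ (which places $(\beta,\omega)$ on the perverse-coherent, i.e.\ Gieseker, side) makes the required inequality fail for a suitable $(s,q)$, contradicting stability. The main obstacle is exactly this last step: for each of the three irreducibles and for each of the sub- and quotient-roles it can play one must identify which phase inequality is implied and verify that $\epsilon<0$ forces a contradiction, using the $\Hom/\Ext$-vanishing among ${\cal O}_C(l)$, ${\cal O}_C(l-1)[1]$ and $k_x$; the simpler computation for $\varrho_X$ above (where $(\omega,C)\geq0$ fixed the order $\phi_\sigma({\cal O}_C(l))<1<\phi_\sigma({\cal O}_C(l-1)[1])$ and hence the admissible extension type) is the model to follow.
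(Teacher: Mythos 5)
Your reduction is sound, and its second half is correct: granting $\pH^1(E_1)=0$, the identification $E_1=\pH^0(E_1)\in{\cal F}_{\beta_0}$, together with $d_{\beta_0}(E_1)=0$ and $\Hom({\cal T}_{\beta_0},{\cal F}_{\beta_0})=0$, does give $\mu$-semi-stability and torsion freeness. But the actual content of the lemma is the vanishing $\pH^1(E_1)=0$ (equivalently $E_1\in{\frak C}^l$), and this you never prove: you only sketch a case-by-case phase computation for ${\cal O}_C(l)$, ${\cal O}_C(l-1)[1]$, $k_x$ along $L$ and explicitly flag it as ``the main obstacle''. So the proposal has a genuine gap precisely at the step the lemma is about. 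Moreover, the mechanism you propose for launching that computation is itself flawed: if $B$ is a Jordan--H\"{o}lder factor of $E_1[1]$ at $(\beta_0,H)$, it is in general only a subquotient, so there need not exist any nonzero morphism $B\to E_1[1]$ or $E_1[1]\to B$ in ${\bf D}(X)$ that could ``persist'' to nearby stability conditions; one would have to argue with the actual sub- and quotient objects of the filtration, not with individual factors.

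The paper obtains the vanishing without any case analysis, and this is the input you are missing: the hypothesis that $E_1$ is $\sigma_{(\beta,\omega)}$-\emph{stable} with $\phi_{(\beta,\omega)}(E_1)\in(0,1]$ for $(\beta,\omega)\in U^{<0}$ means that $E_1$ lies in the heart ${\cal A}_{(\beta,\omega)}$, a tilt of ${\frak C}^l$, so its cohomology with respect to the perverse $t$-structure is concentrated in degrees $-1,0$, i.e.\ $\pH^i(E_1)=0$ for $i\ne -1,0$. On the other hand, as you showed, semistability of phase $0$ at $(\beta_0,H)$ gives $E_1[1]\in{\cal A}_{(\beta_0,H)}$, i.e.\ $\pH^i(E_1)=0$ for $i\ne 0,1$. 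Intersecting the two amplitude constraints yields at once $E_1=\pH^0(E_1)\in{\frak C}^l$, in particular $\pH^1(E_1)=0$ (and $\pH^{-1}(E_1)=0$), after which your concluding paragraph, or the classification quoted before the lemma, finishes the proof. In short: you used stability on $U^{<0}$ only through a phase comparison you could not complete, whereas its efficient use is through membership in the heart, which bounds the cohomological amplitude from the side opposite to the one controlled at $(\beta_0,H)$.
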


\begin{proof}
For $(\beta,\omega) \in U^{<0}$,
$E_1$ is $\sigma_{(\beta,\omega)}$-stable with
$\phi_{(\beta,\omega)}(E_1) \in (0,1]$.
Hence $H^i(E_1)=0$ for $i \ne -1,0$.
Since $E_1$ is $\sigma_{(\beta_0,H)}$-semi-stable with
$\phi_{(\beta_0,H)}(E_1)=0$,
$H^i(E_1)=0$ for $i \ne 0,1$.
Therefore $E_1 \in {\frak C}^l$.
Thus $E_1$ is a $\mu$-semi-stable object of
${\frak C}^l$.
\begin{NB}
Assume that $H^{-1}(E_1) \ne 0$.
$\phi_{(\beta,\omega)}(H^{-1}(E_1)[1]) \in (0,1]$
and $\phi_{(\beta,\omega)}(H^{-1}(E_1)[1]) <
\phi_{(\beta,\omega)}(E_1)$
imply $\phi_{(\beta_0,H)}(H^{-1}(E_1)[1])=0$.
Since $H^{-1}(E_1)$ is a $\mu$-semi-stable object
of twisted degree 0,
$\phi_{(\beta_0,H)}(H^{-1}(E_1)[1])=1$, which is a contradiction.
\end{NB}
\end{proof}
All walls in $\xi^{-1}(\xi(U)) \cap {\cal K}(X)$ are defined by an object 
$E_1$ which defines a wall in $U$.
\begin{NB}
\begin{lem}
For any ${\cal O}_C(a)$,
there is a neighborhood $V_a$ of $e^\delta H$ such
that ${\cal O}_C(a)$ is $\sigma_{(\beta,\omega)}$-stable
for $(\beta,\omega) \in \xi^{-1}(V_a)$ with $\omega \in \Amp(X)_{\Bbb R}$.
\end{lem}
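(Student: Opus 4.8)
The plan is to classify the possible destabilizing subobjects of ${\cal O}_C(a)$ directly, dispose of the rank-zero ones by a phase computation, and reduce the positive-rank ones to the large volume limit, where $\sigma_{(\beta,\omega)}$-stability is governed by Gieseker stability. First I would record the central charge: writing $v({\cal O}_C(a))=C+m\varrho_X$ with $m=\ch_2({\cal O}_C(a))$, one computes $Z_{(\beta,\omega)}({\cal O}_C(a))=(\beta,C)-m+(\omega,C)\sqrt{-1}$, so for $\omega\in\Amp(X)_{\Bbb R}$ we have $(\omega,C)>0$ and $\phi_{(\beta,\omega)}({\cal O}_C(a))\in(0,1)$; since $\omega$ is honestly ample on $X$ we may work with the tilt ${\cal A}_{(\beta,\omega)}$ of $\Coh(X)$ at $({\cal T}_\beta,{\cal F}_\beta)$, and ${\cal O}_C(a)\in{\cal T}_\beta\subset{\cal A}_{(\beta,\omega)}$. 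Given a sequence $0\to E_1\to{\cal O}_C(a)\to E_2\to 0$ in ${\cal A}_{(\beta,\omega)}$, I would pass to the long exact sequence of cohomology sheaves; from $H^{-1}({\cal O}_C(a))=0$ one gets $H^{-1}(E_1)=0$, so $E_1$ is a coherent sheaf in ${\cal T}_\beta$ fitting into $0\to K\to E_1\to{\cal O}_C(a)$ with $K=H^{-1}(E_2)\in{\cal F}_\beta$ and with image a subsheaf of ${\cal O}_C(a)$.

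If $\rk E_1=0$, then $K$ is a torsion-free subsheaf of a torsion sheaf, hence $K=0$ and $E_1$ is a genuine subsheaf of ${\cal O}_C(a)$. As ${\cal O}_C(a)$ is pure of dimension one supported on the integral curve $C$, a proper nonzero $E_1$ must be ${\cal O}_C(a')$ with $a'<a$, so $v(E_1)=C+m'\varrho_X$ with $m'<m$. Then $\mathrm{Im}\,Z_{(\beta,\omega)}(E_1)=(\omega,C)$ agrees with that of ${\cal O}_C(a)$, while $\mathrm{Re}\,Z_{(\beta,\omega)}(E_1)=(\beta,C)-m'>(\beta,C)-m$; since the argument decreases as the real part grows at fixed positive imaginary part, $\phi_{(\beta,\omega)}(E_1)<\phi_{(\beta,\omega)}({\cal O}_C(a))$. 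Hence no rank-zero subobject destabilizes ${\cal O}_C(a)$, for any ample $\omega$.

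The main point is the case $\rk E_1>0$, where $\rk E_2=-\rk E_1<0$ forces $H^{-1}(E_2)\neq0$. Here I would fix a compact neighborhood $B$ of $e^{\delta}H$ in $\NS(X)_{\Bbb R}\times\Amp(X)_{\Bbb R}$ and invoke the support property (Lemma \ref{lem:support}) together with the Bogomolov inequality (Lemma \ref{lem:strong-Bogomolov}): these bound $\|v(E_1)\|$ in terms of $|Z_{(\beta,\omega)}(E_1)|\le|Z_{(\beta,\omega)}({\cal O}_C(a))|$ uniformly over $B$, so that only finitely many Mukai vectors $v(E_1)$ can occur with $\phi_{(\beta,\omega)}(E_1)\ge\phi_{(\beta,\omega)}({\cal O}_C(a))$. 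Consequently, as in Lemma \ref{lem:locally-finite}, only finitely many walls for $w:=v({\cal O}_C(a))$ meet a small neighborhood of $e^{\delta}H$, and it remains to show that none of them passes through $e^{\delta}H$.

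For this I would go to the large volume limit. Taking ample $\omega$ with $(\omega^2)$ large and class close to that of $H$ (for instance $t(H-qC)$ with $0<q\ll1$, $t\gg0$, which is ample and has $\xi$ near $e^{\delta}H$), Proposition \ref{prop:large} and the large volume description of the appendix identify $\sigma_{(\beta,\omega)}$-(semi)stability with (twisted) Gieseker stability; and ${\cal O}_C(a)$, being a line bundle on the integral curve $C$, is a stable pure one-dimensional sheaf, hence $\sigma_{(\beta,\omega)}$-stable for such $\omega$. Thus ${\cal O}_C(a)$ is stable on the large-volume part of the chamber whose closure contains $e^{\delta}H$; since stability is constant on the $\xi_w$-fibers inside a chamber (Lemma \ref{lem:xi-wall}, Remark \ref{rem:fiber-property2}) and only finitely many $w$-walls meet a neighborhood of $e^{\delta}H$, none of these walls can pass through $e^{\delta}H$. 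Choosing $V_a$ to be a neighborhood of $e^{\delta}H$ in $C^+(v)$ disjoint from all of them then gives the assertion. The hard part is exactly the control of the positive-rank subobjects: one must rule out that a wall for $w$ accumulates at or runs through $e^{\delta}H$ as $\omega$ approaches the (merely nef) class $H$, and it is precisely the support property and the Bogomolov inequality, combined with the large-volume identification, that make this possible.
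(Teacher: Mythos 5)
Your rank-zero analysis is correct and complete (a rank-zero subobject of ${\cal O}_C(a)$ in the tilted heart is forced to be a subsheaf ${\cal O}_C(a-k)$, and the phase comparison rules it out for every ample $\omega$), but the positive-rank step fails, and it fails for reasons specific to the blow-up setting of this lemma, where $H\in\pi^*(\Amp(Y)_{\Bbb R})$ is nef but \emph{not} ample. First, the compactness set-up is impossible: $e^\delta H$ lives in $C^+(v)$, not in $\NS(X)_{\Bbb R}\times\Amp(X)_{\Bbb R}$, and by the fiber description in the proof of Proposition \ref{prop:fibration} the fiber $\xi^{-1}(e^\delta H)$ consists exactly of pairs $(\beta,\omega)$ with $\omega\in{\Bbb R}_{>0}H$; since $H$ is not ample, $e^\delta H$ does not even lie in $\xi({\cal K}(X))$, so no subset of ${\cal K}(X)$, compact or otherwise, has image containing a neighborhood of $e^\delta H$ (points $u$ near $e^\delta H$ with $\rk u$ small compared to the $C$-component of $c_1(u)$ have ample-$\omega$ fiber points only with the $C$-component of $\beta$ escaping to infinity, cf.\ \eqref{eq:def-xy} and Remark \ref{rem:beta'}). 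Accordingly, Lemma \ref{lem:locally-finite}, which you invoke ``as in,'' explicitly excludes this point: it requires $\rk u\neq 0$ or $u=e^\delta\lambda$ with $\lambda$ ample, and $e^\delta H$ satisfies neither. Indeed local finiteness of candidate walls at $e^\delta H$ is precisely what is delicate here, since $(C,H)=0$ forces \emph{every} hyperplane $v({\cal O}_C(n))^\perp$, $n\in{\Bbb Z}$, to pass through $e^\delta H$. Second, even granting finiteness, your concluding inference is a non sequitur: stability of ${\cal O}_C(a)$ in the large-volume part of \emph{one} chamber adjacent to $e^\delta H$ does not show that no wall for it passes through $e^\delta H$; a wall through the point is compatible with stability on one side and instability on the other, which is exactly the phenomenon the lemma must exclude, and excluding it requires information \emph{at} the point, not in a single adjacent chamber.

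The paper obtains that information with no classification of destabilizers at all, and this is where your proof diverges from a workable one. It takes the boundary stability condition $\sigma_{(\beta,tH)}$ with $(\beta-\delta,H)=0$ and $a-\frac{1}{2}<(\beta,C)<a+\frac{1}{2}$, a point of $\partial{\cal K}(X)$ lying over $e^\delta H$; there ${\cal O}_C(a)$ is an irreducible object of the heart ${\cal A}_{(\beta,tH)}$ (Lemma \ref{lem:irreducible}), hence stable for trivial reasons, destabilizers of any rank being impossible. Openness of stability (using the support property at such boundary points, Lemma \ref{lem:support}), openness of $\xi$ (Proposition \ref{prop:fibration}), and constancy of stability along the connected fibers of $\xi$ (Lemma \ref{lem:xi-wall}, Lemma \ref{lem:fiber2}) then propagate stability to all of $\xi^{-1}(V_a)$ with $\omega$ ample. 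Note that the choice of the interval for $(\beta,C)$ is exactly where the dependence of $V_a$ on $a$ enters; in your argument nothing reflects the choice of $a$ beyond the Chern character, which is another sign that the positive-rank case has not actually been engaged. To repair your proof you would have to replace the compactness argument by an argument at the boundary point, which is essentially the paper's.
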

\begin{proof}
We take $(\beta,H)$ such that 
$(\beta-\delta,H)=0$ and $-\frac{1}{2}+a<(\beta,C)<a+\frac{1}{2}$.
Since ${\cal O}_C(a)$ is an irreducible object of ${\frak C}^a$,
${\cal O}_C(a)$ is a $\sigma_{(\beta,\omega)}$-stable object.
We take an open neighborhood $U$ of $(\beta,\omega)$ such
that ${\cal O}_C(a)$ is stable on $U$.
Let $V_a$ be an open neighborhood of $e^\delta H$ such tha
$V_a \subset \xi(U)$.
Then ${\cal O}_C(a)$ is $\sigma_{(\beta,\omega)}$-stable
for $(\beta,\omega) \in \xi^{-1}(V_a)$ with $\omega \in \Amp(X)_{\Bbb R}$. 
\end{proof}
\end{NB}
\begin{NB}
We set $(y, x)=(\lambda \cos \theta,\lambda \sin \theta)$.
Then $\epsilon=\frac{(H^2)+p^2+\frac{(v^2)}{r^2}}{2}
r \cos \theta(1+r f(r,\theta))$,
where $f$ is $C^\infty$ in a neighborhood of $r=0$.
So $|f(r, \theta)|<N_1$ for some $N_1$. 
If $r<1/N_1$, then 
$\epsilon  \leq 0$.
%$r<\frac{2}{(H^2)+p^2+\frac{(v^2)}{r^2}N_1 N_2}$.
Assume that $\tan \varphi=-p$.
Then $q=py-x=-\frac{r}{\cos \varphi}(\sin (\theta+\varphi)
+rg(r,\theta))$. 
Similarly if $\sin (\theta+\varphi)<-\frac{1}{N_2}$, then
there is a large number $N_3$ such that 
$q=py-x>0$ for $r<1/N_3$. 
\end{NB}
\begin{NB}
We take $p \in {\Bbb R}$ such that
$a+(\delta,C)-\tfrac{3}{2}<p<a+(\delta,C)-\tfrac{1}{2}$.
Then ${\cal O}_C(-a)[1]$ is an irreducible object 
of ${\frak C}^{-a+1}$.
Let $V_a$ be the open set defined by
$x<(a+(\delta,C)-\tfrac{3}{2})y$ and $y<0$.
%We take $p$ such that $(x,y)=(-p,-1) \in V_a$.
Then there is $p_0 <p$ such that
$a+(\delta,C)-\tfrac{3}{2}<p_0 < p<a+(\delta,C)-\tfrac{1}{2}$.
We set $\beta_0:=\delta+p_0 C$.
In a neighborhood $U$ of 
$(\beta_0,H)$,
we may assume that $\xi$ induces an isomorphism
$\xi:U \to \xi(U)$, where $\xi(U)$ is an open neighborhood
of $\xi(\beta_0,H)=e^\delta H$.
By shrinking $U$, we may assume that
$$
\{\xi(\beta_0+sH,H-qC) \mid \epsilon \leq 0, q>0\}
\supset \{e^\delta(H+xC+y(1+\tfrac{(v^2)}{2r^2}\varrho_X))
\mid x< p y, y \leq 0\}.
$$
Since ${\cal O}_C(-a)[1]$ is $\sigma_{(\beta_0,H)}$-sable,
there is an open neighborhood $U_p \subset U$ of $(\beta_0,H)$
such that ${\cal O}_C(-a)$ is
a $\sigma_{(\beta_0+sH,H-qC)}$-stable object
for $(\beta_0+sH,H-qC) \in U_p$.
\end{NB}

\subsubsection{Stability of ${\cal O}_C(a)$}

For $p_1 > p_0=(\delta-\beta_0,C)$, by shrinking $U$, we may assume that
\begin{equation}
\{\xi(\beta_0+sH,H-qC) \mid \epsilon \leq 0, q>0\}
\supset \{e^\delta(H+xC+y(1+\tfrac{(v^2)}{2r^2}\varrho_X)) \in\xi(U)
\mid x< p_1 y, y \leq 0 \},
\end{equation}
where $\epsilon=s(H^2)+p_0 q$ and
we also use
$$
x=-q+p_0 y+O_2(\epsilon,q).
$$
For $a \in {\Bbb Z}$ with
$p_1<a+(\delta,C)-\tfrac{1}{2}$,
we take
$p:=p(a) \in {\Bbb R}$ such that
$a+(\delta,C)-\tfrac{3}{2}<p(a)<a+(\delta,C)-\tfrac{1}{2}$.
We set $\beta_p:=\delta+p(a) C$.
Let $V_a$ be the open set defined by
$x<(a+(\delta,C)-\tfrac{3}{2})y$ and $y<0$.
%We take $p$ such that $(x,y)=(-p,-1) \in V_a$.
\begin{NB}
In a neighborhood $U$ of 
$(\beta_0,H)$,
we may assume that $\xi$ induces an isomorphism
$\xi:U \to \xi(U)$, where $\xi(U)$ is an open neighborhood
of $\xi(\beta_0,H)=e^\delta H$.
\end{NB}
Since $(-a+1)-\frac{1}{2}<(\beta_p,C)<(-a+1)+\frac{1}{2}$ and
${\cal O}_C(-a)[1]$ is an irreducible object 
of ${\frak C}^{-a+1}$,
${\cal O}_C(-a)[1]$ is $\sigma_{(\beta_p,H)}$-stable.
Hence there is an open neighborhood $U_p$ of $(\beta_p,H)$
such that ${\cal O}_C(-a)$ is
a $\sigma_{(\beta_p+sH,H-qC)}$-stable object
for $(\beta_p+sH,H-qC) \in U_p$.
By shrinking $U_p$, we may assume that
$$
\ell_a:=\{e^\delta(H+xC+y(1+\tfrac{(v^2)}{2r^2}\varrho_X)) \in \xi(U_p)
\mid x=(a+(\delta,C)-\tfrac{1}{2})y, y <0 \}
$$ 
is a subset of 
$$
\{\xi(\beta_p+sH,H-qC) \mid 0 > \epsilon_p:=s(H^2)+p(a)q, q>0\}.
$$
We note that $v({\cal O}_C(-a))^\perp$ is the line
$x=(a+(\delta,C)-\frac{1}{2})y$.
By Remark \ref{rem:fiber-property2},
$\xi_v^{-1}(\ell_a)=\xi_{v({\cal O}_C(-a))}^{-1}(\ell_a)$.
Applying Lemma \ref{lem:xi-wall}
to $\xi_{v({\cal O}_C(-a))}^{-1}(\ell_a)$,
${\cal O}_C(-a)$ is $\sigma_{(\beta_0+sH,H-qC)}$-stable,
if $(\beta_0+sH,H-qC) \in U$ $(q>0)$ belongs to $\xi^{-1}(\ell_a)$. 
Thus we obtain the following.
\begin{lem}\label{lem:C-stable}
For any $a$ with
$p_1<a+(\delta,C)-\tfrac{1}{2}$,
there is a neighborhood $U_a$ 
of $\xi^{-1}(\ell_a) \cap U$ such that  
${\cal O}_C(-a)$ is $\sigma_{(\beta_0+sH,H-qC)}$-stable.
%if $e^\delta(H+xC+y(1+\tfrac{(v^2)}{2r^2}\varrho_X))=
%\xi(\beta_0+sH,H-qC) \in V_a$, $q>0$
%and $x=(a-\frac{1}{2}+(\delta,C))y$.
Moreover 
$$
\phi_{(\beta_0+sH,H-qC)}({\cal O}_C(-a))=
\phi_{(\beta_0+sH,H-qC)}(E),\;\;E \in {\cal M}_{(\beta_0+sH,H-qC)}(v)
$$
if $(\beta_0+sH,H-qC) \in \xi^{-1}(\ell_a) \cap U$.
\end{lem}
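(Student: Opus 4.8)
The plan is to establish stability of ${\cal O}_C(-a)$ at one convenient stability condition and then transport it along the fibers of $\xi$ to all of $\xi^{-1}(\ell_a)\cap U$, using the fiberwise invariance results of Section \ref{sect:chamber}. First I would pick an auxiliary $p=p(a)\in{\Bbb R}$ with $a+(\delta,C)-\tfrac{3}{2}<p<a+(\delta,C)-\tfrac{1}{2}$ and set $\beta_p:=\delta+pC$. Since $(C^2)=-1$ this gives $(\beta_p,C)=(\delta,C)-p$, hence $(-a+1)-\tfrac{1}{2}<(\beta_p,C)<(-a+1)+\tfrac{1}{2}$, so $(\beta_p,C)\notin\tfrac{1}{2}+{\Bbb Z}$ and ${\frak C}^{\beta_p}={\frak C}^{-a+1}$. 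By the classification of irreducible objects recalled in the proof of Lemma \ref{lem:reflexive-hull2} (cf. \cite[Lem. 1.2.16, Prop. 1.2.23]{PerverseI}), ${\cal O}_C(-a)[1]$ is an irreducible, hence simple, object of ${\frak C}^{-a+1}$, and therefore $\sigma_{(\beta_p,H)}$-stable.

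Next I would invoke openness of stability. Since $(\beta_p,H)\in{\frak s}(\partial{\cal K}(X))$ and the support property (Lemma \ref{lem:support}) yields a local wall/chamber structure near $\sigma_{(\beta_p,H)}$, there is an open neighborhood $U_p$ of $(\beta_p,H)$ in the plane $L$ on which ${\cal O}_C(-a)$ remains $\sigma_{(\beta_p+sH,H-qC)}$-stable; here $H-qC$ is ample for small $q>0$ because $(H,C)=0$ gives $(H-qC,C)=q>0$ and $(H-qC)^2=(H^2)-q^2>0$. Shrinking $U_p$, I would arrange that the segment $\ell_a$, which is exactly the trace of $v({\cal O}_C(-a))^\perp$ in $\xi(U)$, lies inside $\{\xi(\beta_p+sH,H-qC)\mid \epsilon_p:=s(H^2)+pq<0,\ q>0\}$, so that every $u\in\ell_a$ is realized by a point of $U_p\cap{\cal K}(X)$ at which ${\cal O}_C(-a)$ is stable.

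The decisive step is the transport. Fix $u\in\ell_a$ together with a point $(\beta_0+sH,H-qC)\in\xi^{-1}(u)\cap U$ with $q>0$. As $u\in v^\perp\cap v({\cal O}_C(-a))^\perp$, Remark \ref{rem:fiber-property2} gives $\xi_v^{-1}(u)=\xi_{v({\cal O}_C(-a))}^{-1}(u)$, and by Proposition \ref{prop:fibration} this fiber restricted to ${\cal K}(X)$ is connected. Applying the fiberwise constancy of stability underlying Lemma \ref{lem:xi-wall} to the object ${\cal O}_C(-a)$ with reference vector $v({\cal O}_C(-a))$, and using that stability already holds at the point of $U_p\cap{\cal K}(X)$ lying over $u$, I conclude that ${\cal O}_C(-a)$ is $\sigma_{(\beta_0+sH,H-qC)}$-stable; this produces the required neighborhood $U_a$. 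For the phase statement, on $\ell_a=v({\cal O}_C(-a))^\perp$ one has $Z_{(\beta,\omega)}({\cal O}_C(-a))\in{\Bbb R}Z_{(\beta,\omega)}(v)$, and a sign check in the region $y<0,\ q>0,\ \epsilon<0$ shows the proportionality factor is positive, so $\phi_{(\beta_0+sH,H-qC)}({\cal O}_C(-a))=\phi_{(\beta_0+sH,H-qC)}(v)=\phi_{(\beta_0+sH,H-qC)}(E)$ for every $E\in{\cal M}_{(\beta_0+sH,H-qC)}(v)$.

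The main obstacle I anticipate is confining the entire argument to the ample locus ${\cal K}(X)$: the connectedness of fibers (Proposition \ref{prop:fibration}) and the fiberwise invariance of stability (Lemma \ref{lem:xi-wall}) are only available there, whereas over a point of $\partial{\cal K}(X)$ the fiber genuinely breaks up (Lemma \ref{lem:fiber2}(1)). This forces the restriction $q>0$ and requires verifying that $\xi^{-1}(u)\cap{\cal K}(X)$ meets both the model region near $(\beta_p,H)$ and the target point near $(\beta_0,H)$ inside a single connected component, which is exactly what the containment $\ell_a\subset\xi(U_p)$ arranged in the second step secures.
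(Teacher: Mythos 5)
Your proposal is correct and follows essentially the same route as the paper: anchor stability of ${\cal O}_C(-a)$ at $(\beta_p,H)$ with $\beta_p=\delta+p(a)C$ via irreducibility of ${\cal O}_C(-a)[1]$ in ${\frak C}^{-a+1}$, spread it by openness to a neighborhood $U_p$ arranged to contain $\ell_a$ in its image, and transport along the connected fibers using Remark \ref{rem:fiber-property2} (so that $\xi_v^{-1}(\ell_a)=\xi_{v({\cal O}_C(-a))}^{-1}(\ell_a)$) together with Lemma \ref{lem:xi-wall}. Your explicit sign check for the phase equality is a minor addition that the paper leaves implicit.
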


\begin{NB}
If $\omega \in \pi^*(\Amp(Y))_{\Bbb R}$, then
$\xi^{-1}(\xi(\beta,\omega))$ is not connected.
\end{NB}

\begin{NB}
If $\omega$ is not ample, then
${\cal O}_C(a)$ is not stable in general.
For example,
${\cal O}_C(1)$ is not semi-stable if $\omega=H+qC$ ($p>0$).
$$
0 \to {\cal O}_C^{\oplus 2} \to {\cal O}_C(1) \to {\cal O}_C(-1)[1]
\to 0
$$
and
$$
0 \to {\cal O}_C[-1] \to {\cal O}_C(-1)[1] \to 
{\bf L}\pi^*({\cal O}_y) \to 0, 
$$
$y \in Y$.
Hence ${\cal O}_C(-1)[1] \in {\cal A}_{(\delta+sH+pC,H+qC)}$
and ${\cal O}_C \in {\cal A}_{(\delta+sH+pC,H+qC)}[1]$.
\end{NB}

\subsection{A description of ${\cal M}_{(\delta+sH+pC,H-qC)}(v)$.}
For $(\beta,\omega) \in U^{< 0}$ with $q \geq 0$,
we shall describe ${\cal M}_{(\beta,\omega)}(v)$
in terms of the moduli spaces of semi-stable perverse coherent
sheaves ${\cal M}_H^{\gamma}(v)$.
We set 
$$
u:=e^\delta(H+xC+y(1+\tfrac{(v^2)}{2r^2}\varrho_X)).
$$
We first assume that $u \in \xi(U)$ satisfies
 $(u,v({\cal O}_C(-a)))
=-x-y(\frac{1}{2}-a-(\delta,C)) \ne 0$ for all $a$.
Under this condition, we shall describe ${\cal M}_{(\beta,\omega)}(v)$
with $\xi(\beta,\omega)=u$, where
$(\beta,\omega)=(\delta+sH+pC,H-qC)$.
There are $s'<0, t'>0$ such that
$\xi(\delta+s' H+p' C,t' H)=u$,
where
\begin{equation}
p':=\frac{x}{y}=
\frac{q^3+q(s^2-1)(H^2)+p^2 q+2ps (H^2)-\frac{(v^2)}{r^2}q}{2\epsilon}.
\end{equation}
Then $\{\xi(\delta+s H+p' C,t H) \mid s \leq 0, t >0 \}$
is a segment.  
\begin{NB}
Let $u_s$ ($ 0 \leq s \leq 1$) be a segment such that
$u_0=e^\delta(H+xC+y(1+\frac{(v^2)}{2r^2}\varrho_X))$
and $u_1=e^\delta H$.
We can take a family of stability conditions
$\sigma_{(\beta_s,\omega_s)}$ such that
$\xi(\beta_s,\omega_s)=u_s$ and $\omega_s \in {\Bbb R}_{>0}H$.
\end{NB}
Since there is no wall between $u$ and $e^\delta H$,
$$
{\cal M}_{(\beta,\omega)}(v)={\cal M}_{(\delta+s' H+p' C,t_\infty H)}(v)=
{\cal M}_H^{\delta+p' C-\frac{1}{2}K_X}(v),
$$
 where 
$t_\infty$ is sufficiently large.
If $(\beta,\omega)$ is not general,
then the wall is defined by
\begin{equation}
\frac{\chi(E(-(\delta+\frac{x}{y}C-\frac{1}{2}K_X)))}{\rk E}
=\frac{\chi(E_1(-(\delta+\frac{x}{y}C-\frac{1}{2}K_X)))}{\rk E_1},
\end{equation} 
where $E_1$ is a $(\delta+p' C-\frac{1}{2}K_X)$-twisted stable object
with $(c_1(E_1)-\rk E_1 \delta,H)=0$.

\begin{NB}
Then there is $l \in {\Bbb Z}$ 
with $-\frac{1}{2}+l<(\beta',C)<\frac{1}{2}+l$.
$u_s$ belongs to a wall if and only if 
there is a 
$(\beta'-\frac{1}{2}K_X)$-twisted stable object $E_1$ of ${\frak C}^l$
such that $\chi(E(-\beta'))/\rk E=\chi(E_1(-\beta'))/\rk E_1$
and $0<\rk E_1 <\rk E$.
\end{NB}

\begin{NB}
If $\xi(\delta+sH+pC,tH)=
e^\delta(H+xC+y(1+\frac{(v^2)}{2r^2}\varrho_X))$, then
$x/y=p$ and
$\beta'=\delta+pC$. 
\end{NB}

We next consider the case where $(\beta,C)=l-\frac{1}{2}$.
We set $\beta_\pm:=\beta \mp \eta C$ for a sufficiently small $\eta>0$.

\begin{defn}\label{defn:exceptional-case2}
For 
$E \in {\cal A}_{(\beta_-,tH)}$ with 
$Z_{(\beta,tH)}(E) \in {\Bbb H}$,
$E$ is  
$\sigma_{(\beta,tH)}$-semi-stable,
if 
$$
Z_{(\beta,tH)}(E_1)/Z_{(\beta,tH)}(E) \in -{\Bbb H} \cup {\Bbb R}_{\geq 0}
$$
for all non-zero subobject $E_1$ of $E$ in ${\cal A}_{(\beta_-,tH)}$.
Let ${\cal M}_{(\beta,tH)}(v)$ be the moduli stack of
$\sigma_{(\beta,tH)}$-semi-stable objects $E$ with $v(E)=v$.
\end{defn}
If $v=nv({\cal O}_C(l-1))$, then we also
set ${\cal M}_{(\beta,tH)}(v):=
{\cal M}_{(\beta_-,tH)}(v)=\{{\cal O}_C(l-1)^{\oplus n} \}$.
Then we have ${\cal M}_{(\beta_+,tH)}(v)={\cal M}_{(\beta,tH)}(v)$.
\begin{NB}
${\cal M}_H^{\beta_+-\frac{1}{2}K_X}(v)=\{{\cal O}_C(l-1)^{\oplus n} \}
 \subset {\frak C}^{\beta_+}[-1]$.
\end{NB}

\begin{prop}\label{prop:exceptional-case}
${\cal M}_{(\gamma,\omega)}(v)={\cal M}_{(\beta,tH)}(v)=
{\cal M}_H^{\beta-\frac{1}{2}K_X}(v)$
for $(\gamma,\omega) \in \xi^{-1}(\xi(\beta,tH))$.
\end{prop}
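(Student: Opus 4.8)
The plan is to treat the two asserted equalities separately: first the fibrewise invariance ${\cal M}_{(\gamma,\omega)}(v)={\cal M}_{(\beta,tH)}(v)$, and then the identification ${\cal M}_{(\beta,tH)}(v)={\cal M}_H^{\beta-\frac{1}{2}K_X}(v)$ with the wall-moduli of perverse coherent sheaves constructed in \ref{subsect:moduli-on-wall}. Throughout I use that, since $(\beta-\delta,tH)=ts(H^2)\neq 0$ and $Z_{(\beta,tH)}({\cal O}_C(l-1))=0$, Lemma \ref{lem:lambda} gives $u:=\xi(\beta,tH)\in v({\cal O}_C(l-1))^{\perp}$; thus the relevant wall is $v({\cal O}_C(l-1))^{\perp}$ and ${\cal O}_C(l-1)$ is the distinguished object of zero central charge lying on it.

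For the first equality I would argue by fibre invariance. By Proposition \ref{prop:fibration} and Lemma \ref{lem:fiber2}(2) the fibre $\xi^{-1}(u)\cap{\cal K}(X)$ is connected, and by Lemma \ref{lem:xi-wall} every point of $\xi^{-1}(u)$ lies on the single wall $u^{\perp}$; hence, exactly as in the discussion preceding Lemma \ref{lem:xi-wall}, the $S$-equivalence class of a $\sigma_{(\gamma,\omega)}$-semistable object with $v(E)=v$ is constant as $(\gamma,\omega)$ ranges over the ample part of the fibre. It then remains to match the non-ample wall point $(\beta,tH)$. Since for ample $\omega$ on the wall $Z_{(\gamma,\omega)}({\cal O}_C(l-1))$ is nonzero but lies on the line ${\Bbb R}Z_{(\gamma,\omega)}(v)$ and tends to $0$ as $\omega\to tH$, the limiting semistability is precisely the condition of Definition \ref{defn:exceptional-case2}, in which the ratio $Z(E_1)/Z(E)\in{\Bbb R}_{\geq 0}$ is permitted exactly for the $Z$-zero subobjects. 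I would make this rigorous by passing to the adjacent genuine stability conditions $\sigma_{(\beta_{\pm},tH)}$, which lie in $\partial{\cal K}(X)\subset\overline{\cal K}(X)$ because $(\beta_{\pm},C)\notin\tfrac12+{\Bbb Z}$ for small $\eta>0$, and checking that Definition \ref{defn:exceptional-case2} selects exactly the objects semistable on both sides.

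For the second equality I would identify objects and stabilities directly. A $\sigma_{(\beta,tH)}$-semistable $E$ lies in ${\cal A}_{(\beta_-,tH)}$; its $Z$-zero subobjects are copies of ${\cal O}_C(l-1)$, and as in the remark following Definition \ref{defn:exceptional-case1} one deduces $E\in{\frak C}^{l-1}$. The phase inequality $Z_{(\beta,tH)}(E_1)/Z_{(\beta,tH)}(E)\in-{\Bbb H}\cup{\Bbb R}_{\geq0}$ then translates into the twisted Hilbert polynomial inequality $\chi(G,E_1(nH))\leq\rk E_1\,\chi(G,E(nH))/\rk E$ of Definition \ref{defn:exceptional-case1}: by Lemma \ref{lem:G/beta} one has $\chi(G,E)/\rk G=-(e^{\beta},v(E))=a_\beta(E)$, while the leading coefficient in $n$ is governed by $d_\beta$, so the lexicographic comparison ($d_\beta$ first, then $a_\beta$) of reduced twisted Hilbert polynomials at $n\gg0$ coincides with the comparison of the imaginary part of $Z_{(\beta,tH)}$ (proportional to $d_\beta$) and then its real part $-a_\beta+\rk\cdot\tfrac{t^2(H^2)}{2}$. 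The discrepancy between subobjects taken in ${\cal A}_{(\beta_-,tH)}$ and in ${\frak C}^{l-1}$ is absorbed exactly as in the proof of Lemma \ref{lem:stable-on-wall}: replacing a subobject $E_1$ by its modification $E_1'$ through an exact sequence with a power of ${\cal O}_C(l-1)$ changes neither $\rk$ nor $\chi(G,-(nH))$, since $\chi(G,{\cal O}_C(l-1)(nH))=0$. This realises ${\cal M}_{(\beta,tH)}(v)$ as the wall-moduli of Definition \ref{defn:moduli-on-wall}, described via $\pi_*(E(lC))$ and the Brill--Noether locus through Remark \ref{rem:beta-stable} and Lemma \ref{lem:onto}.

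The main obstacle I expect is the comparison at the level of strictly semistable objects and $S$-equivalence, rather than of stable loci, where Lemma \ref{lem:stable-on-wall} already settles matters. One must show that the Jordan--H\"older factors in the two senses coincide, with ${\cal O}_C(l-1)$ --- the unique $Z_{(\beta,tH)}$-stable object of zero central charge --- appearing as the common factor of the same phase as $v$, so that a properly semistable $E$ is $S$-equivalent to ${\cal O}_C(l-1)^{\oplus p}\oplus E'$ in both senses and the two coarse moduli, not merely their stable substacks, agree. Verifying that these zero-charge factors are exactly powers of ${\cal O}_C(l-1)$, and that the remaining factors match the perverse-coherent description, is the delicate point.
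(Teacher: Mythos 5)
Your outline shares some ingredients with the paper's proof (the perturbation $\beta_\pm=\beta\mp\eta C$, Lemma \ref{lem:stable-on-wall}, the role of ${\cal O}_C(l-1)$ as the zero-charge object), but two of your key steps fail, and the step you defer as ``the delicate point'' is precisely where the paper's proof actually lives. First, your claim that Definition \ref{defn:exceptional-case2} ``selects exactly the objects semistable on both sides'' is false for strictly semistable objects. Take $E={\cal O}_C(l-1)\oplus E'$ with $E'$ stable and $Z_{(\beta,tH)}(E')=Z_{(\beta,tH)}(E)\in{\Bbb H}$: this $E$ satisfies Definition \ref{defn:exceptional-case2} (its subobject ${\cal O}_C(l-1)$ has $Z=0$) and is a point of ${\cal M}_H^{\beta-\frac{1}{2}K_X}(v)$, but at $(\beta_-,tH)$ the subobject ${\cal O}_C(l-1)$ has charge $(\beta_-,C)-(l-\tfrac{1}{2})=-\eta<0$, hence phase $1$, so it destabilizes $E$, whose phase lies in $(0,1)$. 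The intersection property holds only for \emph{stable} loci (that is the content of Lemma \ref{lem:stable-on-wall}); for semistable objects one must argue factor by factor. This is exactly what the paper does: it takes a Jordan--H\"{o}lder filtration of $E$ at the degenerate point, reduces the Proposition to the equality of the three \emph{stable} loci for each factor $v_i$, and transfers stability of each factor across the fibre using the identity $\xi_{v_i}^{-1}(\xi_{v_i}(\beta,tH))=\xi^{-1}(\xi(\beta,tH))$ of Remark \ref{rem:fiber-property2} --- a device absent from your proposal and the reason the degenerate point can be compared with the ample points $(\gamma,\omega)$ at all.

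Second, your pointwise translation at fixed $t$ of the phase inequality into the lexicographic inequality of Definition \ref{defn:exceptional-case1} is not valid: Bridgeland phase comparison is the slope comparison of $\mathrm{Re}\,Z$ against $\mathrm{Im}\,Z$, not a lexicographic comparison of $\mathrm{Im}\,Z$ first and $\mathrm{Re}\,Z$ second. The two coincide only when $(\omega^2)$ is large (Proposition \ref{prop:large}, which requires \eqref{eq:omega^2}) or when no wall separates $(\beta,tH)$ from the large volume region; if your translation were correct as stated, Bridgeland and twisted Gieseker semistability would agree at every $(\beta,tH)$, contradicting the existence of the wall structure in $t$ that this whole section studies. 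The paper obtains the Gieseker identification instead through the chamber structure: since, by the choice of $U$, all walls for each factor $v_i$ pass through $e^\delta H$, the points $(\beta_\pm,tH)$ lie in Gieseker chambers for $v_i$, giving ${\cal M}_H^{\beta_\pm-\frac{1}{2}K_X}(v_i)^s={\cal M}_{(\beta_\pm,tH)}(v_i)^s$, and then Lemma \ref{lem:stable-on-wall} closes the loop. Finally, the two facts you explicitly set aside --- that the zero-charge Jordan--H\"{o}lder factors are exactly copies of ${\cal O}_C(l-1)$, and that the $S$-equivalence classes match in all three senses --- are not residual verifications but the substance of the argument; as the proposal stands, the Proposition is not established.
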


\begin{proof}
For $E \in {\cal M}_{(\beta,tH)}(v)$,
we take a filtration 
\begin{equation}\label{eq:JHF-H}
0 \subset F_1 \subset F_2 \subset \cdots \subset F_s=E
\end{equation}
such that $E_i:=F_i/F_{i-1}$ are $\sigma_{(\beta,tH)}$-stable
and $Z_{(\beta,tH)}(E_i)=\lambda_i Z_{(\beta,tH)}(E)$
with $0 \leq \lambda_i \leq 1$.
If $\lambda_i=0$, then $E_i={\cal O}_C(l-1)$.
\begin{NB}
Since $d_\beta(E)>0$ and $d_\beta(\bullet)$ is a discrete invariant 
for $\beta \in \NS(X)_{\Bbb Q}$,
there is a filtration \eqref{eq:JHF-H} such that
$E_i:=F_i/F_{i-1}$ does not have a subobject 
$E'$ with $Z_{(\beta,tH)}(E')=\lambda Z_{(\beta,tH)}(E_i)$
and $0<\lambda<1$.
Then applying Corollary \ref{cor:irreducible},
we get a desired decomposition.
\end{NB}
We set $v_i:=v(E_i)$.
For the proof of our claim, 
it is sufficient to prove that
\begin{equation}\label{eq:wall-stability}
{\cal M}_{(\gamma,\omega)}(v_i)^s={\cal M}_{(\beta,tH)}(v_i)^s=
{\cal M}_H^{\beta-\frac{1}{2}K_X}(v_i)^s
\end{equation}
for $(\gamma,\omega) \in \xi^{-1}(\xi(\beta,tH))$.

We first assume that $\lambda_i \ne 0$.
As in Remark \ref{rem:fiber-property2},
we set $\xi_{v_i}(\gamma,\omega):=
\mathrm{Im}(Z_{(\gamma,\omega)}(v_i)^{-1}e^{\gamma+\sqrt{-1}\omega})$.
Since $\mathrm{Im}(Z_{(\beta,tH)}(v_i)^{-1}e^{\beta+\sqrt{-1}tH})
=\lambda_i^{-1}
\mathrm{Im}(Z_{(\beta,tH)}(v)^{-1}e^{\beta+\sqrt{-1}tH})$,
 we get
$\xi_{v_i}(\beta,tH) \in {\Bbb R} \xi(\beta,tH)$.
\begin{NB}
By Remark \ref{rem:fiber-property},
$Z_{(\gamma,\omega)}(x) \in {\Bbb R}Z_{(\gamma,\omega)}(v_i)$
if and only if 
$x \in \xi_{v_i}(\gamma,\omega)^\perp$.
\end{NB}
By Remark \ref{rem:fiber-property2},
$\xi_{v_i}^{-1}(\xi_{v_i}(\beta,tH))=
\xi^{-1}(\xi(\beta,tH))$.
We take $(\gamma_\pm,\omega_\pm) \in 
\xi_{v_i}^{-1}(\xi_{v_i}(\beta_\pm,tH))$
which are in a neighborhood of $(\gamma,\omega)$.
Then
${\cal M}_{(\beta_\pm,tH)}(v_i)^s=
{\cal M}_{(\gamma_\pm,\omega_\pm)}(v_i)^s$.
Since 
$$
{\cal M}_{(\gamma_-,\omega_-)}(v_i)^s 
\cap {\cal M}_{(\gamma_+,\omega_+)}(v_i)^s=
{\cal M}_{(\gamma,\omega)}(v_i)^s,
$$
$E_i$ is $\sigma_{(\gamma,\omega)}$-stable
with $Z_{(\gamma,\omega)}(E_i) \in {\Bbb R}_{>0}Z_{(\gamma,\omega)}(v)$.
If $E_i={\cal O}_C(l-1)$, then
it is also $\sigma_{(\gamma,\omega)}$-stable
with $Z_{(\gamma,\omega)}(E_i) \in {\Bbb R}_{>0}Z_{(\gamma,\omega)}(v)$.
Indeed we also have ${\cal M}_{(\gamma_\pm,\omega_\pm)}(v_i)
={\cal M}_{(\beta_\pm,tH)}(v_i)=
\{ {\cal O}_C(l-1) \}$
and
${\cal M}_{(\gamma,\omega)}(v_i)
={\cal M}_{(\beta,tH)}(v_i)=
\{ {\cal O}_C(l-1) \}$.
Hence $E$ is $\sigma_{(\gamma,\omega)}$-semi-stable with a
Jordan-H\"{o}lder filtration \eqref{eq:JHF-H}.
Since $\omega$ is ample, by our choice of $U$,
$e^\beta H \in v_i^\perp$ and all
walls for $v_i$ passes $e^\delta H$. 
Hence
$$
v_i=e^\beta(r_i+d_i H+D_i +a_i \varrho_X),\;r_i=r \frac{d_i}{d},
a_i=a \frac{d_i}{d}
$$
and
$$
{\cal M}_H^{\beta_\pm-\frac{1}{2}K_X}(v_i)^s
={\cal M}_{(\beta_\pm,tH)}(v_i)^s=
{\cal M}_{(\gamma_\pm,\omega_\pm)}(v_i)^s.
$$
By Lemma \ref{lem:stable-on-wall},
we have 
$$
{\cal M}_H^{\beta-\frac{1}{2}K_X}(v_i)^s=
{\cal M}_H^{\beta_--\frac{1}{2}K_X}(v_i)^s 
\cap {\cal M}_H^{\beta_+-\frac{1}{2}K_X}(v_i)^s.
$$
Hence \eqref{eq:wall-stability} holds.

\begin{NB}
Let $v=\sum_i v_i$ be a decomposition of $v$ such that
$$
v_i=e^\beta(r_i+d_i H+D_i +a_i \varrho_X),\;d_i=r_i \frac{d}{r},
a_i=r_i \frac{a}{r}.
$$
We note that $v_i=e^\beta D_i$, if $r_i=0$.
We first prove that
\begin{equation}\label{eq:wall-stability}
{\cal M}_{(\gamma,\omega)}(v_i)^s={\cal M}_{(\beta,tH)}(v_i)^s=
{\cal M}_H^{\beta-\frac{1}{2}K_X}(v_i)^s
\end{equation}
for $(\gamma,\omega) \in \xi^{-1}(\xi(\beta,tH))$.

As in Remark \ref{rem:fiber-property2},
we set $\xi_{v_i}(\gamma,\omega):=
\mathrm{Im}(Z_{(\gamma,\omega)}(v_i)^{-1}e^{\gamma+\sqrt{-1}\omega})$.
By our assumption, we see that
$\xi_{v_i}(\beta,tH) \in {\Bbb R} \xi(\beta,tH)$.
\begin{NB2}
By Remark \ref{rem:fiber-property},
$Z_{(\gamma,\omega)}(x) \in {\Bbb R}Z_{(\gamma,\omega)}(v_i)$
if and only if 
$x \in \xi_{v_i}(\gamma,\omega)^\perp$.
\end{NB2}
By Remark \ref{rem:fiber-property2},
$\xi_{v_i}^{-1}(\xi_{v_i}(\beta,tH))=
\xi^{-1}(\xi(\beta,tH))$.
We first assume that $\rk v_i \ne 0$.
By Lemma \ref{lem:stable-on-wall},
we have 
$$
{\cal M}_H^{\beta-\frac{1}{2}K_X}(v_i)^s=
{\cal M}_H^{\beta_--\frac{1}{2}K_X}(v_i)^s 
\cap {\cal M}_H^{\beta_+-\frac{1}{2}K_X}(v_i)^s.
$$
We take $(\gamma_\pm,\omega_\pm) \in 
\xi_{v_i}^{-1}(\xi_{v_i}(\beta_\pm,tH))$
which are in a neighborhood of $(\gamma,\omega)$.
Then
${\cal M}_H^{\beta_\pm-\frac{1}{2}K_X}(v_i)^s
={\cal M}_{(\beta_\pm,tH)}(v_i)^s=
{\cal M}_{(\gamma_\pm,\omega_\pm)}(v_i)^s$, where $t$ is sufficiently large.
Since 
$$
{\cal M}_{(\gamma_-,\omega_-)}(v_i)^s 
\cap {\cal M}_{(\gamma_+,\omega_+)}(v_i)^s=
{\cal M}_{(\gamma,\omega)}(v_i)^s,
$$
we have ${\cal M}_H^{\beta-\frac{1}{2}K_X}(v_i)^s=
{\cal M}_{(\gamma,\omega)}(v_i)^s$.

Let $E \in {\bf D}(X)$ be an object which is a successive 
extension of $E_i \in {\bf D}(X)$ with
$v(E_i)=v_i$.   
Then 
$E$ is $\beta$-twisted semi-stable if
and only if $E$ is $\sigma_{(\gamma,\beta)}$-semi-stable.  

If $\rk v_i=0$, then 
we also have ${\cal M}_{(\gamma_\pm,\omega_\pm)}(v_i)
={\cal M}_{(\beta_\pm,tH)}(v_i)=
\{ {\cal O}_C(l-1)^{\oplus n} \}$
where $n$ is deermined by $v_i=nv({\cal O}_C(l-1))$.
Hence 
${\cal M}_{(\gamma,\omega)}(v_i)
={\cal M}_{(\beta,tH)}(v_i)=
\{ {\cal O}_C(l-1)^{\oplus n} \}$, which implies that 
\eqref{eq:wall-stability} also holds for this case.
\end{NB}
\begin{NB}
If $r_i=0$, then we also have $M_{(\beta,H)}(v_i)=\{{\cal O}_C(l-1) \}$
by Lemma \ref{lem:C-stable}. 
\end{NB}
\end{proof}

\section{Wall crossing for Gieseker semi-stability}\label{sect:wall-crossing}

\subsection{Ample line bundles on the moduli spaces}\label{subsect:ample}

Assume that there is a moduli scheme 
$M_{(\beta,\omega)}(v)$ of $S$-equivalence classes of
semi-stable objects and consisting of stable objects.
For simplicity, we assume that there is a universal
family ${\cal E}_v$ 
on $M_{(\beta,\omega)}(v) \times X$.
For $\alpha \in K(X)_{\Bbb Q}$, we set
\begin{equation}
{\cal L}(\alpha):=
\det p_{M_{(\beta,\omega)}(v)!}(\alpha^{\vee} \otimes {\cal E}_v).
\end{equation}

We set
\begin{equation}
K(X)_v:=\{\alpha \in K(X)_{\Bbb Q} \mid  \chi(\alpha^{\vee} \otimes v)=0 \}. 
\end{equation}
We have a morphism
\begin{equation}
\begin{matrix}
\kappa:& K(X)_v & \to & \Pic(M_{(\beta,\omega)}(v))_{\Bbb Q}\\
& \alpha & \mapsto & {\cal L}(\alpha).
\end{matrix}
\end{equation}

As in \cite{univ}, we have a morphism
${\frak a}:M_{(\beta,\omega)}(v) \to \Alb(X) \times \Pic^0(X)$.
$\kappa$ induces a homomorphism
\begin{equation}
\theta_v:v^\perp \to \NS(M_{(\beta,\omega)}(v))_{\Bbb Q}
/{\frak a}^*(\NS(\Alb(X) \times \Pic^0(X))_{\Bbb Q}),
\end{equation}
where 
\begin{equation}
\begin{split}
v^\perp:= & \{ u \in H^*(X,{\Bbb Q})_{\alg} \mid 
\chi(u(\td_X^{-1})^{\vee},v)=0 \}\\
= & \{ u \in H^*(X,{\Bbb Q})_{\alg} \mid 
(u,v)=0 \}.
\end{split}
\end{equation}
\begin{NB}
$\chi(u(\td_X^{-1})^{\vee},v)=-(u,v)$.
\end{NB}
By works of Fogarty \cite{Fo} and 
Li \cite{Li}, \cite{Li3} (see also \cite{univ}), 
if $\rk v=1,2$ and $M_{(\beta,\omega)}(v)$ is the moduli of
Gieseker semi-stable sheaves $M_\omega^{\beta-\frac{1}{2}K_X}(v)$, then
${\frak a}$ is the Albanese morphism and 
$\theta_v$ is isomorphic for all sufficiently large $(v^2)$
depending on $\omega, \rk v,c_1(v)$. 

\begin{rem}
If there is a universal family as a twisted object, then 
$\kappa$ is well-defined.
\end{rem}

\begin{NB}

\begin{equation}
\begin{split}
\chi(e^\beta,v)=& -(e^\beta,v\td_X)\\
=&
-r(e^\beta,1+(\delta-\frac{1}{2}K_X)+(\frac{a}{r}-\frac{1}{2}(K_X,\delta)
+\chi({\cal O}_X))\varrho_X)\\
=& -r((\beta,\delta-\frac{1}{2}K_X)-
(\frac{a}{r}-\frac{1}{2}(K_X,\delta)
+\chi({\cal O}_X))-\frac{(\beta^2)}{2}).
\end{split}
\end{equation}
Hence 
\begin{equation}
e^\beta-\frac{\chi(e^\beta,v)}{r}\varrho_X=
1+\beta+(\beta,\delta-\frac{1}{2}K_X)-
(\frac{a}{r}-\frac{1}{2}(K_X,\delta)
+\chi({\cal O}_X))
\end{equation}

\begin{equation}
e^{\beta-\frac{1}{2}K_X}
-\frac{\chi(e^{\beta-\frac{1}{2}K_X},v)}{r}\varrho_X=
1+\left(\beta-\frac{1}{2}K_X \right)+
(\beta,\delta-\frac{1}{2}K_X)+\frac{1}{4}(K_X^2)
-(\frac{a}{r}
+\chi({\cal O}_X))
\end{equation}

\begin{equation}
e^\delta(1+\beta-\delta+\frac{(v^2)}{2r^2}\varrho_X)
=1+\beta+\left((\beta,\delta)-\frac{a}{r}\right)\varrho_X.
\end{equation}

\begin{equation}
\begin{split}
& \left(
1+\beta+\left((\beta,\delta)-\frac{a}{r}\right)\varrho_X
\right)
(\td_X^{-1})^{\vee}\\
=& 
1+\left(\beta-\frac{1}{2}K_X \right)+
\left((\beta,\delta)-\frac{a}{r}-(\beta,\frac{1}{2}K_X)+
\frac{1}{4}(K_X^2)-\chi({\cal O}_X) \right)\varrho_X
\end{split}
\end{equation}

Thus
\begin{equation}
\begin{split}
-\left(\left(
1+\beta+\left((\beta,\delta)-\frac{a}{r}\right)\varrho_X
\right),x \right)=&
\chi \left(\left(
1+\beta+\left((\beta,\delta)-\frac{a}{r}\right)\varrho_X
\right)
(\td_X^{-1})^{\vee},x \right)\\
=&
\chi \left(
e^{\beta-\frac{1}{2}K_X}
-\frac{\chi(e^{\beta-\frac{1}{2}K_X},v)}{r}\varrho_X,x \right).
\end{split}
\end{equation}

\end{NB}

\begin{NB}
\begin{lem}
Let $L_1$ be an ample divisor on $X$ and $L_2$ be a nef divisor
on $X$.
Then $tL_1+(1-t)L_2$ is ample for $0<t \leq 1$.
\end{lem}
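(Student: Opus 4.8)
The plan is to invoke Kleiman's ampleness criterion, which is the right tool here because the statement concerns $\mathbb{R}$-divisor classes in $\NS(X)_{\Bbb R}$ rather than integral divisors. Recall that on a projective surface $X$ a class $D \in \NS(X)_{\Bbb R}$ is ample precisely when $(D,z)>0$ for every nonzero class $z$ in the closed cone of curves $\overline{NE}(X)$, and $D$ is nef precisely when $(D,z)\ge 0$ for all $z \in \overline{NE}(X)$. So first I would reduce ampleness of $tL_1+(1-t)L_2$ to checking strict positivity of its pairing against an arbitrary nonzero class $z$ of the closed cone of curves.

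The computation is then immediate from bilinearity of the intersection form:
\[
(tL_1+(1-t)L_2,\,z)=t\,(L_1,z)+(1-t)\,(L_2,z).
\]
For a fixed nonzero $z \in \overline{NE}(X)$ we have $(L_1,z)>0$ since $L_1$ is ample, and $(L_2,z)\ge 0$ since $L_2$ is nef. Because $0<t\le 1$ forces $t>0$ and $1-t\ge 0$, the first summand is strictly positive and the second is nonnegative, so the whole expression is strictly positive. As $z$ was an arbitrary nonzero class of $\overline{NE}(X)$, Kleiman's criterion yields that $tL_1+(1-t)L_2$ is ample, which is what I want.

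An equivalent and perhaps more structural way to phrase the argument, which I would mention as a remark, is via convexity: the nef cone $\Nef(X)$ is a closed convex cone whose interior is the ample cone $\Amp(X)$, so a segment joining an interior point $L_1$ to an arbitrary point $L_2$ of the cone stays in the interior except possibly at the endpoint $t=0$; hence $tL_1+(1-t)L_2$ is ample for all $0<t\le 1$. I do not expect any genuine obstacle in this lemma; the only point deserving care is that one works with real classes, so one must use the version of Kleiman's criterion valid for $\mathbb{R}$-divisors (equivalently, the identification of $\Amp(X)$ with the interior of $\Nef(X)$). The boundary value $t=1$ simply returns $L_1$, which is ample by hypothesis, so the closed endpoint of the interval causes no difficulty.
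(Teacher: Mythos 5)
Your proposal is correct, and in fact it contains two valid arguments. Your primary argument verifies Kleiman's criterion numerically: for nonzero $z\in\overline{NE}(X)$ you compute $(tL_1+(1-t)L_2,z)=t(L_1,z)+(1-t)(L_2,z)>0$ directly from ampleness of $L_1$, nefness of $L_2$, and $t>0$, $1-t\ge 0$. The paper instead argues geometrically: it takes an open neighborhood $U$ of $L_1$ inside $\Amp(X)_{\Bbb R}$, observes that the cone spanned by $U$ and $L_2$ is contained in $\Nef(X)_{\Bbb R}$, and notes that each point $tL_1+(1-t)L_2$ with $0<t\le 1$ lies in the interior of that cone, so Kleiman's criterion (in the form ``the ample cone is the interior of the nef cone'') gives ampleness. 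Your pairing computation is the more elementary and self-contained of the two, since it needs only bilinearity of the intersection pairing plus the numerical characterizations of ample and nef, whereas the paper's route additionally invokes openness of the ample cone; conversely, the paper's argument makes the convexity picture explicit, which is exactly the ``structural'' variant you sketch in your closing remark, so that remark essentially reproduces the paper's proof. One point in your favor worth keeping: you are careful to test positivity against the full closed cone $\overline{NE}(X)\setminus\{0\}$ rather than merely against curve classes, which is the correct hypothesis for Kleiman's criterion for ${\Bbb R}$-divisor classes.
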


\begin{proof}
There is an open neighborhood $U$ of $L_1$ in $\Amp(X)_{\Bbb R}$.
Then the cone spanned by $U$ and $L_2$ is contained in the nef cone.
Since every $tL_1+(1-t)L_2$ ($0<t \leq 1$) is in the interior
of this cone,
Kleiman's criterion implies the ampleness. 
\end{proof}
\end{NB}

We take $\xi_1,\xi_2 \in K(X)_{\Bbb Q}$ 
such that
\begin{equation}
\begin{split}
\ch(\xi_1)=& (H+(H,\delta)\varrho_X)(\td_X^{-1})^{\vee}
=H+(H,\delta-\tfrac{1}{2}K_X)\varrho_X,\\
\ch(\xi_2)=& -\left(e^{\beta-\frac{1}{2}K_X}
-\frac{\chi(e^{\beta-\frac{1}{2}K_X},v)}{r}\varrho_X \right).
\end{split}
\end{equation} 
By the construction of the moduli scheme, we have the following.
\begin{lem}[cf. {\cite[Lem. A.4]{MYY:2011:2}}]\label{lem:ample}
${\cal L}(n \xi_1+\xi_2)$ is ample for $n \gg 0$.
\end{lem}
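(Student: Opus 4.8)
The plan is to reduce the ampleness of $\mathcal{L}(n\xi_1+\xi_2)$ to the standard Gieseker--Maruyama ample-line-bundle construction, following the pattern of \cite[Lem. A.4]{MYY:2011:2}. First I would identify the geometric meaning of each piece. The class $\xi_1$ is engineered so that $\ch(\xi_1)=(H+(H,\delta)\varrho_X)(\td_X^{-1})^\vee$, and since $H=\pi^*(\text{ample on }Y)$ is at least nef and big, the determinant line bundle $\mathcal{L}(\xi_1)=\det p_{M!}(\xi_1^\vee\otimes\mathcal{E}_v)$ is the classical theta-type line bundle attached to the polarization $H$. Its positivity is exactly what the Gieseker--Maruyama GIT construction delivers: $\mathcal{L}(\xi_1)$ is the pullback of $\mathcal{O}(1)$ under the embedding of $M_{(\beta,\omega)}(v)$ into a projective space via sections $\chi(\mathcal{E}_v\otimes(\text{twists by }H))$, hence nef, and it is ample on the locus where the $H$-slope distinguishes the $S$-equivalence classes. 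The summand $\xi_2$ has $\ch(\xi_2)=-(e^{\beta-\frac12 K_X}-\frac{\chi(e^{\beta-\frac12 K_X},v)}{r}\varrho_X)$, a class in $v^\perp$ (one checks $\chi(\xi_2^\vee\otimes v)=0$ using Lemma \ref{lem:G/beta}), so $\mathcal{L}(\xi_2)$ lies in the image of $\theta_v$ and represents the twist by the $\beta$-data.

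Next I would make precise in what sense $\mathcal{L}(\xi_1)$ fails to be ample and why adding a large multiple to $\xi_2$ fixes this. Because $H$ is only the pullback of an ample class, $\mathcal{L}(\xi_1)$ is semi-ample rather than strictly ample: it contracts exactly the curves along which the $H$-Hilbert polynomial (equivalently $d_\beta$ and $a_\beta$) is constant, i.e.\ the fibers of the map to $M_{H'}(w)$ on $Y$. The content of the lemma is that $\mathcal{L}(\xi_2)$ is relatively ample for this contraction. The mechanism is the usual one: on a family of $S$-equivalent but non-isomorphic stable objects, $\det p_!$ of a class pairing nontrivially against the difference of Mukai vectors of the Jordan--Hölder factors has positive degree, and $\xi_2$ is precisely the class whose pairing separates objects with the same $H$-slope but differing $\beta$-twisted invariant $a_\beta$. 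So for $n$ large, $n\xi_1+\xi_2$ combines a nef class that is ample transverse to the fibers with a class ample along the fibers.

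Concretely I would invoke the general principle (used verbatim in \cite[Lem. A.4]{MYY:2011:2}) that if $L_1$ is nef and $L_1^{\perp}\cap\overline{NE}(M)$ is a face on which $L_2$ is positive, then $nL_1+L_2$ is ample for $n\gg0$ by Kleiman's criterion: on the compact set of curve classes $[\Gamma]$ with $(L_1,[\Gamma])=0$ one has $(L_2,[\Gamma])>0$ uniformly, while for $[\Gamma]$ with $(L_1,[\Gamma])$ bounded away from zero the term $n(L_1,[\Gamma])$ dominates. I would apply this with $L_1=\mathcal{L}(\xi_1)$ and $L_2=\mathcal{L}(\xi_2)$, having verified the boundedness of $M_{(\beta,\omega)}(v)$ (from the support property, Lemma \ref{lem:support}, and Remark \ref{rem:indep-of-G}(2)) so that $\overline{NE}(M)$ is finitely generated and the numerical argument applies.

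The main obstacle I expect is the positivity of $\mathcal{L}(\xi_2)$ on the fibers of the contraction defined by $\mathcal{L}(\xi_1)$, rather than the formal Kleiman step. One must show that a curve contracted by $\mathcal{L}(\xi_1)$ genuinely moves the $\beta$-twisted invariant, so that $(\mathcal{L}(\xi_2),\cdot)>0$ there; this is where the precise choice $\ch(\xi_2)=-(e^{\beta-\frac12 K_X}-\frac{\chi}{r}\varrho_X)$ enters, since it is exactly the class dual (under $(\,,\,)$) to the $e^\delta+\frac{(v^2)}{2r^2}\varrho_X$ direction picked out by $\xi$ in \eqref{eq:def-xi}. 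I would handle this by the standard positivity of determinant line bundles for families of semistable sheaves with fixed reduced Hilbert polynomial, reducing via the morphism $\phi$ to the moduli space $M_{H'}^{\beta'-\frac12 K_Y}(w)$ on $Y$ where the classical Gieseker theory of \cite{Li}, \cite{Fo}, \cite{univ} applies directly, and then pulling back.
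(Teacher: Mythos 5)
The paper does not argue this lemma via cones of curves at all: the sentence preceding the lemma (``By the construction of the moduli scheme\dots'') \emph{is} the proof. In the GIT construction of the moduli scheme of twisted semi-stable objects (\cite[Prop.~1.4.3]{PerverseI}, \cite[Lem.~A.4]{MYY:2011:2}), semi-stability is tested by $\chi(G,E(nH))$, and the ample line bundle that descends from the GIT linearization is precisely the determinant line bundle of the class $n\xi_1+\xi_2$; indeed $\ch(n\xi_1+\xi_2)$ is, up to the normalization placing it in $K(X)_v$, the class $-\ch\bigl(e^{\beta-\frac{1}{2}K_X}(nH)\bigr)$ whose determinant of cohomology gives the Pl\"{u}cker-type embedding used in the construction. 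Your proposal replaces this one-line appeal by a decomposition ``nef class ${\cal L}(\xi_1)$ (contraction to the Uhlenbeck space) plus a class positive on the contracted fibers, plus a Kleiman-type compactness argument.'' The Kleiman step itself is sound (and needs neither finite generation of $\overline{NE}(M)$ --- boundedness of the moduli certainly does not give that --- nor anything beyond projectivity of $M$, which is already known from Proposition \ref{prop:moduli}), and the nefness of ${\cal L}(\xi_1)$ is consistent with the paper's later observation that ${\cal L}(\xi_1)\in f_H^*(\Pic(N_H(v)))$.

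The genuine gap is the step you yourself flag as the main obstacle: positivity of ${\cal L}(\xi_2)$ on the fibers of the contraction $f_H$ defined by ${\cal L}(\xi_1)$. The mechanism you propose cannot close it. A class obtained by pulling back along a morphism is numerically trivial on the fibers of that morphism (and on any curve it contracts), so ``reducing via the morphism $\phi$ to the moduli space $M_{H'}^{\beta'-\frac{1}{2}K_Y}(w)$ on $Y$ \dots and then pulling back'' can never certify positivity on fibers; it could only help if $\phi$ were finite on the $f_H$-fibers, which you do not address and which is essentially equivalent to what must be proved. Moreover $\phi$ is only defined in the blow-up/categorical-wall setting of subsection \ref{subsect:moduli-on-wall}, whereas in this lemma $H$ is an ample divisor on $X$ (your premise that $H$ is pulled back from $Y$, hence merely nef and big, misreads the context; the blow-up case is treated separately in subsection \ref{subsect:wall-crossing:blow-up}). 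To carry out your strategy honestly one must restrict ${\cal L}(\xi_2)$ to the fibers of Gieseker-to-Uhlenbeck --- which contain punctual Quot schemes of $0$-dimensional quotients ${\cal T}$ of the fixed double dual --- and prove positivity of the resulting determinant classes $\det p_!$ built from the family ${\cal T}$ (Le Potier, J.~Li); this amounts to redoing a substantial part of the GIT/Uhlenbeck comparison that the paper's one-line proof simply invokes.
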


We set
\begin{equation}
u:=e^\delta(\zeta-
(1+\tfrac{(v^2)}{2r^2}\varrho_X)).
\end{equation}
Assume that there is no wall between
$H+(H,\delta)\varrho_X$ and $u$.
Then there is a semi-circle in $(s,t)$-plane such that
$\xi(\beta,tH)=u$, where
$\beta=\delta+sH-D$,
$D=\zeta-\frac{(\zeta,H)}{(H^2)}H$.
Then Lemma \ref{lem:isom} implies 
$$
M_{(\delta+sH-D,tH)}(v)=\baM_H^{\beta-\frac{1}{2}K_X}(v).
$$
Since $\beta=\delta-\zeta+\left(s-\frac{(\zeta,H)}{(H^2)}\right) H$,
we also have 
$$
M_{(\delta+sH-D,tH)}(v)=\baM_H^{\delta-\zeta-\frac{1}{2}K_X}(v).
$$

\begin{prop}
Assume that there is no wall between
$H+(H,\delta)\varrho_X$ and $u:=\zeta+(\zeta,\delta)\varrho_X-
(e^\delta+\frac{(v^2)}{2r^2}\varrho_X)$.
Then 
for $\alpha \in K(X)_{\Bbb Q}$ with 
$\ch(\alpha)=u(\td_X^{-1})^{\vee}$,
${\cal L}(\alpha)$ is a ${\Bbb Q}$-ample divisor.
In particular, for an adjacent chamber ${\cal C}$ of $\varrho_X^\perp$,
$\theta_v({\cal C}) \subset
\Amp(\baM_H^\gamma(v))_{\Bbb Q}$,
where $\gamma=\delta-\zeta-\frac{1}{2}K_X$. 
\end{prop}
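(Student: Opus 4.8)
The plan is to identify ${\cal L}(\alpha)$ with the image under $\theta_v$ of the point $u\in C^+(v)$ and then with the positive class that Bayer--Macri attach to a generic stability condition lying over $u$. First I would use the paragraph preceding the statement: choosing $\beta=\delta+sH-D$ with $D=\zeta-\tfrac{(\zeta,H)}{(H^2)}H$ and the point $(\beta,tH)$ on the semicircle with $\xi(\beta,tH)=u$, the hypothesis that there is no wall between $H+(H,\delta)\varrho_X$ and $u$ together with Lemma \ref{lem:isom} gives
\[
M_{(\beta,tH)}(v)=\baM_H^{\beta-\frac12 K_X}(v)=\baM_H^{\gamma}(v),\qquad \gamma=\delta-\zeta-\tfrac12 K_X .
\]
Since $u$ lies in the interior of the chamber ${\cal C}$ adjacent to $\varrho_X^\perp$, the stability condition $\sigma:=\sigma_{(\beta,tH)}$ is generic, so $\baM_H^\gamma(v)$ is a fine moduli space all of whose points are $\sigma$-stable, with universal family ${\cal E}_v$. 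Because $\ch(\alpha)=u(\td_X^{-1})^{\vee}$ we have $\chi(\alpha^\vee\otimes v)=-(u,v)=0$, i.e.\ $\alpha\in K(X)_v$, and ${\cal L}(\alpha)=\kappa(\alpha)$ represents $\theta_v(u)$ modulo ${\frak a}^*\NS(\Alb(X)\times\Pic^0(X))_{\Bbb Q}$.

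Next I would invoke the positivity lemma of Bayer--Macri \cite{BM}. Under the identification $\xi(\beta,tH)=\mathrm{Im}\,e^{\beta+\sqrt{-1}tH}/Z_{(\beta,tH)}(v)$, the class $\theta_v(\xi(\sigma))=\theta_v(u)$ is exactly the nef divisor $\ell_\sigma$ that lemma produces for the family ${\cal E}_v$, and for a curve $\Gamma\subset\baM_H^\gamma(v)$ one has $\ell_\sigma\cdot\Gamma=0$ only if the objects parametrized by two general points of $\Gamma$ are $S$-equivalent. As $\sigma$ is generic these objects are $\sigma$-stable, so $S$-equivalence forces isomorphism; since distinct points of the fine moduli space carry non-isomorphic objects, $\ell_\sigma\cdot\Gamma>0$ for every curve $\Gamma$.

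To upgrade this strict positivity to ampleness I would use Lemma \ref{lem:ample} as an anchor on the line $L$ through $x_0=e^\delta H$ and $x_1$. Writing $\ell(a):=\theta_v(x_1)+a\,\theta_v(x_0)$, the correspondence $\xi_1\leftrightarrow x_0$, $\xi_2\leftrightarrow x_1$ gives $\ell(n)={\cal L}(n\xi_1+\xi_2)$, which is ample for $n\gg0$ by Lemma \ref{lem:ample}, while $\theta_v(u)=\ell(c)$ for the value $c=\tfrac{(\zeta,H)}{(H^2)}$ since $u\in L$. If $\theta_v(u)$ were not ample it would lie on $\partial\Nef(\baM_H^\gamma(v))$, giving a curve $\Gamma$ with $\ell(c)\cdot\Gamma=0$. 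Should $\theta_v(x_0)\cdot\Gamma=0$, then $\ell(a)\cdot\Gamma=\theta_v(x_1)\cdot\Gamma$ is independent of $a$ and hence $0$, contradicting the ampleness of $\ell(n)$; should $\theta_v(x_0)\cdot\Gamma>0$, then $\Gamma$ would be contracted precisely at the interior point $u$, contradicting the strict positivity of $\ell_\sigma$ just established. Thus $\theta_v(u)$ is nef and positive on every curve, so Bayer--Macri's lemma yields that ${\cal L}(\alpha)$ is ample. Letting $\zeta$ vary so that $u$ runs over all of ${\cal C}$ then gives $\theta_v({\cal C})\subset\Amp(\baM_H^\gamma(v))_{\Bbb Q}$.

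The main obstacle is exactly this final passage from nef-ness to ampleness. Bayer--Macri supply only nef-ness for free, and the explicit determinant computation of Lemma \ref{lem:ample} controls positivity only near the large-volume boundary $\varrho_X^\perp$; the real work is in propagating ampleness across the whole chamber, which is what the curve-contraction dichotomy above, combined with the genericity of $\sigma$, accomplishes.
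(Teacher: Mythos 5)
Your first two paragraphs essentially parallel the paper's setup: the identification of ${\cal L}(\alpha)$ with $\theta_v(u)$, nef-ness via the Bayer--Macri positivity lemma \cite{BM2}, and Lemma \ref{lem:ample} as the ample anchor are exactly the ingredients the paper uses. The gap is in your final step. From ``$\theta_v(u)$ is nef but not ample'' you deduce the existence of a \emph{curve} $\Gamma$ with $\theta_v(u)\cdot\Gamma=0$. That implication is false in general: by Kleiman's criterion, a nef class fails to be ample precisely when it pairs to zero with some nonzero element of the closed cone of curves $\overline{\operatorname{NE}}(\baM_H^\gamma(v))$, and such an element need only be a \emph{limit} of curve classes, not a multiple of the class of an actual curve. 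Mumford's classical example of a divisor on a ruled surface that is strictly positive on every curve and yet not ample shows that what your argument actually establishes --- nef-ness plus strict positivity on all curves --- does not imply ampleness. Both branches of your dichotomy, and the Bayer--Macri criterion for $\ell_\sigma\cdot\Gamma=0$ that feeds them, are statements about honest curves, so they say nothing about the boundary class of $\overline{\operatorname{NE}}$ that obstructs ampleness.

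The paper closes exactly this gap by a convex-geometric argument that never mentions curves. One perturbs $u$ to $u':=u-\epsilon(H+(H,\delta)\varrho_X)$ with $\epsilon>0$ small, so that $u'$ still lies in the chamber and ${\cal L}(\alpha')$ is nef by \cite{BM2}, where $\ch(\alpha')=u'(\td_X^{-1})^{\vee}$. Using the congruence $u(\td_X^{-1})^{\vee}\equiv\ch(\xi_2) \mod {\Bbb R}\ch(\xi_1)$ (which you use implicitly when identifying $\ell(n)$ with ${\cal L}(n\xi_1+\xi_2)$), one checks that, up to a positive scalar, $\alpha=(1-t)\alpha'+t\bigl(\xi_1+\tfrac{1}{n}\xi_2\bigr)$ for a suitable $t\in(0,1)$; the perturbation must move \emph{away} from $x_0$ precisely so that the ample anchor $\xi_1+\tfrac{1}{n}\xi_2$ of Lemma \ref{lem:ample} can enter with strictly positive coefficient. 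Since the sum of a nef ${\Bbb Q}$-divisor and an ample ${\Bbb Q}$-divisor is ample, ${\cal L}(\alpha)$ is ample, and letting $u$ vary over ${\cal C}$ gives the last assertion. If you wish to keep your framework on the line $L$, the repair is the same: do not argue by contradiction through curves, but write $\ell(c)$ as a positive combination of the nef class $\ell(c-\epsilon)$ and the ample class ${\cal L}(\xi_1+\tfrac{1}{n}\xi_2)$, and conclude from the cone structure of the ample cone.
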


\begin{proof}
Let $(\beta,tH)$ satisfy
$\xi(\beta,tH)=u$ as above, where
$\beta \equiv \delta-\zeta \mod {\Bbb R}H$.
Then we see that 
\begin{equation}\label{eq:u2}
\begin{split}
u=& -e^\delta(1-\zeta+\tfrac{(v^2)}{2r^2}\varrho_X)\\
\equiv & -(1+\beta+((\beta,\delta)-\tfrac{a}{r})\varrho_X ) 
\mod
{\Bbb R}(H+(H,\delta)\varrho_X)\\
=& -\left(e^\beta+\frac{(e^\beta,v)}{r}\varrho_X \right) 
\mod {\Bbb R}(H+(H,\delta)\varrho_X)
\end{split}
\end{equation}
by \eqref{eq:chi}, where
$v=r(1+\delta+\frac{a}{r}\varrho_X)$.
Hence 
\begin{equation}
\begin{split}
u(\td_X^{-1})^{\vee} \equiv &
-\left(e^{\beta-\frac{1}{2}K_X}
-\frac{\chi(e^{\beta-\frac{1}{2}K_X},v)}{r}\varrho_X \right)
\mod
{\Bbb R} \ch(\xi_1)\\
\equiv & \ch(\xi_2) \mod {\Bbb R}\ch(\xi_1).
\end{split}
\end{equation}
We take $u'$ such that $u'=u-\epsilon (H+(H,\delta)\varrho_X)$ 
($\epsilon>0$) belongs to the same chamber.
We take $\alpha'$ with $\ch(\alpha')=u' (\td_X^{-1})^{\vee}$.  
Then
$$
c_1({\cal L}(\alpha'))=
c_1(p_{M_H^\gamma(v)*}({u'}^{\vee}\ch{\cal E}_v)).
$$
By \cite{BM2}, ${\cal L}(\alpha')$ is nef.
We take an ample ${\Bbb Q}$-divisor 
${\cal L}(\xi_1+\frac{1}{n}\xi_2)$, $n \gg 0$.
Since ${\cal L}(\alpha)={\cal L}((1-t)\alpha'+ t(\xi_1+\frac{1}{n}\xi_2))$
($1>t>0$),
Lemma \ref{lem:ample} implies 
${\cal L}(\alpha)$ is ample. 
\end{proof}

\begin{rem}
If $\xi(\beta,\omega)$ belongs to a wall
$v_1^\perp$ of ${\cal C}$
and there are $\sigma_{(\beta,\omega)}$-stable
objects $E_1$ and $E_2$ with
$v(E_1)=v_1$ and $v(E_2)=v-v_1$
such that
$\dim \Ext^1(E_2,E_1) \geq 2$ and
$\phi_{(\beta',\omega')}(E_1)< \phi_{(\beta',\omega')}(E_2)$
for $\xi(\beta',\omega') \in {\cal C}$.
Then $\theta_v(\xi(\beta,\omega))$ is not ample. 
\end{rem}

Let 
$N_H(v)$
 be the Uhlenbeck moduli space of
$\mu$-semi-stable sheaves constructed by Li \cite{Li2}
(see also Remark \ref{rem:relative}).
Thus a point of $N_H(v)$ corresponds to a pair $(F,Z)$ 
of a poly-stable vector
bundle $F$ with respect to $H$ 
and a 0-cycle $Z=\sum_{i=1}^p n_i x_i \in S^n X$
such that $v=v(F)-n\varrho_X$, where $n \geq 0$.
For $(F,\sum_{i=1}^p n_i x_i) \in N_H(v)$, we have
$F \oplus (\bigoplus_{i=1}^p k_{x_i}[-1] ^{\oplus n_i})
\in {\cal M}_{(\delta,H)}(v)$.
Hence $N_H(v)$ paramerizes $S$-equivalence
classes of 
$\sigma_{(\delta,\omega)}$-semi-stable objects.
By the construction of the moduli space,
we have a morphism
$f_H:\baM_H^{\beta-\frac{1}{2}K_X}(v) \to N_H(v)$ and
${\cal L}(\alpha) \in f_H^*(\Pic(N_H(v)))$ 
if $\ch(\alpha)=(H+(H,\delta)\varrho_X)(\td_X^{-1})^{\vee}$. 
\begin{rem}\label{rem:uniqueness-N}
By the construction of $N_H(v)$ in \cite{Li2}, 
$N_H(v)$ may depend on the choice of $H$. 
Assume that ${\cal M}_H(v)^{\mu\text{-}ss}$ is irreducible,
normal and ${\cal M}_H(v)^{\mu\text{-}s}$ is an open
dense substack.
Then we can construct $N_H(v)$ as a normal projective variety
and $f_H:\baM_H^\gamma(v) \to N_H(v)$ is a birational
map.
If $H'$ and $H$ belongs to the same chamber, then
we have a birational map $f_{H'}\circ f_H^{-1}:N_H(v) \to N_{H'}(v)$.
Since $f_{H'}\circ f_H^{-1}(z)$ is a point for all
$z \in N_H(v)$, it is an isomorphism.
In particular, if $(v^2) \gg 0$, then
$N_H(v)$ depends only on the chamber.
By \cite[1.4]{Ktheory}, ${\cal L}(\alpha) \in f_H^*(\Pic(N_H(v)))$
if $\rk \alpha=0$.

By the same argument, we have a contraction $N_H(v) \to N_{H'}(v)$
if $H$ belongs to a chamber and $H'$ is on the boundary of the chamber. 
\end{rem}

\begin{rem}\label{rem:relative}
We have a morphism
$f:\baM_H^\gamma(v) \to \Pic^0(X)$ 
by sending $E$ to $\det (E-E_0)$, where $E_0$ is a fixed element
of $\baM_H^\gamma(v)$.
In the construction of
the Uhlenbeck moduli space \cite{Li2},
the determinant $\det E$ is fixed. 
In order to construct the Uhlenbeck moduli space for 
$\baM_H^\gamma(v)$, we need to recall its construction.  

We first note that $f$ is \'{e}tale locally trivial.
Indeed we have an \'{e}tale morphism
$f^{-1}(0) \times \Pic^0(X) \to M_H(v)$ by
sending $(E,L) \in f^{-1}(0) \times \Pic^0(X)$
to $E \otimes L$, and hence the composed morphism 
$\{E_0 \} \times \Pic^0(X) \to M_H(v) \to \Pic^0(X)$ is
the multiplication by $\rk E_0$.
Hence we have an identification $i_x:f^{-1}(x) \cong f^{-1}(0)$.
Assume that $\ch (\alpha)=(H+(H,\delta)\varrho_X)(\td_X^{-1})^{\vee}$. 
Then
${\cal L}(n\alpha)_{|f^{-1}(x)}$ is independent of $x \in \Pic^0(X)$
under the identification $i_x$.
By the base change theorem,
$f_*({\cal L}(n\alpha))$ $(n \gg 0)$
is locally free and 
$f_*({\cal L}(n\alpha))_s \cong f_*({\cal L}(n\alpha)_s)$.
Since ${\cal L}(n\alpha)_s$ is base point free,
we have a surjective homomorphism
$f^*(f_*({\cal L}(n\alpha))) \to {\cal L}(n\alpha)$, which implies
we have a morphism
$\baM_H^\gamma(v) \to {\Bbb P}(f_*({\cal L}(n\alpha)))$ over
$\Pic^0(X)$ for $n \gg 0$. Then the image is the Uhlenbeck moduli space.
\end{rem}

\begin{NB}
For an extension
$0 \to E_1 \to E \to E_2 \to 0$,
$E$ deforms to $E_1 \oplus E_2$ by the parameter space
$\Ext^1(E_2,E_1)$.
So $E$ deforms to $\oplus_i E_i$, where $E_i$ is a $\mu$-stable sheaves.
So assume that $E$ is a poly-stable sheaf.
Since the fiber of ${\cal M}_H(v)^{\mu\text{-}ss} \to N_H(v)$
over $(E^{\vee \vee},\sum_i n_i P_i)$
contains a product of punctual quot-schemes 
$\prod_i \Quot_{{\cal O}_{P_i}^r/X}^{n_i}$
which is irreducible, 
the fiber is connected.
\end{NB}

\subsection{Wall crossing for Gieseker and Uhlenbeck moduli spaces}
\label{subsect:MW}

Assume that $H \in \Amp(X)$.
Since the set of walls is locally finite 
(Lemma \ref{lem:locally-finite}),
in a small neighborhood $U$ of $H+(H,\delta)\varrho_X$,
we may assume that all walls pass through 
$H+(H,\delta)\varrho_X$. 
Then ${\cal M}_{(\beta,tH)}(v)={\cal M}_H^{\beta-\frac{1}{2}K_X}(v)$
if $\xi(\beta,tH) \in U$ and $(\beta,H)<(\delta,H)$.
We classify such walls.
For $H+(H,\delta)\varrho_X$,
$v_1:=r_1+\xi_1+a_1 \varrho_X \in (H+(H,\delta)\varrho_X)^\perp$
if and only if $(\xi_1/r_1,H)=(\delta,H)$.
We set $v_2:=v-v_1$.
Assume that $\xi(\beta,tH) \in v_1^\perp$.
We may assume that ${\cal M}_{(\beta,tH)}(v_1) \ne 
\emptyset$ and
${\cal M}_{(\beta,tH)}(v_2) \ne \emptyset$.
By shrinking a neighborhood $U$, we may also assume that 
there is no wall for $v_1, v_2$ between
$\xi(\beta,tH)$ and $H+(H,\delta)\varrho_X$.
Then ${\cal M}_{(\beta,tH)}(v_i)={\cal M}_H^{\beta-\frac{1}{2}K_X}(v_i)$
for $i=1,2$. 
%Moreover we may assume that there are $\mu$-semi-stable
%sheaves $E_1,E_2$ with $v(E_1)=v_1$ and $v(E_2)=v-v_1$.
In particular, $(v_1^2),(v_2^2) \geq 0$. 
%For $u=\zeta+(\zeta,\delta)\varrho_X-
%e^\delta(1+\frac{(v^2)}{2r^2}\varrho_X)$
%which is sufficiently close to 
%$H+(H,\delta)\varrho_X$,
%  $u \in v_1^\perp$
$u=\xi(\beta,tH) \in v_1^\perp$ 
means that 
%$(\xi_1,\zeta-\delta)-r_1((\zeta,\delta)-
%\frac{(\delta^2)}{2}- \frac{(v^2)}{2r^2})-a_1
%$
%Or
\begin{equation}
\begin{split}
& \chi \left(e^{\beta-\frac{1}{2}K_X}
-\frac{\chi(e^{\beta-\frac{1}{2}K_X},v)}{r}\varrho_X,v_1 \right)\\
=& -(e^\beta,v_1)+\frac{r_1}{r}(e^\beta,v)=0
\end{split}
\end{equation}
by \eqref{eq:chi}, \eqref{eq:u2} and
$(H+(H,\delta)\varrho_X,v_1)=0$.
Thus 
$$
\frac{\chi(e^{\beta-\frac{1}{2}K_X},v_1)}{r_1}=
\frac{\chi(e^{\beta-\frac{1}{2}K_X},v)}{r}.
$$
Hence if $u$ crosses a wall $v_1^\perp$, then
twisted semi-stability changes. 

Therefore the wall crossing of Gieseker semi-stability
is naturally understood by the wall crossing of Bridgeland's stability:
For a family of vectors
$$
u_H:=n(H+(H,\delta)\varrho_X)-
\left(e^\delta+\frac{(v^2)}{2r^2}\varrho_X \right),
$$ 
$H \in U \subset \Amp(X)$,
we have a family of moduli spaces, where $U$ is a compact subset and
$n$ is a sufficiently large integer depending on $U$.
\begin{NB}
Let $v_1^\perp,v_2^\perp,...,v_k^\perp$ be the set of walls passing through
$H_0+(H_0,\delta)\varrho_X$.
Then there are chambers ${\cal C}_1,{\cal C}_2,...,{\cal C}_l$ 
and $M_{{\cal C}_i}(v)=M_{(\gamma_i,tH)}(v)
=M_H^{\gamma_i-\frac{1}{2}K_X}(v)$.
The relation of these moduli spaces are described as a sequence of
Mumford-Thaddeus type flips.
\end{NB}
If $H$ crosses a wall for $\mu$-semi-stability, then
$u_H$ crosses all walls for twisted semi-stability.
If $H_0$ is on a wall and let $H_\pm$ be ample divisors in
adjacent chambers, then
we have morphisms $N_{H_\pm}(v) \to N_{H_0}(v)$
constructed by Hu and Li \cite{HL} (see also Remark \ref{rem:uniqueness-N}).
On the other hand, for the Gieseker moduli spaces,
we have a sequence of flops as was constructed by 
Ellingsrud-G\"{o}ttsche \cite{EG}, 
Friedman-Qin \cite{FQ}, Matsuki-Wentworth \cite{MW}.

\subsection{The case of a blow-up}\label{subsect:wall-crossing:blow-up}
Assume that $k:=(c_1(v),C)$ is normalized as $0 \leq k <r$. 
We take $\beta_0:=\delta+p_0 C$ 
such that $(\beta_0,C)=0$, that is, $\beta_0=\pi^*(\beta')$
($\beta' \in \NS(Y)_{\Bbb Q}$).
%$-\frac{1}{2}<(\beta_0,C)<\frac{1}{2}$.
Let $U$ be the open neighborhood of $(\beta_0,H)$ 
in subsection \ref{subsect:classification-wall}.
\begin{NB}
In a neighborhood of $H+(H,\delta)\varrho_X \in \xi(U)$, 
we shall study the wall.
We assume that $v \in \pi^*(H^*(Y,{\Bbb Q}))$
and $(r,(c_1(v),H))=1$. 
\end{NB}
We take $p_1$ with
$p_0+\frac{1}{2} \gg p_1 > p_0$.
Then we may assume that
\begin{equation}\label{eq:image=q<0}
\{\xi(\beta_0+sH,H-qC) \mid \epsilon \leq 0, q>0\}
\supset \{e^\delta(H+xC+y(1+\tfrac{(v^2)}{2r^2}\varrho_X)) \in\xi(U)
\mid x< p_1 y, y \leq 0\},
\end{equation}
where $\epsilon:=s(H^2)+p_0 q$.
Then
${\cal M}_{(\beta_0+sH,H-qC)}(v)={\cal M}_{(\beta,tH)}(v)$
for $q \geq 0$ and $\epsilon \leq 0$,
where $\xi(\beta,tH)=\xi(\beta_0+sH,H-qC)$.
In paricular the wall crossing in $U$ covers 
the wall crossing for the moduli of perverse coherent sheaves
in \cite{perv2}.

We first look at the wall crossing along $\varrho_X^\perp$.
We note that 
$\sigma_{(\beta_0,H)}$ is on the wall defined by $\varrho_X$ and
also on walls defined by $v({\cal O}_C(-n))$.
We define $v' \in H^*(Y,{\Bbb Q})$,
$\delta' \in \NS(Y)_{\Bbb Q}$ and $H'$ by
\begin{equation}
\pi^*(v')=v+kv({\cal O}_C),\;\;\delta':=\frac{c_1(v')}{r},\;\;H=\pi^*(H').
\end{equation}
\begin{lem}
\begin{enumerate}
\item[(1)]
${\cal M}_{(\beta_0,H)}(v)$ consists of
$E$ which is $S$-equivalence to 
$\pi^*(F) \oplus A$
where $F$ is a locally free poly-stable sheaf $F$ on $Y$ and 
\begin{equation}
A=\bigoplus_{i=1}^m   k_{x_i}[-1] \oplus {\cal O}_C[-1]^{\oplus n_1}
\oplus {\cal O}_C(-1)^{\oplus n_2},\;(x_i \in X \setminus C).
\end{equation}
\item[(2)]
The $S$-equivalence class of $E$ in (1) is uniquely 
determined by the $S$-equivalence class of 
${\bf R}\pi_*(E) \in {\cal M}_{(\delta',H')}(v')$, that is,
it is determined by $F$ and
\begin{equation}
A':={\bf R}\pi_*(A)=
\bigoplus_{i=1}^m   k_{x_i}[-1] \oplus k_p[-1]^{\oplus n_1},
\;(x_i \in X \setminus C). 
\end{equation}
\end{enumerate}
\end{lem}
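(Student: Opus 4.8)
The plan is to first locate the stability condition $\sigma_{(\beta_0,H)}$ precisely, then obtain both assertions from the classification of its stable objects together with the behaviour of ${\bf R}\pi_*$ on each building block. Since $(\beta_0,C)=0$ we are in the range $l=0$, so ${\frak C}^{\beta_0}={\frak C}^0={^{-1}\Per}(X/Y)$, and its irreducible objects are ${\cal O}_C$, ${\cal O}_C(-1)[1]$ and $k_x$ ($x\in X\setminus C$) by \cite[Lem.\ 1.2.16, Prop.\ 1.2.23]{PerverseI}. Because $H=\pi^*(H')$ and $\beta_0-\delta=p_0C$, we get $(c_1(v)-r\beta_0,H)=-rp_0(C,H)=0$, i.e.\ $d_{\beta_0}(v)=0$ and $Z_{(\beta_0,H)}(v)\in{\Bbb R}_{>0}$; thus $\xi(\beta_0,H)=e^\delta H\in\varrho_X^\perp$ and we sit on the ``$\mu$-stability'' wall, where $v$ has phase $0$. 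A direct central-charge computation shows that each of ${\cal O}_C[-1]$, ${\cal O}_C(-1)$, $k_x[-1]$ and every pullback $\pi^*(F_i)$ of a $\mu$-stable locally free sheaf $F_i$ on $Y$ with slope matching $v$ again has $Z_{(\beta_0,H)}\in{\Bbb R}_{>0}$, i.e.\ lies in the same phase as $v$.

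For (1), write $E\in{\cal M}_{(\beta_0,H)}(v)$ as $S$-equivalent to $\bigoplus_iE_i$ with the $E_i$ being $\sigma_{(\beta_0,H)}$-stable of this common phase, and classify the factors by rank. If $\rk E_i=0$, then $d_{\beta_0}(E_i)=0$ forces $E_i$ to be, up to shift, a $0$-dimensional object of ${\frak C}^0$, so by the classification of irreducibles $E_i\in\{{\cal O}_C[-1],{\cal O}_C(-1),k_x[-1]\}$. If $\rk E_i>0$, then by the argument of Lemma \ref{lem:mu-ss:wall} the factor $E_i$ is a torsion-free $\mu$-stable object of ${\frak C}^0$ with $(c_1(E_i(-\beta_0)),H)=0$; applying Lemma \ref{lem:reflexive-hull2} (with $l=0$) together with \cite[Lem.\ 1.1.31]{PerverseI} shows $E_i=\pi^*(F_i)$ for a locally free sheaf $F_i$ on $Y$, and $\mu$-stability with respect to $H=\pi^*(H')$ descends to $\mu$-stability of $F_i$ on $(Y,H')$, while Lemma \ref{lem:perverse-Bogomolov} gives $(v(E_i)^2)\ge0$. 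Collecting the positive-rank factors into $\pi^*(F):=\bigoplus_i\pi^*(F_i)=\pi^*(\bigoplus_iF_i)$, with $F$ a $\mu$-poly-stable locally free sheaf (equal slopes), and the rank-$0$ factors into $A$, yields the asserted form.

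For (2), apply ${\bf R}\pi_*$ factor by factor: ${\bf R}\pi_*\pi^*(F)=F$ since $R^1\pi_*{\cal O}_X=0$; ${\bf R}\pi_*(k_{x_i}[-1])=k_{x_i}[-1]$ as $\pi$ is an isomorphism off $C$; ${\bf R}\pi_*({\cal O}_C[-1])=k_p[-1]$ from $\pi_*{\cal O}_C=k_p$ and $H^1({\Bbb P}^1,{\cal O})=0$; and crucially ${\bf R}\pi_*({\cal O}_C(-1))=0$ from $H^0({\Bbb P}^1,{\cal O}(-1))=H^1({\Bbb P}^1,{\cal O}(-1))=0$. Hence ${\bf R}\pi_*E$ is $S$-equivalent to $F\oplus A'$ with $A'$ as stated, is $\sigma_{(\delta',H')}$-semi-stable on $Y$ with $v({\bf R}\pi_*E)=v'$ (the identity $\pi^*(v')=v+kv({\cal O}_C)$ being the Grothendieck--Riemann--Roch bookkeeping, consistent with $k=(c_1(v),C)$ and the multiplicities of the ${\cal O}_C$- and ${\cal O}_C(-1)$-factors), and $F$, the points $x_i$ and the multiplicity $n_1$ are read off directly from $F\oplus A'$. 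The only datum annihilated by ${\bf R}\pi_*$ is the number $n_2$ of ${\cal O}_C(-1)$-summands, which is recovered from the fixed class $v(E)=v$ exactly as in the uniqueness argument of subsection \ref{subsect:moduli-on-wall} (the class $n_2C$ is determined by $c_1(v)$, $c_1(\pi^*F)$ and $n_1$); thus the $S$-equivalence class of $E$ is uniquely determined by that of ${\bf R}\pi_*E$.

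The main obstacle is the positive-rank case of (1): showing that on this non-ample wall ($\omega=\pi^*(H')\in\partial{\cal K}(X)$) the positive-rank stable factors are precisely pullbacks of $\mu$-stable locally free sheaves, with no surviving $0$-dimensional torsion in the poly-stable representative. This rests on the coincidence of $\sigma_{(\beta_0,H)}$-stability of phase $0$ with $\mu$-stability in ${\frak C}^0$ (an analogue of the large-volume-limit comparison of Proposition \ref{prop:large}), after which Lemma \ref{lem:reflexive-hull2} supplies the local freeness and pullback structure; the delicate boundary behaviour of the stability condition is already legitimized by the support property of subsection \ref{subsect:support-property}.
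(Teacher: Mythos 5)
Your treatment of (2) is exactly the paper's proof: the paper's entire proof of this lemma is the three-line argument that $A'$ determines $\{x_1,\dots,x_m\}$ and $n_1$, and that $c_1(E)=c_1(\pi^*(F))+(n_2-n_1)C$ then recovers $n_2$; your factor-by-factor computation of ${\bf R}\pi_*$ (in particular ${\bf R}\pi_*({\cal O}_C(-1))=0$) only makes explicit what the paper builds into the statement. Part (1) is not proved in the paper at all, and the route you take for it --- rank-$0$ stable factors are shifts of the irreducible $0$-dimensional objects of ${\frak C}^0$, positive-rank stable factors are $\mu$-(semi)stable objects of ${\frak C}^0$ of $\beta_0$-degree $0$ which are pullbacks --- is precisely the one the surrounding machinery (Lemmas \ref{lem:reflexive-hull2}, \ref{lem:mu-ss:wall}, \ref{lem:perverse-Bogomolov}) is set up to support.

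There is, however, a gap at what you yourself call ``the main obstacle''. Lemma \ref{lem:reflexive-hull2} does not show that a torsion-free $\mu$-stable $E_i\in{\frak C}^0$ with $d_{\beta_0}(E_i)=0$ equals $\pi^*(F_i)$; it only gives an exact sequence $0\to E_i\to E'\to T\to 0$ in ${\frak C}^0$ with $T$ $0$-dimensional and $E'$ a pullback, and likewise Lemma \ref{lem:mu-ss:wall} yields only $\mu$-semistability. To kill $T$ you must use the Bridgeland stability of $E_i$ at $\sigma_{(\beta_0,H)}$ itself: every $0$-dimensional object $A$ of ${\frak C}^0$ has $Z_{(\beta_0,H)}(A)\in{\Bbb R}_{<0}$, i.e.\ phase $1$, the same phase as $E_i[1]$. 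If $\Ext^1(A,E_i)\neq 0$ for some irreducible $0$-dimensional $A$, a non-split extension $0\to E_i\to W\to A\to 0$ in ${\frak C}^0$ has $W$ torsion free with every subsheaf of non-positive $\beta_0$-degree, so $W\in{\cal F}_{\beta_0}$, and rotating the triangle gives an exact sequence $0\to A\to E_i[1]\to W[1]\to 0$ in ${\cal A}_{(\beta_0,H)}$, exhibiting $A$ as a proper nonzero phase-$1$ subobject of the stable phase-$1$ object $E_i[1]$ --- a contradiction. Hence $\Ext^1(A,E_i)=0$ for all $0$-dimensional $A$, so the construction of Lemma \ref{lem:reflexive-hull} terminates at $E_i$ (that is, $T=0$), and the proof of Lemma \ref{lem:reflexive-hull2} then shows $E_i$ is locally free with $E_{i|C}$ trivial, i.e.\ $E_i=\pi^*(F_i)$. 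The same phase comparison, applied to $\pi^*(A')[1]$ for a saturated equal-slope subsheaf $A'\subsetneq F_i$, shows each $F_i$ is $\mu$-stable, which is what gives the poly-stability of $F$. With these two points repaired your argument is complete and is the one the paper intends.
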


\begin{proof}

(2) 
By $A'$, $\{x_1,...,x_m\}$ and $n_1$ are determined.
Since $c_1(E)=c_1(\pi^*(F))+(n_2-n_1)C$,
$n_2$ is also determined.
Hence the $S$-equivalence class of $E$ is uniquely
 detemined by the $S$-equivalence class of 
${\bf R}\pi_*(E)$.
\end{proof}

%Thus the $S$-equivalence classes of
%the members of ${\cal M}_{(\beta_0,H)}(v)$
%correspond to points of $N_{H'}(v')$.
   
For $\sigma_{(\beta_0+sH,H-q C)}$ with $s(H^2)+p_0 q=0$ and
$\sigma_{(\beta_0,H)}$ on $\varrho_X^\perp$,
\begin{NB}
Then
$N_{H-q C}(v)$ is regarded as the set of 
$\sigma_{(\delta,H-q C)}$-semi-stable objects.
${\cal M}_{(\delta,H-q C)}(v)$ is the Uhlenbeck space
with respect to $H-q C$
and ${\cal M}_{(\delta,H)}(v)$ is the Uhlenbeck space 
on $Y$.
\end{NB}
we have a contraction $N_{H-q C}(v) \to N_{H'}(v')$.
%(see \cite{Lecture}).  

\begin{lem}
For $E \in {\frak C}^0$ with
$\Hom({\cal O}_C,E)=0$,
there is an exact sequence
\begin{equation}\label{eq:univ-ext}
0 \to E \to {\bf L}\pi^*(F) \to \Ext^1({\cal O}_C,E) \otimes {\cal O}_C
\to 0.
\end{equation}
\end{lem}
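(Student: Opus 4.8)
The plan is to realize the term ${\bf L}\pi^*(F)$ as the universal extension of $E$ by copies of ${\cal O}_C$ and then to recognize that object as a pullback by an orthogonality argument. Write $V:=\Ext^1({\cal O}_C,E)$, a finite dimensional vector space. Since $\Ext^1(V\otimes{\cal O}_C,E)\cong V^\vee\otimes\Ext^1({\cal O}_C,E)=\End(V)$, the identity $\id_V$ determines an extension
\begin{equation}
0\to E\to W\to V\otimes{\cal O}_C\to 0 .
\end{equation}
Because $E$ and $V\otimes{\cal O}_C$ lie in the heart ${\frak C}^0$ and ${\frak C}^0$ is closed under extensions, $W\in{\frak C}^0$. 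I would then set $F:={\bf R}\pi_*(W)$ and prove $W\cong{\bf L}\pi^*(F)$; applying ${\bf R}\pi_*$ together with the projection formula (${\bf R}\pi_*{\cal O}_X={\cal O}_Y$) shows this is consistent, and since $W$ is a perverse coherent sheaf its pushforward $F$ is in fact a coherent sheaf on $Y$.

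First I would show $\Ext^j({\cal O}_C,W)=0$ for all $j$. Applying $\Hom({\cal O}_C,-)$ to the sequence above and using $\Hom({\cal O}_C,E)=0$ (the hypothesis), $\Hom({\cal O}_C,{\cal O}_C)=k$, and $\Ext^1({\cal O}_C,{\cal O}_C)=0$ (the curve $C$ is rigid, with $N_{C/X}={\cal O}_C(-1)$), together with the defining property of the universal extension that the connecting map $V=\Hom({\cal O}_C,V\otimes{\cal O}_C)\to\Ext^1({\cal O}_C,E)=V$ is the identity, gives $\Hom({\cal O}_C,W)=\Ext^1({\cal O}_C,W)=0$. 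In top degree, $\Ext^2({\cal O}_C,{\cal O}_C)\cong\Hom({\cal O}_C,{\cal O}_C\otimes K_X)^\vee=H^0({\cal O}_C(-1))^\vee=0$, while $\Ext^2({\cal O}_C,E)\cong\Hom(E,{\cal O}_C(-1))^\vee=0$ because $E\in{\frak C}^0$ and ${\cal O}_C(-1)[1]$ is an irreducible object of ${\frak C}^0$, so there are no maps from $E$ into ${\cal O}_C(-1)$ (the heart admits no negative extensions). Hence $\Ext^2({\cal O}_C,W)=0$ as well.

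Next I would convert this into orthogonality against ${\cal O}_C(-1)$. By adjunction for $C\subset X$ one has $K_X|_C={\cal O}_C(-1)$, so ${\cal O}_C\otimes K_X\cong{\cal O}_C(-1)$, and Serre duality on the surface $X$ yields $\Ext^i(W,{\cal O}_C(-1))\cong\Ext^{2-i}({\cal O}_C,W)^\vee=0$ for every $i$. Thus $W$ lies in the left orthogonal of ${\cal O}_C(-1)$. Since $\pi$ is the blow-up of a smooth point, $C^2=-1$ and ${\bf R}\pi_*{\cal O}_C(-1)=0$, Orlov's blow-up formula gives the semiorthogonal decomposition ${\bf D}(X)=\langle{\cal O}_C(-1),{\bf L}\pi^*{\bf D}(Y)\rangle$, in which the fully faithful functor ${\bf L}\pi^*$ has essential image exactly ${}^\perp\langle{\cal O}_C(-1)\rangle$. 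Therefore the counit ${\bf L}\pi^*{\bf R}\pi_*W\to W$ is an isomorphism, i.e. $W\cong{\bf L}\pi^*(F)$ with $F={\bf R}\pi_*(W)$, and substituting this into the extension produces the asserted sequence.

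The main obstacle is the bookkeeping linking the two curve classes: the universal extension is naturally adapted to ${\cal O}_C$, since it kills $\Ext^1({\cal O}_C,E)$, whereas membership in ${\bf L}\pi^*{\bf D}(Y)$ is governed by orthogonality to ${\cal O}_C(-1)$. The Serre-duality identity $\Ext^i(W,{\cal O}_C(-1))\cong\Ext^{2-i}({\cal O}_C,W)^\vee$, resting on the equality ${\cal O}_C\otimes K_X\cong{\cal O}_C(-1)$, is exactly what bridges the two descriptions; establishing the vanishing in all three relevant degrees $j=0,1,2$ — in particular the degree-two vanishing coming from $E\in{\frak C}^0$ — is the crux, after which the identification of $W$ with a pullback is formal.
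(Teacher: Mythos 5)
Your proof is correct, and while it opens exactly the same way as the paper's, it finishes by a genuinely different route. Both arguments construct the same universal extension $W$ (the paper calls it $E'$) and both rest on the observation $\Ext^2({\cal O}_C,E)=\Hom(E,{\cal O}_C(-1))^{\vee}=0$, valid because ${\cal O}_C(-1)[1]$ lies in the heart ${\frak C}^0$. The paper then stays inside its perverse-sheaf toolkit: from the two vanishings it computes $\dim\Ext^1({\cal O}_C,E)=-\chi({\cal O}_C,E)=(c_1(E),C)$, so that $W$ satisfies $\Hom({\cal O}_C,W)=0$ and $(c_1(W),C)=0$; it then uses that the counit $\pi^*(\pi_*(W))\to W$ is surjective in ${\frak C}^0$ (with $\pi_*(W)$ torsion free, hence ${\bf L}\pi^*(\pi_*(W))=\pi^*(\pi_*(W))$), and deduces that this surjection is an isomorphism by comparing first Chern classes. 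You instead establish the full vanishing $\Ext^j({\cal O}_C,W)=0$ for $j=0,1,2$ (the paper only ever needs $j=0$ for $W$), convert it via Serre duality and $K_X|_C={\cal O}_C(-1)$ into $\Ext^{\bullet}(W,{\cal O}_C(-1))=0$, and then invoke Orlov's blow-up decomposition ${\bf D}(X)=\langle {\cal O}_C(-1),{\bf L}\pi^*{\bf D}(Y)\rangle$ to conclude $W\cong{\bf L}\pi^*({\bf R}\pi_*W)$. What your route buys is a purely formal, categorical identification of $W$ as a pullback, with no Chern-class bookkeeping and no appeal to the counit-surjectivity results of \cite{PerverseI}; what it costs is the external input of the semiorthogonal decomposition, together with one point you assert but should justify by citation: that $F={\bf R}\pi_*W$ is an honest sheaf on $Y$. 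That last fact is true and standard --- ${\bf R}\pi_*$ is $t$-exact from the perverse $t$-structure defining ${\frak C}^0={^{-1}\Per}(X/Y)$ to the standard one on ${\bf D}(Y)$, cf. \cite{Br:4} --- and it is needed for the statement to read as intended (and for the identification $F=\pi_*(E(C))$ in the remark that follows the lemma), so make that reference explicit.
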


\begin{proof}
We note that $\Ext^2({\cal O}_C,E)=\Hom(E,{\cal O}_C(-1))^{\vee}=0$.
Hence 
$$
\dim \Ext^1({\cal O}_C,E)=-\chi({\cal O}_C,E)=(c_1(E),C).
$$
Then the universal extension $E'$ is 
an object of ${\frak C}^0$ such that 
$\Hom({\cal O}_C,E')=0$ and $(c_1(E'),C)=0$.
Since $F:=\pi_*(E')$ is torsion free,
${\bf L}\pi^*(F)=\pi^*(F)$ and $\phi:\pi^*(F) \to E'$ is surjective
in ${\frak C}^0$.
By comparing the first Chern classes,
we see that $\phi$ is isomorphic. 
\end{proof}

\begin{rem}
Since ${\bf R}\pi_*({\cal O}_X(C))={\cal O}_Y$,
$F={\bf R}\pi_*({\bf L}\pi^*(F)(C))=\pi_*(E(C))$.
\end{rem}

In ${\cal A}_{(\beta'+sH',H')}(X/Y)$, \eqref{eq:univ-ext}
gives an exact sequence 
\begin{equation}\label{eq:HNF:q<0}
0 \to \Hom({\cal O}_C[-1],E) \otimes {\cal O}_C
\to E \to {\bf L}\pi^*(F) \to 0.
\end{equation}

Therefore we get the following.
\begin{prop}\label{prop:Uhlenbeck:blou-up}
\begin{enumerate}
\item[(1)]
If $k \ne 0$, then
${\cal M}_{(\beta_0,H-qC)}(v) =\emptyset$
for $q<0$.
\item[(2)]
If $k = 0$, then
${\cal M}_{(\beta_0,H-qC)}(v) =
{\cal M}_{(\delta',H')}(v')$
for $q<0$.
%, where $v'=e^{\delta'}(r-\frac{(v^2)}{2r}\varrho_Y) 
%\in H^*(Y,{\Bbb Q})_{\alg}$.
\end{enumerate}
\end{prop}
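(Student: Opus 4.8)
The plan is to read off both statements from the torsion-pair structure of the tilted category ${\cal A}_{(\delta',H')}(X/Y)=\langle {\bf L}\pi^*{\cal A}_{(\delta',H')},{\cal O}_C[-1]\rangle$ underlying $\sigma_{(\beta_0,H-qC)}$ for $q<0$, together with the exact sequence \eqref{eq:HNF:q<0} coming from \eqref{eq:univ-ext}. Recall that here $\beta_0=\pi^*(\delta')$ and $H=\pi^*(H')$. First I would pin down the orientation of the torsion pair: since $\Hom({\cal O}_C[-1],{\bf L}\pi^*A)=\Ext^1_X({\cal O}_C,{\bf L}\pi^*A)=0$ (because ${\cal O}_C(-1)$ has no cohomology on $C\cong{\Bbb P}^1$), the part ${\cal O}_C[-1]$ is the subobject and ${\bf L}\pi^*(-)$ the quotient. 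Thus every $E\in{\cal A}_{(\delta',H')}(X/Y)$ fits canonically into $0\to{\cal O}_C[-1]^{\oplus n}\to E\to{\bf L}\pi^*(F)\to0$ with $F\in{\cal A}_{(\delta',H')}$. Comparing Chern characters and using $v({\cal O}_C)=(0,C,\tfrac12)$ forces $n=(c_1(E),C)=k$ and $v(F)=v'$, where $\pi^*(v')=v+k\,v({\cal O}_C)$; moreover the projection formula gives $Z_{(\beta_0,H-qC)}({\bf L}\pi^*F)=Z_{(\delta',H')}(F)$, the twisting term $-qC$ pairing trivially with pulled-back classes.

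For part (2), when $k=0$ we have $v=\pi^*(v')$ and the torsion subobject ${\cal O}_C[-1]^{\oplus0}$ vanishes, so every $E$ with $v(E)=v$ is of the form ${\bf L}\pi^*(F)$ with $v(F)=v'$. Since ${\bf L}\pi^*$ is fully faithful (${\bf R}\pi_*{\bf L}\pi^*=\id$), maps ${\cal A}_{(\delta',H')}$ into the heart, and preserves central charges, it preserves and reflects $\sigma$-(semi)stability, hence induces an isomorphism of the moduli onto the objects of class $v'$. This recovers the identification ${\cal M}_{(\pi^*(\delta'),H-qC)}(v)\cong{\cal M}_{(\delta',H')}(v')$ already recorded above, giving the claim.

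For part (1), suppose $k\ne0$ and that some $E\in{\cal M}_{(\beta_0,H-qC)}(v)$ exists. Then in the decomposition $F\ne0$ with $\rk F=r>0$ and $v(F)=v'$, so as a nonzero object of the heart ${\cal A}_{(\delta',H')}$ its central charge must lie in the upper half plane or on the negative real axis. But I compute $Z_{(\delta',H')}(v')=\tfrac{(v'^2)}{2r}+\tfrac{r(H'^2)}{2}$, which is real since $c_1(v')=r\delta'$ gives zero twisted slope. Using $(v'^2)=(v^2)+k(k-r)$ with the Bogomolov inequality $(v^2)\ge0$ (Lemma \ref{lem:perverse-Bogomolov}), together with $0\le k<r$ and $(H'^2)\ge1$, one gets $(v'^2)\ge-\tfrac{r^2}{4}>-r^2(H'^2)$, whence $Z_{(\delta',H')}(v')>0$. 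A nonzero heart object cannot have strictly positive real central charge, a contradiction; therefore ${\cal M}_{(\beta_0,H-qC)}(v)=\emptyset$.

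The main obstacle is the central-charge bookkeeping in part (1): one must check carefully that the torsion quotient carries exactly the class $v'$ and, above all, that the resulting real number $Z_{(\delta',H')}(v')$ is strictly positive under the standing positivity. Establishing the torsion-pair orientation (so that ${\cal O}_C[-1]$ is the subobject, matching \eqref{eq:HNF:q<0}) and the projection-formula identity $Z_{(\beta_0,H-qC)}({\bf L}\pi^*F)=Z_{(\delta',H')}(F)$ are the supporting computations that make both parts run; once these are in place, part (2) is essentially the fully faithfulness of ${\bf L}\pi^*$ and part (1) is the sign of the real charge.
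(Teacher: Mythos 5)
Your skeleton---reduce everything to the torsion-pair decomposition $0\to{\cal O}_C[-1]^{\oplus k}\to E\to{\bf L}\pi^*F\to 0$ of \eqref{eq:HNF:q<0} and then do numerical bookkeeping---is exactly the paper's (largely implicit) route, and your part (2) is essentially the identification ${\cal M}_{(\pi^*(\beta'),H-qC)}(\pi^*v')\cong{\cal M}_{(\beta',H')}(v')$ recorded just before the proposition, so that half is fine. The genuine gap is in part (1): you deduce $Z_{(\delta',H')}(v')>0$ from $(v^2)\ge 0$, citing Lemma \ref{lem:perverse-Bogomolov}. That lemma applies to $\mu$-semistable objects of the perverse heart ${\frak C}^\beta$; your hypothetical $E$ is only $\sigma_{(\beta_0,H-qC)}$-semistable, and for $q<0$ one has $(H-qC,C)=q<0$, so the relevant heart is Toda's ${\cal A}_{(\delta',H')}(X/Y)$, which is \emph{not} a tilt of ${\frak C}^\beta$; none of the ample-$\omega$/large-volume results of the paper convert such an $E$ into a $\mu$-semistable perverse sheaf of class $v$. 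Since the proposition asserts emptiness, you also cannot import $(v^2)\ge 0$ from nonemptiness of some other moduli space. So the positivity on which your contradiction rests is unproven.

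The gap is avoidable, and the fix is what the paper's argument via \eqref{eq:HNF:q<0} amounts to, with no Bogomolov-type input: the quotient would be ${\bf L}\pi^*F$ with $F\in{\cal A}_{(\delta',H')}$ and $v(F)=v'$, hence $\mathrm{Im}\,Z_{(\delta',H')}(F)=(c_1(v')-r\delta',H')=0$; but any nonzero object of the tilt ${\cal A}_{(\delta',H')}$ with vanishing imaginary part is an extension of a $0$-dimensional sheaf by $G[1]$ with $G$ a $\mu$-semistable sheaf of slope $(\delta',H')$, hence has rank $\le 0$, contradicting $\rk v'=r>0$ (and $F=0$ is impossible since then $v=k\,v({\cal O}_C[-1])$ would have rank $0$). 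As $\mathrm{Im}\,Z_{(\beta_0,H-qC)}(v)=-kq\ne 0$, every semistable object of class $v$ lies in the heart up to even shift, so the stack is empty. Two secondary errors: your identity $Z_{(\beta_0,H-qC)}({\bf L}\pi^*F)=Z_{(\delta',H')}(F)$ is false, since $e^{-\sqrt{-1}qC}$ has $\varrho_X$-component $\tfrac{q^2}{2}$, giving $Z_{(\beta_0,H-qC)}({\bf L}\pi^*F)=Z_{(\delta',H')}(F)-\tfrac{q^2}{2}\rk F$; this is harmless in (1) (your contradiction lives entirely on $Y$) but it undercuts the ``preserves central charges, hence preserves and reflects semistability'' step in (2). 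There, note also that for $k=0$ the class $v$ has \emph{real} central charge, so the semistable objects are shifts $A[-1]$ of heart objects $A$ of class $-v$ rather than heart objects of class $v$; the decomposition must be applied to $A$, whose torsion part vanishes, and phase-$1$ objects are automatically semistable, which is what makes the bijection $A\mapsto{\bf L}\pi^*A$ an identification of the two moduli.
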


We next consider the wall crossing for the Gieseker semi-stability.
For simplicity, we assume that
$\gcd(r,(c_1(v),H))=1$.
In this case, the hyperplanes
$v({\cal O}_C(-n))^\perp$ $(n>0)$ in $C^+(v)$
are the candidates of walls. % in ${\cal K}(X)$.
Let ${\cal C}_n$ be a chamber
between $v({\cal O}_C(-n))^\perp$ and
$v({\cal O}_C(-n-1))^\perp$.  
Then $n-\frac{1}{2}<-(\beta,C)<n+\frac{1}{2}$
for $\xi(\beta,\omega) \in {\cal C}_n$ with 
$\omega \in {\Bbb R}_{>0}H$
and ${\cal M}_{{\cal C}_n}(v) \cong
{\cal M}_H^{\beta-\frac{1}{2}K_X}(v)$.
\begin{NB}
For $\beta_s:=\beta+sH$,
$\sigma_{(\beta_s,H)}$ is a family of stability conditions
such that $\xi(\beta_s,H)$ is a line passing through
$H+(H,\delta)\varrho_X$ and
$\xi(\beta_s,H) \in {\cal C}_n$
for $s=(\delta-\beta,H)/(H^2)$.
We have ${\frak C}^{\beta_s}={\frak C}^\beta$.
\end{NB}
%As we shall see in subsection \ref{subsect:classification-wall},
%$v({\cal O}_C(-n-1))^\perp$ defines a wall in ${\cal K}(X)$ if
%${\cal M}_{{\cal C}_n}(v) \ne {\cal M}_{{\cal C}_{n+1}}(v)$. 
By Lemma \ref{lem:l=infty} or the finiteness of walls in $U$,
there is an integer $N(v)$ such that
${\cal M}_{{\cal C}_n}(v)={\cal M}_{H-q C}^{\beta-\frac{1}{2}K_X}(v)$
for $n \geq N(v)$ where $q>0$ is sufficiently small.
If $k=(c_1(v),C)=0$, then ${\cal M}_{{\cal C}_0}(v)$ is the moduli space
of semi-stable sheaves on $Y$.
Thus the wall-crossing in \cite{perv2} is the wall crossing in 
the space of stability conditions.
If $0<k<r$, then by \eqref{eq:HNF:q<0},
we have a morphism 
$$
{\cal M}_{{\cal C}_0}(v) \to
{\cal M}_{(\beta'+sH',H')}(v')
$$ 
whose general fibers are $Gr(r,k)$-bundles.
If $\xi(\beta_0+sH,H-qC) \in {\cal C}_0$ 
crosses the wall $v({\cal O}_C)^\perp$, then
${\cal M}_{(\beta_0+sH,H-qC)}(v)=\emptyset$ for $q<0$.

The relation between 
$\baM_{{\cal C}_0}(v)$ and 
$\baM_{{\cal C}_{N(v)}}(v)$
is described as Mumford-Thaddeus type flips:
\begin{equation}
\begin{matrix}
          \baM_{{\cal C}_0}(v) &&&& \baM_{{\cal C}_1}(v)&&&&&
    \baM_{{\cal C}_{N(v)}}(v),\cr
          &\searrow &&\swarrow&&\searrow&&\cdots & \swarrow &\cr
          &&\baM_{{\cal C}_{0,1}}(v)&&&&
\baM_{{\cal C}_{1,2}}(v)&&\cr    
\end{matrix}
\end{equation}
\begin{NB}
If $n \gg 0$, then
${\cal O}_C(-a)$ does not define a wall
in ${\cal K}(X)$.
Thus $\xi^{-1}({\cal C}_n)$ and
$\xi^{-1}({\cal C}_{n+1})$ belong o the same
chamber.
\end{NB}
where
$\baM_{{\cal C}_{n,n+1}}(v)$ parameterizes $S$-equivalence classes
of $\sigma_{(\beta,\omega)}$-twisted semi-stable objects
with $\xi(\beta,\omega) \in v({\cal O}_C(-n-1))^\perp$.

\begin{rem}
Although we considered the wall-crossing behavior
in a neighborhood of $(\beta_0,H)$,
we may move the parameter $p$ as in \cite{perv2}.
In ${\cal C}_n$, $n \geq 0$,
the wall-crossing behavour is the same.
However for the wall $v({\cal O}_C)^\perp$,
the behavior is different.
For $E \in {\cal M}_{{\cal C}_0}(v)$,
we have an exact sequence
\begin{equation}
0 \to E' \to 
E \to \Hom(E,{\cal O}_C)^{\vee} \otimes {\cal O}_C \to 0
\end{equation}
with $E' \in {\frak C}^1 \cap {\frak C}^0$.
If $k>0$, then
we have a morphism
$\baM_{{\cal C}_0}(v) \to M_{(\beta',H')}(w')$
by sending $E$ to
$\pi_*(E(C))$,
where $\pi^*(w')=v-(r-k)v({\cal O}_C)$.   
\end{rem}

\begin{NB}
Let us briefly explain the structure of
$\baM_{{\cal C}_{n,n+1}}(v)$.
 
We take $G_n:=\pi^*(E_0) \otimes {\cal O}_X(nC)$.
Then ${\bf R}\pi_*(G_n^{\vee} \otimes {\cal O}_C(-n-1))=0$.
Let $E$ be an object of ${\frak C}^\beta$ such that
$E$ does not contain a 0-dimensional subobject $T \ne 0$. 
Since $\chi(G_n,E(m))=\chi(E_0,\pi_*(E(-nC))(m))$
and $E(-nC)$ fits in an exact sequence
\begin{equation}
0 \to {\cal O}_C(-1)^p \to \pi^*(\pi_*(E(-nC))) \to E(-nC) \to 0.
\end{equation}

\begin{equation}
\frac{\chi(G_n,E_1(m))}{\rk E_1} \leq 
\frac{\chi(G_n,E(m))}{\rk E}
\end{equation}
for any subobject $E_1$ of $E$ in ${\frak C}^\beta$ if and only if
$\pi_*(E(-nC))$ is $E_0$-twisted semi-stable.
Hence we have a morphism
$\baM_{{\cal C}_n}(v) \to \baM_{H'}^{\beta'}(v')$
by sending $E$ to $\pi_*(E(-nC))$, where
$\beta'=\frac{c_1(E_0)}{\rk E_0}$ and $v'=v(\pi_*(E(-nC)))$.
As a Brill-Noether locus, we can introduce a scheme structure
on the image (\cite[Prop. 3.31]{perv2}). We denote this scheme by 
$\baM_{{\cal C}_{n,n+1}}(v)$. 
Set-theoretically, we have the following description:
\begin{equation}
\baM_{{\cal C}_{n,n+1}}(v)
=\left\{ F \oplus {\cal O}_C(-n-1)^{\oplus p} \left|
\begin{aligned}
F \in \baM_{{\cal C}_n}(v') \cap
\baM_{{\cal C}_{n+1}}(v')\\
p \in {\Bbb Z}_{\geq 0},\;
v'+pv({\cal O}_C(-n-1))=v 
\end{aligned}
\right. \right\}. 
\end{equation}
\end{NB}

\begin{NB}
We don't need the following anymore (Oct. 2014)
\begin{rem}
Since ${\cal M}_{C_N(v)}(v)={\cal M}_{H-q C}(v)^{\mu\text{-}ss}$,
it seems there is no wall between $C_N(v)$ and
$e^\delta(H-q C)$.
Unfortunately we can not exclude the possibility of the existence
of properly semi-stable objects 
at this moment. 
\end{rem}
\end{NB}

\begin{NB}
We take an ample divisor $L:=H-q C$, 
where $q>0$ is sufficiently
small.
There is $t_0>0$ such that
$(\beta,t_0 L) \in \xi^{-1}$.
Along the line $\{(\beta,tL) \mid t>t_0 \}$,
there are finitely many walls by Lemma \ref{lem:mini-wall}.
Hence almost all ${\cal O}_C(-n)$ do not define walls for $v$, and
we have a sequence of birational maps
$$
M_{H'}^{\beta'-\frac{1}{2}K_Y}(v')=M_{(\beta,t_0 L)}(v)
\leftarrow \cdots \rightarrow
M_{(\beta,t_1 L)}(v)
\leftarrow \cdots \rightarrow
\cdots 
\leftarrow \cdots \rightarrow
M_{(\beta,tL)}(v)=M_L^{\beta-\frac{1}{2}K_X}(v).
$$
For $u=e^\delta(\zeta+(1+\frac{(v^2)}{2r^2}\varrho_X))$,
we set 
$$
D_q:=
\zeta-\frac{(\zeta,H-q C)}{((H-q C)^2)}(H-q C).
$$
Then
$$
D_q-D_0=q \left(
\frac{(\zeta,C)(H^2)-(\zeta,H)q}{(H^2)((H^2)-q^2)}H
+\frac{(\zeta,H-q C)}{(H^2)-q^2}q C \right).
$$

\end{NB}

\begin{NB}
For $E \in {\frak C}^{\beta+C}$,
we have an exact sequence
\begin{equation}
0 \to E'(-nC) \to E(-nC) \to {\cal O}_C(-1)^{\oplus p} \to 0
\end{equation}
where $E' \in {\frak C}^\beta$.
\end{NB}

\begin{NB}
${\bf R}\pi_*:{\bf D}(X) \to {\bf D}(Y)$
induces a morphsim
${\cal M}_{(\delta,H)}(v) \to {\cal M}_{(\delta',H')}(v')$,
where $(\pi^*(\delta'),\pi^*(H'),\pi^*(v'))=
(\delta,H,v)$.
If $E$ is a torsion free sheaf, then
we have
$$
0 \to \pi_*(E) \to {\bf R}\pi_*(E) \to R^1 \pi_*(E)[-1] \to 0.
$$
$\pi_*(E)$ is a torsion free sheaf of $d_\delta(\pi_*(E))=0$ and
and $R^1 \pi_*(E)$ is 0-dimensional with $\phi=0$.
\end{NB}

\begin{NB}
Assume that ${\cal M}_H(v)^{\mu\text{-}ss}$ and
${\cal M}_{H'}(v')^{\mu\text{-}ss}$ are irreducible normal stack.
Then we have a morphism $\baM_{H-q C}(v) \to
N_{H'}(v')$, by sending $E$ to the $S$-equivalence
class of 
$\pi_*(E)^{\vee \vee} \oplus 
\pi_*(E)^{\vee \vee}/\pi_*(E) \oplus R^1 \pi_*(E)$.
\end{NB}

\begin{NB}
\subsection{Bogomolov inequality}

Toda proved that 
$(v^2) \geq C_\omega d_\beta(v)^2 (H^2)$,
where $d_\beta(v)(H^2)=(c_1-r \beta,H)$.

If $\phi_{(\beta,\omega)}(v_1)=\phi_{(\beta,\omega)}(v_2)$, then
\begin{equation}
(v_1,v_2) \geq \frac{d_2}{2d_1}(v_1^2)+\frac{d_1}{2d_2}(v_2^2)
\geq c_\omega (H^2)d_1 d_2.
\end{equation}
Then $((v_1+v_2)^2) \geq C_\omega (H^2)(d_1+d_2)^2$.

\end{NB}

\begin{NB}

\section{The blow-up case}

We set $\beta=pC$ and $\omega=H+tC$.
Then 
\begin{equation}
Z_{(\beta,\omega)}(E)=
\left(p(c_1(E),C)+\rk E\frac{(H^2)+p^2-t^2}{2}-\ch_2(E) \right)
+\sqrt{-1}
\left((c_1(E),H+tC)+rpt \right).
\end{equation}

For ${\cal O}_C(-a-\frac{1}{2})$,
$Z_{(\beta,\omega)}({\cal O}_C(-a-\frac{1}{2}))=
( e^{\beta+\sqrt{-1}\omega},(0,C,-a))
=-(p+\lambda)-t \sqrt{-1}.
$

The equation of the wall defined by ${\cal O}_C(-a-\frac{1}{2})$.
\begin{equation}
t\left(p(c_1,C)+r\frac{(H^2)+p^2-t^2}{2}-\ch_2 \right)-
(p+\lambda)\left((c_1,H+tC)+rpt \right)=0.
\end{equation} 

\begin{equation}
t\left( r\frac{(H^2)+\lambda^2-(p+\lambda)^2-t^2}{2}-\ch_2-\lambda(c_1,C)
\right)-(p+\lambda)(c_1,H)=0.
\end{equation}

\begin{equation}
p+\lambda=\frac{(c_1,H)\pm\sqrt{
(c_1,H)^2+rt(rt((H^2)-t^2)-2t\ch_2-2\lambda t(c_1,C))}}{-rt}
\end{equation}
Since $(c_1,H)>0$, 
if $-1 \ll t<0$, then
$p+\lambda$ is sufficiently large, or
\begin{equation}
\begin{split}
p+\lambda=&\frac{(c_1,H)-\sqrt{
(c_1,H)^2+rt(rt((H^2)-t^2)-2t\ch_2-2\lambda t(c_1,C))}}{-rt}\\
\sim & t\frac{r((H^2)-t^2)-2\ch_2-2\lambda (c_1,C)}
{(c_1,H)(1+
\sqrt{
1+\frac{t}{(c_1,H)^2}(rt((H^2)-t^2)-2t\ch_2-2\lambda t(c_1,C))})}
\end{split}
\end{equation}

We set 
$v=e^\beta(r+a_\beta \varrho_X+d_\beta H+D_\beta)$, 
where $D_\beta \in H^\perp$.
\begin{lem}
Assume that
$(H^2) >\frac{d_\beta-d_{\min}}{r\delta}((v^2)-(D_\beta^2))$ and $t=0$, then
$E$ is $\sigma_{(\beta,\omega)}$-semi-stable if and only if
$E \in {\frak C}$ and $E$ is $\beta-\frac{1}{2}K_X$-twisted semi-stable.
\end{lem}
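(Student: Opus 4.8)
The plan is to read this as the quantitative form of Proposition~\ref{prop:large}: for $\omega=H$ the pull-back of an ample class on $Y$ (the case $t=0$) and the volume $(H^2)$ large, the Bridgeland ordering by the phase $\phi_{(\beta,\omega)}$ should collapse onto the lexicographic ordering of the twisted slope $(d_\beta/\rk,\,a_\beta/\rk)$ that defines $(\beta-\tfrac12 K_X)$-twisted semi-stability (the Remark following Lemma~\ref{lem:weak-Bogomolov}). I would work throughout with the central charge $Z_{(\beta,\omega)}(E)=\bigl(-a_\beta(E)+r(E)(\omega^2)/2\bigr)+d_\beta(E)(H,\omega)\sqrt{-1}$. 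First I would fix a $\sigma_{(\beta,\omega)}$-semi-stable object $E$ with $v(E)=v$ and, using the support property (Lemma~\ref{lem:support}) together with the Bogomolov inequality for perverse coherent sheaves (Lemmas~\ref{lem:weak-Bogomolov} and \ref{lem:perverse-Bogomolov}), reduce the possible Mukai vectors $v(E_1)$ of destabilizing subobjects to a \emph{finite} set; this is what makes all the inequalities below uniform in $E$.

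Next I would show that such an $E$ lies in ${\frak C}^\beta$, i.e. $H^{-1}(E)=0$. Writing the defining triangle $H^{-1}(E)[1]\to E\to H^0(E)$ in ${\cal A}_{(\beta,\omega)}$ with $H^{-1}(E)\in{\cal F}_\beta$ and $H^0(E)\in{\cal T}_\beta$, a nonzero $H^{-1}(E)$ has $d_\beta\le 0$, so its shift $H^{-1}(E)[1]$ contributes a phase forced close to $1$, while $H^0(E)$ has $d_\beta\ge 0$ and phase close to $0$, once $(H^2)$ exceeds the stated threshold; comparing these via the weak Bogomolov bound $d_\beta(F)^2(H^2)-2\,\rk F\,a_\beta(F)\ge 0$ of Lemma~\ref{lem:weak-Bogomolov} contradicts semi-stability of the extension (and the case $H^0(E)=0$ is excluded since $\rk v>0$). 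Hence $E\in{\frak C}^\beta$, and the same reduction applied to a destabilizing subobject $E_1\subset E$ in ${\cal A}_{(\beta,\omega)}$ lets me replace it by an honest subobject of $E$ in ${\frak C}^\beta$ of the same or larger phase, so that only $\frak C$-subobjects need be compared.

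Finally, for $E_1\subset E$ in ${\frak C}^\beta$ with $\rk E_1,\rk E>0$, the inequality $\phi_{(\beta,\omega)}(E_1)\le\phi_{(\beta,\omega)}(E)$ is equivalent, after expanding $\mathrm{Im}\bigl(\overline{Z_{(\beta,\omega)}(E)}\,Z_{(\beta,\omega)}(E_1)\bigr)$ and dividing by $(H,\omega)>0$, to the sign condition
\[
\tfrac{(\omega^2)}{2}\bigl(d_\beta(E)r(E_1)-d_\beta(E_1)r(E)\bigr)+\bigl(d_\beta(E_1)a_\beta(E)-d_\beta(E)a_\beta(E_1)\bigr)\ge 0 .
\]
The leading coefficient in $(\omega^2)$ is the twisted-slope comparison $d_\beta(E)r(E_1)-d_\beta(E_1)r(E)$, and when the slopes agree the expression reduces to the secondary comparison of $a_\beta/\rk$, so the two notions of semi-stability match provided the $(\omega^2)$-term dominates the correction whenever the slopes differ. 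Since the candidate vectors $v(E_1)$ form a finite set and the correction is bounded by the Bogomolov discriminant $(v^2)-(D_\beta^2)=d_\beta(v)^2(H^2)-2r\,a_\beta(v)$, the stated bound on $(H^2)$ is exactly the threshold achieving this. The main obstacle I expect is the bookkeeping in this last step: one must verify that the explicit constant $\tfrac{d_\beta-d_{\min}}{r\delta}\bigl((v^2)-(D_\beta^2)\bigr)$ is genuinely the sharp threshold separating distinct twisted slopes, which requires the discreteness of the values of $d_\beta$ (controlled as in Proposition~\ref{prop:min}) together with a uniform Bogomolov bound on $a_\beta(E_1)/\rk E_1$ at fixed slope.
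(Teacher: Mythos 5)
Your skeleton---show that a $\sigma_{(\beta,\omega)}$-semi-stable $E$ lies in ${\frak C}$ by comparing the phases of $\pH^{-1}(E)[1]$ and $\pH^0(E)$, replace a destabilizing subobject $E_1\subset E$ in ${\cal A}_{(\beta,\omega)}$ by a subobject of $E$ in ${\frak C}$, and then match the phase ordering with the lexicographic ordering of $(d_\beta/\rk,\,a_\beta/\rk)$ once the $(\omega^2)$-term dominates---is indeed the route the paper takes, namely the argument of \cite[Prop.~3.2.1, Prop.~3.2.7]{MYY:2011:1} invoked in the proof of Proposition~\ref{prop:large}, of which this lemma is the $t=0$, $\rk v>0$ case of \eqref{eq:omega^2}. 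The genuine gap is at the point you yourself defer as ``bookkeeping'': your final inequality requires, for \emph{every} destabilizing subobject $E_1$ of $E$ in ${\cal A}_{(\beta,\omega)}$, an a priori upper bound on $a_\beta(E_1)$ in terms of $d_\beta(E_1)$ and $\rk E_1$, i.e.\ a Bogomolov-type inequality for $E_1$. Neither Lemma~\ref{lem:weak-Bogomolov} nor Lemma~\ref{lem:perverse-Bogomolov} applies to such an $E_1$: they concern $\mu$-semi-stable objects of ${\frak C}$, whereas $E_1$ (or its Bridgeland Harder--Narasimhan factors) is only a semi-stable object of the tilted heart. The needed statement is exactly Proposition~\ref{prop:weak-Bogomolov2}, and the paper is explicit that it \emph{cannot} simply be quoted at this stage: it is proved simultaneously with Proposition~\ref{prop:large}, by induction on $d_\beta$, using the quadratic identity for $\bigl((v_1+v_2)^2\bigr)-\bigl((D_1+D_2)^2\bigr)$ displayed in that proof to control the wall-crossing terms. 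Your proposal contains no substitute for this induction; alternatively one can decompose $E_1$ sheaf-theoretically, as an extension of $G=\mathrm{im}(E_1\to E)\subset E$ by $\pH^{-1}(E/E_1)\in{\cal F}_\beta$, and apply Lemma~\ref{lem:weak-Bogomolov2} to the $\mu$-Harder--Narasimhan factors of each piece, but then the constant-tracking you postpone is precisely where the work lies.

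The support-property step does not fill this hole, and in fact works against the statement you are proving. Lemma~\ref{lem:support} gives finiteness of the possible classes $v(E_1)$, but its constant $C_B$ (ultimately Toda's $C_\omega$ from Lemma~\ref{lem:strong-Bogomolov}) is not effective; an argument routed through it can only yield the conclusion for $(H^2)\gg 0$, never for the explicit threshold $(H^2)>\frac{d_\beta-d_{\min}}{r d_0}\bigl((v^2)-(D_\beta^2)\bigr)$, which is the entire content of the lemma. The paper never uses the support property here: the uniformity you want from it is supplied instead by quantifying over all classes $v_1$ with $0<d_\beta(v_1)<d_\beta(v)$ satisfying the weak Bogomolov inequality $(v_1^2)-(D_1^2)\geq 0$ (the conditions $(\star)$ of \cite{MYY:2011:1}), and the explicit threshold is obtained by optimizing over that infinite but Bogomolov-constrained set together with the discreteness of the values of $d_\beta$. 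So: drop the support property, prove (or inductively establish) the Bogomolov bound for the actual destabilizers, and only then does the stated constant emerge; as written, the final step of your proposal is a statement of what must be checked rather than a proof of it.
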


If $\beta=pC$, then
$d_\beta(E)(H^2)=(c_1(E)-\rk E pC,H)=(c_1(E),H)$.
More generally,
$d_\beta(E)(H^2)=(c_1(E)-r \pi_*(\beta),H)$. 

For $\omega=H$,
$(Z_{(\beta,\omega)},{\frak A}_{(\beta,\omega)})$ is a stability 
condition.

\begin{rem}
Assume that $X$ is a Del Pezzo surface.
Let $D$ be an irreducible and reduced curve.
Then $(D,D+K_X) \geq -2$.
Since $(D,-K_X)>0$, 
if $(D^2)<0$, then
$(D^2)=(D,K_X)=-1$.
Thus $D$ is a $(-1)$-curve.
   
\end{rem}

We set
\begin{equation}
f(t):=-\frac{r}{2}t^3+
\left(\frac{r}{2}(-(p+\lambda)^2+\lambda^2+(H^2))-\ch_2-\lambda(c_1,C) \right)t
-(p+\lambda)(c_1,H).
\end{equation}
Then $f'(t)=0$ at
$$
t_\pm =\pm \sqrt{\frac{1}{3}\left(-(p+\lambda)^2+\lambda^2+(H^2)-
\frac{\ch_2}{r}-
\frac{\lambda}{r}(c_1,C) \right)}.
$$

Then 
$f(t_-)=r t_+^3-(p+\lambda)(c_1,H)$.
If $(H^2)$ is sufficiently larger than $|(p+\lambda)|$,
$r,|\ch_2|,|(c_1,C)|,(c_1,H)$, then
$t_- \sim -\sqrt{\frac{(H^2)}{3}}$ and
$f(t_+) \sim r\frac{(H^2)}{3}\sqrt{\frac{(H^2)}{3}}$.

We set $H=s H_0$. Then 
$H+tC=sH_0+tC=s(H_0+\frac{t}{s}C)$.
If $0>\frac{t}{s} \gg -1$ and $s \gg |\tfrac{t}{s}|$, then 
$\sigma_{(\beta,\omega)}$-semi-stability is the same as
the Gieseker semi-stability.

If $s \gg 0$ and $0>t(s)>-1$ satisfies
$f(t(s))=0$, then 
$t(s) \sim \frac{2(p+\lambda)(c_1,H)}{r(H^2)}
=\frac{2(p+\lambda)(c_1,H_0)}{r(H_0^2)s}$
and
$|t(s)|$ is sufficiently small, that is
$\lim_{s \to \infty} t(s)=0$. 
Thus in order to compare $\sigma_{(pC,H)}$-semi-stabilty
with Gieseker semi-stability with respect to 
$H_0+\frac{t}{s}C$, $0>\frac{t}{s} \gg-1$,
we need to cross all walls defined by
${\cal O}_C(\lambda-\frac{1}{2})$.

\begin{equation}
\begin{split}
& \left(r(c_1',H+tC)-r'(c_1,H+tC)\right)\frac{(H^2)+p^2-t^2}{2}\\
+& \left((p(c_1,C)-\ch_2)((c_1',H+tC)+r' pt)-
(p(c_1',C)-\ch_2')((c_1,H+tC)+rpt)\right)=0
\end{split}
\end{equation}

Assume that $|\frac{t}{s}|$ is bounded.
\begin{equation}
\begin{split}
& s^2 \left(r(c_1',H_0+\tfrac{t}{s}C)-r'(c_1,H_0+\tfrac{t}{s}C)\right)
\frac{(H_0^2)+\tfrac{p^2}{s^2}-\tfrac{t^2}{s^2}}{2}\\
+& \left((p(c_1,C)-\ch_2)((c_1',H_0+\tfrac{t}{s}C)+r' p \tfrac{t}{s})-
(p(c_1',C)-\ch_2')((c_1,H_0+\tfrac{t}{s}C)+rp\tfrac{t}{s})\right)=0
\end{split}
\end{equation}

\begin{lem}
There is a positive constant $C$ such that
$|E| \leq C|Z_{(\beta,\omega)}(E)|$
for all $\sigma_{(\beta,\omega)}$-stable object $E$.  
\end{lem}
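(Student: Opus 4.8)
The plan is to recognize this as the support property for the fixed stability condition $\sigma_{(\beta,\omega)}$ and to derive it from the Bogomolov-type inequality already available in Lemma \ref{lem:strong-Bogomolov}. First I would fix the decomposition $v(E)=e^\beta(r+d\omega+\eta+a\varrho_X)$ with $\eta\in\omega^\perp$, where $r=\rk E$, $d=d_\beta(E)$ and $a=a_\beta(E)$. Because $\beta$ and $\omega$ are fixed and, by the Hodge index theorem, the Mukai form is negative definite on $\omega^\perp$, a fixed norm satisfies
\begin{equation}
\|v(E)\|\asymp\max\{\,|r|,\ |a|,\ |d(\omega^2)|,\ \sqrt{-(\eta^2)}\,\}
\end{equation}
up to constants depending only on $(\beta,\omega)$. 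Hence it suffices to bound each of these four quantities by a constant multiple of $|Z_{(\beta,\omega)}(E)|$, where from the explicit formula
\begin{equation}
|Z_{(\beta,\omega)}(E)|^2=\left(-a+r\frac{(\omega^2)}{2}\right)^2+(d(\omega^2))^2 .
\end{equation}

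The bounds on $r$, $a$ and $d$ are elementary. The imaginary part yields $|d(\omega^2)|\le|Z_{(\beta,\omega)}(E)|$ at once. For $|r|$ and $|a|$ I would expand the real part; when the cross term $-ra(\omega^2)$ has the favorable sign one reads off $a^2+(r(\omega^2)/2)^2\le|Z_{(\beta,\omega)}(E)|^2$ directly, and when $ra>0$ one controls the cross term by the inequality of the next paragraph. These routine estimates (carried out as in the computation sketched in subsection \ref{subsect:support-property}) produce a constant $A_0=A_0((\omega^2),C_\omega)$ with $|r|,|a|,|d(\omega^2)|\le A_0\,|Z_{(\beta,\omega)}(E)|$.

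For the transverse part $\sqrt{-(\eta^2)}$ I would bring in stability through Lemma \ref{lem:strong-Bogomolov}. Since $(v(E)^2)=d^2(\omega^2)+(\eta^2)-2ra$ and $(c_1(E(-\beta)),\omega)=d(\omega^2)$, that inequality rearranges to
\begin{equation}
-(\eta^2)\le 1+(1+C_\omega)\,d^2(\omega^2)-2ra .
\end{equation}
Feeding in the bounds already obtained for $|d(\omega^2)|$, $|r|$ and $|a|$ gives $-(\eta^2)\le A_1\,|Z_{(\beta,\omega)}(E)|^2$ for a suitable constant $A_1$, hence $\sqrt{-(\eta^2)}\le\sqrt{A_1}\,|Z_{(\beta,\omega)}(E)|$, as long as $|Z_{(\beta,\omega)}(E)|$ dominates the additive constant coming from the $+1$.

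The hard part will be precisely the objects on which this last step degenerates, namely those with $r=d=0$ but $\eta\neq0$, where the Bogomolov form equals $-1$ rather than being $\ge0$. By the classification of $0$-dimensional objects of ${\frak C}^\beta$ these are exactly $E={\cal O}_C(l)$ and $E={\cal O}_C(l-1)[1]$ (together with the $k_x$, $x\notin C$, for which $-(\eta^2)=0$ and no difficulty arises). For the two exceptional classes I would compute the central charge directly: one finds $Z_{(\beta,\omega)}({\cal O}_C(l))=-a_\beta({\cal O}_C(l))$ and $Z_{(\beta,\omega)}({\cal O}_C(l-1)[1])=-(1-a_\beta({\cal O}_C(l)))$, both nonzero because $(\beta,C)\notin\frac{1}{2}+{\Bbb Z}$ forces $a_\beta({\cal O}_C(l))\in(0,1)$. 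The ratios $\|v(E)\|/|Z_{(\beta,\omega)}(E)|$ are then the explicit finite constants $1/a_\beta({\cal O}_C(l))$ and $1/(1-a_\beta({\cal O}_C(l)))$. Taking $C$ to be the maximum of $A_0$, $\sqrt{A_1}$ and these two constants completes the proof, in agreement with the bound recorded in Lemma \ref{lem:support} specialized to a single stability condition.
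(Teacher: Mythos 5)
Your overall architecture matches the paper's treatment in subsection \ref{subsect:support-property}: reduce to bounding $\max\{|r|,|a|,|d(\omega^2)|,\sqrt{-(\eta^2)}\}$ against $|Z_{(\beta,\omega)}(E)|$, invoke the Bogomolov-type inequality of Lemma \ref{lem:strong-Bogomolov} for the $\omega^\perp$-component, and peel off the exceptional classes ${\cal O}_C(l)$, ${\cal O}_C(l-1)[1]$, whose ratios give exactly the constants $1/a_\beta({\cal O}_C(l))$ and $1/(1-a_\beta({\cal O}_C(l)))$. But there is a genuine gap in your second paragraph, and it propagates into the third. When $ra>0$ you control the cross term $-ra(\omega^2)$ by Lemma \ref{lem:strong-Bogomolov}, i.e. by $2ra\le 1+(1+C_\omega)d^2(\omega^2)$; together with $|d(\omega^2)|\le |Z_{(\beta,\omega)}(E)|$ this only yields
\begin{equation*}
a^2+\left(\tfrac{r(\omega^2)}{2}\right)^2\ \le\ \tfrac{C_\omega+1}{2}\,|Z_{(\beta,\omega)}(E)|^2+\tfrac{(\omega^2)}{2},
\end{equation*}
an estimate with an additive constant that is never absorbed. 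A multiplicative bound $|r|,|a|\le A_0|Z_{(\beta,\omega)}(E)|$ follows only if $|Z_{(\beta,\omega)}(E)|$ is already bounded below on the objects in question, which is essentially what one is trying to prove; and your later assertion that the only degenerate objects are those with $r=d=0$ is precisely the unproved point. A priori there could be stable objects with $d=0$, $r\ne 0$, $ra>0$ and $|Z_{(\beta,\omega)}(E)|$ arbitrarily small; excluding them requires the integrality of $r$ and the bounded denominators of $a_\beta(E)$ (rationality of $\beta$), none of which you invoke.

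The paper closes this circle with a different inequality at that step: the weak Bogomolov inequality of Lemma \ref{lem:weak-Bogomolov}/Proposition \ref{prop:weak-Bogomolov2}, $2\rk E\, a_\beta(E)\le d_\beta(E)^2(H^2)$, which in your normalization reads $2ra\le d^2(\omega^2)$, holds for \emph{every} $\sigma_{(\beta,\omega)}$-semi-stable object, and carries no additive constant. It gives at once
\begin{equation*}
|Z_{(\beta,\omega)}(E)|^2\ \ge\ a^2+\left(\tfrac{r(\omega^2)}{2}\right)^2+\tfrac{1}{2}\bigl(d(\omega^2)\bigr)^2,
\end{equation*}
hence $|a|\le |Z|$, $|r|\le 2|Z|/(\omega^2)$, $|d(\omega^2)|\le\sqrt{2}|Z|$ with no side conditions; note it also shows $d=0$ forces $ra\le 0$, so your problematic sign case never occurs. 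Only then does Lemma \ref{lem:strong-Bogomolov} enter, and its $+1$ is absorbed because $r\ne 0$ forces $|Z_{(\beta,\omega)}(E)|\ge |r|(\omega^2)/2\ge (\omega^2)/2$ by integrality of the rank; for $r=0$, $d\ne 0$ the paper uses Lemma \ref{lem:Bogomolov(1-dim)}, which has no additive constant, so no lower bound on $|Z|$ is needed there; and $r=d=0$ leaves exactly the exceptional classes you identified. Your endgame is correct, but the cross-term step must be replaced by the weak Bogomolov inequality (or supplemented by the discreteness argument) for the proof to close.
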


By \cite[Prop. 3.3]{BM},
the set of walls is locally finite.
Thus there are finitely many vectors $v'$ defining walls for any
compact subset.

\subsection{ }

We set $\beta:=uH$.
Then 
$$
v=e^\beta(r+(c_1-ruH)+(\ch_2-u(c_1,H)+\tfrac{r}{2}u^2 (H^2))).
$$

For $\beta=uH+pC$ and $\omega=sH+tC$,
\begin{equation}
\begin{split}
Z_{(\beta,\omega)}(E)=&
\left(p(c_1(E),C)+\rk E\frac{s^2(H^2)+p^2-t^2}{2}-\ch_2(E)
+u(c_1(E),H)-\frac{\rk E}{2}u^2 (H^2) \right)\\
& +\sqrt{-1}
\left((c_1(E)-\rk E uH,sH+tC)+rpt \right).
\end{split}
\end{equation}

For ${\cal O}_C(\lambda-\frac{1}{2})$,
$Z_{(\beta,\omega)}({\cal O}_C(\lambda-\frac{1}{2}))=
( e^{\beta+\sqrt{-1}\omega},(0,C,\lambda))
=-(p+\lambda)-t \sqrt{-1}.
$

The equation of the wall defined by ${\cal O}_C(\lambda-\frac{1}{2})$.
\begin{equation}
t\left(p(c_1,C)+r\frac{s^2(H^2)+p^2-t^2}{2}-\ch_2 
+u(c_1,H)-\frac{r}{2}u^2 (H^2) \right)-
(p+\lambda)\left((c_1-r uH,sH+tC)+rpt \right)=0.
\end{equation} 

\begin{equation}
t\left( r\frac{s^2(H^2)+\lambda^2-(p+\lambda)^2-t^2}{2}
-\ch_2+u(c_1,H)-\frac{r}{2}u^2 (H^2) -\lambda(c_1,C)
\right)-(p+\lambda)(c_1-ruH,sH)=0.
\end{equation}

\begin{NB2}
For $(\beta,\omega)=(u(H+tC),s(H+tC))$,
the equation is a family of circles:
\begin{equation}
\frac{rt}{2}((H+tC)^2)(s^2+u^2)=\lambda(c_1-ru(H+tC),H+tC).
\end{equation}
\end{NB2}

Assume that $s=1$.
For $u(t):=\frac{(c_1,H+tC)}{r(H^2)}$,
$d_\beta(v)=(c_1-r uH,H+tC)=0$.
\begin{NB2}
If $(c_1,C)=0$, then $u(t)$ is constant.
\end{NB2}
Hence $(\beta,\omega)=(u(t)H+pC,H+tC)$ intersect with
the wall at $t=0$.
Assume that $p=0$.
$(\beta_0,\omega_0)=((u(0)-\epsilon)H,H)$, $0<\epsilon \ll 1$,
$\sigma_{(\beta_0,\omega_0)}$-semi-stability 
is the same as the $(\beta_0-\tfrac{K_X}{2})$-twisted 
semi-stability in ${\frak C}$.

If $(\beta,\omega)=(u(t)H+pC,H+tC)$, 
then $\phi_{(\beta,\omega)}(E)=0$.
If $t<0$, then 
$\phi_{(\beta,\omega)}({\cal O}_C(\lambda-\tfrac{1}{2}))>0$
and 
$\phi_{(\beta,\omega)}({\cal O}_C(\lambda-\tfrac{1}{2}))=0$
for $t=0$.
\begin{NB2}
If ${\cal O}_C(\lambda-\tfrac{1}{2})[1] \in {\frak C}$, then
$\phi_{(\beta,\omega)}({\cal O}_C(\lambda-\tfrac{1}{2})[1])=1$.
Hence
$\phi_{(\beta,\omega)}({\cal O}_C(\lambda-\tfrac{1}{2}))=0$. 
\end{NB2}

For $t<0$ depending on $q$,
$\sigma_{(\beta-q H,\omega)}$-semi-stability is the same as
the $\beta$-twisted semi-stability with respect to
$H+tC$.

If $\phi_{(\beta,\omega)}(E')=0$ at $(\beta_0,\omega_0)$, 
then
$r(c_1(E'),H)=r'(c_1(E),H)$, where
$\rk E'=r'$.
Moreover $E'$ is generated by 
$\mu$-semi-stable objects of ${\frak C}$ and
0-dimensional objects of ${\frak C}[-1]$.
In particular, if $\gcd(r,(c_1,H))=1$, then
$E'$ is generated by 
${\cal O}_C(\lambda-\frac{1}{2})$ with
$\lambda-\frac{1}{2}<0$.

We regard $t$ as a parameter.
Then the wall is 
\begin{equation}
\begin{split}
&\frac{rt}{2}(p+\lambda)^2
-rs(H^2)(p+\lambda)\left(u-\frac{(c_1,H)}{r(H^2)} \right)
+\frac{rt(H^2)}{2}\left(u-\frac{(c_1,H)}{r(H^2)} \right)^2
\\
=&t \left(
r\frac{s^2(H^2)+\lambda^2-t^2}{2}-\ch_2+\frac{(c_1,H)^2}{r(H^2)}
-\lambda(c_1,C) \right).
\end{split}
\end{equation}
So if $-1 \ll t <0$, then
it is a hyperbola.

\begin{NB2}
In order to understand the wall-crossing behavior of Gieseker
semi-stability, it is better to study the case where
$(c_1-uH,H)=0$.  
\end{NB2}

\end{NB}

\section{Appendix}\label{sect:appendix}

\subsection{Some properties of perverse coherent sheaves}

Let $\pi:X \to Y$ be a resolution of a rational singularity.
Let ${\frak C}$ be a category of perverse coherent sheaves and $G$ be a
local projective generator of ${\frak C}$ 
which is a locally free sheaf on $X$
(\cite[Defn. 1.1.3]{PerverseI}).

\begin{lem}\label{lem:reflexive-hull}
For a torsion free object $E$ of ${\frak C}$,
there is an exact sequence 
\begin{equation}
0 \to E \to E' \to T \to 0
\end{equation}
such that $T$ is 0-dimensional, $E'$ is torsion free and
$\Ext^1(A,E')=0$ for all 0-dimensional objects $A$ of
${\frak C}$.
\end{lem}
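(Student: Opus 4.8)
The plan is to reduce the statement to the construction of a reflexive hull over the endomorphism algebra of $G$. Since $G$ is a local projective generator of ${\frak C}$, the functor $E \mapsto \pi_*(\mathcal{H}om(G,E)) = \pi_*(G^\vee \otimes E)$ identifies ${\frak C}$ with the category of coherent modules over the ${\cal O}_Y$-algebra ${\cal A} := \pi_*(\mathcal{E}nd(G))$ (this is the defining property of a local projective generator, cf.\ \cite{PerverseI}): it is exact, sends $G$ to the projective module ${\cal A}$, and under it $\Ext^i_{\frak C}(\,\cdot\,,\,\cdot\,)=\Ext^i_{{\cal A}}(\,\cdot\,,\,\cdot\,)$ for $i\ge 0$. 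Moreover it matches the dimension filtration, so torsion free objects of ${\frak C}$ correspond to torsion free ${\cal A}$-modules and $0$-dimensional objects to finite length ${\cal A}$-modules. First I would record that, because $\pi$ resolves a rational singularity and $G$ is locally free, ${\cal A}$ is a maximal Cohen--Macaulay (in particular reflexive) ${\cal O}_Y$-module on the normal surface $Y$.

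Next I set $M := \pi_*(G^\vee \otimes E)$, a torsion free ${\cal A}$-module, and let $M' := (M^\vee)^\vee$ be its reflexive hull, where $(-)^\vee = \mathcal{H}om_{{\cal O}_Y}(-,{\cal O}_Y)$. Since $M$ is an ${\cal A}$-module and ${\cal A}$ is reflexive, $M'$ inherits an ${\cal A}$-module structure and the canonical map $M \to M'$ is a morphism of ${\cal A}$-modules. On the normal surface $Y$ a torsion free module agrees with its reflexive hull in codimension one, so $M \to M'$ is injective with cokernel $N := M'/M$ supported in dimension $0$, i.e.\ of finite length. Transporting $M \hookrightarrow M'$ back through the equivalence produces an exact sequence $0 \to E \to E' \to T \to 0$ in ${\frak C}$ with $E'$ torsion free (reflexive modules are torsion free) and $T$ $0$-dimensional.

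It then remains to verify $\Ext^1(A,E') = 0$ for every $0$-dimensional object $A$, which by the dictionary is $\Ext^1_{{\cal A}}(N_A, M') = 0$ for every finite length ${\cal A}$-module $N_A$. Here the reflexivity of $M'$ is exactly what is needed: on the normal surface $Y$ a reflexive module satisfies Serre's condition $S_2$, so $\depth_{{\cal O}_{Y,y}} M'_y \ge 2$ at every closed point $y$; hence $\Ext^i_{{\cal O}_Y}(k(y), M') = 0$ for $i < 2$, and since $N_A$ is a finite iterated extension of skyscrapers $k(y)$ this gives $\Ext^1(N_A, M') = 0$. The main obstacle is precisely this translation through the equivalence: one must check that the abstract identification ${\frak C} \simeq \operatorname{coh}({\cal A})$ carries the intrinsic notions (dimension, torsion freeness, and the $\Ext$-groups) to their module-theoretic counterparts, and that ${\cal A}$ is Cohen--Macaulay so that the ${\cal O}_Y$-reflexive hull, the ${\cal A}$-module structure, and the $S_2$/depth vanishing are mutually compatible. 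Equivalently, one can run the construction directly on $X$ by forming successive universal extensions of $E$ by the finitely many simple $0$-dimensional objects in order to absorb $\Ext^1$; the reflexive-hull viewpoint is what guarantees that this process terminates.
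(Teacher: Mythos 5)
Your construction takes a genuinely different route from the paper's. The paper never leaves ${\frak C}$: it builds $E'$ by successive non-trivial extensions $0 \to E_n \to E_{n+1} \to A \to 0$ by irreducible $0$-dimensional objects $A$ (such an extension of a torsion free object is again torsion free), and it uses the reflexive hull on $Y$ only numerically — since $\pi_*(G^{\vee}\otimes E_n)$ is torsion free it embeds into $\pi_*(G^{\vee}\otimes E)^{\vee\vee}$, so $n \le \chi(G,T_n) \le \chi\bigl(\pi_*(G^{\vee}\otimes E)^{\vee\vee}/\pi_*(G^{\vee}\otimes E)\bigr)$, forcing the induction to terminate; at termination the required $\Ext^1$-vanishing holds by construction and d\'{e}vissage. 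You instead produce $E'$ in one stroke as the object corresponding to $M^{\vee\vee}$ under the equivalence $E \mapsto \pi_*(G^{\vee}\otimes E)$ of ${\frak C}$ with coherent ${\cal A}$-modules, ${\cal A}=\pi_*(G^{\vee}\otimes G)$. The dictionary facts you invoke (exactness, matching of dimension, torsion freeness and $\Ext^1$) are indeed available from \cite{PerverseI}, and your $E'$ is in fact the same object at which the paper's induction stops; your version is more canonical and explains *why* the process terminates, while the paper's version needs less of the module-theoretic dictionary. Note also that Cohen--Macaulayness of ${\cal A}$ is not needed anywhere: the ${\cal O}_Y$-double dual of an ${\cal A}$-module is an ${\cal A}$-module and $M \to M^{\vee\vee}$ is ${\cal A}$-linear by functoriality alone.

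There is, however, one genuine gap, and it sits at exactly the step the whole construction is meant to deliver. You need $\Ext^1_{{\cal A}}(N_A,M')=0$, but what your depth argument proves is $\Ext^1_{{\cal O}_Y}(N_A,M')=0$, and the implication from the latter to the former is false in general: an extension of ${\cal A}$-modules can be ${\cal O}_Y$-split without being ${\cal A}$-split (already over a point, with $k \subset S=k[\epsilon]/(\epsilon^2)$, all $\Ext^1_k$ vanish while $\Ext^1_{S}(k,k)\ne 0$). This is not pedantry here: ${\cal A}$ is non-commutative with fibers larger than $k$ over the image of the exceptional locus, and extensions between the corresponding simple ${\cal A}$-modules (those corresponding to ${\cal O}_C(l)$ and ${\cal O}_C(l-1)[1]$ in the blow-up case) are precisely what must be excluded; your d\'{e}vissage to ${\cal O}_Y$-skyscrapers also bypasses the correct composition series, which is by simple ${\cal A}$-modules. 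The gap is fixable inside your framework in either of two ways. (a) Run the depth argument over ${\cal A}$: pick $x \in {\frak m}_y \subset {\cal O}_{Y,y}$ regular on $M'_y$ with $xN_A=0$ (replace $x$ by a power); $x$ is central, so $0 \to M' \xrightarrow{x} M' \to M'/xM' \to 0$ is a sequence of ${\cal A}$-modules, its long exact sequence shows that $\Hom_{{\cal A}}(N_A,M'/xM')$ surjects onto $\Ext^1_{{\cal A}}(N_A,M')$, and the former vanishes because $\depth_{{\cal O}_{Y,y}}(M'/xM')_y \ge 1$ leaves no room for a finite length submodule. (b) Purely formally: given an ${\cal A}$-extension $0 \to M' \to X \to N_A \to 0$, dualize twice over ${\cal O}_Y$; since $N_A$ has codimension two support and the normal surface $Y$ is Cohen--Macaulay, $X^{\vee\vee} \cong M'^{\vee\vee}=M'$, and the canonical ${\cal A}$-linear map $X \to X^{\vee\vee}\cong M'$ retracts $M' \to X$, so the extension splits. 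With either repair your proof is complete. (A last small point: there are infinitely many simple $0$-dimensional objects — $k_x$ for every $x$ outside the exceptional locus — not finitely many as your closing remark says; only those with $\Ext^1(A,E_n)\ne 0$ enter the inductive variant.)
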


\begin{proof}
If $\Ext^1(A,E) \ne 0$ for an irreducible object $A$ of ${\frak C}$,
then a non-trivial extension is torsion free.
So we inductively construct torsion free objects $E_n$ 
$$
0 \subset E=E_0 \subset E_1 \subset E_2 \subset \cdots \subset E_n
$$
such that $E_i/E_{i-1}$ are irreducible objects of ${\frak C}^\beta$.
We set $T_n:=E_n/E_0$. 
\begin{NB}
Old argument(different proof):
Since $T_n$ is a 0-dimensional object, we have 
$v(T_n)=D_n+(D_n,\beta)\varrho_X+a_n \varrho_X$, $D_n \in H^\perp$.
Since $0<\chi(G,T_i/T_{i-1})=-(e^\beta,v(T_i/T_{i-1}))=a_i-a_{i-1}$,
we have $a_n \geq n$.
We set $v(E)=e^\beta(r+a \varrho_X+dH+D)$, $D \in H^\perp$.
Since
\begin{equation}
(v(E_n)^2)=(v(E)^2)-2\rk E a_n+((D+D_n)^2)-(D^2)
 \leq (v(E)^2)-(D^2)-2\rk E a_n,
\end{equation}
Bogomolov inequality implies that $n$ is bounded above.
Hence there is $n$ such that $E_n$ satisfies the desired property.
\end{NB}
We have an exact sequence
\begin{equation}
0 \to \pi_*(G^{\vee} \otimes E) \to \pi_*(G^{\vee} \otimes E_n) 
\to {\bf R}\pi_*(G^{\vee} \otimes T_n) \to 0.
\end{equation}
By the torsion freeness of $E,E_n$,
$\pi_*(G^{\vee} \otimes E)$ and $\pi_*(G^{\vee} \otimes E_n)$
are torsion free sheaves on $Y$ by
\begin{equation}
\Hom({\Bbb C}_y,\pi_*(G^{\vee} \otimes E_n))=
\Hom(G \otimes {\cal O}_{\pi^{-1}(y)},E_n)=0,\;y \in Y.
\end{equation}
Hence 
$\pi_*(G^{\vee} \otimes E_n)$ is regarded as a subsheaf of 
$\pi_*(G^{\vee} \otimes E)^{\vee \vee}$ 
and 
$$
\chi(\pi_*(G^{\vee} \otimes E)^{\vee \vee}
/\pi_*(G^{\vee} \otimes E)) \geq 
\chi({\bf R}\pi_*(G^{\vee} \otimes T_n))=\chi(G,T_n). 
$$ 
On the other hand, 
$\chi(G,T_i/T_{i-1})>0$ for all $i$ imply that 
$\chi(G,T_n) \geq n$.
Therefore $n$ is bounded above.
Hence there is $n$ such that $E_n$ satisfies the desired property.
\begin{NB}
Assume that $l-\frac{3}{2}<(\beta,C)<l-\frac{1}{2}$. Then 
${\frak C}^\beta(lC)={^{-1}\Per(X/Y)}$.
Hence we have an exact sequence
$$
0 \to \pi_*(E(lC)) \to \pi_*(E_n(lC)) \to {\bf R}\pi_*(T_n) \to 0
$$ 
\end{NB}
\end{proof}

\begin{lem}\label{lem:weak-Bogomolov2}
For a $\mu$-semi-stable object $E$ of ${\frak C}$, 
$d_\beta(E)^2 (H^2)-2\rk E a_\beta(E) \geq 0$.
\end{lem}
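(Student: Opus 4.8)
The plan is to deduce the inequality from the ordinary Bogomolov inequality $(v(E)^2)\ge 0$ together with the Hodge index theorem, after transporting everything through the isometry $e^\beta$. First I would dispose of the trivial case $\rk E=0$, where the left-hand side is $d_\beta(E)^2(H^2)\ge 0$ with no further input, so from now on assume $\rk E>0$. Writing the $e^{-\beta}$-twisted class of $E$ as $\rk E+d_\beta(E)H+D_\beta(E)+a_\beta(E)\varrho_X$ with $D_\beta(E)\in H^\perp$, the formal heart of the matter is that multiplication by $e^\beta$ preserves the bilinear form $(\,\cdot\,,\,\cdot\,)$, so
\[
(v(E)^2)=\bigl(\rk E+d_\beta(E)H+D_\beta(E)+a_\beta(E)\varrho_X\bigr)^2 .
\]
Expanding the right-hand side and using $(H,D_\beta(E))=0$, $(H,\varrho_X)=0$ and $(\varrho_X^2)=0$ gives $(v(E)^2)=d_\beta(E)^2(H^2)+(D_\beta(E)^2)-2\rk E\,a_\beta(E)$, which is precisely the equality recorded in Lemma~\ref{lem:weak-Bogomolov}. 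Thus the claim becomes equivalent to $(v(E)^2)-(D_\beta(E)^2)\ge 0$.

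Since $H$ is the pull-back of an ample class, $(H^2)>0$, so by the Hodge index theorem the form on $\NS(X)_{\Bbb R}$ has signature $(1,\rho(X)-1)$ and $H$ spans a positive line; hence $H^\perp$ is negative semidefinite and $(D_\beta(E)^2)\le 0$. Therefore it suffices to prove $(v(E)^2)\ge 0$, and the whole lemma is reduced to the genuine Bogomolov inequality for $\mu$-semistable objects of ${\frak C}$.

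To supply this last step I would proceed in one of two ways. In the blow-up case one may simply quote Lemma~\ref{lem:perverse-Bogomolov}. To stay self-contained in the generality of the appendix I would instead reduce to an honest sheaf using Lemma~\ref{lem:reflexive-hull}: choose an exact sequence $0\to E\to E'\to T\to 0$ in ${\frak C}$ with $T$ zero-dimensional and $E'$ a torsion-free object admitting no $\Ext^1$ from zero-dimensional objects, hence a genuine torsion-free (reflexive) sheaf on $X$. Because $T$ is zero-dimensional we have $\rk E'=\rk E$, $d_\beta(E')=d_\beta(E)$ and $a_\beta(E')=a_\beta(E)+\operatorname{length}(T)\ge a_\beta(E)$, so that
\[
d_\beta(E)^2(H^2)-2\rk E\,a_\beta(E)=\bigl(d_\beta(E')^2(H^2)-2\rk E'\,a_\beta(E')\bigr)+2\rk E\cdot\operatorname{length}(T),
\]
with the second summand $\ge 0$. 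For the honest $\mu$-semistable sheaf $E'$ the Bogomolov inequality of Section~\ref{sect:back} gives $(v(E')^2)\ge 0$, and the same isometry identity together with $(D_\beta(E')^2)\le 0$ then yields $d_\beta(E')^2(H^2)-2\rk E'\,a_\beta(E')=(v(E')^2)-(D_\beta(E')^2)\ge 0$, which is all that remains.

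The two structural ingredients — the isometry identity and the Hodge index inequality — are routine; the only genuine obstacle is the positivity $(v(E)^2)\ge 0$ for objects of ${\frak C}$ that need not be honest sheaves. The reduction via Lemma~\ref{lem:reflexive-hull} is engineered exactly to push this point onto the classical Bogomolov inequality, and the place where I would concentrate the detailed verification is the claim that passing from $E$ to its reflexive-type hull $E'$ preserves $\mu$-semistability while only increasing $a_\beta$ — both of which follow from the fact that the quotient $T$ is zero-dimensional and hence invisible to the $\mu$-slope $(c_1,H)/\rk$.
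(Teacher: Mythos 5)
Your proposal is correct and follows essentially the same route as the paper's own proof: pass to the hull $E'\supset E$ supplied by Lemma \ref{lem:reflexive-hull}, use $a_\beta(T)\geq 0$ (with $\rk T=0$, $(c_1(T),H)=0$) to see that $d_\beta(E)^2(H^2)-2\rk E\,a_\beta(E)\geq d_\beta(E')^2(H^2)-2\rk E'\,a_\beta(E')$, and then conclude for the honest $\mu$-semistable locally free sheaf $E'$ from the ordinary Bogomolov inequality combined with the isometry identity and the Hodge-index bound $(D_\beta(E')^2)\leq 0$. The preliminary rank-zero case and the intermediate reduction ``it suffices to prove $(v(E)^2)\geq 0$'' are harmless but superfluous, since your final argument via $E'$ establishes the stated inequality directly, exactly as in the paper.
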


\begin{proof}
We take $E'$ in Lemma \ref{lem:reflexive-hull}.
Then $E'$ is a $\mu$-semi-stable locally free sheaf
with respect to $H$, since
$\Ext^1(k_x,E')=0$ for all $x \in X$ (cf. \cite[Lem. 1.1.31]{PerverseI}).
Since $a_\beta(T) \geq 0$, we have 
$$
d_\beta(E)^2 (H^2)-2\rk E a_\beta(E) \geq
 d_\beta(E')^2 (H^2)-2\rk E' a_\beta(E').
$$
 Then the claim 
follows from the ordinary Bogomolov-Gieseker inequality.
\end{proof}

The following weak form of Bogomolov inequality is an easy consequence
of Lemma \ref{lem:weak-Bogomolov2} and the proof of 
Lemma \ref{lem:strong-Bogomolov}.
\begin{prop}\label{prop:weak-Bogomolov2}
For a $\sigma_{(\beta,\omega)}$-semi-stable object $E$ of
${\cal A}_{(\beta,\omega)}$,
\begin{equation}\label{eq:weak-bogomolov}
d_\beta(E)^2 (H^2)-2\rk E a_\beta(E) \geq 0.
\end{equation}
\end{prop}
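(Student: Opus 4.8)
The plan is to work with the single quadratic quantity
$$Q(E):=d_\beta(E)^2(H^2)-2\rk E\,a_\beta(E)=(v(E)^2)-(D_\beta(E)^2),$$
and to prove $Q(E)\ge 0$ for every $\sigma_{(\beta,\omega)}$-semi-stable $E\in{\cal A}_{(\beta,\omega)}$. First I would dispose of the rank $0$ case: there $Q(E)=d_\beta(E)^2(H^2)\ge 0$ automatically, so \eqref{eq:weak-bogomolov} is trivial and only $\rk E\ne 0$ needs attention. To reduce the semi-stable case to the stable one I would use the symmetric bilinear companion $Q(u,u')=d_\beta(u)d_\beta(u')(H^2)-\rk u\,a_\beta(u')-\rk u'\,a_\beta(u)$. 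Writing $\omega=tH$ one has $Z_{(\beta,\omega)}(v)=-a_\beta(v)+\rk v\,(\omega^2)/2+\sqrt{-1}\,d_\beta(v)\,(H,\omega)$, and a direct check shows that on the real kernel of $Z_{(\beta,\omega)}$ (where $d_\beta=0$ and $a_\beta=\rk v\,(\omega^2)/2$) one has $Q=-(\rk v)^2(\omega^2)\le 0$; thus $Q$ is negative semi-definite on $\ker Z_{(\beta,\omega)}$. By the standard Hodge-index type argument this forces $Q(u,u')\ge 0$ for classes of the same phase, so the bound $Q\ge 0$ is inherited by extensions of equal-phase objects. Hence it suffices to establish \eqref{eq:weak-bogomolov} for the $\sigma_{(\beta,\omega)}$-stable Jordan--H\"older factors.

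So let $E$ be $\sigma_{(\beta,\omega)}$-stable with $\rk E\ne 0$, and consider first the boundary case $d_\beta(E)=0$, i.e.\ $\phi_{(\beta,\omega)}(E)=1$. From the exact sequence $0\to \pH^{-1}(E)[1]\to E\to \pH^0(E)\to 0$ in ${\cal A}_{(\beta,\omega)}$, together with the fact that objects of ${\cal T}_\beta$ (resp.\ ${\cal F}_\beta$) have $d_\beta\ge 0$ (resp.\ $d_\beta\le 0$), both cohomology objects must have $d_\beta=0$; $\sigma$-stability then forces one of them to vanish, so after a shift $E$ becomes an object $E_0$ of ${\frak C}$ with $d_\beta(E_0)=0$ lying in ${\cal T}_\beta$ (when $\rk E>0$) or in ${\cal F}_\beta$ (when $\rk E<0$). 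Since ${\cal T}_\beta$ is a torsion class and ${\cal F}_\beta$ a torsion-free class, the constraints $d_\beta\ge 0$ on quotients (resp.\ $d_\beta\le 0$ on subobjects) rule out any $\mu$-destabilizing quotient (resp.\ subobject), so $E_0$ is $\mu$-semi-stable; Lemma \ref{lem:weak-Bogomolov2} and the shift-invariance of $Q$ then give $Q(E)=Q(E_0)\ge 0$.

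Finally, for $\sigma_{(\beta,\omega)}$-stable $E$ with $d_\beta(E)>0$ I would run the proof of Lemma \ref{lem:strong-Bogomolov} (i.e.\ \cite[Cor.~3.24]{Toda}) essentially verbatim. That argument reduces the discriminant of $E$ to contributions of the $\mu$-semi-stable constituents of $\pH^{-1}(E)$ and $\pH^0(E)$ in ${\frak C}$ together with purely $1$-dimensional pieces, and the only place where the correction $C_\omega(c_1(E(-\beta)),\omega)^2/(\omega^2)$ and the $-1$ enter is through the $1$-dimensional Bogomolov inequality of Lemma \ref{lem:Bogomolov(1-dim)}. For the present quantity $Q$ that input is unnecessary: each positive-rank $\mu$-semi-stable constituent satisfies $Q\ge 0$ by Lemma \ref{lem:weak-Bogomolov2}, every rank $0$ constituent satisfies $Q\ge 0$ trivially, and these combine through the equal-phase bilinear positivity of the first paragraph. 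This yields $Q(E)\ge 0$ with no correction term, which is \eqref{eq:weak-bogomolov}. I expect the main obstacle to be the bookkeeping in this last step --- checking that Toda's reduction genuinely writes $v(E)$ (modulo the $H^\perp$ directions, which $Q$ ignores) as an equal-phase combination of classes to which Lemma \ref{lem:weak-Bogomolov2} or the rank $0$ triviality applies --- precisely because the negative self-intersection of $C$ is what made the correction terms indispensable in the strong inequality.
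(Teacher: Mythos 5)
Your first two paragraphs are essentially sound, and in fact coincide with the algebra the paper itself uses: the observation that $Q$ is negative semi-definite on $\ker Z_{(\beta,\omega)}$, hence $Q(v_1,v_2)\geq 0$ for classes of equal phase with $Q(v_i)\geq 0$, is exactly the quadratic identity displayed in the proofs of Proposition \ref{prop:large} and Lemma \ref{lem:mini-wall}, whose cross term becomes $\frac{(\omega^2)}{2}\bigl(\tfrac{r_1}{d_1}-\tfrac{r_2}{d_2}\bigr)^2\geq 0$ on a wall; your phase-one analysis is also correct (with the harmless remark that an object of ${\cal T}_\beta$ with $d_\beta=0$ is torsion, so your ``$\rk E>0$'' branch is vacuous and the only nontrivial case is $E[-1]\in{\cal F}_\beta$, $\rk E<0$). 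The genuine gap is your third paragraph, which is where the whole content of the proposition lies. There is no decomposition of a $\sigma_{(\beta,\omega)}$-stable object $E$ with $\phi(E)\in(0,1)$ into \emph{equal-phase} $\mu$-semi-stable constituents: stability forces $\phi(\pH^{-1}(E)[1])<\phi(E)<\phi(\pH^0(E))$ when both pieces are nonzero, and the $\mu$-Harder--Narasimhan factors of each piece have pairwise distinct slopes. Since $Q$ is an indefinite form, it is not super-additive over decompositions into classes of unequal phase; for instance $v_1=e^\beta(1+H)$ and $v_2=e^\beta(H-\varrho_X)$ satisfy $Q(v_1)=Q(v_2)=(H^2)>0$, yet $Q(v_1-v_2)=Q(e^\beta(1+\varrho_X))=-2<0$, and differences of exactly this shape are what arise from $v(E)=v(\pH^0(E))-v(\pH^{-1}(E))$. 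So the equal-phase positivity of your first paragraph cannot glue the constituents, the correction terms in Lemma \ref{lem:strong-Bogomolov} are not merely an artifact of the $1$-dimensional pieces, and the step you defer as ``bookkeeping'' is precisely where a proof has to happen. (Citing Lemma \ref{lem:strong-Bogomolov} as a black box does not help either: $(v(E)^2)\geq -1-C_\omega(c_1(E(-\beta)),\omega)^2/(\omega^2)$ does not imply \eqref{eq:weak-bogomolov}.)

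What is missing is the paper's actual mechanism, and it is the reason the paper explicitly postpones this proof into that of Proposition \ref{prop:large}: a deformation to the large volume limit along $\omega=tH$, combined with induction on $d_\beta$. Given a stable $E$ with $d_\beta(E)>0$, one increases $t$; either $E$ remains semi-stable for all $t$, in which case once $(\omega^2)$ satisfies \eqref{eq:omega^2} the results of \cite{MYY:2011:1} quoted there (whose input is exactly Lemma \ref{lem:weak-Bogomolov2}) show that $E$ is a $(\beta-\tfrac{1}{2}K_X)$-twisted semi-stable object of ${\frak C}$, or $E^{\vee}[1]$ is a twisted semi-stable object of ${\frak C}^*$ --- in particular $\mu$-semi-stable, so that Lemma \ref{lem:weak-Bogomolov2} yields \eqref{eq:weak-bogomolov} directly; or $E$ is destabilized at some finite $t_1$, where it is still semi-stable and acquires a Jordan--H\"{o}lder filtration whose factors have equal phase and strictly smaller positive $d_\beta$ (a factor with $d_\beta=0$ would have phase one, impossible when $\phi(E)<1$), and one concludes by induction on $d_\beta$ together with the equal-phase positivity of your first paragraph. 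Your proposal never deforms $(\beta,\omega)$, and without that step there is no way to bring the sheaf-theoretic Lemma \ref{lem:weak-Bogomolov2} to bear on a stable object of phase strictly between $0$ and $1$.
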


We shall postpone the proof of this claim, since we need
a claim in Proposition \ref{prop:large}.
So we shall prove \eqref{eq:weak-bogomolov} in the course 
of the proof of Proposition \ref{prop:large}.

\begin{NB}
If $\pi:X \to Y$ is the blow-up of a smooth surface $Y$
or $\pi$ is the minimal resolution of a rational double
point, then we have the Bogomolov inequality. 
Assume that $G$ satisfies
\eqref{eq:ch(G)}.
Then for a pair $(\beta,\omega)$,
we have a torsion pair $({\cal T}_\beta,{\cal F}_\beta)$
of ${\frak C}$ which defines a tilted category 
${\cal A}_{(\beta,\omega)}$.
By Lemma \ref{lem:weak-Bogomolov},
$\sigma_{(\beta,\omega)}:=({\cal A}_{(\beta,\omega)},Z_{(\beta,\omega)})$
is a stability condition.
\begin{lem}
If $\pi:X \to Y$ is the blow-up of a smooth surface $Y$
or $\pi$ is the minimal resolution of a rational double
point, then the support property also holds.
\end{lem}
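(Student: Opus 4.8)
The plan is to derive the support property from the Bogomolov-type inequality, following the scheme of Toda \cite{Toda}. Since $\pi$ is the blow-up of a smooth point (or the minimal resolution of rational double points), the genuine Bogomolov inequality $(v(E)^2)\ge 0$ holds for every $\mu$-semi-stable object of ${\frak C}$ by Lemma \ref{lem:perverse-Bogomolov} (the rational-double-point case being handled by the analogous argument). First I would combine this with the one-dimensional estimate of Lemma \ref{lem:Bogomolov(1-dim)} to obtain the strong Bogomolov inequality of Lemma \ref{lem:strong-Bogomolov} for an arbitrary $\sigma_{(\beta,\omega)}$-stable object, fixing this as the single analytic input.

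Next I would convert this into a norm bound. Writing $v(E)=e^\beta(r+d\omega+\eta+a\varrho_X)$ with $\eta\in\omega^\perp$, the strong Bogomolov inequality controls $-(\eta^2)$ in terms of $r,a,d$, while the real and imaginary parts of the central charge recover $-a_\beta(E)+r(\omega^2)/2$ and $d_\beta(E)(H,\omega)$. Dividing through by $|Z_{(\beta,\omega)}(E)|$ and completing squares, exactly as in the displayed computation preceding Lemma \ref{lem:support}, I would bound $\max\{|r|,|a|,|d(\omega^2)|,\sqrt{-(\eta^2)}\}/|Z_{(\beta,\omega)}(E)|$ by a constant $A$ depending only on $C_\omega$ and $(\omega^2)$, valid for all objects except the two exceptional ones ${\cal O}_C(l)$ and ${\cal O}_C(l-1)[1]$.

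The exceptional objects, for which $r=d=0$ and $-(\eta^2)=1$, I would treat separately: there the ratio equals $1/a_\beta({\cal O}_C(l))$ or $1/(1-a_\beta({\cal O}_C(l)))$, so $A$ acquires a dependence on these quantities. Over a compact set $B$ of stability conditions, all of $C_\omega$, $(\omega^2)$, $1/a_\beta({\cal O}_C(l))$ and $1/(1-a_\beta({\cal O}_C(l)))$ stay bounded, so a uniform $C_B$ exists; passing from the max-norm to the fixed norm $\|\cdot\|$ on $H^*(X,{\Bbb Q})_{\alg}$ then yields the estimate, which is precisely the support property recorded in Lemma \ref{lem:support}.

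The main obstacle is the exceptional locus. As $(\beta,C)\to\tfrac12+{\Bbb Z}$ one has $a_\beta({\cal O}_C(l))\to 0$, so the bound degenerates; consequently a uniform $C_B$ is only produced on compacta avoiding the categorical walls, which is why one restricts either to ample $\omega$ or to $(\beta,C)\notin\tfrac12+{\Bbb Z}$ when $\omega\in\pi^*(\Amp(Y))$, matching the hypotheses of Lemma \ref{lem:support}. The delicate point to verify is thus the genuine boundedness of these constants on $B$, equivalently that no spherical-type object other than the two exceptional ones can make $|Z_{(\beta,\omega)}|$ small relative to $\|v\|$ and spoil the negative-definiteness of the associated quadratic form on $\ker Z_{(\beta,\omega)}$.
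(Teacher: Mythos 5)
Your proposal is correct and follows essentially the same route as the paper: the strong Bogomolov inequality (Lemma \ref{lem:strong-Bogomolov}, obtained from the Bogomolov inequality for perverse coherent sheaves together with the one-dimensional estimate of Lemma \ref{lem:Bogomolov(1-dim)}), then Toda's sect.~3.7 computation bounding $\max\{|r|,|a|,|d(\omega^2)|,\sqrt{-(\eta^2)}\}/|Z_{(\beta,\omega)}(E)|$ with the exceptional objects ${\cal O}_C(l)$, ${\cal O}_C(l-1)[1]$ contributing $1/a_\beta({\cal O}_C(l))$ and $1/(1-a_\beta({\cal O}_C(l)))$, and finally the compactness argument of Lemma \ref{lem:support}. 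You also correctly identify the one genuine subtlety, namely that these constants degenerate as $(\beta,C)\to\tfrac{1}{2}+{\Bbb Z}$ when $\omega\in\pi^*(\Amp(Y)_{\Bbb R})$, which is exactly why the paper restricts to ample $\omega$ or to $(\beta,C)\notin\tfrac{1}{2}+{\Bbb Z}$ on the boundary.
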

\end{NB}

Let ${\frak C}^*={\frak C}^D(-K_X)$ be a category of 
 perverse coherent sheaves such that 
$G^{\vee}(-K_X)$ is a local projective generator
(see \cite[Lem. 1.1.14]{PerverseI}).
Since $(G^{\vee}(-K_X))^{\vee}(-K_X)=G$, we have 
$({\frak C}^*)^*={\frak C}$.
By \cite[Lem. 1.1.14 (4)]{PerverseI}, the following claim holds.
\begin{lem}\label{lem:0-dim-dual}
$A \in {\bf D}(X)$ is a 0-dimensional object
of ${\frak C}$ if and only if 
$A^{\vee}[2]$ is a 0-dimensional object of
${\frak C}^*$.
\end{lem}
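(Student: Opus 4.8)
The plan is to realise the correspondence $A \mapsto A^{\vee}[2]$ as the derived dualizing functor $\mathbb{D}(\bullet) := \mathbf{R}\mathcal{H}om(\bullet,\mathcal{O}_X)[2]$ and to invoke \cite[Lem. 1.1.14]{PerverseI}, which asserts that $\mathbb{D}$ restricts to a contravariant equivalence ${\frak C} \to {\frak C}^*$ interchanging the local projective generators $G$ and $G^{\vee}(-K_X)$. In particular, for $A \in {\frak C}$ one has $A^{\vee}[2] \in {\frak C}^*$, so the membership half of the ``if and only if'' is immediate from the cited lemma; since $({\frak C}^*)^* = {\frak C}$ and $\mathbb{D}\circ\mathbb{D} \simeq \mathrm{id}$, it suffices to treat one direction of the statement about being $0$-dimensional.

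First I would recall that the dimension of an object $A \in {\frak C}$ is defined as the degree of the twisted Hilbert polynomial $P_A(n) := \chi(G, A(nH))$, so that $A$ is $0$-dimensional exactly when $P_A$ is constant; likewise $A^{\vee}[2] \in {\frak C}^*$ is $0$-dimensional exactly when $P^*(n) := \chi(G^{\vee}(-K_X), A^{\vee}[2](nH))$ is constant. The heart of the argument is the identity
\[
\chi(G^{\vee}(-K_X), A^{\vee}[2](nH)) = \chi(G, A(-nH)),
\]
that is, $P^*(n) = P_A(-n)$. I would prove it by chaining three standard facts: the shift $[2]$ changes the Euler characteristic by $(-1)^2 = 1$; the contravariant equivalence $\mathbb{D}_0 = \mathbf{R}\mathcal{H}om(\bullet,\mathcal{O}_X)$ gives $\chi(B, C^{\vee}) = \chi(C, B^{\vee})$ together with $(A(-nH))^{\vee} = A^{\vee}(nH)$ and $(G^{\vee}(-K_X))^{\vee} = G(K_X)$; and finally Serre duality $\chi(E,F) = \chi(F, E(K_X))$ combined with the invariance $\chi(E(L), F(L)) = \chi(E, F)$, which makes the twist by $-K_X$ on the generator cancel against the $K_X$ produced by Serre duality. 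Writing $A^{\vee}[2](nH) = (A(-nH))^{\vee}[2]$ and running these substitutions yields the displayed identity.

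Since a polynomial $P_A$ is constant precisely when $n \mapsto P_A(-n)$ is constant, the identity shows that $A$ is $0$-dimensional in ${\frak C}$ if and only if $A^{\vee}[2]$ is $0$-dimensional in ${\frak C}^*$; the reverse implication then follows by applying the same argument to ${\frak C}^*$ using $({\frak C}^*)^* = {\frak C}$. The only place demanding care is the bookkeeping in the Hilbert-polynomial identity---tracking the shift, the line-bundle twists, and the reversal $n \mapsto -n$---but this is routine Riemann--Roch and Serre-duality manipulation rather than a genuine obstacle; the substantive input, namely that $\mathbb{D}$ maps ${\frak C}$ onto ${\frak C}^*$, is supplied by the cited \cite[Lem. 1.1.14]{PerverseI}.
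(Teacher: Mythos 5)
The central step of your proposal --- ``for $A \in {\frak C}$ one has $A^{\vee}[2] \in {\frak C}^*$, so the membership half of the `if and only if' is immediate from the cited lemma'' --- is false, and this is a genuine gap rather than a bookkeeping issue. The functor $\mathbb{D}=\mathbf{R}\mathcal{H}om(\bullet,{\cal O}_X)[2]$ is a contravariant autoequivalence of ${\bf D}(X)$, but it does \emph{not} restrict to an equivalence of hearts ${\frak C} \to {\frak C}^*$, and no lemma can assert that it does: take $\pi=\mathrm{id}$, so that ${\frak C}={\frak C}^*=\Coh(X)$, and $A={\cal O}_X$; then $A^{\vee}[2]={\cal O}_X[2] \notin \Coh(X)$. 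More generally, every object of ${\frak C}^*$ has cohomology sheaves only in degrees $-1,0$, whereas $\mathbb{D}$ applied to any locally free object of ${\frak C}$ (e.g.\ the generator $G$ itself) produces an object concentrated in degree $-2$. The whole point of the $0$-dimensionality hypothesis is that it is exactly what forces $A^{\vee}[2]$ back into the heart ${\frak C}^*$: for instance $k_x^{\vee}[2]=k_x$, $({\cal O}_C(l))^{\vee}[2]={\cal O}_C(-l-1)[1]$ and $({\cal O}_C(l-1)[1])^{\vee}[2]={\cal O}_C(-l)$, so the irreducible $0$-dimensional objects of ${\frak C}$ are carried to ($0$-dimensional) objects of ${\frak C}^*$, and one then propagates this through extensions. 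By splitting the lemma into ``membership (free, from the citation)'' plus ``dimension count (computation)'' you have assumed away the only part that requires proof; membership for $0$-dimensional objects is precisely the content of \cite[Lem.~1.1.14~(4)]{PerverseI}, which is also how the paper itself disposes of the lemma --- its entire ``proof'' is that citation, made legitimate by the observation $({\frak C}^*)^*={\frak C}$.

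For what it is worth, your Euler-characteristic identity
$\chi(G^{\vee}(-K_X), A^{\vee}[2](nH)) = \chi(G, A(-nH))$
is correct: each of the three ingredients (invariance of $\chi$ under $[2]$, the symmetry $\chi(B,C^{\vee})=\chi(C,B^{\vee})$, and Serre duality combined with simultaneous twisting) checks out. But this identity only compares Hilbert polynomials; constancy of $\chi(G^{\vee}(-K_X), B(nH))$ for a complex $B$ does not place $B$ in ${\frak C}^*$ (e.g.\ $B=k_x\oplus k_y[1]$ has identically vanishing polynomial), so the computation cannot substitute for the membership argument. If you want a self-contained proof rather than the citation, the route is the one sketched above: reduce to irreducible $0$-dimensional objects via the exactness of $\mathbb{D}$ on extensions, and compute their duals explicitly.
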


For a 0-dimensional object $A$ of ${\frak C}$,
\begin{equation}\label{eq:dual*}
\begin{split}
\Hom(A,E)=& \Hom(E^{\vee}[1],(A^{\vee}[2])[-1]),\\
\Hom(E,A[-1])=& \Hom(A^{\vee}[2],E^{\vee}[1]).
\end{split}
\end{equation}

\begin{lem}\label{lem:duality*}
Let $E \in {\bf D}(X)$ satisfy $\pH^i(E)=0$
($i \ne -1,0$) and $\Hom({\Bbb C}_x,E)=0$ for all $x \in X$.
Then $H^i(E^{\vee}[1])=0$ for $i \ne -1,0$ and
$\Hom(B,E^{\vee}[1])=0$ for all 0-dimensional object
$B$ of ${\frak C}^*$.
Moreover if $\Hom(A,E)=0$ for all 0-dimensional object 
$A$ of ${\frak C}$,
then $\pH^i(E^{\vee}[1])=0$ for $i \ne -1,0$.
\end{lem}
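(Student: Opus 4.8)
My plan is to push all three assertions through the contravariant duality $\mathbb{D}=(-)^{\vee}={\bf R}\mathcal{H}om(-,{\cal O}_X)$, an anti-autoequivalence of ${\bf D}(X)$, translating each statement about $F:=E^{\vee}[1]$ back to a statement about $E$ by means of the adjunctions \eqref{eq:dual*} and the bijection of Lemma \ref{lem:0-dim-dual}. The perverse hypothesis $\pH^i(E)=0$ $(i\ne-1,0)$ means $E\in{\frak C}[1]*{\frak C}$, and since ${\frak C}\subset{\bf D}^{[-1,0]}(X)$ this gives $E\in{\bf D}^{[-2,0]}(X)$, a fact I use repeatedly. For the middle conclusion, every $0$-dimensional object $B$ of ${\frak C}^*$ can be written $B=A^{\vee}[2]$ with $A$ a $0$-dimensional object of ${\frak C}$ (apply Lemma \ref{lem:0-dim-dual} to $({\frak C}^*)^*={\frak C}$); then the second identity of \eqref{eq:dual*} gives $\Hom(B,F)=\Hom(A^{\vee}[2],E^{\vee}[1])=\Hom(E,A[-1])$. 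As $A\in{\frak C}$ sits in perverse degree $0$, the object $A[-1]$ sits in perverse degree $1$, whereas $E$ has perverse cohomology only in degrees $-1,0$; applying $\Hom(-,A[-1])$ to the triangle $\pH^{-1}(E)[1]\to E\to\pH^0(E)$ and using that ${\frak C}$, being a heart, has no negative self-extensions yields $\Hom(E,A[-1])=0$. This step needs only the perverse bound, not the skyscraper hypothesis.

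The ordinary-cohomology bound $H^i(F)=0$ $(i\ne-1,0)$, i.e. $E^{\vee}\in{\bf D}^{[0,1]}(X)$, is the technical heart and is where $\Hom({\Bbb C}_x,E)=0$ enters. The lower bound $E^{\vee}\in{\bf D}^{\ge0}$ is immediate from $E\in{\bf D}^{\le0}$. For the upper bound I would apply $\Hom({\Bbb C}_x,-)$ to the truncation triangle $\tau_{\le-1}E\to E\to H^0(E)$; since $\Ext^{-1}({\Bbb C}_x,H^0(E))=0$, the hypothesis forces $\Hom({\Bbb C}_x,\tau_{\le-1}E)=0$ for all $x$. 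The hyper-Ext spectral sequence of $\tau_{\le-1}E\in{\bf D}^{[-2,-1]}(X)$ then gives, first, $\Ext^1({\Bbb C}_x,H^{-1}(E))=0$, so $H^{-1}(E)$ is locally free by \cite[Lem. 1.1.31]{PerverseI}; this in turn gives $\Hom({\Bbb C}_x,H^{-1}(E))=0$, which kills the differential out of the remaining term and forces $\Ext^2({\Bbb C}_x,H^{-2}(E))=0$ for all $x$, i.e. $H^{-2}(E)=0$. Hence $E\in{\bf D}^{[-1,0]}(X)$ with $H^{-1}(E)$ locally free, so after dualizing the only cohomology of $E^{\vee}$ above degree $1$ is the top term $H^2(E^{\vee})=\mathcal{E}xt^2(H^0(E),{\cal O}_X)$, a $0$-dimensional sheaf. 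Using ${\Bbb C}_x^{\vee}\cong{\Bbb C}_x[-2]$ I rewrite $0=\Hom({\Bbb C}_x,E)=\Hom(E^{\vee}[2],{\Bbb C}_x)=\Hom(H^2(E^{\vee}),{\Bbb C}_x)$; a nonzero $0$-dimensional sheaf always has a skyscraper quotient, so $H^2(E^{\vee})=0$ and $E^{\vee}\in{\bf D}^{[0,1]}(X)$.

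For the perverse refinement under the stronger hypothesis, I would dualize it directly: by \eqref{eq:dual*} and Lemma \ref{lem:0-dim-dual}, the condition $\Hom(A,E)=0$ for all $0$-dimensional $A$ of ${\frak C}$ is equivalent to $\Hom(F,B[-1])=0$ for all $0$-dimensional $B$ of ${\frak C}^*$. Since part one gives $F\in{\bf D}^{[-1,0]}(X)$, the only possibly nonzero ${\frak C}^*$-perverse cohomologies are $\pH^{-1}_{{\frak C}^*}(F),\pH^0_{{\frak C}^*}(F),\pH^1_{{\frak C}^*}(F)$, so that $\pH^{-2}_{{\frak C}^*}(F)=0$ for free; moreover $\pH^1_{{\frak C}^*}(F)$ is the torsion-free part (for the pair defining ${\frak C}^*$) of the sheaf $H^0(F)$ shifted into ${\frak C}^*$, hence a $0$-dimensional object of ${\frak C}^*$. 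Were it nonzero, the perverse truncation map $F\to\pH^1_{{\frak C}^*}(F)[-1]$ would be a nonzero element of $\Hom(F,B[-1])$ with $B=\pH^1_{{\frak C}^*}(F)$, contradicting the dualized hypothesis; therefore $\pH^1_{{\frak C}^*}(F)=0$ and $\pH^i(E^{\vee}[1])=0$ for $i\ne-1,0$.

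The main obstacle is precisely the upper bound in the second paragraph: a vanishing of morphisms \emph{from} skyscrapers only detects $0$-dimensional torsion, so one cannot directly control $H^2,H^3,H^4$ of $E^{\vee}$. The crux is that the perverse amplitude of $E$, combined with the local-freeness criterion \cite[Lem. 1.1.31]{PerverseI}, is exactly strong enough to collapse $E$ to a two-term complex whose dual has only $0$-dimensional cohomology above degree $1$, after which the identification ${\Bbb C}_x^{\vee}\cong{\Bbb C}_x[-2]$ finishes the argument.
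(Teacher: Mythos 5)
Your proof is correct, and for the two assertions about $0$-dimensional objects it coincides with the paper's argument (the same reduction through Lemma \ref{lem:0-dim-dual} and \eqref{eq:dual*}, and the same identification of the obstruction with the $S^*$-part of $H^0(E^{\vee}[1])$, which must vanish under the dualized hypothesis). Where you genuinely diverge is the core bound $H^i(E^{\vee}[1])=0$ for $i \ne -1,0$. The paper argues globally: it picks a local projective generator $G$ of ${\frak C}$, uses $\Ext^2(G(-n),\pH^{-1}(E))=0$ for $n \gg 0$ to produce a morphism $V_0:=G(-n)^{\oplus N} \to E$ inducing a surjection on $\pH^0$, shows $V_{-1}:=\Cone(V_0 \to E)[-1] \in {\frak C}$, and then uses $\Ext^1({\Bbb C}_x,V_0)=0$ together with $\Ext^2(E,{\Bbb C}_x)=\Hom({\Bbb C}_x,E)^{\vee}=0$ to conclude from \cite[Lem. 1.1.31]{PerverseI} that $V_{-1}$ is locally free, so that $E^{\vee}[1]$ is the dual two-term complex of locally free sheaves sitting in degrees $-1,0$. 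You instead analyze the ordinary cohomology sheaves of $E$ itself: the skyscraper hypothesis applied to $\tau_{\le -1}E$ gives $\Ext^1({\Bbb C}_x,H^{-1}(E))=0$ (so $H^{-1}(E)$ is locally free, by the same criterion) and then $\Ext^2({\Bbb C}_x,H^{-2}(E))=0$ (so $H^{-2}(E)=0$); dualizing the triangle $H^{-1}(E)[1] \to E \to H^0(E)$ leaves only a quotient of the $0$-dimensional sheaf ${\cal E}xt^2(H^0(E),{\cal O}_X)$ in degree $2$, which is killed by $\Hom({\Bbb C}_x,E)=\Hom(E^{\vee}[2],{\Bbb C}_x)=0$. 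Both routes rest on exactly the same two inputs (Serre duality for skyscrapers and the local-freeness criterion), but they buy different things: the paper's construction produces an explicit presentation of $E$ by a two-term complex of locally free sheaves, which is what Remark \ref{rem:C^D} refers to immediately afterwards, whereas your argument is more self-contained (it never invokes the generator $G$ or the vanishing $\Ext^2(G(-n),\pH^{-1}(E))=0$) and records the sharper intermediate fact that $E$ itself lies in ${\bf D}^{[-1,0]}(X)$ with $H^{-1}(E)$ locally free.
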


\begin{proof}
Let $G$ be a local projective generator of ${\frak C}$.
Since $\Ext^2(G(-n),\pH^{-1}(E))=0$ for $n \gg 0$,
we have a morphism 
$G(-n)^{\oplus N} \to E$ such that
$\pH^0(G(-n)^{\oplus N}) \to \pH^0(E)$ is surjective in ${\frak C}$.
We set $V_0:=G(-n)^{\oplus N}$ and
$V_{-1}:=\mathrm{Cone}(V_0 \to E)[-1]$.
Then we see that $V_{-1} \in {\frak C}$. 
Since $V_0$ is a locally free sheaf, $\Ext^1({\Bbb C}_x,V_0)=0$
for all $x \in X$.
By $\Ext^2(E,{\Bbb C}_x)=\Hom({\Bbb C}_x,E)^{\vee}=0$,
$\Ext^1(V_{-1},{\Bbb C}_x)=0$ for all $x \in X$.
Hence $V_{-1}$ is a locally free sheaf.
Then $E^{\vee}[1]$ is represented by a two term complex
of locally free sheaves.
For a 0-dimensional object $B$ of ${\frak C}^*$,
we set $A:=B^{\vee}[2]$. Then $A^{\vee}[2]=B$ and 
Lemma \ref{lem:0-dim-dual} implies that $A$ is a 0-dimensional
object of ${\frak C}$.
Then \eqref{eq:dual*} implies
$\Hom(B,E^{\vee}[1])=\Hom(E,A[-1])=0$.

Assume that $\Hom(A,E)=0$ for all 0-dimensional objects.
Then by Lemma \ref{lem:0-dim-dual} and \eqref{eq:dual*},
we see that
$H^0(E^{\vee}[1]) \in {\frak C}^*$. 
For $F:=H^{-1}(E^{\vee}[1])$,
we have an exact triangle
\begin{equation}
\pH^0(F) \to F \to \pH^1(F)[-1] \to \pH^0(F)[1].
\end{equation}
Therefore $\pH^i(E^{\vee}[1])=0$ for $i \ne -1,0$,
$\pH^{-1}(E^{\vee}[1])=\pH^0(F)$ and
we have an exact sequence 
$$
0 \to \pH^1(F) \to
\pH^0(E^{\vee}[1]) \to H^0(E^{\vee}[1]) \to 0
$$
in ${\frak C}^*$.
\end{proof}

\begin{rem}\label{rem:C^D}
$V_i^{\vee}$ are local projective objects of ${\frak C}^D$.
However we do not know whether $V_i^{\vee} \in {\frak C}^*$
or not. 
\end{rem}

\begin{cor}\label{cor:duality*}
For a complex $E \in {\bf D}(X)$,
 $\pH^i(E)=0$
($i \ne -1,0$) and $\Hom(A,E)=0$ for all 0-dimensional
object $A$ of ${\frak C}$ if and only if 
$\pH^i(E^{\vee}[1])=0$ for $i \ne -1,0$ and
$\Hom(B,E^{\vee}[1])=0$ for all 0-dimensional
object $B$ of ${\frak C}^*$.
\end{cor}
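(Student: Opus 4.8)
The plan is to read the corollary as an equivalence whose two directions are interchanged by the duality $F\mapsto F^{\vee}[1]$, so that the forward implication is precisely Lemma~\ref{lem:duality*} and the reverse implication is that same lemma applied to the dual complex in the dual category. Thus no new computation is needed; everything is arranged so that the symmetry between ${\frak C}$ and ${\frak C}^*$ does the work.

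First I would record the structural facts that make the duality an involution between ${\frak C}$ and ${\frak C}^*$. On the smooth surface $X$ every object of ${\bf D}(X)$ is perfect, so $E^{\vee\vee}\cong E$ and hence $(E^{\vee}[1])^{\vee}[1]=E$. Moreover $G^{\vee}(-K_X)$ is a local projective generator of ${\frak C}^*$ and $({\frak C}^*)^*={\frak C}$, so Lemma~\ref{lem:duality*} applies verbatim with ${\frak C}$ replaced by ${\frak C}^*$. I would also note, using Lemma~\ref{lem:0-dim-dual} together with ${\Bbb C}_x^{\vee}[2]\cong {\Bbb C}_x$, that ${\Bbb C}_x$ is simultaneously a $0$-dimensional object of ${\frak C}$ and of ${\frak C}^*$; consequently the hypothesis ``$\Hom(A,\,\cdot\,)=0$ for all $0$-dimensional $A$'' automatically includes ``$\Hom({\Bbb C}_x,\,\cdot\,)=0$ for all $x$'', which is the extra input the lemma needs.

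For the forward implication, assume $\pH^i(E)=0$ ($i\ne -1,0$) and $\Hom(A,E)=0$ for all $0$-dimensional objects $A$ of ${\frak C}$. By the previous remark both hypotheses of Lemma~\ref{lem:duality*} hold, so its conclusion together with the ``moreover'' clause gives $\pH^i(E^{\vee}[1])=0$ for $i\ne -1,0$ and $\Hom(B,E^{\vee}[1])=0$ for all $0$-dimensional objects $B$ of ${\frak C}^*$, which is the right-hand side.

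For the converse I would apply Lemma~\ref{lem:duality*} to $F:=E^{\vee}[1]$ in the category ${\frak C}^*$, whose dual is $({\frak C}^*)^*={\frak C}$. The right-hand side of the corollary asserts exactly that $\pH^i(F)=0$ ($i\ne -1,0$) and $\Hom(B,F)=0$ for all $0$-dimensional $B$ of ${\frak C}^*$; these (including the skyscraper case) are the hypotheses required by the lemma applied to $F$. Its conclusion and ``moreover'' clause then yield $\pH^i(F^{\vee}[1])=0$ ($i\ne -1,0$) and $\Hom(A,F^{\vee}[1])=0$ for all $0$-dimensional $A$ of ${\frak C}$, and since $F^{\vee}[1]=E$ this is the left-hand side. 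I expect no genuine obstacle here; the only point requiring care is to verify that the involution $F\mapsto F^{\vee}[1]$ really exchanges the data of ${\frak C}$ with that of ${\frak C}^*$ — in particular that Lemma~\ref{lem:0-dim-dual} makes the two notions of ``$0$-dimensional object'' correspond under this duality, so that the two $\Hom$-vanishing conditions, and the two perverse-cohomology conditions, match up exactly.
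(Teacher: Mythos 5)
Your proposal is correct and is exactly the intended argument: the paper states the corollary without proof immediately after Lemma \ref{lem:duality*}, precisely because it follows by applying that lemma once to $E$ in ${\frak C}$ and once to $F=E^{\vee}[1]$ in ${\frak C}^*$, using $({\frak C}^*)^*={\frak C}$ and $(E^{\vee}[1])^{\vee}[1]\cong E$. Your preliminary observation that the skyscrapers ${\Bbb C}_x$ are $0$-dimensional objects of both ${\frak C}$ and ${\frak C}^*$ (so that the corollary's $\Hom$-vanishing hypothesis subsumes the lemma's skyscraper hypothesis, and the ``moreover'' clause upgrades $H^i$ to $\pH^i$) is the one point that needs saying, and you say it.
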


\begin{rem}
\begin{enumerate}
\item[(1)]
Let $E$ be a $\sigma$-stable object 
with $0<\phi(E)<1$.
Then $\Hom(A,E)=0$ for all 0-dimensional object $A$ of 
${\frak C}$.
\item[(2)]
For the object $E$ in Lemma \ref{lem:duality*},
if $\rk \pH^0(E)=0$, then $E^{\vee}[1] \in \Coh(X)$.
Moreover if $\Hom(A,E)=0$ for all
0-dimensional object $A$ of ${\frak C}$,
then $E^{\vee}[1] \in {\frak C}^*$ and
$E^{\vee}[1]$ does not contain any 0-dimensional object. 
\item[(3)]
Let $V_{-1} \to V_0$ be a two term complex of locally free sheaves.
Then $\pH^i(E)=0$ for $i \ne 0,-1$ if $\Hom(E,A[-1])=0$ for all
0-dimensional object $A$ of ${\frak C}$.
\end{enumerate}
\end{rem}

\begin{NB}
\begin{lem}[Bogomolov inequality]\label{lem:Bogomolov}
For a $\mu$-semi-stable object $E$ of ${\frak C}$ with $\rk E>0$,
$(v(E)^2) \geq 0$.
\end{lem}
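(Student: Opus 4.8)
The plan is to reduce $(v(E)^2)\ge 0$ to the classical Bogomolov--Gieseker inequality for a locally free sheaf. Since $\mu$-semistability is only defined for torsion free objects, I may assume $E$ is a torsion free object of ${\frak C}$ with $\rk E>0$. Applying Lemma~\ref{lem:reflexive-hull} gives an exact sequence $0\to E\to E'\to T\to 0$ with $T$ $0$-dimensional, and exactly as in the proof of Lemma~\ref{lem:weak-Bogomolov2} the sheaf $E'$ is locally free and $\mu$-semistable with respect to $H$. The first step is to compare Mukai squares via $v(E)=v(E')-v(T)$: writing $v(T)=(0,D_T,c)$ with $\rk v(T)=0$, the bilinear form gives $(v(E)^2)=(v(E')^2)+2\rk E\,c-2(c_1(E'),D_T)+(D_T^2)$. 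When $T$ is supported away from the exceptional locus one has $D_T=0$ and $c=\ell(T)\ge 0$, so $(v(E)^2)=(v(E')^2)+2\rk E\,\ell(T)\ge (v(E')^2)$; the exceptional components of $c_1(T)$ (coming from objects such as the sheaves supported on the exceptional curves) must be absorbed using the classification of $0$-dimensional objects of ${\frak C}$ together with the positivity of their twisted Euler characteristics.

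Granting the reduction, the heart of the matter is to prove $(v(E')^2)\ge 0$ for a locally free sheaf $E'$ that is $\mu$-semistable with respect to $H=\pi^*H_Y$ with $H_Y$ ample. Because $(v(E')^2)=2\rk E'\,\Delta(E')$, this is precisely the Bogomolov inequality $\Delta(E')\ge 0$. The obstacle I expect to be the genuine difficulty is that $H$ is only nef and big, not ample, so the classical inequality cannot be quoted directly. I would resolve this by restriction to a curve: for $m\gg 0$ a general member $C_Y\in|mH_Y|$ on $Y$ avoids the finitely many singular points, so its preimage $\pi^{-1}(C_Y)\cong C_Y$ lies in the smooth locus and is a smooth curve in $|mH|$; by the Mehta--Ramanathan restriction theorem $E'|_{\pi^{-1}(C_Y)}$ stays semistable, and the Bogomolov inequality then follows by the standard restriction-to-curves argument. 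Equivalently, one may descend along $\pi_*$: the sheaf $\pi_*E'$ is torsion free and $\mu$-semistable on $Y$ with respect to the \emph{ample} class $H_Y$, to which the classical Bogomolov inequality applies, and one checks that the passage $E'\mapsto \pi_*E'$ does not decrease the Mukai square.

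Finally, I would note that in the blow-up situation this general statement specializes to Lemma~\ref{lem:perverse-Bogomolov}, whose proof proceeds instead by induction on the multiplicity along the exceptional curve via the exact sequences involving ${\cal O}_C(-m-1)$; the same inductive device over the exceptional components gives an alternative route for a resolution with several exceptional curves, peeling off the $0$-dimensional objects one at a time and reducing to the locally free (hence sheaf-theoretic) Bogomolov inequality. Either way, the single essential input beyond the formal manipulations is the Bogomolov inequality for $\mu$-semistable sheaves with respect to a nef and big class, which is where the real content lies.
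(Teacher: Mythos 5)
Your overall strategy---reduce to the locally free ``hull'' $E'$ of $E$ via Lemma \ref{lem:reflexive-hull}, compare Mukai squares, and quote the classical Bogomolov inequality for $E'$---is exactly the reduction the paper sketches (it is the same device used to prove the weak inequality, Lemma \ref{lem:weak-Bogomolov2}). Your worry about $H$ being only nef and big is a genuine but minor point: in characteristic $0$ Bogomolov's inequality holds for nef and big polarizations (the destabilizing $\xi$ produced by Bogomolov's theorem lies in the positive cone, and $(\xi,H)>0$ then follows from the Hodge index theorem since $(H^2)>0$); alternatively, in the blow-up case Lemma \ref{lem:reflexive-hull2} gives $E'(lC)=\pi^*(F)$ with $F$ $\mu$-semi-stable with respect to the ample class on $Y$, so $(v(E')^2)=(v(F)^2)$ exactly and no restriction theorem or pushforward estimate is needed.

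The genuine gap is the step you explicitly defer. In your identity $(v(E)^2)=(v(E')^2)+2\rk E\,c-2(c_1(E'),D_T)+(D_T^2)$, the terms $(v(E')^2)$ and $2\rk E\,c$ (suitably twisted) are nonnegative, but $-2(c_1(E'),D_T)+(D_T^2)$ has no sign, and ``positivity of twisted Euler characteristics of the $0$-dimensional factors'' cannot absorb it. Concretely, in the blow-up case $D_T=(n_1-n_2)C$, where $n_1,n_2$ are the multiplicities of ${\cal O}_C(l)$ and ${\cal O}_C(l-1)[1]$ among the factors of $T$: the twisted Euler characteristic of $T$ grows only \emph{linearly} in $n_1,n_2$, while $(D_T^2)=-(n_1-n_2)^2$ is \emph{quadratically} negative, and $(c_1(E'),D_T)$ can have either sign; nothing in your argument bounds $|n_1-n_2|$. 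This is precisely why the reflexive-hull trick proves the weak inequality of Lemma \ref{lem:weak-Bogomolov2} (there the $H^\perp$-components, hence $D_T$, are projected away) but not the full one. The missing input is a bound on the multiplicity of the exceptional factors in terms of $(c_1,C)$, and supplying it is the entire content of the paper's actual proof, Lemma \ref{lem:perverse-Bogomolov}: one peels off a \emph{subobject} ${\cal O}_C(-m-1)^{\oplus n}\subset E$ with $n\leq\dim\Ext^1(E',{\cal O}_C(-m-1))$, converts this via $\chi(E',{\cal O}_C(-m-1))=-\dim\Ext^1(E',{\cal O}_C(-m-1))$ into the numerical bound $n\leq (c_1(E'),C)+rm$, and it is exactly this bound that makes the square comparison come out as $(v^2)\geq({v'}^2)+n^2+nr>({v'}^2)$; induction on $m$ together with Lemma \ref{lem:l=infty} then reduces to honest $\mu$-semi-stable torsion free sheaves. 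So the inductive device you mention at the end as ``an alternative route'' is not an alternative---it is the proof; the route you make primary stops exactly where the real difficulty begins.
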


\begin{proof}
In the proof of Lemma \ref{lem:reflexive-hull},
we set
$v(T_n)=D_n+(D_n,\beta)\varrho_X+a_n \varrho_X$, $D_n \in H^\perp$,
since $T_n$ is a 0-dimensional object.
Since $0<\chi(G,T_i/T_{i-1})=-(e^\beta,v(T_i/T_{i-1}))=a_i-a_{i-1}$,
we have $a_n \geq n$.
We set $v(E)=e^\beta(r+a \varrho_X+dH+D)$, $D \in H^\perp$.
Since
\begin{equation}
(v(E_n)^2)=(v(E)^2)-2\rk E a_n+((D+D_n)^2)-(D^2)
 \leq (v(E)^2)-(D^2)-2\rk E a_n,
\end{equation}

\end{proof}
\end{NB}

\begin{lem}\label{lem:irreducible}
Let $\pi:X \to Y$ be the blow-up of a point of a smooth 
surface $Y$ and $C$ the exceptional
divisor. Assume that $l-\frac{1}{2}<(\beta,C)<l+\frac{1}{2}$.
\begin{enumerate}
\item[(1)]
${\cal O}_C(l)$ is an irreducible object
of ${\cal A}_{(\beta,tH)}$.
\item[(2)]
Let $E$ be a subobject of ${\cal O}_C(l)^{\oplus n}$ in
${\cal A}_{(\beta,tH)}$. Then
$E={\cal O}_C(l)^m$ with $0 \leq m \leq n$.
\end{enumerate}
\end{lem}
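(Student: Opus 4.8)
The plan is to prove (1) by classifying subobjects of ${\cal O}_C(l)$ inside ${\cal A}_{(\beta,tH)}$ via the long exact sequence of perverse cohomology, and then to deduce (2) from (1) by an elementary induction. First I would record that ${\cal O}_C(l)$ lies in ${\cal A}_{(\beta,tH)}$: since $l-\tfrac12<(\beta,C)<l+\tfrac12$, it is one of the irreducible objects ${\cal O}_C(l),\,{\cal O}_C(l-1)[1],\,k_x$ of ${\frak C}^\beta$, and being a purely $1$-dimensional (torsion) object it lies in ${\cal T}_\beta$, hence sits in ${\cal A}_{(\beta,tH)}$ in perverse degree $0$. The decisive numerical input is that $H=\pi^*(\text{ample})$ gives $(C,H)=0$, so $d_\beta({\cal O}_C(l))=\tfrac{(C,H)}{(H^2)}=0$ and therefore $Z_{(\beta,tH)}({\cal O}_C(l))=-a_\beta({\cal O}_C(l))$ is real; indeed $a_\beta({\cal O}_C(l))=l+\tfrac12-(\beta,C)\in(0,1)$, so ${\cal O}_C(l)$ has phase $1$.

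For (1), given a short exact sequence $0\to A\to {\cal O}_C(l)\to B\to 0$ in ${\cal A}_{(\beta,tH)}$ I would apply the cohomological functors $\pH^{i}$ and use $\pH^{i}({\cal O}_C(l))=0$ for $i\ne 0$ to obtain
\[
0\to \pH^{-1}(A)\to 0\to \pH^{-1}(B)\to \pH^{0}(A)\xrightarrow{\ f\ }{\cal O}_C(l)\to \pH^{0}(B)\to 0
\]
in ${\frak C}^\beta$. Hence $\pH^{-1}(A)=0$ and $A=\pH^{0}(A)\in{\cal T}_\beta$. Since ${\cal O}_C(l)$ is irreducible in ${\frak C}^\beta$, $\im f$ is $0$ or ${\cal O}_C(l)$. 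If $\im f=0$ then $A\cong\pH^{-1}(B)\in{\cal T}_\beta\cap{\cal F}_\beta=0$, so $A=0$. If $\im f={\cal O}_C(l)$ then $\pH^{0}(B)=0$ and there is an exact sequence $0\to K\to A\to {\cal O}_C(l)\to 0$ with $K:=\pH^{-1}(B)\in{\cal F}_\beta$. Applying $Z$ to the original sequence, both $A,B\in{\cal A}_{(\beta,tH)}$ have $\operatorname{Im}Z\ge 0$ while $\operatorname{Im}Z({\cal O}_C(l))=0$; hence $\operatorname{Im}Z(A)=0$, i.e. $d_\beta(A)=0$. The crux is then the claim that an object $A\in{\cal T}_\beta$ with $d_\beta(A)=0$ is torsion: its torsion-free quotient lies in ${\cal T}_\beta$ and, being an iterated extension of $\mu$-stable objects of $d_\beta>0$, has $d_\beta>0$ unless it vanishes (any minimal-slope torsion-free quotient in ${\cal T}_\beta$ cannot have $d_\beta\le 0$, else it would lie in ${\cal T}_\beta\cap{\cal F}_\beta=0$), while the torsion part has $d_\beta\ge 0$ because $(C,H)=0$ and all other effective curve classes pair non-negatively with the nef class $H$. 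Thus $A$ is torsion, so $\rk A=0$; but $\rk A=\rk K$ and a nonzero object of ${\cal F}_\beta$ has positive rank, forcing $K=0$ and $A={\cal O}_C(l)$. So the only subobjects are $0$ and ${\cal O}_C(l)$, proving (1).

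For (2), I would use that ${\cal O}_C(l)$ is simple with $\Hom({\cal O}_C(l),{\cal O}_C(l))\cong k$ and $\Ext^1_X({\cal O}_C(l),{\cal O}_C(l))\cong H^0(C,N_{C/X})=0$, since $C$ is a $(-1)$-curve, and induct on $n$. For a subobject $E\subset{\cal O}_C(l)^{\oplus n}$, projecting to the last factor and intersecting with ${\cal O}_C(l)^{\oplus(n-1)}$ gives $0\to E'\to E\to Q\to 0$ with $E'\subset{\cal O}_C(l)^{\oplus(n-1)}$ and $Q\in\{0,{\cal O}_C(l)\}$ by (1). By induction $E'\cong{\cal O}_C(l)^{\oplus m'}$, and since $\Ext^1_{{\cal A}}({\cal O}_C(l),E')\cong\Ext^1_X({\cal O}_C(l),{\cal O}_C(l))^{\oplus m'}=0$ the sequence splits, giving $E\cong{\cal O}_C(l)^{\oplus m}$.

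The main obstacle will be the structural claim used in (1): that a $d_\beta=0$ object of ${\cal T}_\beta$ is forced to be torsion. This is where I must argue carefully from the definition of the torsion pair that the $\mu$-Harder--Narasimhan factors of the torsion-free quotient all have strictly positive $d_\beta$, and that the torsion part contributes $d_\beta\ge 0$ using $(C,H)=0$. Everything else is formal manipulation of the tilted heart.
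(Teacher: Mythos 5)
Your proof is correct, and it runs on the same engine as the paper's, but organized in the opposite direction, so the difference is worth recording. The paper proves (2) directly and obtains (1) as the case $n=1$: setting $F:={\cal O}_C(l)^{\oplus n}/E$, it takes the four-term sequence $0 \to \pH^{-1}(F) \to \pH^0(E) \to {\cal O}_C(l)^{\oplus n} \to \pH^0(F) \to 0$ in ${\frak C}^l$ (noting $\pH^{-1}(E)=0$), applies exactly your sign constraints $d_\beta(\pH^{-1}(F))\leq 0 \leq d_\beta(\pH^0(E))$ together with $(C,H)=0$ to conclude that $\pH^{-1}(F)=0$ and that $\pH^0(E)$ is $0$-dimensional, and then finishes inside ${\frak C}^l$: there $E$ is a subobject of a finite direct sum of copies of the irreducible object ${\cal O}_C(l)$, hence is itself such a direct sum by semisimplicity, with no Ext computation needed. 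You instead establish simplicity of ${\cal O}_C(l)$ in ${\cal A}_{(\beta,tH)}$ first---same long exact sequence, same torsion-pair sign argument, refined by the rank bookkeeping that nonzero objects of ${\cal F}_\beta$ have positive rank---and then deduce (2) by induction on $n$, at the cost of the additional input $\Ext^1_X({\cal O}_C(l),{\cal O}_C(l))=0$ needed to split off each factor. The paper's order is shorter and treats all $n$ at once; yours isolates the real content (an object of ${\cal T}_\beta$ with $d_\beta=0$ is torsion) as an explicit step, which the paper compresses into a single sentence.

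Two small repairs to your write-up. First, a torsion-free object of ${\cal T}_\beta$ is not, directly from the definition, an iterated extension of $\mu$-stable objects with $d_\beta>0$ (the generating extensions may interleave torsion factors), so the argument to keep is your parenthetical one: the minimal-slope Harder--Narasimhan quotient of the torsion-free quotient of $A$ is again a quotient of $A$, hence lies in ${\cal T}_\beta$, and would lie in ${\cal T}_\beta \cap {\cal F}_\beta=0$ if its $d_\beta$ were $\leq 0$. Second, $\Ext^1_X({\cal O}_C(l),{\cal O}_C(l))$ has two pieces, $H^1(C,{\cal O}_C)$ and $H^0(C,N_{C/X})$; both vanish for the $(-1)$-curve $C$, so cite both rather than only the normal bundle term.
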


\begin{proof}
It is sufficient to prove (2).
We set $F:={\cal O}_C(l)^{\oplus n}/E \in {\cal A}_{(\beta,tH)}$.
Then we have an exact sequence
\begin{equation}
0 \to \pH^{-1}(F) \to \pH^0(E) \to {\cal O}_C(l)^{\oplus n}
\to \pH^0(F) \to 0
\end{equation}
in ${\frak C}^l$ and $\pH^{-1}(E)=0$.
Since $(c_1(\pH^{-1}(F)(-\beta)),H) \leq 0$ and
$(c_1(\pH^0(E)(-\beta)),H) \geq 0$,
we see that $\pH^0(E)$ is 0-dimensional and
$\pH^{-1}(F)=0$.
Since ${\cal O}_C(l)$ is an irreducible object of
${\frak C}^l$, we see that $E={\cal O}_C(l)^{\oplus m}$.
\end{proof}

\begin{cor}\label{cor:irreducible}
For $E \in {\frak C}^\beta$, there is a filtration
$$
0 \subset F_1 \subset F_2 \subset F_3=E
$$
such that $F_1={\cal O}_C(l)^{\oplus n_1}$,
$\Hom({\cal O}_C(l),F_2/F_1)=\Hom(F_2/F_1,{\cal O}_C(l))=0$
and $F_3/F_2={\cal O}_C(l)^{\oplus n_3}$.
\end{cor}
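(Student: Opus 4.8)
The plan is to peel off the $\mathcal{O}_C(l)$-isotypic sub-object and then the $\mathcal{O}_C(l)$-isotypic quotient-object, exploiting that $\mathcal{O}_C(l)$ is a simple object of $\mathfrak{C}^l$ with $\End(\mathcal{O}_C(l))=k$ and $\Ext^1(\mathcal{O}_C(l),\mathcal{O}_C(l))=0$; the last vanishing holds because $C\cong{\Bbb P}^1$ is a $(-1)$-curve, so $\Ext^1_X(\mathcal{O}_C(l),\mathcal{O}_C(l))=\Ext^1_X(\mathcal{O}_C,\mathcal{O}_C)=H^0(N_{C/X})=H^0(\mathcal{O}_{{\Bbb P}^1}(-1))=0$. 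The two inputs I would draw from Lemma \ref{lem:irreducible} are that $\mathcal{O}_C(l)$ is irreducible and that every subobject of $\mathcal{O}_C(l)^{\oplus n}$ is isomorphic to $\mathcal{O}_C(l)^{\oplus m}$. All the $\Hom$-spaces below are finite dimensional, since $E$ is a bounded complex with coherent cohomology on the projective surface $X$.

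First I would construct $F_1$. Consider the evaluation morphism $\mathrm{ev}:\mathcal{O}_C(l)\otimes_k\Hom(\mathcal{O}_C(l),E)\to E$, i.e. $\mathcal{O}_C(l)^{\oplus n_1}\to E$ with $n_1=\dim\Hom(\mathcal{O}_C(l),E)$ given by a basis $\{\psi_j\}$. I would show $\mathrm{ev}$ is injective: its kernel is a subobject of $\mathcal{O}_C(l)^{\oplus n_1}$, hence $\cong\mathcal{O}_C(l)^{\oplus m}$ by Lemma \ref{lem:irreducible}; if $m>0$ a nonzero map $\mathcal{O}_C(l)\to\ker(\mathrm{ev})$ is a tuple $(\lambda_1,\dots,\lambda_{n_1})\in k^{n_1}$ (using $\End(\mathcal{O}_C(l))=k$) with $\sum_j\lambda_j\psi_j=0$, contradicting linear independence of $\{\psi_j\}$. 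Thus $F_1:=\im(\mathrm{ev})\cong\mathcal{O}_C(l)^{\oplus n_1}$. Since every map $\mathcal{O}_C(l)\to E$ factors through $F_1$, the map $\Hom(\mathcal{O}_C(l),F_1)\to\Hom(\mathcal{O}_C(l),E)$ is an isomorphism, so the long exact sequence for $\Hom(\mathcal{O}_C(l),-)$ together with $\Ext^1(\mathcal{O}_C(l),\mathcal{O}_C(l))=0$ gives $\Hom(\mathcal{O}_C(l),E/F_1)=0$.

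Next I would pass to $E':=E/F_1$ and extract the top piece by the dual construction. Consider the coevaluation $\mathrm{coev}:E'\to\mathcal{O}_C(l)\otimes_k\Hom(E',\mathcal{O}_C(l))^{\vee}\cong\mathcal{O}_C(l)^{\oplus N}$, whose kernel is $M:=\bigcap_{\phi}\ker\phi$ over $\phi\in\Hom(E',\mathcal{O}_C(l))$. The image $E'/M$ is a subobject of $\mathcal{O}_C(l)^{\oplus N}$, hence $\cong\mathcal{O}_C(l)^{\oplus n_3}$ by Lemma \ref{lem:irreducible}. Because every $\phi:E'\to\mathcal{O}_C(l)$ kills $M$, the pullback $\Hom(E'/M,\mathcal{O}_C(l))\to\Hom(E',\mathcal{O}_C(l))$ is surjective; feeding this into the contravariant long exact sequence and again using $\Ext^1(\mathcal{O}_C(l),\mathcal{O}_C(l))=0$ yields $\Hom(M,\mathcal{O}_C(l))=0$. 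Moreover $\Hom(\mathcal{O}_C(l),M)\subseteq\Hom(\mathcal{O}_C(l),E')=0$ by the previous step. Finally I let $F_2\subseteq E$ be the preimage of $M$ under $E\twoheadrightarrow E'$ and set $F_3=E$; then $F_1\subseteq F_2\subseteq F_3$, $F_2/F_1=M$ has no maps to or from $\mathcal{O}_C(l)$, and $F_3/F_2=E'/M\cong\mathcal{O}_C(l)^{\oplus n_3}$, which is exactly the asserted filtration.

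The constructions are essentially formal once the three facts ($\End=k$, the subobject description, $\Ext^1=0$) are available, so no single hard computation arises. The point needing care is the compatibility of categories: Lemma \ref{lem:irreducible} is phrased for $\mathcal{A}_{(\beta,tH)}$ whereas the corollary concerns $\mathfrak{C}^{\beta}=\mathfrak{C}^l$. I would invoke the $\mathfrak{C}^l$-versions of these statements, which the proof of Lemma \ref{lem:irreducible} in fact establishes (the irreducibility of $\mathcal{O}_C(l)$ in $\mathfrak{C}^l$ being the fact quoted from \cite{PerverseI}); alternatively, one notes that $\mathcal{O}_C(l)^{\oplus n}$ has finite length in $\mathfrak{C}^l$ with all composition factors $\mathcal{O}_C(l)$, whence the subobject description and semisimplicity follow from $\Ext^1(\mathcal{O}_C(l),\mathcal{O}_C(l))=0$. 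The only genuine structural subtlety is the nesting $F_1\subseteq F_2$; this is precisely why I peel $F_1$ off first and only then extract the quotient isotypic part from $E/F_1$, rather than trying to intersect kernels inside $E$ directly, where a copy of $\mathcal{O}_C(l)$ occurring simultaneously as a sub and a quotient would obstruct the nesting.
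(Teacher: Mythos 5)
Your proof is correct, and it is in essence the paper's own argument: the paper likewise builds the filtration from the evaluation map $\Hom({\cal O}_C(l),E)\otimes{\cal O}_C(l)\to E$ and the coevaluation $E\to\Hom(E,{\cal O}_C(l))^{\vee}\otimes{\cal O}_C(l)$, quoting exactly your three inputs (the injectivity/surjectivity statements derived from Lemma \ref{lem:irreducible}, and $\Ext^1({\cal O}_C(l),{\cal O}_C(l))=0$). The one place you deviate is an improvement rather than a detour. The paper's terse construction sets $F_1:=\operatorname{im}\bigl(\Hom({\cal O}_C(l),E)\otimes{\cal O}_C(l)\to E\bigr)$ and takes $F_2$ to be the kernel of the coevaluation out of $E$ itself; with those definitions the required inclusion $F_1\subseteq F_2$ is not automatic, and in fact it fails whenever ${\cal O}_C(l)$ splits off as a direct summand of $E$ (a summand is simultaneously a subobject and a quotient, so the composite $F_1\hookrightarrow E\to F_3/F_2$ is nonzero). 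Your variant — peel off $F_1$ first, apply the coevaluation to $E/F_1$, and let $F_2$ be the preimage of its kernel — is precisely what is needed to guarantee the nesting in all cases, and it costs nothing extra, since $\Hom({\cal O}_C(l),E/F_1)=0$ is already supplied by the first step and $\Hom(F_2/F_1,{\cal O}_C(l))=0$ follows from the contravariant long exact sequence as you indicate. Your handling of the category issue is also sound: the final step of the paper's proof of Lemma \ref{lem:irreducible} is exactly the ${\frak C}^l$-statement you need, and your alternative argument (finite length in ${\frak C}^l$ with all composition factors ${\cal O}_C(l)$, plus $\Ext^1$-vanishing to split) gives it independently.
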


\begin{NB}
We note that $\Ext^1({\cal O}_C(l),{\cal O}_C(l))=0$.
By Lemma \ref{lem:irreducible},
$\Hom({\cal O}_C(l),E) \otimes {\cal O}_C(l) \to E$
is injective and $E \to \Hom(E,{\cal O}_C(l))^{\vee} 
\otimes{\cal O}_C(l)$ is surjective.
Hence we can define 
$F_1:=\Hom({\cal O}_C(l),E) \otimes {\cal O}_C(l)$ and
$F_3/F_2:=\Hom(E,{\cal O}_C(l))^{\vee} 
\otimes{\cal O}_C(l)$.
\end{NB}

\subsection{Large volume limit}
We consider the stability condition $\sigma_{(\beta,\omega)}$
in \ref{subsect:stab}.
This stability condition is very similar to
that for an abelian surface.
Thus almost same results in \cite{MYY:2011:1},\cite{MYY:2011:2} and 
\cite{YY2} hold.
In particular results in \cite[sect. 4.1]{YY2} hold.
\begin{NB}
For a fixed Mukai vector $v\in H^*(X,{\Bbb Q})_{\alg}$, 
let us recall the expression of $v$ 
defined in \eqref{eq:Mukai-vector}:
\begin{align}\label{eq:v}
v=r e^\beta+a \varrho_X+(d H+D+(d H+D,\beta)\varrho_X),
\quad d_\beta>0,\;
D \in H^\perp \cap \NS(X)_{\Bbb Q}.
\end{align}
In the same way, for $E_1 \in {\bf D}(X)$, we set
\begin{align}
\label{eq:v(E_1)}
v(E_1)=r_1 e^\beta+a_1 \varrho_X+(d_1 H+D_1+(d_1 H+D_1,\beta)\varrho_X),
\quad
D_1 \in H^\perp \cap \NS(X)_{\Bbb Q}.
\end{align}
We will use these notations freely in this section.
\\

We consider the following conditions for $v$:
\begin{itemize}
\item[$(\star 1)$]
(1) $r \geq 0$, $d>0$ and
(2) $dr_1-d_1 r>0$ implies 
$(dr_1-d_1 r)(\omega^2)/2-(d a_1-d_1 a) > 0$ for
all $E_1$ with $0<d_1< d$ and
$( v(E_1)^2 ) -(D_1^2) \geq 0$.
\begin{NB2}
Thus $\phi(E_1) <\phi(E)$.
\end{NB2}
\begin{NB2}
$r_1$ may be bigger than $r$.
The assumption implies that
$dr_1>d_1r \geq 0$. Hence we have $r_1>0$.
\end{NB2}

\item[$(\star 2)$]
(1) $r \geq 0$, $d<0$ and 
(2) $dr_1-d_1 r < 0$ implies 
    $(dr_1-d_1 r)(\omega^2)/2-(d a_1-d_1 a) < 0$ for
    all $E_1$ with $d < d_1 < 0$ and
    $(v(E_1)^2 ) -(D_1^2) \geq 0$.
\begin{NB2}
Thus $\phi(E_1) > \phi(E)$.
\end{NB2}

\item[$(\star 3)$]
(1) $r \geq 0$, $d<0$ and 
(2) $dr_1-d_1 r \leq 0$ implies 
    $(dr_1-d_1 r)(\omega^2)/2-(d a_1-d_1 a) \leq 0$ 
    and the inequality is strict if $dr_1-d_1 r < 0$ 
    for all $E_1$ with $d \leq d_1 \leq 0$ and
    $( v(E_1)^2 ) -(D_1^2) \geq 0$.
\begin{NB2}
Thus $\phi(E_1) > \phi(E)$.
\end{NB2}

\begin{NB2}
\item[$(\star )$]
(1) $d<0$ and (2) 
$dr_1-d_1 r \leq 0$ implies 
$(dr_1-d_1 r)(\omega^2)/2-(d a_1-d_1 a) \leq 0$ for
all $E_1$ with $d \leq d_1 \leq 0$ and
$\langle v(E_1)^2 \rangle \geq -2$.
Thus $\phi(E_1) \geq \phi(E)$.
\end{NB2}
\end{itemize}
\end{NB}

We set 
$$
v:=e^\beta(r+a_\beta \varrho_X+d_\beta H+D_\beta), D_\beta \in H^\perp.
$$
\begin{NB}
If $\chr \ne 0$, then
assume that the Bogomolov inequality holds for any semi-stable objects
in ${\frak C}$.
\end{NB}
By Lemma \ref{lem:weak-Bogomolov},
 \cite[Lem. 3.1.8]{MYY:2011:1} holds, where
\begin{equation}\label{eq:omega^2}
(\omega^2)>
\begin{cases}
\frac{d_\beta-d_{\min,\beta}}{r d_0}((v^2)-(D_\beta^2)),\;r>0\\
\frac{d_\beta-d_{\min,\beta}}{d_0}((v^2)-(D_\beta^2))+
\frac{2d_\beta|a_\beta|}{d_0},\;r=0,\\
\frac{-d_\beta+d_{\min,-\beta}}{r d_0}((v^2)-(D_\beta^2)),\;r<0,\\
\end{cases}
\end{equation}
$d_{\min,\beta}:=\min\{d_\beta(E)>0 \mid E \in K(X) \}$
and 
$d_0=\frac{1}{(H^2)}\min \{(D,H)>0 \mid D \in \Pic(X) \}$.

We set $\beta=\xi/r'$, $\xi \in \NS(X)$.
Then 
\begin{equation}
(c_1(E)-r\beta,H)=\frac{(r' c_1(E)-r \xi,H)}{r'}
\in {\Bbb Z}\frac{d_0 (H^2)}{r'}.
\end{equation}

If $\gcd(\frac{(c_1(E),H)}{d_0 (H^2)},r)=1$, that is,
$r' \frac{(c_1(E),H)}{d_0 (H^2)}-r a=1$ ($r', a \in {\Bbb Z}$),
then for $\xi \in \NS(X)$ with $(\xi,H)=a d_0 (H^2)$,
we have 
$$
(c_1(E)-r \xi/r',H)=\frac{(r' c_1(E)-r \xi,H)}{r'}
=\frac{d_0(H^2)}{r'}=d_{\min,\xi/r'} (H^2).
$$

\begin{NB}
\begin{rem}
We don't know when the Bogomolov inequality holds
for Bridgeland stable objects.
However we have a weak form of inequality
$(v^2)-(D_\beta^2)\geq 0$ for any pair $(\beta,tH)$.
Indeed for any stable object $E$ of ${\frak C}$,
$(v(E)^2)-(D_\beta(E)^2) \geq 0$.
Then by using \cite[(3.3)]{MYY:2011:1}, we can study the wall-crossing
behavior, and get the inequality.  
\end{rem}
\end{NB}
By the same proof of \cite[Prop. 3.2.1, Prop. 3.2.7]{MYY:2011:1},
we have the following results.
\begin{prop}\label{prop:large}
Assume that $\omega$ satisfies \eqref{eq:omega^2} with respect to $v$.
Then $E \in {\bf D}(X)$ is 
$\sigma_{(\beta,\omega)}$-semi-stable with $\phi(E) \in (0,1)$
if and only if 
\begin{enumerate}
\item[(i)] $\rk v \geq 0$, $E \in {\frak C}$ and
is $(\beta-\tfrac{1}{2}K_X)$-twisted semi-stable
or 
\item[(ii)]
 $\rk v<0$, $E^{\vee}[1] \in {\frak C}^*$ and is
$-(\beta+\tfrac{1}{2}K_X)$-twisted semi-stable.
\end{enumerate}
Thus we have 
\begin{equation}
{\cal M}_{(\beta,\omega)}(v) \cong
\begin{cases}
{\cal M}_\omega^{\beta-\frac{1}{2}K_X}(v),\; \rk v \geq 0,\\
{\cal M}_\omega^{-\beta-\frac{1}{2}K_X}(-v^{\vee}),\; \rk <0.
\end{cases}
\end{equation}
\end{prop}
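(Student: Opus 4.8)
The plan is to prove Proposition \ref{prop:large} by comparing $\sigma_{(\beta,\omega)}$-semi-stability in the large volume limit with twisted Gieseker semi-stability, following the strategy of \cite[Prop. 3.2.1, Prop. 3.2.7]{MYY:2011:1}. The central quantitative input is the constraint \eqref{eq:omega^2} on $(\omega^2)$, together with the weak Bogomolov inequality of Lemma \ref{lem:weak-Bogomolov} (equivalently Proposition \ref{prop:weak-Bogomolov2}), which guarantees that for any $\sigma_{(\beta,\omega)}$-semi-stable $E$ the discriminant quantity $d_\beta(E)^2(H^2)-2\rk E\,a_\beta(E)=(v(E)^2)-(D_\beta(E)^2)$ is nonnegative. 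This nonnegativity is exactly what lets one bound the destabilizing subobjects: it forces the relevant slope comparisons to be controlled once $(\omega^2)$ exceeds the threshold in \eqref{eq:omega^2}.

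First I would treat the case $\rk v\geq 0$. The goal is to show that for $(\omega^2)$ large an object $E$ with $\phi_{(\beta,\omega)}(E)\in(0,1)$ is $\sigma_{(\beta,\omega)}$-semi-stable if and only if $E\in{\frak C}$ and $E$ is $(\beta-\tfrac12 K_X)$-twisted semi-stable. The forward step is to argue $E\in{\frak C}$: since the central charge $Z_{(\beta,\omega)}(E)=-a_\beta(E)+r(E)(\omega^2)/2+d_\beta(E)(H,\omega)\sqrt{-1}$ has the phase pinned in $(0,1)$, one shows $\pH^i(E)=0$ for $i\neq 0$ using the fact that any nonzero $\pH^{-1}(E)[1]$ would contribute a subobject whose imaginary part and phase violate the semi-stability bound. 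Then, for $E\in{\frak C}$, I would translate the defining inequalities of $(\beta-\tfrac12 K_X)$-twisted semi-stability—stated via $d_\beta$ and $a_\beta$ in the Remark after Lemma \ref{lem:weak-Bogomolov}—into the ordering of phases $\phi_{(\beta,\omega)}$. The threshold \eqref{eq:omega^2} is precisely calibrated so that the term $(dr_1-d_1r)(\omega^2)/2$ dominates the bounded correction $(da_1-d_1a)$ whenever the $d$-slopes differ, using $(v(E_1)^2)-(D_1^2)\geq 0$ to bound $a_1$; this is where the inequality \eqref{eq:omega^2} enters essentially. By Lemma \ref{lem:G/beta} the twisted Hilbert polynomial ordering is identical to the $\langle e^\beta,\cdot\rangle$ ordering, so the two notions of semi-stability coincide.

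Next I would handle $\rk v<0$ by dualizing. The tool is Corollary \ref{cor:duality*} together with Lemma \ref{lem:duality*} and Lemma \ref{lem:0-dim-dual}: applying $E\mapsto E^\vee[1]$ exchanges ${\frak C}$ with ${\frak C}^*$, sends rank $r$ to $-r$, and sends $\beta$-twisted data to $-\beta$-twisted data, intertwining $Z_{(\beta,\omega)}$ with the corresponding central charge for ${\frak C}^*$. Since $({\frak C}^*)^*={\frak C}$, the case $\rk v<0$ reduces formally to the already-established case $\rk(-v^\vee)>0$, giving ${\cal M}_{(\beta,\omega)}(v)\cong{\cal M}_\omega^{-\beta-\frac12 K_X}(-v^\vee)$. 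I would verify that the duality carries semi-stable objects with $\phi\in(0,1)$ to semi-stable objects with $\phi\in(0,1)$, which follows from the reflection of phases under $E\mapsto E^\vee[1]$.

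The main obstacle I anticipate is the boundary bookkeeping at the extreme $d$-slopes, i.e.\ controlling subobjects $E_1$ with $d_1$ equal to $0$ or to $d$, where the strict inequality degenerates and one must invoke the $\mu$-semi-stability structure and the classification of irreducible objects (${\cal O}_C(l),\ {\cal O}_C(l-1)[1]$, and $k_x$) to rule out unwanted destabilizers; this is also the step where I must be careful that \eqref{eq:omega^2} was chosen large enough uniformly. A secondary technical point, deferred from the Appendix, is that the same wall-crossing estimate simultaneously yields \eqref{eq:weak-bogomolov} of Proposition \ref{prop:weak-Bogomolov2}, so I would prove that inequality in passing as part of establishing the semi-stability equivalence rather than assuming it beforehand.
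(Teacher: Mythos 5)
Your overall strategy (reduce to the MYY comparison for $\rk v\geq 0$, then dualize via Corollary \ref{cor:duality*} for $\rk v<0$) is the paper's strategy, and the duality step matches the paper exactly. But there is a genuine logical gap at the heart of your argument: a circularity in how you use the Bogomolov-type inequality. In your first paragraph you describe Lemma \ref{lem:weak-Bogomolov} as ``equivalently Proposition \ref{prop:weak-Bogomolov2}'' and take as input that \emph{every} $\sigma_{(\beta,\omega)}$-semi-stable object has nonnegative discriminant $d_\beta(E)^2(H^2)-2\rk E\,a_\beta(E)$. These two statements are not equivalent: the Lemma concerns $\mu$-semi-stable objects of ${\frak C}$ and is proved independently in the Appendix, while the Proposition concerns Bridgeland semi-stable objects of ${\cal A}_{(\beta,\omega)}$, and the paper explicitly postpones its proof because it can only be obtained \emph{through} Proposition \ref{prop:large}. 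The distinction matters exactly where you use it. In the direction ``Bridgeland semi-stable $\Rightarrow$ twisted semi-stable'' the relevant destabilizers are subsheaves in ${\frak C}$, whose $\mu$-Harder--Narasimhan factors satisfy the inequality by Lemma \ref{lem:weak-Bogomolov2}, so that direction is fine. But in the converse direction the potential destabilizer of a twisted semi-stable $E$ is the maximal destabilizing (Bridgeland semi-stable) subobject $F\subset E$ in ${\cal A}_{(\beta,\omega)}$, which need not be a sheaf, let alone $\mu$-semi-stable; to feed $v(F)$ into your domination estimate ``$(dr_1-d_1r)(\omega^2)/2$ beats $(da_1-d_1a)$'' you need $(v(F)^2)-(D_\beta(F)^2)\geq 0$, i.e.\ precisely Proposition \ref{prop:weak-Bogomolov2}. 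On a general surface there is no Serre-duality shortcut to this inequality (unlike the abelian/K3 setting of \cite{MYY:2011:1}), so as written your converse direction assumes what it must prove.

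The paper breaks this circle with a two-step structure that your proposal does not contain. First it proves the equivalence in the regime $(\omega^2)\gg 0$ --- far beyond the threshold \eqref{eq:omega^2} --- where the arguments of \cite[Prop.~3.2.1, Lem.~3.2.3, Prop.~3.2.7]{MYY:2011:1} require only the sheaf-level inequality of Lemma \ref{lem:weak-Bogomolov2}, applied in ${\frak C}$ and in ${\frak C}^*$; as a byproduct, Bridgeland semi-stable objects in that regime satisfy \eqref{eq:weak-bogomolov}. Second, it proves Proposition \ref{prop:weak-Bogomolov2} for general $\omega$ by induction on $d_\beta$, descending from the large volume limit: at a wall a semi-stable object is $S$-equivalent to a sum of semi-stable factors with classes $v_1,v_2$ of smaller $d_\beta$, and the quadratic identity expressing the discriminant of $v_1+v_2$ in terms of those of $v_1,v_2$ has cross-term $\left(\tfrac{a_1}{d_1}-\tfrac{a_2}{d_2}\right)\left(\tfrac{r_1}{d_1}-\tfrac{r_2}{d_2}\right)=\tfrac{(\omega^2)}{2}\left(\tfrac{r_1}{d_1}-\tfrac{r_2}{d_2}\right)^2\geq 0$ on the wall, so nonnegativity propagates across walls. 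Only after this does the converse direction at the threshold \eqref{eq:omega^2} become available. Your closing remark that the inequality is obtained ``in passing'' from ``the same wall-crossing estimate'' gestures at this, but your actual argument is a pointwise slope comparison at the threshold with no wall-crossing in it; you would need to add the base case at $(\omega^2)\gg 0$ and the $d_\beta$-induction explicitly, and this addition is the main content of the paper's proof rather than a secondary technical point.
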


\begin{NB}
\begin{rem}
Since $K_X$ is relatively trivial for $\pi$,
$\beta \pm \tfrac{1}{2}K_X$ also satisfies the condition.
\end{rem}
\end{NB}

\begin{proof}
Let $E$ be a $\sigma_{(\beta,\omega)}$-semi-stable object
for $(\omega^2) \gg 0$.
By using Lemma \ref{lem:weak-Bogomolov2} for 
$(\beta-\frac{1}{2}K_X)$-semi-stable objects of ${\frak C}$
and $-(\beta+\frac{1}{2}K_X)$-semi-stable object of ${\frak C}^*$,
we see that $E$ is $(\beta-\frac{1}{2}K_X)$-semi-stable
or $E^{\vee}[1]$ is $-(\beta+\frac{1}{2}K_X)$-semi-stable  
(\cite[Prop. 3.2.1, Lem. 3.2.3, Prop. 3.2.7]{MYY:2011:1}). 
In particular, $E$ satisfies Proposition \ref{prop:weak-Bogomolov2}.
For a general $\omega$, 
we use
\begin{equation}
\frac{((v_1+v_2)^2)-(D_1+D_2)^2}{(d_1+d_2)^2}=
\frac{(v_1^2)-(D_1^2)}{d_1^2}+\frac{(v_2^2)-(D_2^2)}{d_2^2}+
\left(\frac{a_1}{d_1}-\frac{a_2}{d_2} \right)
\left(\frac{r_1}{d_1}-\frac{r_2}{d_2} \right)
\end{equation}
for 
$$
v_i:= e^\beta(r_i+d_i H+D_i+a_i \varrho_X),\; D_i \in H^\perp.
$$
to study the wall crossing behavior.
Then Proposition \ref{prop:weak-Bogomolov2}
follows by the induction on $d_\beta$.

Thanks to
Proposition \ref{prop:weak-Bogomolov2}, the converse direction also holds. 
For more detail, see \cite{MYY:2011:1}.
\end{proof}

\begin{rem}
If ${\frak C}^\beta=\Coh(X)$, then
the claim is 
a refinement of results of Lo and Qin \cite{Lo-Qin} or Maciocia \cite{Ma}.
\end{rem}

\begin{NB}
\begin{lem}[{\cite[Cor. 2.2.8]{MYY:2011:1}}]
Assume that $Z_{(\beta,\omega)}(v) \in {\Bbb H}$.
If $(\omega^2)$ is sufficiently large depending only on
$\beta$ and $v$, then
${\cal M}_{(\beta,\omega)}(v)={\cal M}_\omega^{\beta-\frac{1}{2}K_X}(v)$
for $\rk v>0$ and
${\cal M}_{(\beta,\omega)}(v)=
{\cal M}_\omega^{-\beta-\frac{1}{2}K_X}(-v^{\vee})$
for $\rk v<0$. 
\end{lem}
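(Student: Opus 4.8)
The plan is to transport the large-volume-limit analysis of \cite{MYY:2011:1} to the perverse heart $\mathfrak{C}$ and its dual $\mathfrak{C}^{*}$. Writing the central charge as $Z_{(\beta,\omega)}(E)=-a_\beta(E)+r(E)\tfrac{(\omega^2)}{2}+d_\beta(E)(H,\omega)\sqrt{-1}$, the real part is dominated for $(\omega^2)\gg 0$ by the rank term $r(E)\tfrac{(\omega^2)}{2}$, while the imaginary part is controlled by $d_\beta(E)$. First I would record the resulting phase comparison: to leading order in $(\omega^2)$ the inequality $\phi(E_1)<\phi(E)$ reduces to a comparison of the Gieseker-type slopes built from $(d_\beta,r)$, and the finer invariant $a_\beta$ enters only when these slopes agree. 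This is precisely the combinatorics that singles out $(\beta-\tfrac12 K_X)$-twisted semistability.

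The engine of the argument is the weak Bogomolov inequality $d_\beta(E)^2(H^2)-2\rk E\, a_\beta(E)\ge 0$. Combining Lemma \ref{lem:weak-Bogomolov2} for $(\beta-\tfrac12 K_X)$-twisted semistable objects of $\mathfrak{C}$ with its dual form for $-(\beta+\tfrac12 K_X)$-twisted semistable objects of $\mathfrak{C}^{*}$ --- the passage between the two being governed by the duality $E\mapsto E^\vee[1]$ of Lemma \ref{lem:0-dim-dual} and Corollary \ref{cor:duality*} --- I would analyse the perverse cohomology objects $\pH^{-1}(E)$ and $\pH^{0}(E)$ of a $\sigma_{(\beta,\omega)}$-semistable $E$. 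At infinite volume the dominant rank term forces the rank of the ``wrong'' cohomology piece to vanish, so that either $\rk v\ge 0$ and $E\in\mathfrak{C}$ is twisted-Gieseker semistable, or $\rk v<0$ and $E^\vee[1]\in\mathfrak{C}^{*}$ satisfies the dual condition. This simultaneously proves the forward direction of the equivalence at the limit and yields Proposition \ref{prop:weak-Bogomolov2} there.

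To pass from the strict limit to a general $\omega$ in the range \eqref{eq:omega^2}, I would induct on $d_\beta$ by wall-crossing, the key input being the identity
\begin{equation*}
\frac{((v_1+v_2)^2)-(D_1+D_2)^2}{(d_1+d_2)^2}=
\frac{(v_1^2)-(D_1^2)}{d_1^2}+\frac{(v_2^2)-(D_2^2)}{d_2^2}+
\left(\frac{a_1}{d_1}-\frac{a_2}{d_2}\right)
\left(\frac{r_1}{d_1}-\frac{r_2}{d_2}\right),
\end{equation*}
for $v_i=e^\beta(r_i+d_i H+D_i+a_i\varrho_X)$ with $D_i\in H^\perp$. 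When a wall produces factors of strictly smaller $d_\beta$, the inductive hypothesis gives $(v_i^2)-(D_i^2)\ge 0$, and the sign of the cross term, dictated by the phase ordering across the wall, shows the quantity stays nonnegative for $v$. This extends Proposition \ref{prop:weak-Bogomolov2} to all admissible $\omega$, and the converse implication --- that a twisted-Gieseker semistable object of $\mathfrak{C}$ (or its dual) is $\sigma_{(\beta,\omega)}$-semistable --- then follows, since the inequality forbids any subobject of strictly larger phase.

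The main obstacle I expect is the explicit bound \eqref{eq:omega^2}: one must check it is large enough to guarantee that no wall separates $\sigma_{(\beta,\omega)}$ from the genuine limit for every vector that can occur as a Jordan--H\"older factor, and this must be done uniformly across the three regimes $\rk v>0$, $\rk v=0$, $\rk v<0$. The rank-zero case is the delicate one: there the real part of $Z$ loses its dominant term, so the $a_\beta$-contribution has to be bounded by hand, which is exactly why \eqref{eq:omega^2} carries the extra summand $\tfrac{2d_\beta|a_\beta|}{d_0}$ when $r=0$. A secondary subtlety is that the irreducible perverse objects such as $\mathcal{O}_C(l)$ and $\mathcal{O}_C(l-1)[1]$ do not behave like ordinary sheaves, so the duality relating $\mathfrak{C}$ and $\mathfrak{C}^{*}$, rather than naive Serre duality on $\Coh(X)$, must be used throughout the rank-negative half of the argument.
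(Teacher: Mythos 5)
Your proposal is correct and follows essentially the same route as the paper: the weak Bogomolov inequality (Lemma \ref{lem:weak-Bogomolov2}) applied in both $\frak C$ and $\frak C^*$ via the duality $E \mapsto E^{\vee}[1]$, the reduction of $\sigma_{(\beta,\omega)}$-stability to $(\beta-\tfrac{1}{2}K_X)$-twisted semistability at the limit, the induction on $d_\beta$ using exactly the displayed identity to handle wall crossing for general $\omega$ in the range \eqref{eq:omega^2}, and the converse direction from Proposition \ref{prop:weak-Bogomolov2}. This is precisely the paper's argument for Proposition \ref{prop:large} (with details deferred to \cite{MYY:2011:1}), so nothing further is needed.
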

\end{NB}

The following claim is due to Maciocia \cite[Thm. 3.13]{Ma}
if ${\frak C}^\beta=\Coh(X)$
\begin{lem}[{\cite[sect. 4]{MYY:2011:1}}]\label{lem:mini-wall}
We fix $\beta \in \NS(X)_{\Bbb Q}$.
Assume that 
$d_\beta(v)>0$ and
\begin{equation}
v_1 \in {\Bbb Q}v
\Longleftrightarrow
(r(v_1),d_\beta(v_1),a_\beta(v_1)) \in
{\Bbb Q}(r(v),d_\beta(v),a_\beta(v))
\end{equation}
for all $v_1 \in H^*(X,{\Bbb Q})_{\alg}$ with
$0<d_\beta(v_1)<d_\beta(v)$.
Then the set of walls intersecting
$\{(\beta,tH) \mid t >0 \}$
is finite.
\end{lem}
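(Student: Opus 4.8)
The plan is to reduce the finiteness of walls on the ray $\{(\beta,tH)\mid t>0\}$ to the finiteness of the numerical classes $v_1=v(E_1)$ that can occur as the class of a $\sigma_{(\beta,tH)}$-stable subobject $E_1$ of a $\sigma_{(\beta,tH)}$-semi-stable object $E$ with $v(E)=v$ and $\phi_{(\beta,tH)}(E_1)=\phi_{(\beta,tH)}(E)$. Since $\omega=tH$ gives $(\omega^2)=t^2(H^2)$ and $(H,\omega)=t(H^2)$, the central charge
\begin{equation}
Z_{(\beta,tH)}(w)=-a_\beta(w)+\rk w\,\frac{t^2(H^2)}{2}+d_\beta(w)\,t(H^2)\sqrt{-1}
\end{equation}
depends only on the triple $(\rk w,d_\beta(w),a_\beta(w))$. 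Writing $r=\rk v,\ d=d_\beta(v),\ a=a_\beta(v)$ and $r_i,d_i,a_i$ for $v_i$ (with $v_2:=v-v_1$), a direct computation shows that $Z_{(\beta,tH)}(v_1)\in{\Bbb R}Z_{(\beta,tH)}(v)$ is equivalent to $(d_1 r-d r_1)\tfrac{t^2(H^2)}{2}=d_1 a-d a_1$. Hence every class $v_1$ with $d_1 r\ne d r_1$ determines a single wall value $t_W^2=\tfrac{2(d_1 a-d a_1)}{(H^2)(d_1 r-d r_1)}$, and it suffices to bound the set of admissible triples $(r_1,d_1,a_1)$.

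First I would record that on such a wall $0<d_1<d$, because the phases of $E_1$ and $E_2$ lie in $(0,1)$ and so both have positive imaginary part of $Z$; moreover, for fixed $\beta\in\NS(X)_{\Bbb Q}$ the invariants $d_1=d_\beta(v_1)$ and $a_1=a_\beta(v_1)$ range over fixed discrete subsets of ${\Bbb Q}$ while $r_1\in{\Bbb Z}$, so only finitely many values of $d_1$ arise. The key input is the Bogomolov-type inequality of Proposition \ref{prop:weak-Bogomolov2}: for any $\sigma_{(\beta,\omega)}$-semi-stable $w$ one has $\Delta(w):=(w^2)-(D_\beta(w)^2)=d_\beta(w)^2(H^2)-2\rk w\,a_\beta(w)\ge 0$. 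Applying the additivity identity from the proof of Proposition \ref{prop:large},
\begin{equation}
\frac{\Delta(v)}{d^2}=\frac{\Delta(v_1)}{d_1^2}+\frac{\Delta(v_2)}{d_2^2}+\left(\frac{a_1}{d_1}-\frac{a_2}{d_2}\right)\left(\frac{r_1}{d_1}-\frac{r_2}{d_2}\right),
\end{equation}
I would use the phase equality $\tfrac{\mathrm{Re}\,Z(v_1)}{d_1}=\tfrac{\mathrm{Re}\,Z(v_2)}{d_2}$ at $t=t_W$ to rewrite the cross term as $\tfrac{t_W^2(H^2)}{2}\big(\tfrac{r_1}{d_1}-\tfrac{r_2}{d_2}\big)^2\ge 0$. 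Combined with $\Delta(v_1),\Delta(v_2)\ge 0$ this yields $\Delta(v_1)\le\tfrac{d_1^2}{d^2}\Delta(v)\le\Delta(v)$ and likewise for $v_2$, so both discriminants are bounded.

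It then remains to convert the bounds $0\le\Delta(v_1),\Delta(v_2)\le\Delta(v)$ and the finitely many values of $d_1$ into finiteness of $(r_1,a_1)$. From $\Delta(v_i)=d_i^2(H^2)-2r_ia_i$ the products $r_1a_1$ and $(r-r_1)(a-a_1)$ are bounded, hence so is $r\,a_1+a\,r_1$; eliminating $a_1$ produces a quadratic $a\,r_1^2-(\text{bounded})\,r_1+(\text{bounded})=0$ which, using $\rk v=r>0$ and the fact that a genuine wall has $t_W>0$, i.e. $d_1a-da_1\ne0$ (so the degenerate subcases $a=0$ or $a_1=0$ are excluded), confines $r_1$ and then $a_1$ to a finite set. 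This is exactly where the hypothesis on $v$ is essential: it guarantees that any $v_1$ with $(r_1,d_1,a_1)$ proportional to $(r,d,a)$ already lies in ${\Bbb Q}v$, so such a class satisfies $Z_{(\beta,tH)}(v_1)\in{\Bbb R}Z_{(\beta,tH)}(v)$ identically in $t$ and defines no wall; every remaining class has $d_1r\ne dr_1$, to which the bounding argument applies. I expect the main obstacle to be precisely this last step — excluding an unbounded family of $(r_1,a_1)$ with bounded discriminants — which is resolved by the case analysis on the vanishing of $a$ and $a_1$ together with $t_W>0$. Collecting everything, only finitely many numerical classes $v_1$ can define a wall, hence only finitely many wall values lie on the ray $\{(\beta,tH)\mid t>0\}$.
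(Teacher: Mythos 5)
Your route is essentially the paper's: restrict the central charge to the ray, write the wall equation $\tfrac{t^2(H^2)}{2}(dr_1-d_1r)=da_1-d_1a$, apply the Bogomolov-type inequality of Proposition \ref{prop:weak-Bogomolov2} to both factors, use the phase equality to turn the cross term into $\tfrac{t^2(H^2)}{2}\bigl(\tfrac{r_1}{d_1}-\tfrac{r_2}{d_2}\bigr)^2\ge 0$, deduce $0\le\Delta(v_i)\le\Delta(v)$, and combine this with the finitely many admissible $d_1$ to bound $P:=r_1a_1$ and $S:=ra_1+ar_1$. One correction en route: the additivity identity you quote (with weights $1/d_i^2$) is false as stated — it is also misstated in the paper's proof of Proposition \ref{prop:large}; the correct form, which the paper uses in its proof of this lemma, is $\tfrac{\Delta(v)}{d}=\tfrac{\Delta(v_1)}{d_1}+\tfrac{\Delta(v_2)}{d_2}+\tfrac{2d_1d_2}{d}\bigl(\tfrac{r_1}{d_1}-\tfrac{r_2}{d_2}\bigr)\bigl(\tfrac{a_1}{d_1}-\tfrac{a_2}{d_2}\bigr)$, so your intermediate bound should be $\Delta(v_1)\le\tfrac{d_1}{d}\Delta(v)$; the consequence $\Delta(v_i)\le\Delta(v)$ you actually use survives.

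The genuine gap is the final step. From ``a genuine wall has $d_1a-da_1\ne 0$'' you conclude that ``the degenerate subcases $a=0$ or $a_1=0$ are excluded''; this is a non sequitur. The condition $d_1a-da_1\ne 0$ is perfectly compatible with $a=0$ (then $a_1\ne 0$) and with $a_1=0$ (then $a\ne 0$), and such classes do define walls. Precisely there your elimination breaks: for $a=0$ the quadratic $ar_1^2-Sr_1+rP=0$ degenerates to $Sr_1=rP$, which with $S=ra_1$ and $P=r_1a_1$ is a tautology and bounds nothing, since bounded $S,P$ do not prevent $a_1$ from being small while $r_1$ is large. You also divide by $r$ and invoke $\rk v=r>0$, which the lemma does not assume (this subsection, cf.\ Proposition \ref{prop:large}, allows $\rk v$ of any sign, including $r=0$). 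What closes these cases — and is the real content of the paper's argument — is discreteness plus the proportionality hypothesis: $r_1\in{\Bbb Z}$ and $2B^2a_1\in{\Bbb Z}$, where $B$ is the denominator of $\beta$, so a nonzero $a_1$ satisfies $|a_1|\ge\tfrac{1}{2B^2}$ and then $|r_1|\le 2B^2|P|$; while in the doubly degenerate cases ($r=r_1=0$, or $a=a_1=0$, or $r=a=0$, where the wall equation forces $r_1=0\Leftrightarrow a_1=0$) the wall equation makes $(r_1,d_1,a_1)$ proportional to $(r,d,a)$, and the hypothesis upgrades this to $v_1\in{\Bbb Q}v$, i.e.\ no wall at all. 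So the case analysis cannot be dismissed by appealing to $t_W>0$; it is exactly where the rationality of $\beta$ and the hypothesis on $v$ enter.
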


\begin{proof}
Let $B \in {\Bbb Z}_{>0}$ be the denominator of $\beta$.
Assume that we have a decomposition
$v=v_1+v_2$ such that
there are $\sigma_{(\beta,\omega)}$-semi-stable
objects $E_i$ with $v(E_i)=v_i$ $(i=1,2)$ and
$Z_{(\beta,\omega)}(v_1),Z_{(\beta,\omega)}(v_2)
\in {\Bbb R}_{>0}Z_{(\beta,\omega)}(v)$.
We set 
\begin{equation}
\begin{split}
v:= & e^\beta(r+d H+D+a \varrho_X),\; D \in H^\perp\\
v_i:= & e^\beta(r_i+d_i H+D_i+a_i \varrho_X),\; D_i \in H^\perp.
\end{split}
\end{equation}
Then
\begin{equation}
\frac{(\omega^2)}{2}(d r_i-d_i r)=
(d a_i-d_i a). 
\end{equation}
By Lemma \ref{lem:weak-Bogomolov2},
$(v_i^2)-(D_i^2)=d_i^2 (H^2)-2r_i a_i \geq 0$.
Since
\begin{equation}
\frac{(v_1,v_2)-(D_1,D_2)}{d_1 d_2}=
\frac{(v_1^2)-(D_1^2)}{2d_1^2}+
\frac{(v_2^2)-(D_2^2)}{2d_2^2}+
\left(\frac{r_1}{d_1}-\frac{r_2}{d_2} \right)
\left(\frac{a_1}{d_1}-\frac{a_2}{d_2} \right),
\end{equation}
$(v_1,v_2)-(D_1,D_2)>0$.
Hence
$0 \leq  (v_i^2)-(D_i^2) <(v^2)-(D^2)$ and
$(v_i,v-v_i)-(D_i,D-D_i)>0$.
Since the choice of $d_i$ are finite,
$r_i a_i$ are bounded.
$(v^2)-(D^2) \geq 2((v_i,v)-(D_i,D))>0$
implies $ra_i+r_i a$ is also bounded.
Therefore $r_i,a_i$ are bounded.
\begin{NB}
If $r_i=0$, then $ra_i$ is bounded.
If $a_i=0$, then $r_i a$ is bounded.
If $r=r_i=0$, then $a_i=ad_i/d$.
If $a=a_i=0$, then $r_i=r d_i/d$.
Hence if $(r,a) \ne (0,0)$, then
the choice of $r_i,a_i$ is finite.
If $r=a=0$, then $a_i=r_i (\omega^2)/2$, which implies
$r_i=0$ if and only if $a_i=0$.
Therefore the choice of $r_i,a_i$ is finite in this case. 
\end{NB}
Since $r_i \in {\Bbb Z}$ and $2 B^2 a_i \in {\Bbb Z}$,
\begin{NB}
$a_i=-(e^\beta,v_i)$
\end{NB}
the choice of $r_i$ and $a_i$ are finite.
Therefore the set of walls is finite.
\end{proof}

\subsection{Examples of Gieseker chamber}\label{subsect:ex-Gieseker}
Let $X$ be a surface with $\Pic(X)={\Bbb Z}H$.
Let $v=(r,H,a)$, $r \geq 0$.
We note that
\begin{equation}\label{eq:min}
\deg E=\min\{\deg E'>0 \mid E' \in K(X)\}
\end{equation}
for $E$ with $v(E)=v$.
\begin{lem}\label{lem:r=0}
Assume that 
\begin{equation}\label{eq:a-cond}
a \leq -\frac{1}{2}(K_X,H)-r-1.
\end{equation}
\begin{enumerate}
\item[(1)]
For all $L \in M_H(0,H,a)$,
$H^0(X,L)=0$ and $\dim \Ext^1(L,{\cal O}_X) \geq r+1$.
\item[(2)]
For $L \in M_H(0,H,a)$, 
we set $n:=\dim \Ext^1(L,{\cal O}_X)$.
Then there is a family of stable sheaves
\begin{equation}
0 \to {\cal O}_X \otimes V \to E \to L \to 0
\end{equation}
parameterized by $Gr(n,r)$, where $V$ is the universal subspace of
dimension $r$.
\end{enumerate}
\end{lem}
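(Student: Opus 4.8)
The plan is to prove both statements by a cohomological computation using Riemann--Roch and the duality $\Ext^1(L,{\cal O}_X)\cong \Hom({\cal O}_X,L(K_X))^\vee = H^0(X,L(K_X))^\vee$ together with Serre duality $\Ext^2(L,{\cal O}_X)\cong \Hom({\cal O}_X,L)^\vee = H^0(X,L)^\vee$, and then building the extension family and checking stability. First I would treat part (1). Since $L\in M_H(0,H,a)$ is a pure $1$-dimensional stable sheaf with $c_1(L)=H$ and $\chi(L)=\chi(X,L)$ governed by $a$, I would compute $\chi(L)=a+\tfrac12(H,H-K_X)+\,(\text{correction})$ via Riemann--Roch for a sheaf supported on a curve; the key point is that the hypothesis \eqref{eq:a-cond} forces $\chi(L)<0$, so that $h^1(X,L)>0$. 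To get the vanishing $H^0(X,L)=0$ I would use stability: a nonzero section ${\cal O}_X\to L$ would, since $L$ is pure of dimension $1$ and stable, contradict the slope inequality (the image would be a subsheaf of $L$ supported on the whole curve $\Supp L$ but of larger reduced Hilbert polynomial), so $\Hom({\cal O}_X,L)=0$, i.e.\ $H^0(X,L)=0$ and hence $\Ext^2(L,{\cal O}_X)=0$ by Serre duality.

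Granting $H^0(X,L)=0$, the only nonvanishing terms in $\chi({\cal O}_X,L)=\sum_i(-1)^i\dim\Ext^i({\cal O}_X,L)$ are $h^1$ and $h^2=0$, and $\chi({\cal O}_X,L)=\chi(L)$. Dually $\chi(L,{\cal O}_X)=\chi({\cal O}_X,L(K_X))$, and I would read off $\dim\Ext^1(L,{\cal O}_X)=-\chi(L,{\cal O}_X)=-\chi(L)-(c_1(L),K_X)$ up to the Euler-characteristic contribution. The arithmetic should collapse, precisely because of the normalization $a\le-\tfrac12(K_X,H)-r-1$, to the bound $\dim\Ext^1(L,{\cal O}_X)\ge r+1$. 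I would carry out this bookkeeping carefully using $\chi({\cal O}_X)$ and the intersection numbers $(H^2)$, $(H,K_X)$, but I expect it to be a short routine computation once $\Ext^2(L,{\cal O}_X)=0$ is in hand.

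For part (2), let $n:=\dim\Ext^1(L,{\cal O}_X)\ge r+1$. For an $r$-dimensional subspace $V\subset\Ext^1(L,{\cal O}_X)$, the evaluation gives a universal extension
\begin{equation}
0 \to {\cal O}_X\otimes V \to E \to L \to 0,
\end{equation}
and as $V$ varies over $Gr(n,r)$ these fit into a flat family (taking the universal subbundle $V$ on the Grassmannian and forming the associated extension of the pullbacks of $L$ and ${\cal O}_X$). The plan is to show each such $E$ is a stable sheaf of rank $r$ with $v(E)=(r,H,a)$: since the extension is the universal one for the chosen $V$, the induced map $V\to\Ext^1(L,{\cal O}_X)$ is injective, which guarantees that $E$ has no sub-${\cal O}_X$ splitting off, i.e.\ $\Hom({\cal O}_X,E)=0$. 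Then I would verify stability by the standard argument: any destabilizing subsheaf would either map trivially to $L$ (hence factor through ${\cal O}_X\otimes V$, impossible by the purity/vanishing just noted) or project onto a destabilizing subsheaf of the stable $L$, which cannot happen. The Chern character computation $v(E)=r\,v({\cal O}_X)+v(L)=(r,H,a)$ follows from \eqref{eq:min} ensuring the degree is minimal and positive.

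The main obstacle I anticipate is not the extension construction but the precise stability verification for $E$: one must rule out subsheaves of intermediate rank whose slope could in principle exceed $\mu(E)$, and this requires combining the stability of $L$, the vanishing $H^0(X,E)=0$, and the minimality of $\deg E$ from \eqref{eq:min} in a single slope estimate. I would isolate this as the crux and handle it by pushing any candidate destabilizer $F\subset E$ into the exact sequence, analyzing $F\cap({\cal O}_X\otimes V)$ and the image of $F$ in $L$, and showing the reduced Hilbert polynomial inequality forces $F$ to be trivial or all of $E$. The dimension count of the family, namely $\dim Gr(n,r)=r(n-r)$, is then immediate.
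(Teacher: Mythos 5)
There are genuine gaps in both parts. In part (1), your two Serre-duality statements are mis-twisted: on a surface, $\Ext^i(L,{\cal O}_X)\cong \Ext^{2-i}({\cal O}_X,L(K_X))^{\vee}=H^{2-i}(X,L(K_X))^{\vee}$, so $\Ext^2(L,{\cal O}_X)\cong H^0(X,L(K_X))^{\vee}$ (not $H^0(X,L)^{\vee}$) and $\Ext^1(L,{\cal O}_X)\cong H^1(X,L(K_X))^{\vee}$ (not $H^0(X,L(K_X))^{\vee}$). Hence $H^0(X,L)=0$ does \emph{not} give $\Ext^2(L,{\cal O}_X)=0$; in the relevant regime ($K_X$ a positive multiple of $H$), $L(K_X)$ can have positive degree and sections, so your claimed equality $\dim\Ext^1(L,{\cal O}_X)=-\chi(L,{\cal O}_X)$ can fail. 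The conclusion survives only because one needs an inequality, not an equality: $\Hom(L,{\cal O}_X)=0$ (torsion into torsion-free) and $\dim\Ext^2\ge 0$ give $\dim\Ext^1(L,{\cal O}_X)\ge -\chi(L,{\cal O}_X)=-a-\tfrac{1}{2}(K_X,H)\ge r+1$, which is exactly the paper's argument. Your vanishing $H^0(X,L)=0$ is also unjustified as written: the image of a nonzero section is ${\cal O}_C$ with $C\in |H|$ and $0$-dimensional cokernel, and this contradicts stability (indeed, is impossible at all) only when $\chi({\cal O}_C)\ge \chi(L)$, i.e.\ when $\deg L=a+\tfrac{1}{2}(H^2)\le 0$. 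Your ``key point'' $\chi(L)<0$ is the wrong threshold (one needs $\chi(L)<\chi({\cal O}_C)=-\tfrac{1}{2}((H^2)+(K_X,H))$), and deducing $\deg L<0$ from \eqref{eq:a-cond} requires the observation opening the paper's proof, namely $(K_X,H)\ge (H^2)$ because $\NS(X)={\Bbb Z}H$; without that input the vanishing can fail (e.g.\ $K_X=0$ and $(H^2)\gg 0$).

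In part (2), the assertion $\Hom({\cal O}_X,E)=0$ is simply false: the defining sequence gives $H^0(X,E)\cong V\ne 0$ once $H^0(X,L)=0$, so no stability argument can rest on that vanishing. Moreover the step you yourself identify as the crux is left as a plan, and the plan lacks the actual mechanism. The paper compresses (2) into ``well known by \eqref{eq:min}'', and the content behind that is: if $F\subsetneq E$ is a saturated subsheaf with $\mu(F)\ge\mu(E)=(H^2)/r$ and $0<\rk F<r$, then by \eqref{eq:min} and $\deg(F\cap({\cal O}_X\otimes V))\le 0$ one is forced into $\deg(F\cap ({\cal O}_X\otimes V))=0$, whence $F\cap({\cal O}_X\otimes V)={\cal O}_X\otimes V'$ for a proper subspace $V'\subset V$ (a saturated degree-$0$ subsheaf of a trivial bundle is a trivial summand), and $F_1:=F/({\cal O}_X\otimes V')\hookrightarrow L$ has $0$-dimensional cokernel $T$. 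This exhibits a splitting of the induced extension of $F_1$ by ${\cal O}_X\otimes(V/V')$; but $\Ext^1(T,{\cal O}_X)=0$ for $0$-dimensional $T$, so $\Ext^1(L,{\cal O}_X)\to\Ext^1(F_1,{\cal O}_X)$ is injective, and injectivity of $V\hookrightarrow\Ext^1(L,{\cal O}_X)$ makes the projected extension class nonzero, a contradiction. Without this argument (or an explicit citation replacing it), your proposal does not rule out the genuinely dangerous subsheaves, e.g.\ rank $r-1$ and degree $(H^2)$, so part (2) remains unproved.
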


\begin{NB}
If $H^1(X,{\cal O}_X)=0$, then
$\dim \Ext^1(E,{\cal O}_X)=n-r$.
\end{NB}

\begin{proof}
We note that $(K_X,H) \geq (H^2)$ by $\NS(X)={\Bbb Z}H$.
(1)
Since $\deg L=a+\frac{(H^2)}{2}<0$,
$H^0(X,L)=0$.
Since $\Hom(L,{\cal O}_X)=0$ and
$\chi(L,{\cal O}_X)=\chi(L(K_X))=a+\frac{(H^2)}{2}+(K_X,H)+\chi({\cal O}_H)
=a+\frac{(K_X,H)}{2}$, we get (1).

(2) is well-known by \eqref{eq:min}.
\end{proof}

\begin{NB}
If $a \ll 0$, then there is a stable vector bundle $E$ 
fitting in
\begin{equation}
0 \to {\cal O}_X \to E \to F \to 0,
\end{equation}
where $F$ is a stable sheaf with $v(F)=(r-1,H,a)$.

\begin{proof}
Assume that $\chi(F) \leq 0$ and
$\chi(F(K_X)) \leq -2$.
Since $a$ is large, we may also assume that
$H^0(F)=0$ for a general $F \in M_H(r-1,H,a)$.
For a non-zero element of $\Ext^1(F,{\cal O}_X)$,
we have a stable sheaf $E$.
\end{proof}
\end{NB}
We note that
$$
\NS(X)_{\Bbb R} \times \Amp(X)_{\Bbb R}=
\{(sH,tH) \mid s \in {\Bbb R}, t>0 \}. 
$$
By \cite{Ma}, walls are defined by
circles containing
$(\frac{1}{r}-\sqrt{\frac{(v^2)}{r^2(H^2)}},0)$
for $r>0$ and
containing $(\tfrac{a}{(H^2)},0)$.
Since $(v^2)>(H^2)$, the center is in $s<0$.
By \eqref{eq:min},
all walls are in $s \leq 0$. 
%In particular
%${\cal M}_{(sH,tH)}(r,H,a)={\cal M}_H(r,H,a)$
%for $s=0$ and $t>0$.
For a Mukai vector $w=(r',d' H,a')$, we set 
$$
W_w:=\{(s,t) \mid {\Bbb R}Z_{(sH,tH)}(v)={\Bbb R}Z_{(sH,tH)}(w) \}.
$$
It is a candidate of walls for $v$.
By \cite[Lem. 5.5, Prop. 5.6]{YY2},
$\{ W_w \}$ forms a pencil of circles.
Hence there is a unique
$W_w$ passing $(0,0)$.
By \cite[Lem. 5.11]{YY2},
it is defined by $w=v({\cal O}_X)=(1,0,0)$, and the equation is
$$
t^2+s\left(s-\tfrac{2}{(H^2)}a\right)=0.
$$
In particular, it is independent of the choice of $r$. 
If $w$ defines a wall, then
$W_w$ is contained in $W_{(1,0,0)}$.
In particular
${\cal M}_{(sH,tH)}(r,H,a)={\cal M}_H(r,H,a)$
in the exterior of $W_{(1,0,0)}$.
Hence all $E \in {\cal M}_H(r-1,H,a)$ are $\sigma_{(sH,tH)}$-semi-stable
on $W_{(1,0,0)}$.
Therefore $W_{(1,0,0)}$ defines a wall for $v$ and gives
a boundary of the Gieseker chamber
if $a$ satisfies \eqref{eq:a-cond}.

\begin{NB}
Assume that $\sqrt{\frac{(v^2)}{2^2(H^2)}}>\frac{1}{2}$, that is,
$(v^2)>(H^2)$.
Then there is no wall between 
$0<s<\frac{1}{2}$.
Since $v({\cal O}_X)=1$ defines a wall $C$,
the region between $s=\frac{1}{2}$ and $C$ is the Gieseker chamber.
\end{NB}

\end{document}